\documentclass[a4paper, 12pt]{book}	
\usepackage[T1]{fontenc}
\usepackage{amssymb}
\usepackage{amsmath}
\usepackage{amsthm}
\usepackage[english]{babel}
\usepackage{graphicx}
\usepackage{epstopdf}
\usepackage{epsfig}
\usepackage{color}
\usepackage{latexsym}
\usepackage[all]{xy}
\usepackage{fancyhdr}
\usepackage{geometry}
\usepackage{graphics}

\geometry{ hmargin=2cm, vmargin=3.5cm }

\usepackage{verbatim} 

\makeatletter
\renewcommand{\@makechapterhead}[1]{%
\vspace*{50 pt}%
{\setlength{\parindent}{0pt} \raggedright \normalfont
\bfseries\Huge
\ifnum \value{secnumdepth}>1 
   \if@mainmatter\thechapter\hspace{5mm}\ \fi%
\fi
#1\par\nobreak\vspace{40 pt}}}
\makeatother

\newcommand\nc{\newcommand}

\theoremstyle{plain}

\newtheorem{theorem}{Theorem}
\newtheorem{corollary}[theorem]{Corollary}
\newtheorem{proposition}[theorem]{Proposition}
\newtheorem{lemma}[theorem]{Lemma}
\newtheorem*{lemma*}{Lemma}
\newtheorem*{theorem*}{Theorem}

\theoremstyle{definition}

\newtheorem{example}[theorem]{Example}
\newtheorem{remark}[theorem]{Remark}
\newtheorem*{remark*}{Remark}
\newtheorem*{example*}{Example}

\theoremstyle{plain}
\newtheorem{Atheorem}{Theorem}[section]
\newtheorem{Alemma}[Atheorem]{Lemma}
\newtheorem{Aproposition}[Atheorem]{Proposition}
\newtheorem{Acorollary}[Atheorem]{Corollary}


\def\be#1\ee{\begin{equation}#1\end{equation}}

\newcommand{\sn}{\operatorname{sn}}

\newcommand{\con}{\equiv}
\newcommand{\tr}{\operatorname{tr}}
\newcommand{\End}{\operatorname{End}}
\newcommand{\cl}{\operatorname{cl}}
\newcommand{\id}{\operatorname{id}}
\newcommand{\gr}{\operatorname{gr}}
\newcommand{\ord}{\operatorname{ord}}
\newcommand{\rk}{\operatorname{rk}}
\newcommand{\ev}{\operatorname{ev}}
\newcommand{\lk}{\operatorname{lk}}

\newcommand{\inlim}{\lim\limits_{\overleftarrow{\hspace{2mm}n\hspace{2mm}}}} 
\newcommand{\inverselim}[1]{\lim\limits_{\overleftarrow{\hspace{2mm}#1\hspace{2mm}}}} 

\nc{\homeo}{\cong}
\nc{\iso}{\simeq}

\def\Habiro{\widehat{\Z[q]}}

\nc{\qbinom}[2]{\left[\begin{array}{c}#1\\#2 \end{array}\right]}


\def\cR{\mathcal R}
\def\cL{\mathcal L}
\def\cT{\mathcal T}
\def\cS{\mathcal S}

\newcommand{\U}{\mathcal{U}}
\def\F{\mathcal F}

\newcommand{\tF}{\tilde{F}}
\newcommand{\tcS}{\tilde {\mathcal S}}

\def\fg{\mathfrak{g}}

\def\bk{\mathbf{k}}

\def\bp{{\mathbf p}}
\def\bn{{\mathbf n}}
\def\bj{{\mathbf j}}
\def\bv{{\mathbf v}}

\def\N{\mathbb{N}}
\def\Q{\mathbb{Q}}
\def\R{\mathbb{R}}
\def\C{\mathbb{C}}
\def\Z{\mathbb{Z}}

\nc\modb {{\mathsf b}}
\nc\modB {{\mathsf B}}

\nc\ve{\varepsilon}


\nc\FIG[3]{
    \begin{figure}
    \includegraphics[#3]{#1}
    \caption{#2}
    \label{fig:#1}
    \end{figure}}

\nc\incl[2]{{\includegraphics[height=#1]{#2}}}

\newcommand{\bz} {{\bar 0}}


\nc{\hsp}{\hspace{5mm}}
\nc{\vsp}{\vspace{5mm}}

\begin{document}	
\pagestyle{plain}

\titlepage{
\begin{centering}

\vspace*{2.5\baselineskip}

{\LARGE {\bfseries Unified Quantum $SO(3)$ and $SU(2)$ Invariants \\[1ex] for Rational Homology 3--Spheres}}\\[12ex]
Irmgard B\"uhler\\[5ex]
Z\"urich, 2010

\end{centering}}

\newpage
\thispagestyle{empty}
\begin{center}
\Large {\bf Abstract} 
\normalsize
\end{center}


Inspired by E. Witten's work, N. Reshetikhin and V. Turaev introduced in 1991
important invariants for 3--manifolds and links in 3--manifolds, the
so--called quantum (WRT) $SU(2)$ invariants. Short after, R. Kirby and
P.  Melvin defined a modification of these invariants, called the
quantum (WRT) $SO(3)$ invariants.  Each of these invariants depends on
a root of unity.

In this thesis, we give a unification of these invariants. Given a
rational homology 3--sphere $M$ and a link $L$ inside, we define the
unified invariants $I^{SU(2)}_{M,L}$ and $I^{SO(3)}_{M,L}$, such that
the evaluation of these invariants at a root of unity equals the
corresponding quantum (WRT) invariant. In the $SU(2)$ case, we assume
the order of the first homology group of the manifold to be odd.
Therefore, for rational homology 3--spheres, our invariants dominate the whole set of $SO(3)$ quantum
(WRT) invariants and, for manifolds with the order of the first homology group odd, the whole set of $SU(2)$ quantum (WRT)
invariants. We further show, that the unified invariants have a strong integrality property, i.e. that they lie in modifications of the Habiro ring, which is a cyclotomic completion of
the polynomial ring $\Z[q]$. 

We also give a complete computation of the quantum (WRT) $SO(3)$
and $SU(2)$ invariants of lens spaces with a colored unknot inside.

\pagestyle{fancy}
\setlength{\headheight}{15.4pt}

\fancyhead{}
\fancyhead[LE]{\thepage}
\fancyhead[RO]{\thepage}
\fancyfoot{}
\renewcommand{\headrulewidth}{0pt} 
\renewcommand{\footrulewidth}{0pt}

\tableofcontents
\let\cleardoublepage\clearpage

\chapter*{Introduction}

In 1984, V. Jones \cite{Jo} discovered the famous Jones polynomial, a strong link invariant which led to a rapid development of knot theory. Many new link invariants were defined short after, including the so--called colored Jones polynomial which uses representations of a ribbon Hopf algebra acting as colors attached to each link component. The whole collection of invariants of this spirit are called quantum link invariants.
\\

In the 60' and 70' of the last century, Likorish \cite{Li62}, Wallace \cite{Wallace} and Kirby \cite{Kirby} showed, that there is a one--to--one correspondence via surgery between closed oriented 3--manifolds up to homeomorphisms and knots in the $3$--dimensional sphere modulo Kirby--moves. This gives the possibility to study 3--manifolds using knot theory. 
\\

In 1989, E. Witten \cite{Wi} considered quantum field theory defined by the noncommutative Chern--Simons action to define (on a physical level of rigor) certain invariants of closed oriented 3--manifolds and links in 3--manifolds. Inspired by this work, N. Reshetikhin and V. Turaev \cite{RT2, Tu} constructed in 1991 new topological invariants of 3--manifolds and of links in 3--manifolds. 
The construction goes as follows. Let $M$ be a closed, oriented 3--manifold and $L_M$ its corresponding surgery link. The quantum group $U_q(\mathfrak{sl}_2)$ is a deformation of the Lie algebra $\mathfrak{sl}_2$ and has the structure of a ribbon Hopf algebra. One now takes the sum of the colored Jones polynomial of $L_M$, normalized in an appropriate way, over all colors, i.e over all finite--dimensional irreducible representations of $U_q(\mathfrak{sl}_2)$. Evaluating at a root of unity $\xi$ makes the sum finite and well--defined. 
These invariants are denoted by $\tau_M(\xi)$. Together they form a sequence of complex numbers parameterized by complex roots of unity and are known either as the Witten--Reshetikhin--Turaev invariants, short WRT invariants, or as the quantum invariants of $3$--manifolds. 
Since the irreducible representations of the quantum group $U_q(\mathfrak{sl}_2)$ correspond to the irreducible representations of the Lie group $SU(2)$, they are sometimes also called the quantum (WRT) $SU(2)$ invariants.

R. Kirby and P. Melvin \cite{KM} defined the $SO(3)$ version of the quantum (WRT) invariants by summing only over representations of $U_q(\mathfrak{sl}_2)$ of \emph{odd} dimension and evaluating at roots of unity of \emph{odd} order. These invariants are known as the quantum (WRT) $SO(3)$ invariants.
They have very nice properties. For example, A. Beliakova and T. Le \cite{BL} showed that they are algebraic integers, i.e. $\tau^{SO(3)}_M(\xi)\in \Z[\xi]$ for any closed oriented 3--manifold $M$ and any root of unity $\xi$ (of odd order). Similar results where also proven for the $SU(2)$ invariants with some restrictions on either the manifold $M$ or the order of the root of unity $\xi$ (see \cite{Ha}, \cite{BBL}, \cite{GM}, \cite{MR}). The full integrality result is conjectured and work in progress.
\\

The integrality results are based on a unification of the quantum (WRT) invariant.
For any integral homology $3$--sphere $M$, K. Habiro \cite{Ha} constructed a unified invariant $J_M$ whose evaluation at any root of unity coincides with the value of the quantum (WRT) $SU(2)$ invariant at that root. The unified invariant is an element of a certain cyclotomic completion of a polynomial ring, also known as the Habiro ring. This ring has beautiful properties. For example, we can think of its elements as analytic functions at roots of unity \cite{Ha}. Therefore, the unified invariant belonging to the Habiro ring means that the collection of the quantum (WRT) invariants is far from a random collection of algebraic integers: together they form a nice function. 
\\

In this thesis, we give a similar unification result for rational homology $3$--spheres which includes Habiro's result for integral homology $3$--spheres. More precisely, for a rational homology $3$--sphere $M$, we define unified invariants $I_M^{SO(3)}$ and $I_M^{SU(2)}$ such that the evaluation at a root of unity $\xi$ gives the corresponding quantum (WRT) invariant (up to some renormalization). In the $SU(2)$ case, we assume the order of the first homology group of the manifold to be odd -- the even case turns out to be quite different from the odd case and is part of ongoing research. Further, new rings, similar to the Habiro ring, are constructed which have the unified invariants as their elements. We show that these rings have similar properties to those of the Habiro ring. We also give a complete computation of the quantum (WRT) $SO(3)$ and $SU(2)$ invariants for lens spaces with a colored unknot inside at all roots of unity. 
\\

Additionally to the techniques developed by Habiro, we use deep results coming from number theory, commutative algebra, quantum group and knot theory. The new techniques developed in Chapters \ref{cyc} and \ref{RootsInSp} about cyclotomic completions of polynomial rings could be of separate interest for analytic geometry (compare \cite{Ma}), quantum topology, and representation theory. 
Further, even though integrality of the quantum (WRT) invariants does \emph{not} in general follow directly from the unification of the quantum (WRT) invariants, it does help proving it and a conceptual solution of the integrality problem is of primary importance for any attempt of a categorification of the quantum (WRT) invariants (compare \cite{Kho}).
Our results are also a step towards the unification of quantum (WRT) $\mathfrak{g}$ invariants of any semi--simple Lie algebra $\mathfrak{g}$ (see \cite{Tu} for a definition of quantum (WRT) $\mathfrak{g}$ invariants). K. Habiro and T. Le announced such unified $\mathfrak{g}$ invariants for integral homology 3--spheres. We expect that the techniques introduced here will help to generalize their results to rational homology 3--spheres.

\subsubsection*{Plan of the thesis} 
In Chapter \ref{ColoredJonesPolynomial}, we give the definition of the colored Jones polynomial and  state that it has a cyclotomic expansion with integral coefficients. The proof of this integrality result is postponed to the Appendix. This expansion is used for the definition of the unified invariant (Chapter \ref{UnifiedInvariant}). 
In Chapter \ref{WRTInvariant}, the quantum (WRT) invariants are defined	and important facts about (generalized) Gauss sums are stated. 
Chapter \ref{cyc} is devoted to the theory of cyclotomic completions of polynomial rings. For a given $b$, we define the rings $\cR_b$ and $\cS_b$ and discuss the evaluation at a root of unity in these rings.
In Chapter \ref{UnifiedInvariant}, the unified invariants $I_M^{SO(3)}$ and $I_M^{SU(2)}$ of a rational homology 3--sphere $M$ are defined and the main results of this thesis, i.e. the invariance of $I_M^{SO(3)}$ and $I_M^{SU(2)}$ and that their evaluation at a root of unity equals the corresponding quantum (WRT) invariant, are proven. Here we use (technical) results from Chapters \ref{LensSpaces} and \ref{laplace}.
In Chapter \ref{RootsInSp}, we prepare Chapters \ref{LensSpaces} and \ref{laplace} by showing that certain roots appearing in the unified invariants exist in the rings $\cR_b$ and $\cS_b$.
In Chapter \ref{LensSpaces}, we compute the quantum (WRT) invariants of lens spaces with a colored unknot inside and define the unified invariants of lens spaces.
In Chapter \ref{laplace}, we define a Laplace transform which we use to prove the main technical result of this thesis, namely that the unified invariant $I_M^{SO(3)}$ (respectively $I_M^{SU(2)}$) is indeed an element of $\cR_b$ (respectively of $\cS_b$), where $b$ is the order of the first homology group of the rational homology 3--sphere $M$.
\\

The material of Chapters \ref{ColoredJonesPolynomial} and \ref{WRTInvariant} is partly taken from \cite{Ha}, \cite{Li62}, \cite{KM} and \cite{BeBuLe}. Chapter \ref{cyc} includes results of Habiro \cite{Ha, Ha1}. The $SO(3)$ case of the results from Chapters \ref{cyc} to \ref{laplace} as well as the Appendix appeared in our joint paper with A. Beliakova and T. Le \cite{BeBuLe}. The $SU(2)$ case has not yet been published anywhere else.
\\

\subsubsection*{Acknowledgments}

First and foremost, I want to express my deepest gratitude to my supervisor Anna Beliakova.
Her encouragement, guidance as well as her way of thinking about mathematics influenced and motivated me throughout my studies. 

Further, I would like to thank Thang T. Q. Le for sharing with me his immense mathematical knowledge, his way of explaining, discussing and doing mathematics and for our joint research work. 

During my thesis, I spent seven months at the CTQM, University of Aarhus, in Denmark. Further, a part of my PhD was funded by the Forschungskredit of the University of Zurich as well as by the Swiss National Science Foundation.

\newpage

\pagestyle{fancy}
\setlength{\headheight}{15.4pt}

\renewcommand{\chaptermark}[1]{\markboth{\MakeUppercase{\chaptername}\ \thechapter. \ #1}{}}
\renewcommand{\sectionmark}[1]{\markright{\thesection. \ #1}{}}

\fancyhead{}
\fancyhead[RE]{\slshape \leftmark}
\fancyhead[LE]{\thepage}
\fancyhead[LO]{\slshape \rightmark}
\fancyhead[RO]{\thepage}
\fancyfoot{}
\renewcommand{\headrulewidth}{0pt} 
\renewcommand{\footrulewidth}{0pt}

\chapter{Colored Jones Polynomial}
 \label{ColoredJonesPolynomial}

In this chapter, we first recall some basic concepts of knot theory and quantum groups. We then define the universal $\mathfrak{sl}_2$ invariant of knots and links which leads us to the definition of the colored Jones Polynomial. In the last section, we state a generalization of Habiro's Theorem 8.2 of \cite{Ha} about a cyclotomic expansion of the colored Jones polynomial which we need for the definition of the unified invariant in Chapter \ref{UnifiedInvariant}. The proof of this theorem is postponed to the Appendix.
\\

Throughout this thesis, we will use the following notation. The $n$--dimensional sphere will be denoted by $S^n$, the $n$--dimensional disc by $D^n$ and the unit interval $[0,1]\subset\R$ by $I$. The boundary of a manifold $M$ is denoted by $\partial M$.
Except otherwise stated, a manifold $M$ is always considered to be closed, oriented and 3--dimensional.

\section{Links, tangles and bottom tangles}

A \emph{link} $L$ with $m$ \emph{components} in a manifold $M$  is an equivalence class by ambient isotopy of smooth embeddings of $m$ disjoint circles $S^1$ into $M$. A one--component link is called a \emph{knot}. The link is \emph{oriented} when an orientation of the components is chosen. 

A \emph{(rational) framing} of a link is an assignment of a rational number to each component of the link. It is called $integral$ when all numbers assigned are integral. A \emph{link diagram} of a framed link is a generic projection onto the plane as depicted in Figure \ref{fig:FramedLink}, where the framing is denoted by numbers next to each component.

\begin{figure}[!ht]
\centering%
\includegraphics[height=2.5cm]{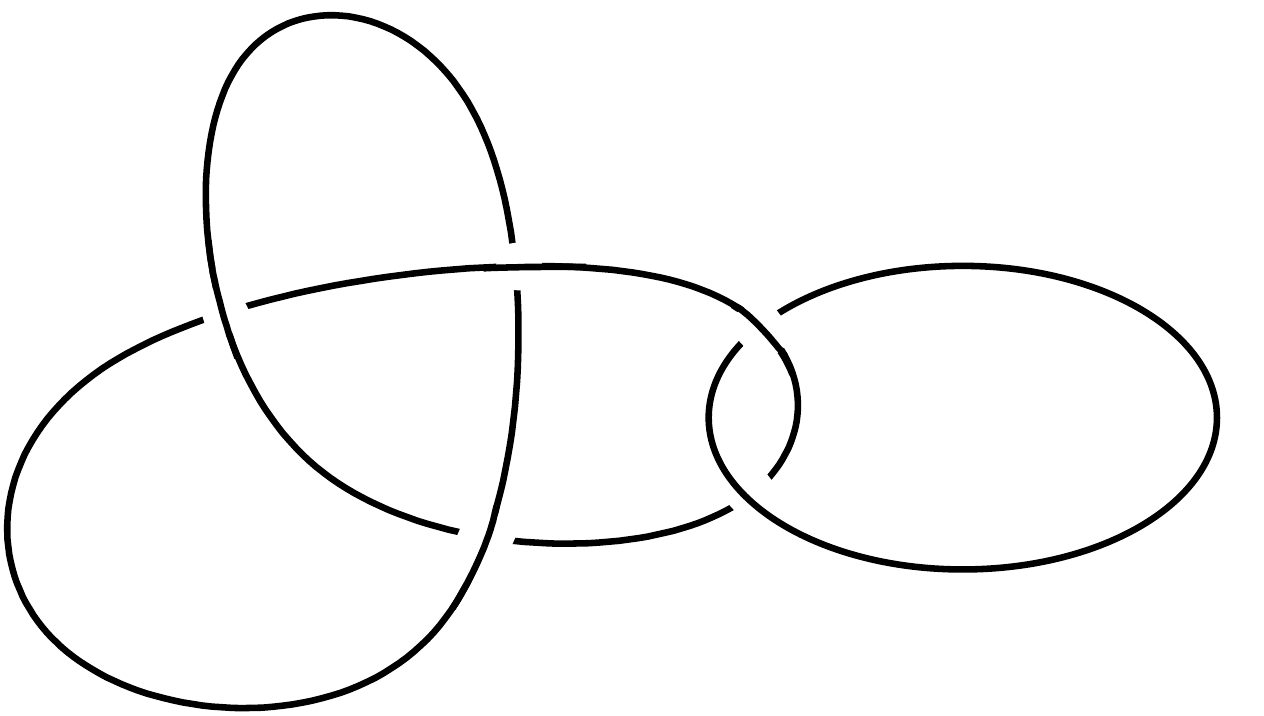}
\caption{A link diagram of a framed link.}
\label{fig:FramedLink}
\end{figure}


The \emph{linking number} of two components $L_1$ and $L_2$ of an oriented link $L$ is defined as follows. Each crossing in a link diagram of $L$ between $L_1$ and $L_2$ counts as $+1$ or $-1$, see Figure \ref{fig:LinkingNumber} for the sign. The sum of all these numbers divided by $2$ is called the linking number $\lk(L_1,L_2)$, which is independent of the diagram chosen for $L$.
The \emph{linking matrix} of a link $L$ with components $L_1,L_2,\ldots,L_n$ is a $n\times n$ matrix $(l_{ij})_{1\leq i,j\leq n}$ with the framings of the $L_i$'s on the diagonal and $l_{ij}=\lk(L_i,L_j)$ for $i\not=j$.
\\

\begin{figure}[!ht]
\begin{center}
\input{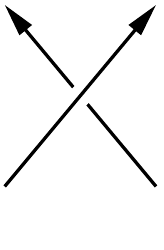_t}\hspace{2cm}
\input{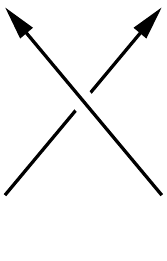_t}
\caption{The assignment of $+1$ and $-1$ to the crossings.}
\label{fig:LinkingNumber}
\end{center}
\end{figure}

A \emph{tangle} $T$ is an equivalence class by ambient isotopy (fixing $\partial I^3\setminus \{\frac{1}{2}\}\times I\times \partial I$) of smooth embeddings of disjoint $1$--manifolds into the unit cube $I^3$ in $\R^3\subset S^3$ with $\partial T \subset \{\frac{1}{2}\}\times I\times \partial I$. We define $\partial_- T=T\cap (I^2\times \{0\})$ and $\partial_+ T=T\cap (I^2\times \{1\})$ and call $T$ a $(m,n)$--tangle if $m=|\partial_+ T|$ and $n=|\partial_- T|$, where $|M|$ denotes the number of connected components of $M$. Thus a link is a $(0,0)$--tangle. 

\begin{figure}[!ht]
\begin{center}
\includegraphics[height=3cm]{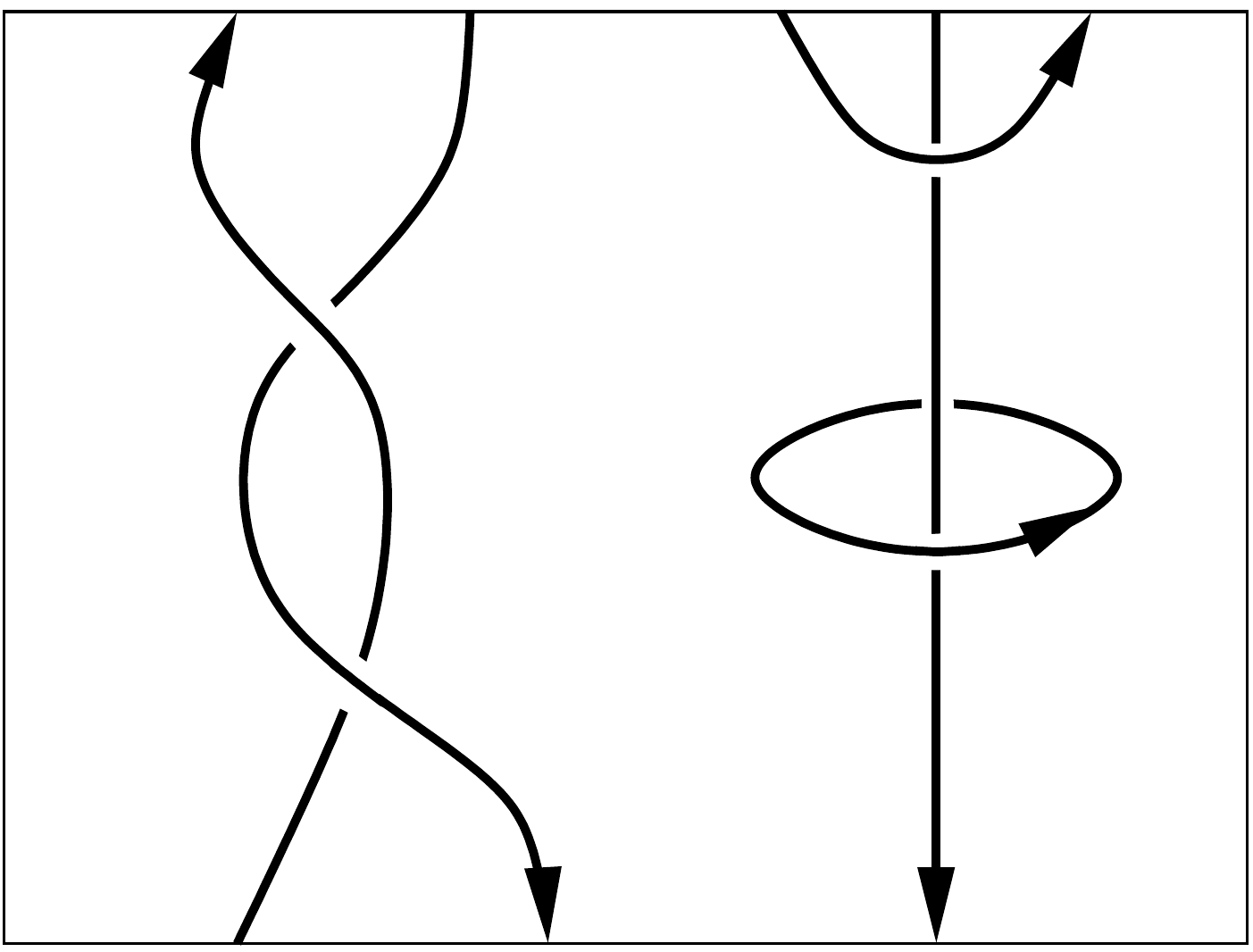}
\caption{A diagram of an oriented $(5,3)$--tangle.}
\label{fig:ExampleTangle}
\end{center}
\end{figure}

Framing, orientation and diagrams of tangles are defined analogously as for links. See Figure \ref{fig:ExampleTangle} for an example of a diagram of an oriented $(5,3)$--tangle. Every (oriented) tangle diagram can be factorized into the elementary diagrams shown in Figure \ref{fig:FundamentalTangles} using composition $\circ$ (when defined) and tensor product $\otimes$ as defined in Figure \ref{fig:CompositionAndTensorProductOfTangles}.
The oriented tangles can therefore be considered as the morphisms of a category $\cT$ with objects $x_1\otimes x_2\otimes\cdots\otimes x_n$, $x_i \in \{\uparrow, \downarrow\}$.
\\

\begin{figure}[!ht]
\begin{center}
\includegraphics[height=1.5cm]{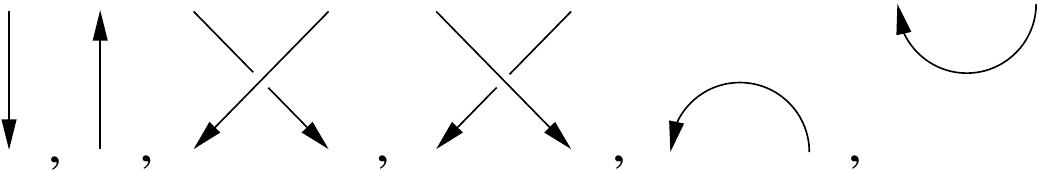}
\caption{The fundamental tangles.}
\label{fig:FundamentalTangles}
\end{center}
\end{figure}

\begin{figure}[!ht]
\begin{center}
\input{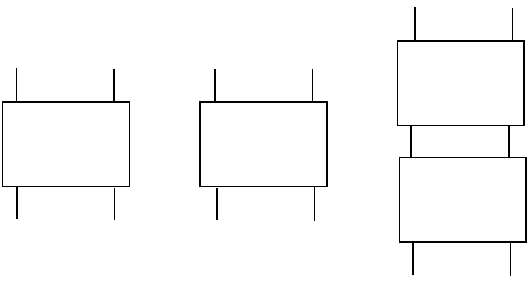_t}\hspace{1.5cm}
\input{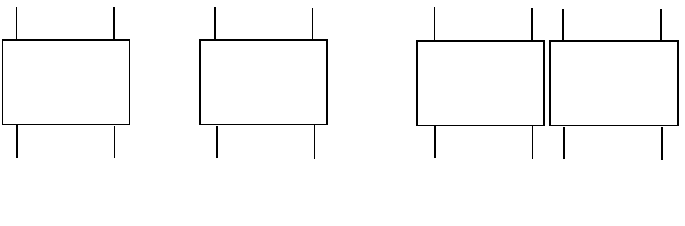_t}
\caption{Composition and tensor product of tangle $T_1$ and tangle $T_2$.}
\label{fig:CompositionAndTensorProductOfTangles}
\end{center}
\end{figure}

In the cube $I^3$, we define the points $p_i:=\{\frac 12, \frac{i}{2n+1},0\}$ for $i=1,\ldots, 2n$, on the bottom of the cube. An \emph{$n$--component bottom tangle} $T=T_1\sqcup \dots \sqcup T_n$ is an oriented $(0,n)$--tangle consisting of $n$ arcs $T_i$ homeomorphic to $I$ and the $i$--th arc $T_i$ starts at point $p_{2i}$ and ends at $p_{2i-1}$. 
For an example, a diagram of the Borromean bottom tangle $B$ is given in Figure \ref{fig:borromeanTangle}. 

\begin{figure}[!ht]
\begin{center}
\includegraphics[height=3cm]{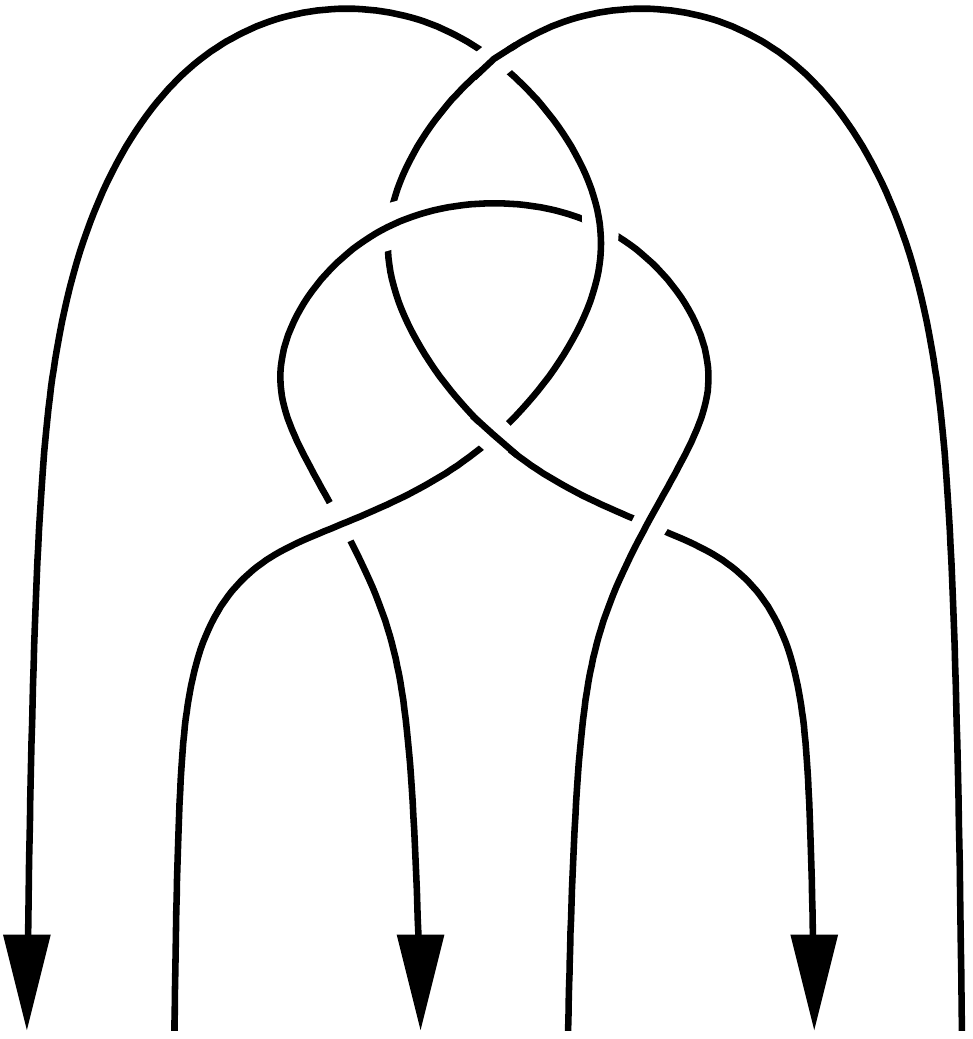}
\caption{Borromean bottom tangle $B$.}
\label{fig:borromeanTangle}
\end{center}
\end{figure}

The \emph{closure $\cl(T)$} of a bottom tangle $T$ is the $(0,0)$--tangle obtained by taking the composition of $T$ with the element  \includegraphics[height=0.3cm]{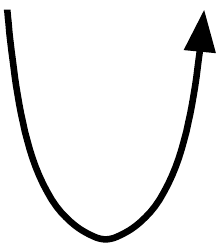}\;\includegraphics[height=0.3cm]{uArrow}
\;$\cdots$\;\includegraphics[height=0.3cm]{uArrow}. See Figure \ref{fig:borromeanLink} for an example.
\\

\begin{figure}[!ht]
\begin{center}
\includegraphics[height=3cm]{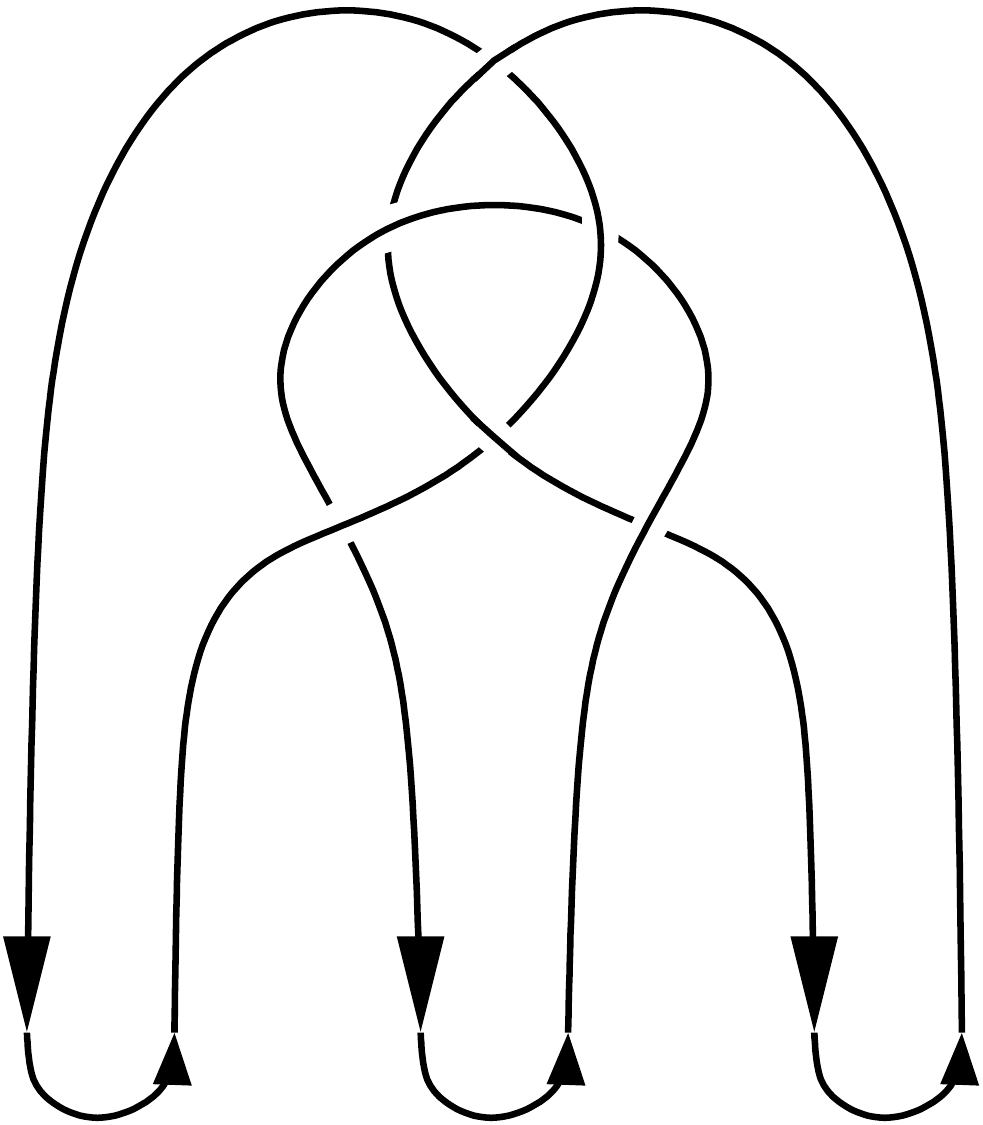}
\caption{The closure $\cl(B)$ of $B$.}
\label{fig:borromeanLink}
\end{center}
\end{figure}

In \cite{Ha-b}, Habiro defined a subcategory $\modB$ of the category of framed, oriented tangles $\cT$. The objects of $\modB $ are the symbols $\modb ^{\otimes n}$, $n\ge 0$,
where $\modb:=\downarrow \uparrow $. A morphism $X$ of $\modB$ is a $(m,n)$--tangle mapping $\modb^{\otimes m}$ to $\modb ^{\otimes n}$ for some $m,n\ge 0$. We can compose such a morphism with $m$--component bottom tangles 
to get $n$--component bottom tangles. Therefore, $\modB$ acts on the bottom tangles by composition. The category $\modB$ is braided: the monoidal 
structure is given by taking the tensor product of the tangles, the braiding for the generating object $\modb$  with itself is given by $\psi _{\modb ,\modb }=\includegraphics[height=1.2em]{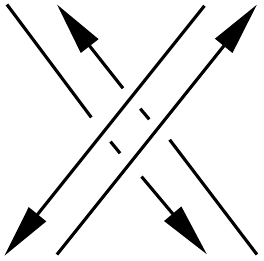}$.

\section{The quantized enveloping algebra $U_h(\mathfrak{sl}_2)$}\label{Uh(sl2)}

We follow the notation of \cite{Ha}.
We consider $h$ as a free parameter and let
\[
v=\exp \frac h2 \in \Q[[h]],\hsp\hsp q=v^2=\exp h,
\]
\[
\{n\} = v^{n}-v^{-n},
 \quad  \{n\}!=
\prod_{i=1}^n \{i\} ,\quad  [n] =\frac{\{n\}}{\{1\}}, \quad
\qbinom{n}{k} = \frac{\{n\}!}{\{k\}!\{n-k\}!}.
\]

The quantized enveloping algebra $U_h:=U_h(\mathfrak{sl}_2)$ is the
quantum deformation of the universal enveloping algebra
$U(\mathfrak{sl}_2)$ of the Lie algebra $\mathfrak{sl}_2$. More precisely, it is the $h$--adically complete $\Q[[h]]$--algebra generated by the elements $H,E$ and $F$ satisfying the relations
\[
HE-EH=2E, \hsp HF-FH=-2F,\hsp EF-FE=\frac{K-K^{-1}}{v-v^{-1}}
\]
where $K:=\exp\frac{hH}{2}$. It has a ribbon Hopf algebra structure with comultiplication $\Delta:U_h\to U_h\hat\otimes U_h$ (where $\hat\otimes$ denotes the $h$--adically complete tensor product), counit $\epsilon:U_h\to \Q[[h]]$ and antipode $S:U_h\to U_h$ defined by
\[
\begin{array}{lllll}
\Delta(H)=H\otimes 1+1\otimes H &\hsp& \epsilon(H)=0 &\hsp&S(H)=-H\\
\Delta(E)=E\otimes 1+K\otimes E &\hsp& \epsilon(E)=0 &\hsp&S(E)=-K^{-1}E\\
\Delta(F)=F\otimes K^{-1}+1\otimes F &\hsp& \epsilon(F)=0 &\hsp&S(H)=-FK.
\end{array}
\]
The universal $R$--matrix and its inverse are given by
\begin{eqnarray*}
R&=&D\left(\sum_{n\geq 0}v^{n(n-1)/2}\frac{(v-v^{-1})^n}{[n]!}F^n\otimes E^n\right)\\
R^{-1}&=&\left(\sum_{n\geq 0}(-1)^{n}v^{-n(n-1)/2}\frac{(v-v^{-1})^n}{[n]!}F^n\otimes E^n\right)D^{-1}
\end{eqnarray*}
where 
\[
D=\exp\left(\frac h4 H\otimes H\right).	
\]
We will use the Sweedler notation $R=\sum \alpha \otimes \beta$ and $R^{-1}=\sum \overline{\alpha}\otimes \overline{\beta}$ when we refer to $R$. 
As always, the ribbon element and its inverse can be defined via the $R$--matrix and the associated grouplike element $\kappa \in U_h$ satisfies $\kappa=K^{-1}$.
\\

By a \emph{finite-dimensional representation} of $U_h$, we mean a left $U_h$--module which is free of finite rank as a $\Q[[h]]$--module. For each $n\geq 0$, there exists exactly one irreducible finite--dimensional representation $V_n$ of rank $n+1$ up to isomorphism. It corresponds to the $(n+1)$--dimensional irreducible representation of the Lie algebra $\mathfrak{sl}_2$.

The structure of $V_n$ is as follows. Let $\bv_0^n \in V_n$ denote a highest weight vector of $V_n$ which is characterized by $E\bv_0^n=0, H\bv_0^n=n\bv_0^n$ and $U_h \bv_0^n=V_n$. Further we define the other basis elements of $V_n$ by 
$\bv_{i}^n:= \frac{F^{i}}{[i]!} \bv_0^n$ for $i=1,\ldots,n$. 
Then the action $\rho_{V_n}$ of $U_h$ on $V_n$ is given by
\[
H \bv_i^n = (n-2i) \bv_i^n,\;\;F \bv_i^n=[i+1]\bv_{i+1}^n,\;\; E \bv_i^n=[n+1-i]\bv_{i-1}^n
\]
where we understand $\bv_i^n=0$ unless $0\leq i\leq n$. It follows that $K^{\pm 1} \bv_i^n =v^{\pm(n-2i)}\bv_i^n$.

If $V$ is a finite--dimensional representation of $U_h$, then the \emph{quantum trace} $\tr_q^V(x)$ in $V$ of an element $x\in U_h$ is given by
\[
 \tr_q^V(x)=\tr^V(\rho_V(K^{-1}x)) \in \Q[[h]]
\]
where $\tr^V:\End(V)\to \Q[[h]]$ denotes the trace in $V$.

\section{Universal $\mathfrak{sl}_2$ invariant}

For every ribbon Hopf algebra exists a \emph{universal invariant} of links and tangles from which one can recover the operator invariants such as the colored Jones polynomial. Such universal invariants have been studied by Kauffman, Lawrence, Lee, Ohtsuki, Reshetikhin, Turaev and many others, see \cite{ Ha}, \cite{Ohtsukibook}, \cite{Tu} and the references therein. Here we need only the case of bottom tangles.

Let $T=T_1\sqcup T_2\sqcup \ldots \sqcup T_n$ be an ordered oriented $n$--component framed bottom tangle. We define the universal $\mathfrak{sl}_2$ invariant $J_T\in U_h^{\hat\otimes n}$ as follows. We choose a diagram for $T$ which is obtained by composition and tensor product of fundamental tangles (see Figure \ref{fig:FundamentalTangles}). On each fundamental tangle, we put elements of $U_h$ as shown in Figure \ref{fig:AssignmentToFundamentalTangles}. Now we read off the elements on the $i$--th component following its orientation. Writing down these elements from right to left gives $J_{(T_i)}$. This is the $i$--th tensorand of the universal invariant $J_T=\sum J_{(T_1)}\otimes J_{(T_2)}\otimes \ldots \otimes J_{(T_n)}$. Here the sum is taken over all the summands of the $R$--matrices which appear. The result of this construction does not depend on the choice of diagram and defines an isotopy invariant of bottom tangles. 

\begin{figure}[!ht]
\begin{center}
\input{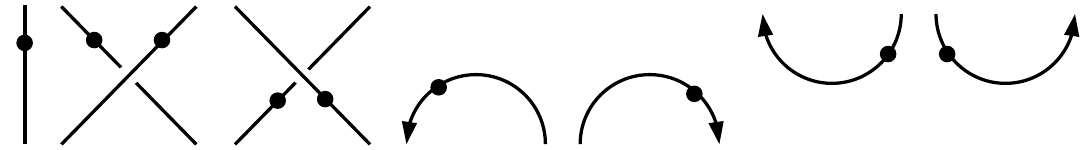_t}
\caption{Assignment to the fundamental tangles. Here $S'$ should be replaced with the identity map if the string is oriented downward and by $S$ otherwise.}
\label{fig:AssignmentToFundamentalTangles}
\end{center}
\end{figure}

\begin{example}
For the Borromean tangle $B$, the assignment of elements of $U_h$ to $B$ are shown in Figure \ref{fig:ExampleUniversalInvariant}. The universal invariant is given by
\[
J_B=\sum
S(\alpha_6)S(\beta_5)S(\overline{\alpha_3})S(\beta_1)\otimes
\alpha_1 \beta_4 S(\alpha_5)S(\beta_2)\otimes
\alpha_2\overline{\beta_3}\alpha_4\beta_6
\]
where we use the Sweelder notation, i.e. we sum over all $\alpha_i,\beta_i$ for $i=1,\ldots,6$.
Compare also with \cite[Proof of Corollary 9.14]{Ha-b} and \cite[Proof of Theorem 4.1]{Ha}. Habiro uses $(S\otimes S)R=R$ and $R^{-1}=(1\otimes S^{-1})R$ therein.
\end{example}

\begin{figure}[!ht]
\begin{center}
\input{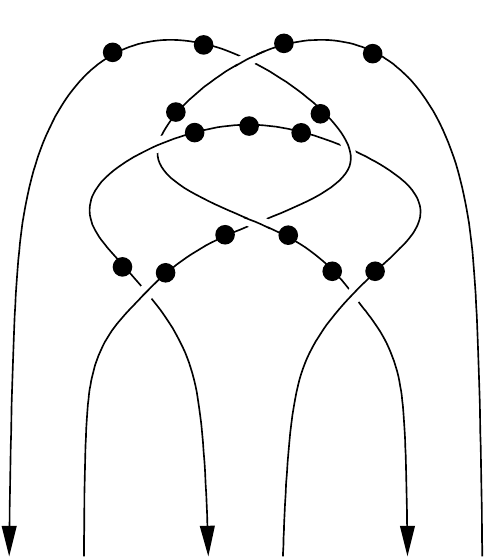_t}
\caption{The assignments to the Borromean tangle.}
\label{fig:ExampleUniversalInvariant}
\end{center}
\end{figure}

\section{Definition of the colored Jones Polynomial}

Let $L=L_1\sqcup L_2\sqcup \ldots\sqcup L_m$ be an $m$--component framed oriented ordered link with associated positive integers $n_1,\ldots,n_m$ called the \emph{colors associated with $L$}. Remember that the $n$--dimensional representation of $U_h$ is denoted by $V_{n-1}$. Let further $T$ be a bottom tangle with $\cl(T)=L$. 
The \emph{colored Jones polynomial}  of $L$ with colors $n_1,\ldots,n_m$ is given by
\[
J_L(n_1,\ldots,n_m)=(\tr_q^{V_{n_1-1}}\otimes \tr_q^{V_{n_2-1}}\otimes\ldots\otimes\tr_q^{V_{n_m-1}})(J_T).
\]
For every choice of $n_1,\ldots,n_m$, this is an invariant of framed links (see e.g. \cite{RT1} and \cite[Section 1.2]{Ha-b}).

\begin{example}\label{unknot}
 Let us calculate $J_U(n)$, where $U$ denotes the unknot with zero framing. For $T=\includegraphics[height=0.8cm]{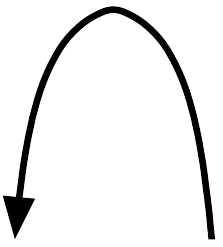}$, we have $\cl(T)=U$. We choose for $V_{n-1}$ the basis $\bv_0^{n-1}, \bv_1^{n-1},\ldots,\bv_{n-1}^{n-1}$ described in Section \ref{Uh(sl2)}. Since 
$J_T=1$, we have
\begin{eqnarray*}
J_U(n)&=&\tr^{V_{n-1}}(\rho_{V_{n-1}}(K^{-1}))=\tr^{V_{n-1}}\left(\left(\begin{array}{ccccc}v^{-n+2} & 0&\ldots&\ldots&0\\0&v^{-n+4}&0&\cdots&0\\ \vdots&&\ddots&&\vdots\\ 0&\ldots&\ldots&0&v^{n-2}\end{array}\right)\right) \\
&&\\
&=& v^{n-2}+v^{n-4}+\ldots+v^{-n+2}=[n].
\end{eqnarray*}
\end{example}

We will need the following two important properties of the colored Jones polynomial.
\begin{lemma}\cite[Lemma 3.27]{KM}\label{framing}

If $L_1$ is obtained from $L$
by increasing the framing of the $i$th component by 1, then
\[
J_{L_1}(n_1,\dots,n_m) = q^{(n_i^2-1)/4} J_{L}(n_1,\dots,n_m).
\]
\end{lemma}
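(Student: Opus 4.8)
The plan is to compute the effect of a framing change on the universal $\mathfrak{sl}_2$ invariant $J_T$ and then apply the quantum trace. The key observation is that changing the framing of the $i$th component by $+1$ amounts, at the level of bottom tangles, to inserting a positive kink (curl) on the $i$th strand. On the universal invariant this multiplies the $i$th tensorand $J_{(T_i)}$ by the ribbon element $\theta^{-1}$ (or $\theta$, depending on sign conventions): writing $J_{T_1}$ for the new invariant, one has $J_{(T_{1,j})} = J_{(T_j)}$ for $j \neq i$ and $J_{(T_{1,i})} = \theta^{\pm 1} J_{(T_i)}$, where the sum/Sweedler notation is carried along unchanged. This is a standard property of the universal invariant of a ribbon Hopf algebra and follows directly from the assignment rules in Figure \ref{fig:AssignmentToFundamentalTangles} together with the defining relation of the ribbon element via the $R$-matrix and the grouplike element $\kappa = K^{-1}$.

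Next I would reduce to a statement about the quantum trace. Since the ribbon element $\theta$ is central in $U_h$, it acts on the irreducible representation $V_{n_i-1}$ as a scalar; call it $c_{n_i-1}$. Then
\[
J_{L_1}(n_1,\dots,n_m) = (\tr_q^{V_{n_1-1}} \otimes \cdots \otimes \tr_q^{V_{n_m-1}})(J_{T_1}) = c_{n_i-1}^{\pm 1}\, (\tr_q^{V_{n_1-1}} \otimes \cdots \otimes \tr_q^{V_{n_m-1}})(J_T),
\]
because the scalar pulls out of the $i$th quantum trace. So everything comes down to identifying the scalar by which the ribbon element acts on $V_{n-1}$.

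For that step I would use the standard formula for the action of the ribbon element on a highest weight module: on $V_{n-1}$ the ribbon element $\theta$ acts by $q^{-(n^2-1)/4}$ (up to the sign convention fixed above), which can be checked directly on the highest weight vector $\bv_0^{n-1}$ using $H\bv_0^{n-1} = (n-1)\bv_0^{n-1}$, the explicit form of $D = \exp(\tfrac h4 H\otimes H)$, and $\kappa = K^{-1}$, exactly as in Example \ref{unknot} where a similar computation with $K^{-1}$ is carried out. Matching the sign so that increasing framing by $+1$ multiplies by $q^{(n_i^2-1)/4}$ then gives the claim. Alternatively, and perhaps cleanest as a sanity check, one can verify the formula on the unknot: $J_U(n) = [n]$ by Example \ref{unknot}, and the unknot with framing $+1$ is the $+1$-framed unknot whose colored Jones polynomial is known to be $q^{(n^2-1)/4}[n]$, pinning down the constant.

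The main obstacle is bookkeeping of conventions: the precise power of $v$ versus $q$, and the sign in the exponent, depend on the chosen normalizations of the $R$-matrix, the ribbon element, and the framing convention (positive kink versus negative kink). The mathematical content --- centrality of the ribbon element and its scalar action on irreducibles --- is routine; the care needed is in matching these conventions so that the exponent comes out as $(n_i^2-1)/4$ with a plus sign, which I would nail down via the unknot computation above. Since this is quoted from \cite[Lemma 3.27]{KM}, I would also simply refer to that source for the convention-dependent sign verification if a self-contained check proves tedious.
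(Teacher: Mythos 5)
Your argument is correct and is essentially the standard proof behind the cited \cite[Lemma 3.27]{KM}, which the thesis itself does not reprove: a $+1$ framing change inserts a kink whose contribution to the universal invariant is the (inverse) ribbon element, this element is central and hence acts on $V_{n_i-1}$ by a scalar, and that scalar, computed on the highest weight vector, is $q^{(n_i^2-1)/4}$, which then passes through the quantum traces. One caution: fixing the sign by quoting $J_{U^{+1}}(n)=q^{(n^2-1)/4}[n]$ is circular if that value is itself derived from this framing lemma (as it is in Example \ref{TauNonZero}), so the convention should be pinned down by the direct eigenvalue computation of the ribbon element on $\bv_0^{n-1}$ (or by computing the quantum trace of the one-kink bottom tangle from the assignment rules), which you also propose and which suffices.
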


\begin{lemma}\cite[Strong integrality Theorem 2.2 and Corollary 2.4]{LeDuke}

There exists a number $p\in \Z$, depending only on the linking matrix of $L$, such that \linebreak $J_{L}(n_1,\dots,n_m) \in q^{\frac{p}{4}}\Z[q^{\pm 1}]$. Further, if all the colors $n_i$ are odd, $J_{L}(n_1,\dots,n_m) \in \Z[q^{\pm 1}]$.
\end{lemma}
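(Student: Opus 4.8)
The plan is to reduce the statement to the known integrality properties of the colored Jones polynomial of the zero-framed link underlying $L$, and then control the framing correction via Lemma~\ref{framing}. First I would write $L$ as obtained from a zero-framed link $L^0$ with the same underlying link and orientation, by adding framing $f_i\in\Z$ to each component $L_i$. (Recall framings here are integral on each component, since we are dealing with links in $S^3$ arising from bottom tangles.) Applying Lemma~\ref{framing} componentwise, one gets
\[
J_{L}(n_1,\dots,n_m)=\Big(\prod_{i=1}^m q^{f_i(n_i^2-1)/4}\Big)\,J_{L^0}(n_1,\dots,n_m).
\]
So the exponent of $q$ that appears is $\frac14\sum_i f_i(n_i^2-1)$, and the framing numbers $f_i$ are exactly the diagonal entries of the linking matrix of $L$; hence this part of the exponent depends only on the linking matrix, as required. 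The remaining task is to show $J_{L^0}(n_1,\dots,n_m)\in q^{p_0/4}\Z[q^{\pm1}]$ for some $p_0$ depending only on the linking matrix (in fact only on the off-diagonal part), and to see that when all $n_i$ are odd the whole thing lands in $\Z[q^{\pm1}]$.

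For the zero-framed case, the natural route is to invoke the strong integrality theorem of Le \cite{LeDuke} that is being quoted; concretely, I would expand $J_{L^0}$ in the universal-invariant formalism of the previous sections: $J_{L^0}(n_1,\dots,n_m)=\big(\tr_q^{V_{n_1-1}}\otimes\cdots\otimes\tr_q^{V_{n_m-1}}\big)(J_T)$ for a bottom tangle $T$ with $\cl(T)=L^0$. Each $R$-matrix contributes a factor from $D=\exp(\frac h4 H\otimes H)$, and evaluating $D$ between highest-weight vectors of weights $n_i-2a_i$ produces powers $q^{\frac14(n_i-2a_i)(n_j-2a_j)}$. Summing over $a_i$ and using that crossings between $L_i$ and $L_j$ come in the combinatorial pattern computing $\lk(L_i,L_j)$, the half-integer part of the $q$-exponent is governed precisely by $\sum_{i<j}\lk(L_i,L_j)n_in_j$ modulo an element of $\Z[q^{\pm1}]$; the self-crossings of a zero-framed component cancel. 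This identifies $p$ (together with the framing contribution above) as a fixed $\Z$-linear combination of the entries of the linking matrix, and in particular as an element of $\Z$ depending only on the linking matrix. I would present this as a short recollection of \cite[Theorem 2.2, Corollary 2.4]{LeDuke} rather than reproving it, since the statement explicitly cites that result.

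For the odd-colors refinement, the key point is parity: when every $n_i$ is odd, each factor $f_i(n_i^2-1)/4$ is an integer because $n_i^2-1\equiv0\pmod 8$, and similarly $\lk(L_i,L_j)n_in_j$ contributes only integer powers of $q$ after the dust settles (the residual quarter-integer shifts all cancel). More cleanly, this is exactly the content of Corollary 2.4 of \cite{LeDuke}, which I would simply cite. The main obstacle, if one wanted a fully self-contained argument, is the zero-framed integrality statement $J_{L^0}\in q^{p_0/4}\Z[q^{\pm1}]$ with $p_0$ depending only on the linking matrix: this is genuinely the substance of \cite{LeDuke} and rests on a careful bound on denominators in the expansion of the universal $\mathfrak{sl}_2$ invariant (quantum integers $[k]!$ appearing in the $R$-matrix expansion must be shown not to survive in the final trace). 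Since the excerpt explicitly attributes the statement to \cite{LeDuke}, my proof would consist of (i) the framing reduction via Lemma~\ref{framing}, (ii) the observation that the resulting $q$-exponent is a $\Z$-linear function of the linking matrix entries, and (iii) an appeal to \cite[Theorem 2.2 and Corollary 2.4]{LeDuke} for the zero-framed case and the odd-color improvement.
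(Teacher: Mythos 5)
Your proposal is consistent with the paper, which gives no argument for this lemma at all: it is stated purely as a citation of \cite[Theorem 2.2 and Corollary 2.4]{LeDuke}, exactly the appeal you make in step (iii). Your additional framing reduction via Lemma \ref{framing} and the parity observation $8 \mid n_i^2-1$ for odd $n_i$ are correct but not needed beyond what the cited result already provides.
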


\section{Cyclotomic expansion of the colored Jones polynomial}\label{CyclotomicJones}
Let  $L$ and $L'$ have $m$ and $l$ components. Let us color $L'$ by fixed $\bj=(j_1,\dots,j_l)$ and vary the colors $\bn=(n_1,\dots,n_m)$ of $L$.

For non--negative integers $n,k$ we define
\[ 
A(n,k) := \frac{\prod^{k}_{i=0}
\left(q^{n}+q^{-n}-q^i -q^{-i}\right)}{(1-q) \, (q^{k+1};q)_{k+1}}
\]
where we use from $q$--calculus the definition
\[ 
(x;q)_n := \prod_{j=1}^n (1-x q^{j-1}).
\]
For $\bk=(k_1,\dots,k_m)$ let
\[ 
A(\bn,\bk):= \prod_{j=1}^m \; A(n_j,k_j).
\]
Note that $A(\bn,\bk)=0$ if $k_j \ge n_j$ for some index $j$. Also 
\[ 
A(n,0)= q^{-1} J_U(n)^2. 
\]

The colored Jones polynomial $J_{L\sqcup L'} (\bn, \bj)$, when $\bj$ is fixed, can be repackaged into the invariant $C_{L\sqcup L'} (\bk, \bj)$ as stated in the following theorem.

\begin{theorem}\label{GeneralizedHabiro} 
Suppose $L\sqcup L'$ is a link in $S^3$, with $L$ having 0 linking matrix. Suppose the components of $L'$ have fixed {\em odd} colors $\bj = (j_1,\dotsm j_l)$. Then  there are invariants
\begin{equation}\label{Jones2}
C_{L\sqcup L'}(\bk,\bj) \in \frac{(q^{k+1};q)_{k+1}}{(1-q)}
\,\,\Z[q^{\pm 1}] \quad \text{where  $k=\max\{k_1,\dots, k_m\}$}
\end{equation}
such that for every $\bn =(n_1,\dots, n_m)$
\begin{equation}\label{Jones}
 J_{L\sqcup L'}
(\bn, \bj)  \,  \prod^m_{i=1}\; [n_i] = 
\sum_{0\le k_i \le n_i-1}C_{L\sqcup L'}(\bk,\bj)\;  A(\bn, \bk).
\end{equation}
\end{theorem}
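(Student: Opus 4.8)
The plan is to reduce the statement to the known cyclotomic expansion of the colored Jones polynomial for a single knot (Habiro's Theorem 8.2 of \cite{Ha}) and then to bootstrap it to the multi-component, zero-linking-matrix case, keeping careful track of the denominators. First I would recall the single-variable case: for a knot $K$ (with $L'$ colored and linked in), Habiro shows that $J_{K\sqcup L'}(n,\bj)\,[n]$ can be written as $\sum_{k\ge 0} C_k(\bj)\, \sigma_n(k)$, where $\sigma_n(k)=\prod_{i=0}^{k-1}(q^n+q^{-n}-q^{i+1}-q^{-i-1})$ (or an equivalent normalization), and the coefficients $C_k(\bj)$ lie in $\Z[q^{\pm1}]$ — this is precisely the kind of statement generalized in Section~\ref{CyclotomicJones}, whose proof the paper defers to the Appendix. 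The function $A(n,k)$ defined above is, up to the unit $q^{-1}$ and the factor $1/((1-q)(q^{k+1};q)_{k+1})$, exactly $\prod_{i=0}^{k}(q^n+q^{-n}-q^i-q^{-i})$, so I would first match normalizations and observe that $A(\bn,\bk)$ vanishes when some $k_j\ge n_j$, which is what allows the infinite sum to be truncated to $0\le k_i\le n_i-1$ in \eqref{Jones}.

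Next I would handle the multi-component case by induction on $m$, the number of components of $L$. The hypothesis that $L$ has zero linking matrix is essential: it means each component of $L$ is zero-framed and unlinked from the others (homologically), so one can apply the single-variable cyclotomic expansion one component at a time, treating the remaining components of $L$ together with $L'$ as the ``fixed'' sublink at each stage. The only subtlety is that after expanding in the first variable $n_1$, the coefficient $C^{(1)}_{k_1}$ is an invariant of $L\setminus L_1$ together with $L'$ and $L_1$ colored by the cabling element corresponding to $k_1$; I would need the fact — again part of the integrality package proven in the Appendix — that this cabled link still has zero linking matrix on its $L$-part and odd colors where required, so the induction hypothesis applies. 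Iterating $m$ times yields $J_{L\sqcup L'}(\bn,\bj)\prod_i[n_i]=\sum_{\bk} C_{L\sqcup L'}(\bk,\bj)\prod_j\sigma_{n_j}(k_j)$ with $C_{L\sqcup L'}(\bk,\bj)\in\Z[q^{\pm1}]$ in a suitable normalization.

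Finally I would convert this into the stated form. Writing $A(\bn,\bk)=\prod_j A(n_j,k_j)$ and $A(n_j,k_j)=q^{-1}\,\sigma_{n_j}(k_j)/\big((1-q)(q^{k_j+1};q)_{k_j+1}\big)$, each factor contributes a denominator $(1-q)(q^{k_j+1};q)_{k_j+1}$. Absorbing these denominators (and the units $q^{\pm1}$) forces the coefficient in front of $A(\bn,\bk)$ to be $C_{L\sqcup L'}(\bk,\bj)=\big(\text{unit}\big)\cdot\prod_j (1-q)(q^{k_j+1};q)_{k_j+1}\cdot(\text{element of }\Z[q^{\pm1}])$; the main point to verify is that $\prod_{j=1}^m (q^{k_j+1};q)_{k_j+1}/(1-q)^{?}$ can be bounded, up to a unit, by the single factor $(q^{k+1};q)_{k+1}/(1-q)$ with $k=\max_j k_j$, using divisibility properties of the $q$-factorials $(q^{a};q)_{a}$ (each $(q^{k_j+1};q)_{k_j+1}$ divides $(q^{k+1};q)_{k+1}$ in $\Z[q]$ when $k_j\le k$, since it is a product of cyclotomic-type factors appearing among those of the larger one). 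I expect this last divisibility bookkeeping — showing the product of the individual denominators is absorbed by the single claimed denominator $\tfrac{(q^{k+1};q)_{k+1}}{1-q}$ up to a unit in $\Z[q^{\pm1}]$ — to be the main obstacle, as it requires a precise analysis of which factors $(1-q^i)$ divide which $q$-Pochhammer symbols, rather than any deep topology; the topological input is entirely contained in the Appendix's integrality theorem.
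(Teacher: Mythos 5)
There is a genuine gap, and it appears at the very first step: the ``single--variable case'' you take as known is not Habiro's Theorem 8.2. Habiro's theorem is the case $L'=\emptyset$; the statement for a knot (or link) $L$ together with a sublink $L'$ carrying fixed odd colors is precisely the new content of Theorem \ref{GeneralizedHabiro}, and the paper has to prove it even for one component of $L$. Deferring ``the topological input'' to ``the Appendix's integrality theorem'' is therefore circular: the Appendix's theorem \emph{is} the statement under discussion. The paper's actual route is to set $C_{L\sqcup L'}(\bk,\bj)\doteq J_{L\sqcup L'}(\tilde P'_{\bk},\bj)$ (Habiro's basis of the Grothendieck ring), reduce to Theorem \ref{main-integrality}, and prove that by a new parity/grading theorem for the universal $\mathfrak{sl}_2$ invariant of a bottom tangle together with a colored \emph{closed} link (Theorem \ref{MMM}), established via the decomposition into good morphisms (Proposition \ref{TWWB}) and a skein--algebra argument with the gradings $\gr_\ve$, $\gr_q$ (Lemmas \ref{A9}, \ref{A8}, Propositions \ref{MM2}, \ref{3308}), after which Habiro's center/Casimir/partial--trace machinery applies. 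None of this is replaced by anything in your proposal.

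The two auxiliary devices you propose also do not work as stated. First, the induction on the components of $L$ breaks down at the inductive step: after expanding in $n_1$, the first component is colored by $\tilde P'_{k_1}$, which is neither a fixed odd integer color nor even a $\Z[q^{\pm 1}]$--combination of irreducibles of fixed parity (it has denominators $\{k_1\}!$ and mixes even-- and odd--dimensional modules), so the hypothesis ``$L'$ has fixed odd colors'' is not available for the next component; this is why both Habiro's proof and the paper treat all components of $L$ simultaneously, and why the divisibility involves the single factor indexed by $k=\max k_i$ rather than data accumulated one variable at a time. Second, the denominator bookkeeping runs in the wrong direction: your intermediate claim that the expansion in the basis $\prod_j\sigma_{n_j}(k_j)$ has coefficients in $\Z[q^{\pm 1}]$ is equivalent to $J_{L\sqcup L'}(\tilde P'_{\bk},\bj)$ being divisible by the full product $\prod_j(1-q)(q^{k_j+1};q)_{k_j+1}$, a strictly stronger assertion than \eqref{Jones2}, which is not established anywhere (a priori those coefficients are only rational functions in $q$ -- the paper stresses that existence of rational coefficients is easy and integrality is the whole difficulty). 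So the final ``absorption'' step assumes more than the theorem instead of deriving it, and the genuinely hard cancellation -- that $C_{L\sqcup L'}(\bk,\bj)$ is divisible by $(q^{k+1};q)_{k+1}/(1-q)$ -- is never addressed.
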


When $L'=\emptyset$, this was proven by K. Habiro, see Theorem 8.2 in \cite{Ha}. This generalization can be proved similarly as in \cite{Ha}. For completeness, we give a proof in the Appendix. Note that the existence of $C_{L\sqcup L'}(\bk,\bj)$ as rational functions in $q$ satisfying \eqref{Jones} is easy to establish. The difficulty here is to show the integrality of \eqref{Jones2}.

\begin{remark}\label{FiniteSum}
Since $A(\bn, \bk) =0$ unless $ \bk < \bn$, in the sum on the right hand side of \eqref{Jones} one can assume that $\bk$ runs over the set of all $m$--tuples $\bk$ with non--negative integer components. We will use this fact later.
\end{remark}

\newpage

\chapter{Quantum (WRT) invariant}\label{WRTInvariant}

In this chapter, we describe in Section \ref{Surgery} a one--to--one correspondence between 3--dimensional manifolds up to orientation preserving homeomorphisms and links up to Fenn--Rourke moves. We then state in Section \ref{GaussSums} results about generalized Gauss sums and define a variation therefrom. We use this in Section \ref{QuantumInvariant} where we give the definition of the quantum (WRT) invariants and, for rational homology 3--spheres, a renormalization of these invariants. Finally, we describe the connection between the quantum (WRT) $SU(2)$ and $SO(3)$ invariants.

\section{Surgery on links in $S^3$}\label{Surgery}

Let $K$ be a knot in $S^3$ and $N(K) = K\times D^2$ its tubular neighbourhood. The \emph{knot exterior} $E$ is defined as the closure of $S^3\backslash N(K)$. 

A 3--manifold $M$ is obtained from $S^3$ by a \emph{rational 1--surgery} along a framed knot $K\subset S^3$ with framing $\frac{p}{q}$, when $N(K)$ is removed from $S^3$ and a copy of $D^2\times S^1$ is glued back in using a homeomorphism $h: \partial D^2\times S^1 \to \partial E$.
If $q=1$, the $1$--surgery is called \emph{integral}.
The homeomorphism $h$ is completely determined by the image of any meridian $m:=\partial D^2 \times \{*\}$ of $\partial D^2\times S^1$. To describe this image it is enough to specify a canonical longitude $l$ of $\partial E$ and an orientation on $m$ and $l$. The image will then be a simple closed curve on $\partial E$ isotopic to a curve of the form $c=p \cdot m+ q\cdot l$, where $p$ and $q$ are given by the framing of the knot. 
The canonical longitude $l$ is, up to isotopy, uniquely defined as the curve homologically trivial in $E$ and with $\lk(l,K)=0$. For the orientation on $m$ and $l$, we choose the standard orientation on $S^3$ which induces an orientation on $E$. The two curves $m$ and $l$ are then oriented such that the triple $\langle m,l,n\rangle$ is positively oriented. Here $n$ is a normal vector to $\partial E$ pointing inside $E$, see Figure \ref{fig:MeridianLongitudeNormal}.

\begin{figure}[ht]
\begin{center}
 \includegraphics[height=3cm]{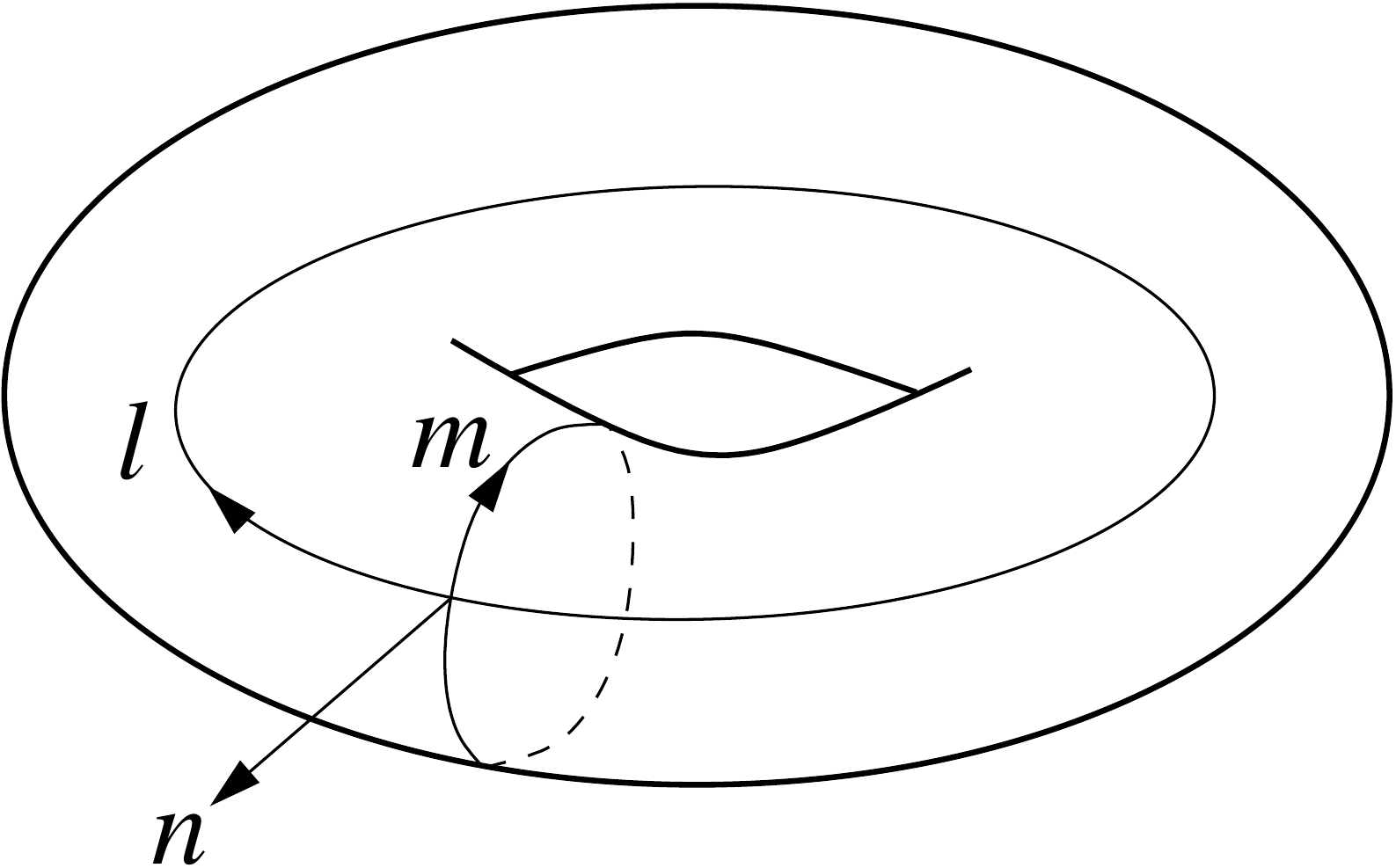}
 \caption{Orientation of meridian $m$ and longitude $l$ in $\partial E$.} 
\end{center}
\label{fig:MeridianLongitudeNormal}
\end{figure}

\begin{theorem}[Likorish, Wallace]
 Any closed connected orientable 3--manifold $M$ can be obtained from $S^3$ by a collection of integral 1--surgeries.
\end{theorem}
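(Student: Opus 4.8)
The plan is to prove this via Heegaard splittings, following Lickorish. Recall that every closed connected orientable $3$--manifold $M$ admits a \emph{Heegaard splitting} $M = H \cup_\phi H'$, where $H$ and $H'$ are handlebodies of some common genus $g$, glued along their boundary surfaces by an orientation--reversing homeomorphism $\phi$; this follows from Moise's triangulation theorem, taking $H$ to be a regular neighbourhood of the $1$--skeleton and $H'$ the complementary handlebody. The sphere $S^3$ itself carries, for every $g \ge 0$, a \emph{standard} genus--$g$ Heegaard splitting $S^3 = H \cup_{\phi_0} H'$. Since any two genus--$g$ handlebodies are homeomorphic, we may take the \emph{same} pair $H, H'$ in both decompositions, so that $M$ is obtained from $S^3$ by cutting along the Heegaard surface $\Sigma_g := \partial H$ and regluing by the orientation--preserving self--homeomorphism $\psi := \phi_0^{-1}\circ\phi$ of $\Sigma_g$; moreover, since isotopic gluing maps give homeomorphic manifolds, only the mapping class of $\psi$ matters.

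First I would invoke Lickorish's twist theorem: the mapping class group of $\Sigma_g$ is generated by Dehn twists along a fixed finite family $c_1,\dots,c_N$ of simple closed curves on $\Sigma_g$. Thus $\psi$ is isotopic to a composition $\tau_{c_{i_1}}^{\pm1}\circ\cdots\circ\tau_{c_{i_r}}^{\pm1}$ of such twists and their inverses. Next I would show that a single Dehn twist is realised by surgery: if $c$ is a simple closed curve lying on the standard Heegaard surface $\Sigma_g\subset S^3$, then performing $\mp1$--surgery on $c$, framed by the surface $\Sigma_g$, returns $S^3$, but now carrying the Heegaard splitting whose gluing map differs from the original one exactly by $\tau_c^{\pm1}$. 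This is a local computation: the surgery solid torus can be isotoped to lie on one side of $\Sigma_g$, and regluing a tubular neighbourhood of an annular collar of $c$ by a single twist is precisely a $\pm1$--framed surgery on the core. Performing these $r$ surgeries in turn — each on a curve $c_{i_j}$ which, lying on the standard Heegaard surface of $S^3$, is an \emph{unknot}, and with framing $\pm1$ — therefore transforms $S^3$ into $M$. As all the framings are $\pm1$, these are integral $1$--surgeries, which proves the theorem.

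The main obstacle is Lickorish's twist theorem itself: that Dehn twists along finitely many explicit curves generate the whole mapping class group of $\Sigma_g$. This is the genuinely deep input and is proved by induction on $g$, using the change--of--coordinates principle and the connectivity of the complex of non--separating simple closed curves. A second point needing care is the orientation and framing bookkeeping in the surgery realisation of one twist, so that the outcome is again $S^3$ (not a lens space) and the induced change of gluing map has the correct sign; a third, foundational, point is the existence of Heegaard splittings, which rests on triangulability of $3$--manifolds. An alternative route avoids Heegaard splittings altogether: since $\Omega_3^{SO}=0$, $M$ bounds a compact connected oriented $4$--manifold $W$, which after surgery to kill $\pi_1$ and the usual handle trading may be arranged to be built from a single $0$--handle together with $2$--handles attached along a framed link $L\subset S^3=\partial D^4$; then $M=\partial W$ is integral surgery on $L$. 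Either way the essential content is the same.
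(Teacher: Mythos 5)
Your proposal is correct, and it is precisely the argument of Lickorish's paper that the thesis cites for this statement (with Wallace's bordism-theoretic proof correctly identified as the alternative route): Heegaard splitting, Lickorish's twist theorem, and realisation of each Dehn twist by a $\pm1$--surgery (framed by the surface) on a curve in the Heegaard surface. The only cosmetic caveat is that the unknottedness of the twist curves is not needed (and a general simple closed curve on the standard surface need not be unknotted); what matters is only that the surface framing is integral, which your argument already uses.
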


\begin{proof}
See for example \cite{Li62} or \cite{Wallace}. 
\end{proof}

Therefore any ordered framed link $L$ gives a description of a collection of 1-surgeries and the manifold obtained in this way will be denoted by $M=S^3(L)$.
Let $L'$ be an other link in $S^3$. Surgery along $L$ transforms $(S^3,L')$ into  $(M,L')$. We use the same notation $L'$ to denote the link in $S^3$ and the corresponding one in $M$.

\begin{example}
 The $(b,a)$ lens space $L(b,a)$ is obtained by surgery along an unknot with rational framing $\frac{b}{a}$. Further we have $S^3=S^3(U^1)$, where $U^1$ denotes the unknot with framing 1. 
\end{example}

In \cite{Kirby}, R. Kirby proved a one--to--one correspondence between 3--manifolds up to homeomorphisms and framed links up to the two so--called Kirby moves. In \cite{FR}, R. Fenn and C. Rourke showed that these two moves are equivalent to the one Fenn--Rourke move (see Figure \ref{fig:FennRourke}) and proved the following.

\begin{theorem}[Fenn--Rourke]
 Two framed links in $S^3$ give, by surgery, the same oriented 3--manifold if and only if they are related by a sequence of Fenn Rourke moves. A Fenn Rourke move means replacing in the link locally $T$ by $T_+$ or $T_-$ as shown in Figure \ref{fig:FennRourke} where the non--negative integer $m$ can be chosen arbitrary.
The framings $j$ and $j_{\pm}$ on corresponding components $J$ and $J_{\pm}$ (before and after a move) are related by $j_{\pm}=j\pm \lk(K,J_{\pm})^2$.

\begin{figure}[h]
\begin{center}
\input{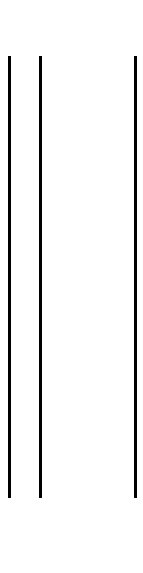_t} \hspace{2.2cm}
\input{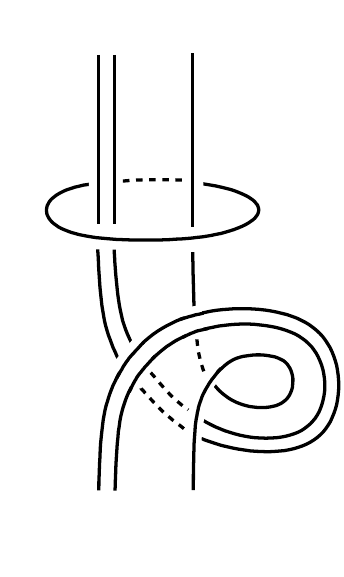_t} \hspace{1.4cm}
\input{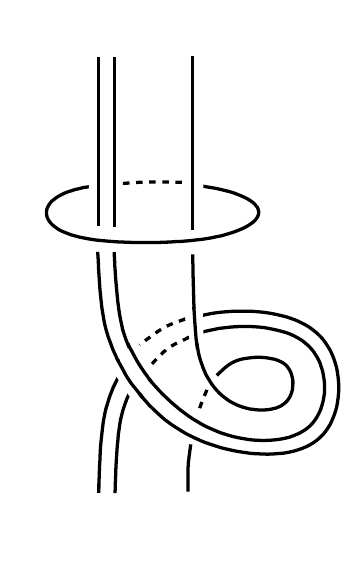_t}
\caption{The positive and the negative Fenn--Rourke move.}
\label{fig:FennRourke}
\end{center}
\end{figure}

\end{theorem}

\begin{proof}
 See \cite{FR}.
\end{proof}

\section{Gauss sums}\label{GaussSums}

We use the following notation. The greatest common divisor of two integers $a$ and $b$ is denoted by $(a,b)$. If $a$ does (respectively does not) divide $b$, we write $a \mid b$ (respectively $a\nmid b$). \\

Further, for $b$ odd and positive, the Jacobi symbol, denoted by $\left(\frac{a}{b}\right)$, is defined as follows. First, $\left(\frac{a}{1}\right)=1$. Then for $p$ prime, $\left(\frac{a}{p}\right)$ represents the Legendre symbol, which is defined for all integers $a$ and all odd primes $p$ by
\[
    \left(\frac{a}{p}\right) = \begin{cases} \;\;\,0\mbox{ if } a \equiv 0 \pmod{p} \\+1\mbox{ if }a \not\equiv 0\pmod{p} \mbox{ and for some integer }x, \;a\equiv x^2\pmod{p} \\-1\mbox{ else.} \end{cases}
\]
Finally,  if $b\not=1$, we put
\[
     \left(\frac{a}{b}\right) = \prod_{i=1}^{m} \left(\frac{a}{p_i}\right)^{\alpha_i} \quad\text{where}\quad b=\prod_i^{m}p_i^{\alpha_i} \quad\text{for } p_i \text{ prime}.
\]

Let $e_r:=\exp(\frac{2\pi i}{r})$ where $i$ denotes the positive primitive $4$th root of $1$.
The generalized Gauss sum is defined as
\[
G(r,x,y):=\sum_{j=0}^{r-1} e_r^{xj^2+yj}.
\]
The values of $G(r,x,y)$ are well known:

\begin{lemma}\label{GaussSumFormulas}
For $r,x,y\in \N$ we have
\[
G(r,x,y)=\begin{cases}
		0 &\text{if } (r,x) \nmid y\\
		(r,x)\cdot G\left(\frac{r}{(r,x)},\frac{x}{(r,x)},\frac{y}{(r,x)}\right) &\text{else}
          \end{cases}
\]
and for $(r,x)=1$ 
\begin{eqnarray*}
 G(r,x,y)=\begin{cases}
           \epsilon(r)\left(\frac{x}{r}\right)\sqrt{r}e_r^{-\frac{x_{*r}y^2}{4}(r+1)^2} & \text{\, if } r \text{ odd}\\
	   0 &\begin{array}{l}
		\text{if } r\equiv 2 \pmod{4} \text{ and } y \text{ even, or}\\
		\text{if } r\equiv 0\pmod{4} \text{ and } y \text{ odd}
	      \end{array}\\
           \epsilon\left(\frac{r}{2}\right)\left(\frac{2x}{r/2}\right)\sqrt{2r}e_r^{-\frac{x_{*r}y^2}{4}\left(\frac{r+2}{2}\right)^3}&\text{\, if }r\equiv 2\pmod{4} \text{ and } y \text{ odd}\\
           \overline{\epsilon(x)}\left(\frac{r}{x}\right)(1+i)\sqrt{r}e_r^{-\frac{x_{*r}y^2}{4}}&\text{\, if }r\equiv 0\pmod{4} \text{ and } y \text{ even}
          \end{cases}
\end{eqnarray*}
where $x_{*r}$ is defined such that $xx_{*r}\equiv 1 \pmod{r}$ and $\epsilon(x)=1$ if $x\equiv 1\pmod{4}$ and $\epsilon(x)=i$ if $x\equiv 3\pmod{4}$.
\end{lemma}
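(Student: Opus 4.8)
The plan is to first reduce to the coprime case $(r,x)=1$ by splitting the summation index, and then, in that case, complete the square and invoke the classical evaluation of the quadratic Gauss sum. For the reduction, put $d=(r,x)$ and write $r=dr_1$, $x=dx_1$, so that $(r_1,x_1)=1$. Since $e_r^{xj^2}=e_{r_1}^{x_1j^2}$ depends only on $j$ modulo $r_1$, the substitution $j=s+r_1t$ with $0\le s<r_1$ and $0\le t<d$ gives
\[
G(r,x,y)=\Big(\sum_{s=0}^{r_1-1}e_{r_1}^{x_1s^2}\,e_r^{ys}\Big)\Big(\sum_{t=0}^{d-1}e_d^{yt}\Big).
\]
The second factor equals $d$ if $d\mid y$ and $0$ otherwise. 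When $d\nmid y$ this yields $G(r,x,y)=0$; when $d\mid y$, writing $y=dy_1$ and using $e_r^{ys}=e_{r_1}^{y_1s}$ gives $G(r,x,y)=d\,G(r_1,x_1,y_1)$. This is exactly the first two cases of the lemma, so from now on one may assume $(r,x)=1$.

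\emph{Completing the square.} When $r$ is odd, $2x$ is invertible modulo $r$; when $r\equiv 0\pmod4$ and $y$ is even, $x$ is invertible and $y=2y_1$. In either case one produces integers $c$ and $d_0$ with $xj^2+yj\equiv x(j+c)^2+d_0\pmod r$ — using $4^{-1}\equiv\big(\tfrac{r+1}{2}\big)^2\pmod r$ in the odd case — and translating the summation variable gives $G(r,x,y)=e_r^{d_0}\,G(r,x,0)$. Now $G(r,x,0)$ is the classical quadratic Gauss sum; by Gauss's theorem it equals $\epsilon(r)\big(\tfrac{x}{r}\big)\sqrt r$ for $r$ odd and $(1+i)\,\overline{\epsilon(x)}\,\big(\tfrac{r}{x}\big)\sqrt r$ for $r\equiv 0\pmod4$ (in the latter case one reduces to the modulus $2^a$ by the multiplicativity below and computes $G(4,\cdot,0)$ directly). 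I would quote this classical evaluation rather than reprove it. It then remains to check that $d_0$ is congruent modulo $r$ to the explicit exponent written in the lemma, which is a short computation with $2^{-1}$ and $4^{-1}$ modulo $r$.

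\emph{The remaining cases.} If $r\equiv 2\pmod4$ with $y$ even, or $r\equiv 0\pmod4$ with $y$ odd, a direct computation shows $x(j+\tfrac r2)^2+y(j+\tfrac r2)\equiv xj^2+yj+\tfrac r2\pmod r$, so the shift $j\mapsto j+\tfrac r2$ multiplies every summand by $e_r^{r/2}=-1$; pairing $j$ with $j+\tfrac r2$ over a complete residue system forces $G(r,x,y)=0$. For $r\equiv 2\pmod4$ with $y$ odd, I would use the multiplicativity $G(mn,x,y)=G(m,xn,y)\,G(n,xm,y)$ for $(m,n)=1$, proved by the change of variables $j=nj_1+mj_2$ (the mixed term contributes $1$) together with the Chinese remainder theorem; applied with $m=2$, $n=r/2$ it gives $G(2,x\tfrac r2,y)=1+(-1)^{x(r/2)+y}=2$, while $G(r/2,2x,y)$ falls under the odd case already treated. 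Reassembling, using $2\sqrt{r/2}=\sqrt{2r}$ and the compatibility of inverses $2(2x)_{*r/2}\equiv x_{*r}\pmod{r/2}$, produces the stated formula.

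The main difficulty is not conceptual but bookkeeping: matching the root of unity $e_{r/2}^{\,\cdot}$ that emerges from the multiplicative splitting with the precise integer exponent $-\tfrac{x_{*r}y^2}{4}\big(\tfrac{r+2}{2}\big)^3$ of the lemma, and carefully tracking the Jacobi symbols together with the factors $\epsilon$ and the conjugate $\overline{\epsilon}$ in the $4\mid r$ case. The single genuinely deep ingredient, namely the sign of the classical quadratic Gauss sum and its value for moduli divisible by $4$, is classical and would be cited.
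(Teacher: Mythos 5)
Your proposal is correct, but there is essentially nothing in the paper to compare it against: the paper's ``proof'' of Lemma \ref{GaussSumFormulas} is only a pointer to a standard number theory text, so you have in effect supplied the derivation the paper omits. Each of your reduction steps is sound. The substitution $j=s+r_1t$ does give exactly the stated dichotomy: $G(r,x,y)=0$ if $(r,x)\nmid y$ and $G(r,x,y)=(r,x)\,G\left(\frac{r}{(r,x)},\frac{x}{(r,x)},\frac{y}{(r,x)}\right)$ otherwise. Completing the square with $4^{-1}\equiv\left(\frac{r+1}{2}\right)^2\pmod{r}$ (respectively with $y=2y_1$ when $4\mid r$) reproduces precisely the exponents $-\frac{x_{*r}y^2}{4}(r+1)^2$ and $-\frac{x_{*r}y^2}{4}$ of the lemma, reducing to $G(r,x,0)$, whose classical value (Gauss's theorem, including the $4\mid r$ case) you quote -- which is the same external input the paper's citation rests on. The shift $j\mapsto j+\frac r2$ adds $\frac r2$ to the exponent exactly in the two vanishing cases, so those sums are zero. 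For $r\equiv 2\pmod 4$, $y$ odd, the twisted multiplicativity $G(mn,x,y)=G(m,xn,y)\,G(n,xm,y)$ (the cross term is an integer multiple of $mn$) with $m=2$, $n=r/2$ gives $2\,G(r/2,2x,y)$, and the bookkeeping you flag does close: writing $s=r/2$ and $u=\frac{s+1}{2}\equiv 2^{-1}\pmod{s}$, the compatibility $2(2x)_{*s}\equiv x_{*r}\pmod{s}$ yields $(2x)_{*s}u^2\equiv x_{*r}u^3\pmod{s}$, hence the exponent $-(2x)_{*s}y^2u^2$ of $e_s$ coincides, as an exponent of $e_r$, with $-\frac{x_{*r}y^2}{4}\left(\frac{r+2}{2}\right)^3$ modulo $r$, and $2\sqrt{r/2}=\sqrt{2r}$ gives the stated prefactor (as a sanity check, for $r=6$, $x=y=1$ both your assembled expression and the lemma give $3+i\sqrt{3}$). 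So your argument is a correct, essentially self-contained reconstruction whose only cited ingredient is the evaluation of the pure quadratic Gauss sum, whereas the paper defers the entire lemma to the literature.
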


\begin{proof}
See e.g. \cite{Lang} or any other text book in basic number theory.
\end{proof}

Let $R$ be a unitary ring and $f(q;n_1,\dots,n_m)\in R[q^{\pm 1}, n_1,n_2,\ldots,n_m]$.
For each root of unity $\xi$ of order $r$, in quantum topology, the following sum plays an important role:
\[
{\sum_{n_i}}^{\xi, G} f := \sum_{n_i\in N_G}  f(\xi; n_1,\dots, n_m).
\]
Here $G$ stands for either the Lie group $SU(2)$ or the Lie group $SO(3)$ and $N_{SU(2)} :=\{n\in\Z\mid 0\leq n\leq 2r-1\} $ and $N_{SO(3)}:=\{n\in\Z\mid 0\leq n\leq 2r-1, n\text{ odd}\}$. If $G=SO(3)$, $r$ is always assumed to be odd.
\\

Let us explain the meaning of the $N_G$'s. Roughly speaking, the set $N_G$ corresponds to the set of irreducible representations of $U_q(\mathfrak{sl}_2)$ where $q$ is a root of unity of order $r$.

In fact, the quantum invariants (see next Section \ref{QuantumInvariant}) were originally defined by N. Re\-she\-ti\-khin and V. Turaev in \cite{RT1, RT2} by summing over all irreducible representations of the quantum group $U_q(\mathfrak{sl}_2)$ where $q$ is chosen to be a root of unity of order $r$. The quantum group $U_q(\mathfrak{sl}_2)$ is defined similarly as $U_h(\mathfrak{sl}_2)$ (see Section \ref{Uh(sl2)}) where $q$ stands for $\exp(h)$. Similarly as for $U_h(\mathfrak{sl}_2)$, there exists exactly one free finite--dimensional irreducible representation of $U_q(\mathfrak{sl}_2)$ in each dimension. In the case when $q$ is chosen to be a primitive $r$th root of unity, only the representations of dimension $\leq r$ are irreducible (see e.g. \cite[Theorem 2.13]{KM}). Since for every irreducible representation of $U_q(\mathfrak{sl}_2)$ there is a corresponding irreducible representation of the Lie group $SU(2)$, the invariant is sometimes also called the \emph{quantum (WRT) $SU(2)$ invariant}.

Kirby and Melvin showed in \cite[Theorem 8.10]{KM}, that summing over all irreducible representations of $U_q(\mathfrak{sl}_2)$ of \emph{odd} dimension also gives an invariant.
Since the Lie group $SO(3)$ is isomorphic to $SU(2)/\{\pm I\}$, where $I$ stands for the identity matrix, a representation $\rho_{n-1}$ of dimension $n-1$ of $SU(2)$ is a representation of $SO(3)$ if and only if $\rho_{n-1}(-I)$ is the identity map. This is true if and only if $n$ is odd. Therefore it makes sense to call the invariant introduced by Kirby and Melvin the \emph{quantum (WRT) $SO(3)$ invariant}. 

As already mentioned above, if  $q$ is chosen to be a root of unity of order $r$, the irreducible representations of $U_q(\mathfrak{sl}_2)$ are actually of dimension $\leq r$, and not $\leq 2r-1$ which we use as upper bound in $N_G$. But summing up to $2r-1$ makes all calculations much simpler and due to the first symmetry principle of Le \cite[Theorem 2.5]{LeDuke}, summing up to $r$ or up to $2r-1$ does change the invariant only by some constant factor. 
\\

For $\xi$ a root of unity, we define the following  variation  of the Gauss sum:
\[
\gamma^G_b(\xi):= {\sum_{n}}^{\xi,G} q^{b\frac{n^2-1}{4}}.
\]
Notice, that for $G=SO(3)$, $\gamma^{SO(3)}_b(\xi)$ is well--defined in $\Z[q]$ since, for odd $n$, $4\mid n^2-1$.
In the case $G=SU(2)$, the Gauss sum is dependent on a $4$th root of $\xi$ which we denote by $\xi^{\frac{1}{4}}$.
\\

For an arbitrary primitive $r$th root of unity $\xi$, we define the Galois transformation
\begin{eqnarray*}
\varphi: \Q(e_r)&\to& \Q(\xi)\\
e_r&\mapsto&\xi
\end{eqnarray*}
which is a ring isomorphism.

\begin{lemma}\label{GammabNonzero}
Let $\xi$ be a primitive $r$th root of unity and $b\in\Z$. The following holds.
\begin{eqnarray*}
\gamma_b^{SO(3)}(\xi) &=& \varphi(G(r,b,b))\\
\gamma_b^{SU(2)}(\xi) &=& \xi^{\frac{-b}{4}} \varphi(G(r,b,0))+\varphi(G(r,b,b)).
\end{eqnarray*}
In particular, $\gamma_b^{SU(2)}(\xi)$ is zero for $\frac{r}{(r,b)}$ odd, $\frac{b}{(r,b)}\equiv 2\pmod{4}$ and $\gamma_b^{G}(\xi)$ is nonzero in all other cases.
\end{lemma}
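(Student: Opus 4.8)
The plan is in two parts: first derive the two closed formulas for $\gamma_b^G(\xi)$ by a change of summation variable, and then read off the vanishing behaviour from Lemma~\ref{GaussSumFormulas}, using the Galois isomorphism $\varphi$ only to transport (non)vanishing between $\Q(e_r)$ and $\Q(\xi)$.

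For the formulas I would split the summation index by parity. In the $SO(3)$ case $n$ runs over the $r$ odd numbers $n=2j+1$, $0\le j\le r-1$, and since $(n^2-1)/4=j^2+j$ there, the sum is literally $\sum_{j=0}^{r-1}\xi^{bj^2+bj}=\varphi(G(r,b,b))$. In the $SU(2)$ case $n$ runs over all of $\{0,\dots,2r-1\}$; its odd part contributes $\varphi(G(r,b,b))$ exactly as before, while for the even part $n=2j$ one has $b(n^2-1)/4=bj^2-b/4$, so pulling out the factor $\xi^{-b/4}=(\xi^{1/4})^{-b}$ coming from the fixed fourth root, the even part equals $\xi^{-b/4}\sum_{j=0}^{r-1}\xi^{bj^2}=\xi^{-b/4}\varphi(G(r,b,0))$. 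Adding the two contributions gives the second identity.

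For non-vanishing, put $d=(r,b)$, $r'=r/d$, $b'=b/d$, so $(r',b')=1$. By the first clause of Lemma~\ref{GaussSumFormulas}, $G(r,b,0)=d\,G(r',b',0)$ and $G(r,b,b)=d\,G(r',b',b')$, and from the coprime formulas $G(r',b',0)=0$ iff $r'\equiv 2\pmod 4$ while $G(r',b',b')=0$ iff $r'\equiv 0\pmod 4$ (here $r'$ even forces $b'$ odd). These two cases are disjoint, so at most one of the two Gauss sums vanishes, and since $\varphi$ is a ring isomorphism this settles everything except the case $r'$ odd: for $SO(3)$ the modulus $r$, hence $r'$, is odd, so $G(r,b,b)\neq 0$ and $\gamma_b^{SO(3)}(\xi)=\varphi(G(r,b,b))\neq 0$; for $SU(2)$, if $r'\equiv 0$ or $2\pmod 4$ then exactly one of the two terms survives and the sum is nonzero.

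The remaining case --- $r'$ odd in the $SU(2)$ formula, where both Gauss sums are nonzero --- is where genuine cancellation can occur, and this is the heart of the argument. Here I would factor $\varphi(G(r,b,0))$ out using the explicit phase from Lemma~\ref{GaussSumFormulas},
\[
\frac{G(r,b,b)}{G(r,b,0)}=e_{r'}^{-\frac{b'_{*r'}(b')^2}{4}(r'+1)^2},
\]
which for $r'$ odd simplifies to a single $r'$th root of unity with exponent congruent to $-b'\cdot 4_{*r'}\pmod{r'}$; thus $\gamma_b^{SU(2)}(\xi)=\varphi(G(r,b,0))\bigl(\xi^{-b/4}+\varphi(e_{r'}^{-b'\,4_{*r'}})\bigr)$ vanishes iff $\xi^{-b/4}=-\varphi(e_{r'}^{-b'\,4_{*r'}})$. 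This is an identity between roots of unity; expanding it via the fixed fourth root $\xi^{1/4}$ and clearing by $r$ reduces it to a congruence $b'k\equiv 2\pmod 4$ in which $k$ is forced to be odd, hence equivalent to $b'\equiv 2\pmod 4$. The delicate step of the whole proof is precisely this last bookkeeping --- tracking the chosen fourth root $\xi^{1/4}$ as it passes through the quadratic Gauss sum phase $e_{r'}^{-x_{*r'}y^2(r'+1)^2/4}$ and then reducing the resulting root-of-unity identity modulo $4$; everything before it is routine once Lemma~\ref{GaussSumFormulas} is in hand.
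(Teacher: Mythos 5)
Your proposal is correct and follows essentially the same route as the paper: split the sum over $N_G$ by parity to get $\varphi(G(r,b,b))+\xi^{-b/4}\varphi(G(r,b,0))$, reduce by $c=(r,b)$ via Lemma \ref{GaussSumFormulas}, observe the two coprime Gauss sums vanish in disjoint cases $r'\equiv 0,2\pmod 4$, and in the remaining odd-$r'$ case reduce the cancellation to the mod-$4$ condition $b'\equiv 2\pmod 4$ (your ratio $e_{r'}^{-b'4_{*r'}}$ is just the paper's phase $e_{r'}^{-\frac{b'}{4}(r'+1)^2}$ rewritten). The only cosmetic difference is that you work directly at $\xi$ and use $\varphi$ only to transport nonvanishing, whereas the paper computes at $e_r$ with $e_r^{1/4}=e_{4r}$ and Galois-transports the whole statement; note that in both arguments the final step tacitly uses that the fixed fourth root satisfies $(\xi^{1/4})^r=\pm i$, i.e.\ the fourth root compatible with the paper's convention, since otherwise the vanishing criterion would change.
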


\begin{proof}
It is enough to prove the claim at the root of unity $e_r$ and then apply the Galois transformation to get the general result.

For $G=SO(3)$, we have $r$ odd and 
\[
\gamma_b^{SO(3)}(e_r)
=\sum_{\substack{n=0\\n \text{ odd}}}^{2r-1}e_r^{b \frac{n^2-1}{4}}
=\sum_{n=0}^{r-1} e_r^{b(n^2+n)}=G(r,b,b)
\]
which is nonzero for all $b$ and odd $r$.

For $G=SU(2)$, we split the sum into the even and the odd part and get
\begin{eqnarray*}
\gamma_b^{SU(2)}(e_r)
&=&\sum_{n=0}^{2r-1}e_r^{b\frac{n^2-1}{4}}
=\sum_{n=0}^{r-1} e_r^{b(n^2+n)} + e_r^{-\frac{b}{4}}\sum_{n=0}^{r-1}e_r^{bn^2}
=G(r,b,b)+e_r^{-\frac{b}{4}}G(r,b,0)\\
&=&c\cdot\begin{cases} 
    \epsilon(r')\left(\frac{b'}{r'}\right)(e_{r'}^{-\frac{b'}{4}}+e_{r'}^{-\frac{b'}{4}(r'+1)^2})
				&\text{if } r' \text{ odd}\\
    G(r',b',b')                   &\text{if } r' \equiv 2\pmod{4} \\
    e_{r'}^{-\frac{b'}{4}}G(r',b',0) &\text{if } r' \equiv 0\pmod{4}
   \end{cases}
\end{eqnarray*}
where $c=(r,b)$ and $r'=\frac{r}{c}$ and $b'=\frac{b}{c}$. Therefore, $\gamma_b^{SU(2)}(e_r)$ can only be zero if $r'$ odd and $e_{r'}^{-\frac{b'}{4}}+e_{r'}^{-\frac{b'}{4}(r'+1)^2}$ equal zero, i.e. $e_{r'}^{-\frac{b'}{4}r'(r'+2)}=-1$. Since $e_{r'}^{\frac{r'}{4}}$ is a primitive $4$th root of unity, this is true if and only if $b'\equiv 2\pmod{4}$.
\end{proof}

\begin{example}
For $b=1$, we have
\be\label{Gamma1SO3}
 \gamma^{SO(3)}_1(e_r)=G(r,1,1)=\epsilon(r)\sqrt{r} e_r^{-4_*}
\ee
where $4\cdot 4_*\equiv 1\pmod{r}$. Further, for the $SU(2)$ case, fixing the $4$th root of $e_r$ as $e_r^{\frac{1}{4}}:=e_{4r}$, we get
\be\label{Gamma1SU2}
 \gamma^{SU(2)}_1(e_r)=e_{4r}^{-1}G(r,1,0)+G(r,1,1)=(1+i)\sqrt{r}e_{4r}^{-1}.
\ee
\end{example}

\section{Definition of the quantum (WRT) invariant}\label{QuantumInvariant}
Suppose the components of $L'$ are colored by fixed integers $j_1,\dots,j_l$. Let
\be\label{F}
 F^G_{L\sqcup L'}(\xi):= {\sum_{n_i}}^{\xi,G}\;
\left \{  J_{L\sqcup L'}(n_1,\dots,n_m, j_1,\dots,j_l)\prod_{i=1}^m [n_i]\right \}.
\ee

\begin{example}\label{TauNonZero}
An important special case is when $L=U^b$, the unknot with framing $b \neq 0$, and $L'=\emptyset$. Then $J_{U^b}(n)=q^{b\frac{n^2-1}{4}}[n]$. Applying Lemma \ref{GaussSumFormulas} we get \begin{equation}\label{eq:F_U_b}
F^G_{U^b}(\xi)= {\sum_{n}}^{\xi,G}\; q^{b\frac{n^2-1}{4}}[n]^2=2 \gamma_b^{G}(\xi) \ev_{\xi}\left(\frac{(1-q^{-b_{*r}})^{\chi(c)}}{(1-q)(1-q^{-1})}\right)
\end{equation}
where $\chi(c)=1$ if $c=1$ and zero otherwise. Another (shorter) way to calculate $F^{G}_{U^b}(\xi)$ uses the Laplace transform method introduced in Section \ref{laplace}, see Equation \eqref{1122}. From Formula (\ref{eq:F_U_b}) and Lemma \ref{GammabNonzero} we can see that $F^G_{U_b}(\xi)$ is non--zero except if $G=SU(2)$, $\frac{r}{(r,b)}$ odd, and $\frac{b}{(r,b)} \equiv 2\pmod{4}$.
\end{example}

For a better readability, we omit from now on the index $G$ in all notations. If the dependency on $G$ is not indicated, the notation should be understood in generality, in the sense that $G$ can be $SU(2)$ or $SO(3)$. Otherwise the Lie group will be indicated as a superscript. 
\\

Let $M=S^3(L)$ and $\sigma_+ $ (respectively $\sigma_-$) be the number of positive (respectively negative) eigenvalues of the linking matrix of $L$. Further, let $L'$ be an ordered $l$--component framed link colored by fixed integers $j_1, j_2,\ldots,j_l$. 
We define 
\begin{equation}\label{eq:DefinitionOfWRT}
\tau_{M,L'}(\xi) :=
\frac{F_{L\sqcup L'}(\xi)}
{(F_{U^{+1}}(\xi))^{\sigma_+}\,(F_{U^{-1}}(\xi))^{\sigma_-} }
\end{equation}
where $U^{\pm 1}$ is the unknot with framing $\pm 1$. 

\begin{theorem}[Reshetikhin--Turaev]\label{InvarianceOfWRT}
Assume the order of the root of unity $\xi$ is odd if $G=SO(3)$ and arbitrary otherwise. Then $\tau_{M,L'}(\xi)$ is invariant under orientation preserving homeomorphisms of $M$ and ambient isotopies of $L'$ and is called the \emph{quantum (WRT) invariant} of the pair $(M,L')$. 
\end{theorem}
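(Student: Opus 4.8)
The plan is to prove invariance under the Fenn--Rourke move, since by the Fenn--Rourke theorem any two surgery presentations of the same pair $(M,L')$ are connected by a finite sequence of such moves (and their inverses), together with reordering of components which clearly leaves $\tau_{M,L'}$ unchanged. So fix a surgery link $L$ together with the fixed link $L'$, and suppose that a local picture $T$ is replaced by $T_\pm$ as in Figure \ref{fig:FennRourke}, introducing a new $\pm1$--framed unknot $K$ that encircles $m$ strands, while the framings of the old components $J$ change to $j_\pm = j \pm \lk(K,J_\pm)^2$. I want to show the two expressions \eqref{eq:DefinitionOfWRT} computed from the two presentations agree at every admissible root of unity $\xi$.

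First I would compare the numerators. On the side with $K$ present, colour $K$ by $n_0$ and apply the definition \eqref{F}: the sum over $n_0$ of $J_{L\sqcup L' \sqcup K}(\dots,n_0)\,[n_0]$ is, after pushing the $\pm1$--framing of $K$ through Lemma \ref{framing} and using the fact that $K$ bounds a disc meeting the $m$ strands, a Gauss--sum-type expression in $n_0$ whose effect is exactly to \emph{twist} the $m$ strands passing through $K$ by $\mp1$ (this is the standard Kirby/handle-slide computation: summing a framed unknot over all colours implements a twist on the strands it bounds, up to the scalar $F_{U^{\pm1}}(\xi)$). Concretely one uses that $J$ of a link with a strand passing through a $\pm1$--framed unknot, summed against $[n_0]$, reproduces $F_{U^{\pm1}}(\xi)$ times $J$ of the link with that bunch of strands given a full $\mp1$ twist, and the framing shift $j\mapsto j_\pm$ is precisely the correction that makes the twisted diagram isotopic to the original one on the $T_\pm$ side. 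Hence
\[
F_{L\sqcup L'\sqcup K^{\pm1}}(\xi) \;=\; F_{U^{\pm1}}(\xi)\,\cdot\, F_{L_{\mp}\sqcup L'}(\xi),
\]
where $L_\mp$ is the link on the $T$ side (with the original framings). The key input is that this identity is forced at each $\xi$ by the colored-Jones computation; I would cite the relevant twist/handle-slide lemma (e.g. from \cite{Li62}, \cite{KM}, or \cite{BeBuLe}) rather than redo it.

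Next I would track the signature. The linking matrix on the $K$--side is obtained from that on the $T$--side by adding a $\pm1$ block corresponding to $K$ together with the linking entries $\lk(K,J)$; a standard congruence (diagonalizing out the $\pm1$ entry) shows $\sigma_\pm$ increases by exactly $1$ on the $K$--side for the positive (resp. negative) move, and $\sigma_\mp$ is unchanged. Therefore the denominator $(F_{U^{+1}})^{\sigma_+}(F_{U^{-1}})^{\sigma_-}$ picks up exactly one extra factor $F_{U^{\pm1}}(\xi)$, which cancels the extra factor in the numerator identity above. This uses that $F_{U^{\pm1}}(\xi)\neq 0$ at the admissible roots of unity: for $G=SO(3)$ with $r$ odd, and for $G=SU(2)$, this is Example \ref{TauNonZero} together with Lemma \ref{GammabNonzero} (here $b=\pm1$, so $\tfrac{b}{(r,b)}=\pm1\not\equiv2\pmod4$, so the vanishing exception never occurs). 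Combining the two steps gives $\tau_{M,L'}(\xi)$ equal on both presentations, and since the Fenn--Rourke move generates the equivalence, invariance follows.

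The main obstacle is the numerator identity: making precise, at a fixed root of unity and uniformly in the group $G$ (i.e. over $N_{SU(2)}$ or $N_{SO(3)}$), that summing the new unknot $K$ against $[n_0]$ implements a $\mp1$ twist on the encircled strands with the correct scalar. This is where the Gauss-sum machinery of Section \ref{GaussSums}, the framing behaviour of Lemma \ref{framing}, and the strong integrality/symmetry principles of \cite{LeDuke} enter; the subtlety is that the truncation of the color sum (up to $2r-1$ rather than $r$) and the $SU(2)$ vs.\ $SO(3)$ parity constraints must be handled so that the twist identity still holds after restricting to $N_G$. Everything else — the signature bookkeeping and the cancellation in \eqref{eq:DefinitionOfWRT} — is routine linear algebra once the twist formula is in hand.
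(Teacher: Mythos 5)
The paper offers no argument for this theorem at all: its ``proof'' is the single line ``See \cite{KM} and \cite{RT2,Tu}.'' Your proposal instead reconstructs the standard argument that those references carry out, and its architecture is sound: reduce to invariance under a single Fenn--Rourke move, establish the numerator identity $F_{L\sqcup L'\sqcup K^{\pm1}}(\xi)=F_{U^{\pm1}}(\xi)\,F_{L\sqcup L'}(\xi)$ via the encircling/handle-slide computation, check that the linking matrix after the move is congruent to the old one direct sum $(\pm1)$ so that exactly one extra factor $F_{U^{\pm1}}(\xi)$ appears in the denominator of \eqref{eq:DefinitionOfWRT}, and verify $F_{U^{\pm1}}(\xi)\neq0$ from Example \ref{TauNonZero} and Lemma \ref{GammabNonzero} (your parity check $b=\pm1\not\equiv2\pmod 4$ is the right one). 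What you gain over the paper is an explicit reduction of the theorem to one clearly identified lemma plus routine bookkeeping; what you should be explicit about is that this lemma --- that coloring the new $\pm1$--framed unknot with $\sum_{n\in N_G}[n]V_{n-1}$ and summing implements a full $\mp1$ twist on the encircled strands times the scalar $F_{U^{\pm1}}(\xi)$ --- is precisely where all the content lives, and that it is \emph{not} uniform in $G$ for free: for $SO(3)$ one sums only odd colors at odd order, and the sliding/twist identity for this restricted sum is exactly the Kirby--Melvin refinement (their Theorem 8.10 and the symmetry principles of \cite{KM,LeDuke}), not a formal consequence of the $SU(2)$ case. Since you cite these sources for that step, your proof ultimately rests on the same literature the paper cites wholesale, which is acceptable here; just note also that the congruence of linking matrices uses that the full twist changes the pairwise linking numbers of the encircled strands by $\pm\lk(K,J_i)\lk(K,J_k)$, which is what makes the blow-down diagonalization come out exactly as $A\oplus(\pm1)$.
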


\begin{proof}
 See \cite{KM} and \cite{RT2,Tu}.
\end{proof}

\begin{example}\label{WRTOfLensSpace}
The quantum invariant of the lens space $L(b,1)$,
obtained by surgery along $U^b$,  is
\begin{equation} \label{eq:WRTOfLensSpace}
\tau_{L(b,1)} (\xi)= \frac{ F_{U^b}(\xi)}{F_{U^{\sn(b)}}(\xi)}=
\frac{\gamma_b(\xi)}{\gamma_{\sn(b)}(\xi)}\cdot
\frac{(1-\xi^{-b_{*r}})^{\chi(c)}}{1-\xi^{-\sn(b)}}
\end{equation}
where $\sn(b)$ is the sign of the integer $b$. Since $S^3(U^1)=S^3$, we have $\tau_{S^3} (\xi)=1$.
\end{example}

\begin{theorem}[Reshetikhin--Turaev, Kirby--Melvin]\label{PropertiesOfWRTInvariant}
 The quantum invariant satisfies the following properties:
\begin{enumerate}
 \item Multiplicativity: $\tau_{M \# N}(\xi)=\tau_{M}(\xi)\tau_{N}(\xi)$.
 \item Orientation: $\tau_{-M}(\xi)=\overline{\tau_{M}(\xi)}$.
 \item Normalization: $\tau_{S^3}(\xi)=1$.
\end{enumerate}
Here, $M\#N$ is the connected sum of the two manifolds $M$ and $N$ and $-M$ denotes $M$ with orientation reversed. By $\overline{z}$ for $z\in \C$, we denote the complex conjugate of $z$.
\end{theorem}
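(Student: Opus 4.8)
The plan is to verify each of the three properties directly from the definition \eqref{eq:DefinitionOfWRT} of $\tau_{M,L'}(\xi)$, reducing everything to standard facts about the colored Jones polynomial and surgery presentations. Since the statement is attributed to Reshetikhin--Turaev and Kirby--Melvin, I expect the proof to be essentially bookkeeping; the only genuinely nontrivial ingredient is the invariance Theorem \ref{InvarianceOfWRT}, which guarantees that the quantities below are well-defined independently of the chosen surgery link.

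\textbf{Normalization.} This is immediate: $S^3 = S^3(U^1)$, so taking $L = U^1$ (with linking matrix the $1\times 1$ matrix $(1)$, hence $\sigma_+ = 1$, $\sigma_- = 0$ and $L' = \emptyset$), Definition \eqref{eq:DefinitionOfWRT} gives $\tau_{S^3}(\xi) = F_{U^1}(\xi)/F_{U^{+1}}(\xi) = 1$. This was already noted in Example \ref{WRTOfLensSpace}.

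\textbf{Multiplicativity.} If $M = S^3(L)$ and $N = S^3(L'')$ with $L, L''$ disjoint surgery links placed in disjoint balls of $S^3$, then $M \# N = S^3(L \sqcup L'')$, since surgery performed inside disjoint balls realizes exactly the connected sum of the results. The colored Jones polynomial is multiplicative over split unions, $J_{L \sqcup L'' \sqcup L'}(\cdots) = J_{L}(\cdots)\, J_{L'' \sqcup L'}(\cdots)$ (one takes the tensor product of bottom tangles and the universal invariant $J_T$ is multiplicative under $\otimes$), so after summing over colors the Gauss-sum-type quantity $F$ factors: $F_{(L \sqcup L'') \sqcup L'}(\xi) = F_{L}(\xi)\cdot F_{L'' \sqcup L'}(\xi)$. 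The linking matrix of $L \sqcup L''$ is block diagonal, so its signature splits as $\sigma_\pm(L \sqcup L'') = \sigma_\pm(L) + \sigma_\pm(L'')$, whence the denominator in \eqref{eq:DefinitionOfWRT} also factors as the product of the two denominators. Dividing gives $\tau_{M \# N}(\xi) = \tau_M(\xi)\,\tau_N(\xi)$. (If $L'$ meets both pieces one argues componentwise; the only subtlety is placing the summands of $L'$ in the correct ball, which follows from the hypothesis that $L'$ is given as a link in $M$ respectively $N$.)

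\textbf{Orientation reversal.} If $M = S^3(L)$ then $-M = S^3(\bar L)$, where $\bar L$ is the mirror image of $L$ with all framings negated; this negates the linking matrix, interchanging $\sigma_+ \leftrightarrow \sigma_-$. Mirroring a link diagram replaces each $R$-matrix by $R^{-1}$ and conjugates crossings, which at the level of the colored Jones polynomial has the effect $J_{\bar L}(\mathbf{n}, \mathbf{j})\big|_{q} = J_L(\mathbf{n}, \mathbf{j})\big|_{q^{-1}}$; equivalently, evaluating at $\xi$ on the mirror equals the complex conjugate of the evaluation at $\xi$ on the original, because $\overline{\xi} = \xi^{-1}$ and the colored Jones polynomial at odd colors lies in $\Z[q^{\pm 1}]$ (Lemma of Le, \cite{LeDuke}), on which complex conjugation acts by $q \mapsto q^{-1}$. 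Hence $F_{\bar L \sqcup \bar L'}(\xi) = \overline{F_{L \sqcup L'}(\xi)}$, and likewise $F_{U^{-1}}(\xi) = \overline{F_{U^{+1}}(\xi)}$. Feeding these into \eqref{eq:DefinitionOfWRT}, and using that conjugation is a ring homomorphism together with the swap $\sigma_+ \leftrightarrow \sigma_-$, gives
\[
\tau_{-M}(\xi) = \frac{\overline{F_{L\sqcup L'}(\xi)}}{(\overline{F_{U^{+1}}(\xi)})^{\sigma_-}(\overline{F_{U^{-1}}(\xi)})^{\sigma_+}} = \overline{\left(\frac{F_{L\sqcup L'}(\xi)}{(F_{U^{+1}}(\xi))^{\sigma_+}(F_{U^{-1}}(\xi))^{\sigma_-}}\right)} = \overline{\tau_M(\xi)}.
\]

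\textbf{Main obstacle.} The routine steps are the multiplicativity of $J$ under split union and the behaviour under mirroring; the one point that needs care is that the denominators $F_{U^{\pm 1}}(\xi)$ are nonzero so that the ratios make sense --- this is exactly Example \ref{TauNonZero} (for $b = \pm 1$ one has $(r,b) = 1$, so $F_{U^{\pm 1}}(\xi) \neq 0$ for all admissible $\xi$). Beyond that, the deepest input is simply the cited invariance Theorem \ref{InvarianceOfWRT}, which one invokes to know that $\tau_{M,L'}$ depends only on $(M, L')$ and not on the surgery presentation, so that the particular presentations chosen above for $S^3$, $M \# N$ and $-M$ legitimately compute the invariant.
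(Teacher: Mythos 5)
Your argument is correct in substance, but it takes a different route from the thesis: the paper proves only the normalization property (via Example \ref{WRTOfLensSpace}, exactly as you do) and simply cites \cite{KM} and \cite{RT2,Tu} for multiplicativity and orientation, whereas you reconstruct those two properties directly from Definition \eqref{eq:DefinitionOfWRT}. Your reduction is the standard one and it works: connected sum corresponds to a split union of surgery links in disjoint balls, $J$ (hence $F$) is multiplicative over split unions because the universal invariant of a tensor product of bottom tangles is the tensor product and quantum traces multiply, and the signatures of a block-diagonal linking matrix add; for orientation reversal, $-S^3(L)=S^3(\bar L)$ with the mirror link, and mirroring acts on the colored Jones polynomial by $q\mapsto q^{-1}$, which under evaluation at a root of unity is complex conjugation. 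What your approach buys is a self-contained verification; what it costs is that the ingredients you invoke (multiplicativity under split union, the mirror symmetry $J_{\bar L}(q)=J_L(q^{-1})$) are themselves the content of the cited sources, so the logical dependence is comparable. One point to tighten: in the $SU(2)$ case the sum in \eqref{F} runs over \emph{all} colors, so the odd-color integrality statement you quote from \cite{LeDuke} does not apply to every summand; you need the general strong integrality $J_L\in q^{p/4}\Z[q^{\pm1}]$ together with the fact that the chosen fourth root $\xi^{1/4}$ is itself a root of unity, so $\overline{\xi^{1/4}}=\xi^{-1/4}$ and the conjugation argument goes through with a consistent choice of fourth root on both sides (the paper's Section \ref{RenormalizedWRTInvariants} shows this dependence on $\xi^{1/4}$ is a real issue for the $SU(2)$ invariant). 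With that caveat addressed, your proof is sound.
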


\begin{proof}
We have proven the normalization property in Example \ref{WRTOfLensSpace}. For the multiplicativity and orientation property see \cite{KM} and \cite{RT2,Tu}.
\end{proof}

\subsection{Renormalization of quantum (WRT) invariant}\label{RenormalizedWRTInvariants}
Suppose that $M$ is a rational homology 3--sphere.
There is a unique decomposition $H_1(M;\Z)=\bigoplus_i \Z/b_i\Z$, where each $b_i$ is a prime power. We put $b=\prod b_i =|H_1(M;\Z)|$, the order of the first homology group $H_1(M;\Z)$.
\\

For the rest of this thesis, we allow $b$ to be any number in the $SO(3)$ case but assume $b$ to be \emph{odd} in the $SU(2)$ case. 
\\

Let the absolute value of an integer $x$ be denoted by $|x|$. 
We renormalize the quantum invariant of the pair $(M,L')$ as follows:
\be\label{eq:DefinitionRenormalizedWRT}
\tau'_{M,L'}(\xi):=\frac{\tau_{M,L'}(\xi)}{\prod\limits_i \tau_{L(|b_i|,1)}(\xi)}.
\ee
Notice that due to Example \ref{TauNonZero}, $\tau^{SO(3)}_{L(b,1)}(\xi)$ is always nonzero and $\tau^{SU(2)}_{L(b,1)}(\xi)$ is nonzero for $b$ odd  and therefore the renormalization is well--defined. 
\\

Let us focus on the special case when the linking matrix of
$L$ is diagonal, with $b_1, b_2, \dots, b_m$ on the diagonal.
Assume each $b_i$ is a power of a prime up to sign.
Then $H_1(M,\Z) = \oplus_{i=1}^m \Z/|b_i|\Z$, and
\[
\sigma_+ = {\rm card}\, \{ i\mid b_i >0\}, \quad \sigma_- =
{\rm card}\, \{ i \mid b_i < 0\}. 
\]
Thus from Definitions \eqref{eq:DefinitionOfWRT} and \eqref{eq:DefinitionRenormalizedWRT} and Equation \eqref{eq:WRTOfLensSpace} we have
\begin{equation}
 \tau'_{M,L'}(\xi) = \left( \prod_{i=1}^m  \tau'_{L(b_i,1)}(\xi)
 \right)\,
\frac{F_{L\sqcup L'}(\xi)}
{\prod_{i=1}^m F_{U^{b_i}}(\xi) }
\,  ,
\label{0077}
\end{equation}
with
\[
\tau'_{L(b_i,1)}(\xi)= \frac{\tau_{L(b_i,1)}(\xi)}{\tau_{L(|b_i|,1)}(\xi)}\, . 
\]

For the renormalized quantum invariant, multiplicativity and normalization follows from Theorem \ref{PropertiesOfWRTInvariant}, but reversing the orientation of the manifold does not induce complex conjugacy for the renormalized quantum invariant. For example $L(-b,1)$ is homeomorphic to $L(b,1)$ with orientation reversed. But for $b>0$,
$\tau'_{L(b,1)}(\xi)=1$ but $\tau'_{L(-b,1)}(\xi)\not=1$ (see Section \ref{LensSpaces} for the exact calculation). To achieve a better behavior under orientation reversing, we could instead renormalize the invariant as
\[
\tilde{\tau}_{M,L'}(\xi):=\frac{\tau_{M,L'}(\xi)}{\prod\limits_{i=1}^m\sqrt{\tau_{L(b_i,1)\#L(-b_i,1)}(\xi)}}.
\]
The disadvantage of this normalization is that to be allowed to take the square root of 
\linebreak
$\tau_{L(b_i,1)\# L(-b_i,1)}(\xi)$ we need to fix a $4$th root of $\xi$. We have this in the $SU(2)$ case but not in the $SO(3)$ case.

\subsection{Connection between $SU(2)$ and $SO(3)$ invariant}\label{SU2SO3Connection}

\begin{theorem}[Kirby--Melvin]
For $\ord(\xi)$ odd, we have
\[
\tau^{SU(2)}_{M,L'}(\xi)=\tau^{SO(3)}_{M,L'}(\xi)\cdot \tau^{SU(2)}_{M,L'}(e_3).
\]
Therefore, for $\ord(\xi)$ odd, the $SO(3)$ invariant is sometimes stronger than the $SU(2)$ invariant since $\tau^{SU(2)}_{M,L'}(e_3)$ is sometimes zero while the $SO(3)$ invariant is not. 
\end{theorem}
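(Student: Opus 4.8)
The plan is to reduce everything to the level of the surgery link and the sums $F^G_{L\sqcup L'}(\xi)$, and then to use the explicit evaluation of the Gauss-sum variations $\gamma^G_b(\xi)$ from Lemma \ref{GammabNonzero}. First I would fix a surgery presentation $M=S^3(L)$ with $L$ an $m$-component framed link with linking matrix of signature $(\sigma_+,\sigma_-)$, and recall the definitions
\[
\tau^G_{M,L'}(\xi)=\frac{F^G_{L\sqcup L'}(\xi)}{\bigl(F^G_{U^{+1}}(\xi)\bigr)^{\sigma_+}\bigl(F^G_{U^{-1}}(\xi)\bigr)^{\sigma_-}}.
\]
Since $\ord(\xi)=r$ is odd, both sides make sense in the $SO(3)$ theory as well as in the $SU(2)$ theory, and the key point is that the \emph{numerators} $F^{SU(2)}_{L\sqcup L'}(\xi)$ and $F^{SO(3)}_{L\sqcup L'}(\xi)$ differ only by the contribution of the even colors, which for odd $r$ can be related to a global constant depending only on the linking matrix.

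The main step is a lemma of the following shape: for $r$ odd and any framed link $L\sqcup L'$ with $L'$ carrying odd colors,
\[
F^{SU(2)}_{L\sqcup L'}(\xi)=F^{SO(3)}_{L\sqcup L'}(\xi)\cdot \lambda(\xi)^{\,m},
\]
where $\lambda(\xi)$ is a universal scalar (essentially $F^{SU(2)}_{U^{b}}(\xi)/F^{SO(3)}_{U^{b}}(\xi)$ made independent of $b$ via Lemma \ref{GammabNonzero}), or more precisely one tracks a factor attached to each surgery component. To prove this one uses the first symmetry principle of Le together with the change-of-variable $n\mapsto n+r$ on $N_{SU(2)}$, which for odd $r$ interchanges the even and odd residues; this is exactly the manipulation already performed in the proof of Lemma \ref{GammabNonzero} where $\sum_{n=0}^{2r-1}$ splits as $\sum_{n\text{ odd}}+\sum_{n\text{ even}}$ and the even part is rewritten as a shifted Gauss sum. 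Carrying this out in the presence of the extra link $L\sqcup L'$ (rather than just $U^b$) and using that the colored Jones polynomial $J_{L\sqcup L'}(\bn,\bj)\prod[n_i]$ has the appropriate symmetry under $n_i\mapsto r-n_i$ (or $n_i\mapsto n_i+r$) for odd $r$ gives the factorization.

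Once this multiplicativity is established, the computation of the ratio is bookkeeping: in
\[
\tau^{SU(2)}_{M,L'}(\xi)=\frac{F^{SU(2)}_{L\sqcup L'}(\xi)}{\bigl(F^{SU(2)}_{U^{+1}}(\xi)\bigr)^{\sigma_+}\bigl(F^{SU(2)}_{U^{-1}}(\xi)\bigr)^{\sigma_-}}
\]
one substitutes the factorization in numerator and denominator; the powers of $\lambda(\xi)$ combine to $\lambda(\xi)^{m-\sigma_+-\sigma_-}$, and since $\sigma_++\sigma_-=m$ for a rational homology sphere (nondegenerate linking matrix) these cancel, leaving $\tau^{SO(3)}_{M,L'}(\xi)$ times a leftover global constant. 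That leftover constant is, by the symmetry/split computation specialized appropriately, independent of $(M,L')$, so evaluating both sides at the particular manifold $S^3$ with $\xi$ replaced by $e_3$ — where $\tau^{SO(3)}_{S^3}(e_3)=1$ by the normalization in Theorem \ref{PropertiesOfWRTInvariant} — identifies the constant as $\tau^{SU(2)}_{M,L'}(e_3)$ evaluated on the relevant piece; more cleanly, one shows directly that the leftover constant equals $\tau^{SU(2)}_{M,L'}(e_3)$ by running the same argument with $r=3$, where $N_{SO(3)}=\{1\}$ collapses the $SO(3)$ sum. The last remark of the statement is then immediate: when $\tau^{SU(2)}_{M,L'}(e_3)=0$ the $SU(2)$ invariant at $\xi$ vanishes regardless of the value of $\tau^{SO(3)}_{M,L'}(\xi)$, which need not vanish.

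The hard part will be pinning down the correct universal scalar and the precise symmetry of $J_{L\sqcup L'}(\bn,\bj)\prod[n_i]$ under shifting colors by $r$ for odd $r$ — i.e.\ making the heuristic ``even part $=$ shifted odd part up to a global Gauss-sum factor'' into an honest identity valid uniformly in the link, so that the factor really is independent of $L$ and of the colors $\bj$. This is where Le's symmetry principles (the first and, for the color shift, the second symmetry principle) and the explicit Gauss-sum evaluations of Lemma \ref{GaussSumFormulas} do the essential work; everything after that is cancellation of the $\sigma_\pm$ normalization factors and specialization at $e_3$.
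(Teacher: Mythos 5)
The paper itself does not prove this statement -- it simply cites \cite[Corollary 8.9]{KM} -- so the only question is whether your sketch would stand on its own, and as written it has a genuine gap at its central step. The proposed lemma $F^{SU(2)}_{L\sqcup L'}(\xi)=F^{SO(3)}_{L\sqcup L'}(\xi)\cdot\lambda(\xi)^m$ with $\lambda(\xi)$ a universal scalar is false, and the paper's own formulas show it: already for $L=U^b$ one has $F^{SU(2)}_{U^b}(\xi)/F^{SO(3)}_{U^b}(\xi)=\gamma^{SU(2)}_b(\xi)/\gamma^{SO(3)}_b(\xi)$, and Lemmas \ref{GaussSumFormulas} and \ref{GammabNonzero} give (for $(b,r)=1$, $r$ odd, $\xi=e_r$ with the standard $4$th root) the value $1+i^{\,b(r+2)}$, which depends on the framing $b$ modulo $4$ and even vanishes when $b(r+2)\equiv 2 \pmod 4$. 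The same phenomenon occurs for a general surgery component: shifting a color $n_i\mapsto n_i+r$ (your change of variables interchanging parities) produces, via $q^{\,l_{ii}((n_i+r)^2-n_i^2)/4}$ and the linking terms, a fourth root of unity depending on $l_{ii}$, on $l_{ij}$ for the \emph{other} shifted components, and on $\ord(\xi)$ mod $4$; so the even-color contribution does not peel off as a per-component constant but assembles into a Gauss-type sum over shift vectors $\ve\in\{0,1\}^m$ weighted by the linking matrix mod $4$. Identifying that sum (after dividing by the $U^{\pm1}$ normalizations) with $\tau^{SU(2)}_{M,L'}(e_3)$ is precisely the substantive content of Kirby--Melvin's argument, not bookkeeping. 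Your claim that the leftover factor is ``independent of $(M,L')$'' cannot be right for a more basic reason: the theorem's own last sentence uses that $\tau^{SU(2)}_{M,L'}(e_3)$ is sometimes zero and sometimes not, so it is certainly not a universal constant.

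Two smaller corrections: at $r=3$ one has $N_{SO(3)}=\{1,3,5\}$, not $\{1\}$; the colors $3$ and $5$ do drop out ($[3]=0$, and $n=5$ contributes like $n=1$), and this is how one sees $\tau^{SO(3)}_{M,L'}(e_3)=1$, but that needs its own short argument rather than a collapse of the index set. Also, specializing the desired identity at $\xi=e_3$ only yields a tautology, so it cannot be used to \emph{identify} the leftover factor; the identification must come from matching the $\ve$-sum described above with the explicit $r=3$ sum over colors $\{1,2\}$, where $[1]=[2]=1$ and the colored Jones values are determined by linking data. With the cross terms tracked correctly (Le's symmetry principles do supply the needed identities, with the colors of $L'$ odd ensuring those components contribute trivially to the parity factors), your outline can be completed, but the uniform-factor shortcut as stated would fail.
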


\begin{proof}
See \cite[Corollary 8.9]{KM}. 
\end{proof}

\begin{remark}
 In \cite{Le3}, T. Le proved a similar result for arbitrary semi--simple Lie algebras $\fg$. 
\end{remark}

The result for the renormalized quantum invariant follows immediately:

\begin{corollary}
For $\ord(\xi)$ odd, we have
\[
\tau'^{SU(2)}_{M,L'}(\xi)=\tau'^{SO(3)}_{M,L'}(\xi)\cdot \tau'^{SU(2)}_{M,L'}(e_3).
\]
\end{corollary}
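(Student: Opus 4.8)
The plan is to deduce the corollary directly from the preceding theorem of Kirby--Melvin together with the definition of the renormalized invariant in Equation~\eqref{eq:DefinitionRenormalizedWRT}. Recall that for a rational homology $3$--sphere $M$ we have the decomposition $H_1(M;\Z)=\bigoplus_i \Z/b_i\Z$ into prime--power cyclic summands, with $b=\prod_i b_i$, and that
\[
\tau'_{M,L'}(\xi)=\frac{\tau_{M,L'}(\xi)}{\prod_i \tau_{L(|b_i|,1)}(\xi)}.
\]
The key observation is that the lens space $L(|b_i|,1)$ is itself a rational homology $3$--sphere (with empty link inside), so the Kirby--Melvin theorem applies to it as well: for $\ord(\xi)$ odd,
\[
\tau^{SU(2)}_{L(|b_i|,1)}(\xi)=\tau^{SO(3)}_{L(|b_i|,1)}(\xi)\cdot \tau^{SU(2)}_{L(|b_i|,1)}(e_3).
\]

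First I would write out $\tau'^{SU(2)}_{M,L'}(\xi)$ as the quotient of $\tau^{SU(2)}_{M,L'}(\xi)$ by $\prod_i \tau^{SU(2)}_{L(|b_i|,1)}(\xi)$ and substitute the Kirby--Melvin identity in both the numerator and in each factor of the denominator. The factors $\tau^{SU(2)}_{M,L'}(e_3)$ and $\tau^{SU(2)}_{L(|b_i|,1)}(e_3)$ that appear reassemble, using multiplicativity of the quantum invariant (Theorem~\ref{PropertiesOfWRTInvariant}) and the fact that $\tau^{SU(2)}_{M,L'}(e_3)/\prod_i \tau^{SU(2)}_{L(|b_i|,1)}(e_3)=\tau'^{SU(2)}_{M,L'}(e_3)$, into a single factor $\tau'^{SU(2)}_{M,L'}(e_3)$. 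What remains is exactly $\tau^{SO(3)}_{M,L'}(\xi)/\prod_i \tau^{SO(3)}_{L(|b_i|,1)}(\xi)=\tau'^{SO(3)}_{M,L'}(\xi)$, which yields the claimed formula.

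The only points requiring care are the non--vanishing of the denominators so that all these quotients are legitimate, and the applicability of the Kirby--Melvin theorem at $e_3$. For the former, note that $\ord(\xi)$ is odd by hypothesis, so by the remark following Equation~\eqref{eq:DefinitionRenormalizedWRT} (citing Example~\ref{TauNonZero}) each $\tau^{SO(3)}_{L(|b_i|,1)}(\xi)$ is nonzero; the $SU(2)$ denominators are handled similarly since the relevant $b$ in this corollary is odd in the $SU(2)$ case, and $e_3$ has odd order so the same non--vanishing applies there. For the latter, the Kirby--Melvin theorem as stated requires only $\ord(\xi)$ odd, and $\ord(e_3)=3$ is odd, so it may be invoked at $e_3$ as well (there it reads $\tau^{SU(2)}_{M,L'}(e_3)=\tau^{SO(3)}_{M,L'}(e_3)\cdot\tau^{SU(2)}_{M,L'}(e_3)$, consistent since $\tau^{SO(3)}_{M,L'}(e_3)$ is the WRT invariant at an order--$3$ root). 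I do not expect any serious obstacle: the argument is a formal manipulation of the defining quotient combined with the already--established multiplicativity and the Kirby--Melvin relation, and the corollary is flagged in the text as following ``immediately.''
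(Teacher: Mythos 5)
Your proposal is correct and matches the paper's (implicit) argument: the corollary is obtained by substituting the Kirby--Melvin relation into the numerator and each lens-space factor of the denominator in the defining quotient \eqref{eq:DefinitionRenormalizedWRT}, with the $e_3$ factors reassembling into $\tau'^{SU(2)}_{M,L'}(e_3)$; the non-vanishing of the denominators (using $b$ odd in the $SU(2)$ case and Example \ref{TauNonZero}) is exactly the point the paper relies on as well. The aside about invoking Kirby--Melvin at $e_3$ itself is unnecessary but harmless.
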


\chapter{Cyclotomic completions of polynomial rings}\label{cyc}

In \cite{Ha1}, Habiro develops a theory for cyclotomic completions of polynomial rings. 
In this chapter, we first recall some important results about cyclotomic polynomials and inverse limits before summarizing some of Habiro's results. We then define the rings $\cS_b$ and $\cR_b$ in which the unified invariant defined in Chapter \ref{UnifiedInvariant} is going to lie. We also describe the evaluation in these rings. Most important, we prove that when the evaluation of two elements of the ring $\cS_b$ coincide at all roots of unity, the two elements are actually identical in $\cS_b$. A similar statement holds in the ring $\cR_b$ for roots of unity of odd order.

\section{On cyclotomic polynomial}
Recall that $e_n := \exp(\frac{2\pi i}{n})$ and denote by
$\Phi_n(q)$ the cyclotomic polynomial
\[
\Phi_n(q) = \prod_{\substack{(j,n)=1\\0<j\leq n}} (q - e_n^j).
\]
Since $q^n-1=\prod_{j=0}^{n-1}(q-e_n^j)$, collecting together all terms belonging to roots of unity of the same order, we have 
\[
q^n-1=\prod_{d\mid n}\Phi_d(q).
\]
The degree of $\Phi_n(q)\in \Z[q]$ is given by the Euler function $\varphi(n)$.
Suppose $p$ is a prime and $n$ an integer. Then (see e.g. \cite{Lang})
\begin{equation} \Phi_n(q^p)= \begin{cases}    \Phi_{np}(q)  & \text{ if } p \mid n \\
\Phi_{np}(q) \Phi_n(q)  & \text{ if } p \nmid n.
\end{cases}
\end{equation}
It follows that  $\Phi_n(q^p)$ is always divisible by $\Phi_{np}(q)$.
The ideal of $\Z[q]$ generated by $\Phi_n(q)$ and $\Phi_m(q)$ is well--known,
see e.g. \cite[Lemma 5.4]{Le}:

\begin{lemma}
\label{IdealGeneratedByPhinPhim}
$\text{ }$

\begin{itemize}
\item[(a)] If $\frac{m}{n} \neq p^e$ for any  prime $p$ and any integer $e\neq 0$, then
$(\Phi_n(q))+ (\Phi_m(q))=(1)$ in $\Z[q]$.

\item[(b)] If $\frac{m}{n} = p^e$ for a prime $p$ and some integer $e \neq 0$, then $(\Phi_n(q))+ (\Phi_m(q))=(1)$ in $\Z[1/p][q]$.
\end{itemize}
\end{lemma}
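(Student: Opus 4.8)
The plan is to compute the ideal $\mathcal I := (\Phi_n(q))+(\Phi_m(q))$ by deciding whether it is contained in a maximal ideal of the ring in question. Throughout we may assume $n\neq m$ (otherwise $\Phi_n=\Phi_m$ and there is nothing to say). Recall that every maximal ideal of $\Z[q]$ has the form $(\ell,f(q))$ with $\ell$ a rational prime and $f$ monic reducing to an irreducible in $\mathbb F_\ell[q]$, and similarly every maximal ideal of $\Z[1/p][q]$ has this form with $\ell\neq p$. Since $\Phi_n,\Phi_m\in(\ell,f)$ iff $\bar f$ divides both $\Phi_n\bmod\ell$ and $\Phi_m\bmod\ell$, we get: $\mathcal I=(1)$ in $\Z[q]$ (resp. in $\Z[1/p][q]$) if and only if $\Phi_n$ and $\Phi_m$ have no common irreducible factor in $\mathbb F_\ell[q]$ for any prime $\ell$ (resp. any prime $\ell\neq p$).

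The core computation is to describe, for a fixed prime $\ell$, the roots of $\Phi_N\bmod\ell$ in $\overline{\mathbb F_\ell}$. Write $N=\ell^a N_0$ with $\ell\nmid N_0$. For $a=0$: since $q^{N_0}-1$ is separable mod $\ell$ and $q^{N_0}-1=\prod_{d\mid N_0}\Phi_d(q)$, an induction on $N_0$ shows that the roots of $\Phi_{N_0}\bmod\ell$ are exactly the elements of multiplicative order $N_0$ in $\overline{\mathbb F_\ell}$. For $a\ge 1$ I would use the relation displayed just before the lemma, $\Phi_K(q^\ell)=\Phi_{K\ell}(q)$ if $\ell\mid K$ and $=\Phi_{K\ell}(q)\Phi_K(q)$ if $\ell\nmid K$, together with the Frobenius congruence $g(q^\ell)\equiv g(q)^\ell\pmod\ell$ valid for every $g\in\Z[q]$ (from $a_i^\ell\equiv a_i$ and the multinomial theorem). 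These give $\Phi_{\ell N_0}(q)\equiv\Phi_{N_0}(q)^{\ell-1}\pmod\ell$ and then, inductively, $\Phi_{\ell^a N_0}(q)\equiv\Phi_{N_0}(q)^{\varphi(\ell^a)}\pmod\ell$; hence the set of roots of $\Phi_N\bmod\ell$, equivalently its set of irreducible factors, depends only on the prime-to-$\ell$ part $N_0$ of $N$, and equals the set of elements of order $N_0$.

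It follows that $\Phi_n$ and $\Phi_m$ share an irreducible factor modulo $\ell$ precisely when $n$ and $m$ have the same prime-to-$\ell$ part, which (using $n\neq m$) happens precisely when $m/n=\ell^{e}$ for some nonzero integer $e$. Both cases now drop out. In case (a), if $m/n$ is not of the form $p^e$ with $p$ prime and $e\neq0$, then no prime $\ell$ yields a common factor, so $\mathcal I$ lies in no maximal ideal of $\Z[q]$ and equals $(1)$. In case (b), if $m/n=p^e$ then the only prime for which a common factor can occur is $\ell=p$; as every maximal ideal of $\Z[1/p][q]$ lies over a prime $\neq p$, the ideal $\mathcal I$ lies in no maximal ideal of $\Z[1/p][q]$ and hence equals $(1)$ there. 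The one genuinely technical step is the congruence $\Phi_{\ell^a N_0}\equiv\Phi_{N_0}^{\varphi(\ell^a)}\pmod\ell$ and the resulting identification of roots mod $\ell$; everything else is formal. (Alternatively one can show the integer $\mathrm{Res}(\Phi_n,\Phi_m)$ lies in $\mathcal I$ and invoke the classical formula $\mathrm{Res}(\Phi_m,\Phi_n)=\pm1$ or $\pm p^{\varphi(n)}$, but this merely repackages the same computation.)
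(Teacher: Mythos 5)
Your argument is correct and complete, but it is worth noting that the thesis itself does not prove this lemma at all: it is quoted from Le's paper (Lemma 5.4 there), and the proof cited there (as in Habiro's work) runs through the observation that $(\Phi_n(q))+(\Phi_m(q))=(1)$ is equivalent to $\Phi_m(e_n)$ being a unit in $\Z[q]/(\Phi_n(q))\simeq\Z[e_n]$, which is then settled by the classical resultant computation $\mathrm{Res}(\Phi_m,\Phi_n)=\pm1$ unless $m/n$ is a prime power $p^e$, in which case it is a power of $p$ — essentially the ``alternative'' you relegate to your final parenthesis. Your main route is genuinely different in packaging: you characterize the maximal ideals of $\Z[q]$ (resp.\ $\Z[1/p][q]$) as $(\ell,f)$ with $f$ irreducible modulo $\ell$ (resp.\ with $\ell\neq p$), and reduce everything to whether $\Phi_n$ and $\Phi_m$ can share an irreducible factor in $\mathbb{F}_\ell[q]$; the congruence $\Phi_{\ell^a N_0}\equiv\Phi_{N_0}^{\varphi(\ell^a)}\pmod\ell$, obtained from the displayed identity for $\Phi_n(q^p)$ together with Frobenius, correctly identifies the factors of $\Phi_N\bmod\ell$ with those of its prime-to-$\ell$ part, and the case analysis then closes both (a) and (b). What your approach buys is elementarity — no norms or units in cyclotomic integer rings are needed — at the price of invoking the Nullstellensatz-type fact that every maximal ideal of $\Z[q]$ (or $\Z[1/p][q]$) lies over a rational prime; what the cited approach buys is brevity given the standard resultant formula. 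Two small remarks: you rightly flag the implicit hypothesis $n\neq m$ (for $n=m$ the literal statement of (a) would be false, and the applications in the thesis only use distinct indices), and in part (b) your argument in fact shows the sharper statement that $\ell=p$ is the only prime at which a common factor can occur, which is exactly why inverting $p$ suffices.
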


\begin{remark}\label{xytoxkyl}
Note that in a commutative ring $R$,  $(x) + (y) =(1)$ if and only if
$x$ is invertible in $R/(y)$. Therefore $(x) + (y) =(1)$ implies $(x^k) +
(y^l) =(1)$ for any integers $k,l \ge 1$.
\end{remark}

\section{Inverse limit}

Let $(I,\leq)$ be a partially ordered set, $R_i$, $i\in I$, unital commutative rings and for $i\leq j$ let $f_{ij}:R_j \to R_i$ be ring homomorphisms. We call $(R_i,f_{ij})$ an \emph{inverse system} of rings and ring homomorphisms if $f_{ii}$ is the identity map in $R_i$ and $f_{ik}=f_{ij} \circ f_{jk}$ for all $i\leq j\leq k$. 

The \emph{inverse limit} of an inverse system $(R_i,f_{ij})$ is defined to be the ring
\[
\inverselim{i\in I} R_i =\left\{(r_i) \in \prod_{i\in I}R_i \;\Big|\; r_i = f_{ij}(r_j) \mbox{ for all } i \leq j\right\}.
\]
The following is well--known.
\begin{lemma}\label{cofinal}
 If the set $J\subset I$ is cofinal to $I$, i.e. for every element $i\in I$ exists an element $j\in J$ such that $j\geq i$, we have
\[
 \inverselim{i\in I}R_i \iso \inverselim{j \in J} R_j.
\]
\end{lemma}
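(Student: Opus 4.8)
The statement is the standard fact that passing to a cofinal subset does not change an inverse limit; I would prove it by exhibiting mutually inverse ring homomorphisms between the two limits. First I would set up the restriction map: given $(r_i)_{i\in I}$ in $\inverselim{i\in I} R_i$, simply forget the coordinates outside $J$, obtaining $(r_j)_{j\in J}$. This lands in $\inverselim{j\in J} R_j$ because the compatibility conditions $r_j = f_{jk}(r_k)$ for $j\leq k$ in $J$ are a subset of those holding in the big limit, and it is visibly a ring homomorphism since the operations on an inverse limit are componentwise.

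The content is in constructing the inverse map. Given $(r_j)_{j\in J}$ in $\inverselim{j\in J} R_j$, I need to define $r_i$ for \emph{every} $i\in I$. Using cofinality, pick some $j\in J$ with $j\geq i$ and set $r_i := f_{ij}(r_j)$. The key step is checking this is well defined, i.e. independent of the choice of $j$: if $j, j'\in J$ both dominate $i$, cofinality gives a $j''\in J$ with $j''\geq j$ and $j''\geq j'$ (here I am implicitly using that $J$ is cofinal and $I$ is directed — or, more carefully, I only need that $J$ is cofinal and for the two chosen elements $j,j'$ there is a common upper bound in $J$; if $I$ is merely a poset this requires a small remark, but in all applications in this thesis the index sets are directed); then $f_{ij}(r_j) = f_{ij}(f_{jj''}(r_{j''})) = f_{ij''}(r_{j''})$ and likewise $f_{ij'}(r_{j'}) = f_{ij''}(r_{j''})$, so the two agree. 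Next I would verify the resulting family $(r_i)_{i\in I}$ is compatible: for $i\leq i'$ in $I$, choose $j\in J$ with $j\geq i'$ (hence $j\geq i$), and compute $f_{ii'}(r_{i'}) = f_{ii'}(f_{i'j}(r_j)) = f_{ij}(r_j) = r_i$. That this assignment is a ring homomorphism again follows from componentwise operations, once one notes that for a fixed finite collection of elements one may choose a common index $j\in J$ above a given $i$ to compare their $i$-th coordinates.

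Finally I would check the two maps are mutually inverse. Restriction followed by extension returns the original family because for $j\in J$ one may take the dominating index to be $j$ itself, giving $f_{jj} = \id$; extension followed by restriction is the identity on $\inverselim{j\in J}R_j$ for the same reason. Hence the two inverse limits are isomorphic as rings.

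\textbf{Main obstacle.} There is no serious obstacle — the only point requiring care is the well-definedness of the extension map, which needs a common upper bound \emph{inside} $J$ for any two elements of $J$ dominating a given $i$. If one only assumes cofinality of $J$ in a general poset $I$ this need not hold, so I would either add the (harmless, and satisfied in all our applications) hypothesis that $I$ is directed, or remark that the inverse systems occurring in this thesis are indexed by directed sets so the issue does not arise.
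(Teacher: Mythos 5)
Your proof is correct, and it is in fact more complete than the one in the thesis. The thesis argues only one direction: it takes the canonical projection $\varphi:\inverselim{i\in I}R_i \to \inverselim{j\in J}R_j$ (your ``restriction map'') and proves it is injective, by choosing for each $i$ some $j_i\in J$ with $j_i\geq i$ and noting $r_i=f_{ij_i}(r_{j_i})=0$; surjectivity of $\varphi$ is left unaddressed. Your construction of the extension map $(r_j)_{j\in J}\mapsto (r_i)_{i\in I}$ with $r_i:=f_{ij}(r_j)$ is exactly the missing surjectivity argument, and your well-definedness check is the only place where anything beyond bare cofinality is used. Your caveat there is well taken: you need a common upper bound in $J$ for two elements of $J$ dominating a given $i$, which follows from cofinality once $I$ is directed, and in all instances in this thesis the index sets are indeed directed ($\N$ under divisibility, the multiplicative sets $\Phi^*_S$ under divisibility, powers of an ideal), so the hypothesis is harmless. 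In short: the paper's argument buys brevity at the cost of silently assuming surjectivity of the projection, while yours supplies the full isomorphism and makes the directedness hypothesis explicit.
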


\begin{proof}
Let $\varphi: \inverselim{i\in I}R_i \to \inverselim{j\in J}R_j$ be the canonical projection. We have to show that $\varphi$ is injective. Assume $\varphi((r_i))=(0)$. Since $J$ is cofinal to $I$, for every $i\in I$ we can choose $j_i \in J$ such that $j_i\geq i$. But $f_{ij_i}(r_{j_i})=r_i$ and $r_{j_i}=0$ for all $j_i$. Therefore we have $r_i=0$ for all $i\in I$.
\end{proof}

For a ring $R$ and $I\subset R$ an ideal, the inverse limit $\inverselim{j} R/I^j$ is called the \emph{$I$-adic completion of $R$}.
There is a map from this ring to the formal sums of elements of $R$. Namely, every element $r$ in $\inverselim{j} R/I^j$ can be expressed in the form
\[
r=\sum_{j\geq 0} s_j i_j
\]
where $s_j \in R/I$ and $i_j\in I^j$. This decomposition is not unique. 

\begin{example}
The $(3)$--adic completion of $\Z$ corresponds to the $3$--adic expansion of $\Z$. For example, the number $124$ corresponds to the element 
\[
(r_n)=(0,1,7,16,43,124,124,124,\ldots)
\]
in $\inverselim{n\in \N}\Z/(3^n)$ and as $3$--adic number we write it as
\[
124=1+2\cdot 3+1\cdot 9+1\cdot 27+1\cdot 81=\sum_{n\geq 0} s_n 3^n
\]
where $s_n=\frac{r_{n+1}-r_n}{3^n}$.
\end{example}

\begin{example}
In \cite{Ha1}, Habiro defined the so--called \emph{Habiro ring}
\[
\Habiro:=\inlim \Z[q]/((q;q)_n).
\]
Every element $f(q)\in \Habiro$ can be written as an infinite sum
\[
f(q)= \sum_{n\ge 0} f_n(q)\, (1-q)(1-q^2)...(1-q^n)
\]
with $f_n(q)\in \Z[q]$. When evaluating $f(q)$ at a root of unity $\xi$, only a finite number of terms on the right hand side are not zero, hence the right hand side gives a well--defined value. Since $f_n(q)\in \Z[q]$, the evaluation of $f(q)$ is an algebraic integer, i.e. lies in $\Z[\xi]$.
\end{example}

\section{Cyclotomic completions of polynomial rings}

We now summarize further results of Habiro on cyclotomic completions of polynomial rings \cite{Ha1}. Let  $R$ be a commutative integral domain of characteristic zero and $R[q]$  the polynomial ring over $R$.
We consider $\N$ as a directed set with respect to the divisibility relation.
For any $S\subset \N$, the $S$--cyclotomic completion ring $R[q]^S$ is defined as 
\be\label{rs} 
R[q]^S:=\lim_{\overleftarrow{f(q)\in \Phi^*_S}} \;\;\frac{R[q]}{(f(q))} 
\ee
where $\Phi^*_S$ denotes the multiplicative set in $\Z[q]$ generated by $\Phi_S=\{\Phi_n(q)\mid n\in S\}$ and directed with respect to the divisibility relation.

\begin{example}
Since the sequence $(q;q)_n$, $n\in \N$, is cofinal to $\Phi^*_\N$, Lemma \ref{cofinal} implies
\be
\Habiro\simeq\Z[q]^\N.
\ee
\end{example}

Note that if $S$ is finite, $R[q]^S$ is identified with the $(\prod \Phi_S)$--adic completion of $R[q]$. In particular,
\[
R[q]^{\{1\}}\simeq R[[q-1]], \quad
R[q]^{\{2\}}\simeq R[[q+1]].
\]
\\

Two positive integers $n, n'$ are called {\em adjacent} if $n'/n=p^e$ with a nonzero $e\in \Z$ and  a  prime $p$, such that the ring $R$ is $p$--adically separated, i.e. $\bigcap_{n=1}^\infty (p^n) =0$ in $R$. A set of positive integers is {\em $R$--connected} if for any two distinct elements $n,n'$ there is a sequence $n=n_1, \,n_2, \dots,\, n_{k-1},\, n_k= n'$ in the set, such that any two consecutive numbers of this sequence are adjacent. 
\\

Suppose $S' \subset S$, then $\Phi^*_{S'}\subset \Phi^*_S$, hence there is a natural map 
\[
\rho^R_{S, S'}: R[q]^S \to R[q]^{S'}.
\]

\begin{theorem}[Habiro]\label{HabiroTheorem4.1}
If $S$ is $R$--connected, then for any subset $S'\subset S$ the natural map 
\[
\rho^R_{S,S'}: R[q]^S \hookrightarrow R[q]^{S'}
\] 
is an embedding.
\end{theorem}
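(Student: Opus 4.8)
The plan is to reduce the general statement to two cases: the case where $S'$ consists of a single integer, and the general case assembled from these. First I would observe that it suffices to prove that $\rho^R_{S,\{n\}}$ is injective for each $n\in S$ — indeed, if $S'\subset S$ is arbitrary and $x\in R[q]^S$ maps to $0$ in $R[q]^{S'}$, then in particular its image under each $\rho^R_{S',\{n\}}\circ\rho^R_{S,S'}=\rho^R_{S,\{n\}}$ ($n\in S'$) vanishes, but I would want the stronger conclusion that $x=0$, so I actually need $\rho^R_{S,\{n\}}$ injective for \emph{every} $n\in S$, not just $n\in S'$. So the real content is: for an $R$-connected set $S$ and any $n\in S$, the map $R[q]^S\to R[q]^{\{n\}}=R[q][[\text{(shift of }q)]]$ at the single cyclotomic point $e_n$ is injective. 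Once this is known, injectivity of $\rho^R_{S,S'}$ follows because $R[q]^S\hookrightarrow \prod_{n\in S}R[q]^{\{n\}}$ factors through $R[q]^{S'}\hookrightarrow\prod_{n\in S'}R[q]^{\{n\}}$ when combined with the fact that an element of $R[q]^S$ is determined by its single-point completions along $S$ — which is itself the $n$-by-$n$ injectivity statement.

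The heart of the argument is therefore a statement about the multiplicative set $\Phi^*_S$: I would show that for an $R$-connected set $S$, any finite product $f(q)=\prod \Phi_{n_i}(q)^{a_i}$ of cyclotomic polynomials with indices in $S$ is ``captured'' by high powers $\Phi_n(q)^N$ at a single $n\in S$, in the sense that there is a chain linking $n$ to each $n_i$ along which Lemma \ref{IdealGeneratedByPhinPhim} and Remark \ref{xytoxkyl} let me pass invertibility. Concretely: if $x\in R[q]^S$ has trivial image in $R[q]^{\{n\}}$, i.e. $x\equiv 0$ mod $\Phi_n(q)^N$ for all $N$, I want to conclude $x\equiv 0$ mod $f(q)$ for every $f\in\Phi^*_S$. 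Fix such an $f=\prod_i\Phi_{n_i}(q)^{a_i}$. For each $i$, connectedness gives a chain $n=m_0,m_1,\dots,m_k=n_i$ of pairwise adjacent integers in $S$; adjacency means $m_{j+1}/m_j=p^{\pm e}$ with $R$ being $p$-adically separated, and by Lemma \ref{IdealGeneratedByPhinPhim}(b) together with the $p$-adic separatedness (which lets me upgrade the identity modulo $\mathbb{Z}[1/p]$ back to an identity over $R$ after completing, via the separatedness hypothesis $\bigcap(p^n)=0$) I get that $\Phi_{m_j}(q)$ and $\Phi_{m_{j+1}}(q)$ are coprime in the relevant completed sense; then Remark \ref{xytoxkyl} propagates coprimality to arbitrary powers. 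Telescoping along the chain shows that being divisible by a large power of $\Phi_n(q)$ forces divisibility by $\Phi_{n_i}(q)^{a_i}$ (after passing to a sufficiently high power $N$), and since the $\Phi_{n_i}(q)^{a_i}$ for distinct $i$ are mutually coprime, the Chinese Remainder Theorem in $R[q]$ gives divisibility by their product $f(q)$. Hence $x=0$ in $R[q]^S$.

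The delicate point — and the step I expect to be the main obstacle — is the handling of the prime $p$ and the $p$-adic separatedness of $R$. Lemma \ref{IdealGeneratedByPhinPhim}(b) only gives $(\Phi_n(q))+(\Phi_m(q))=(1)$ in $\mathbb{Z}[1/p][q]$, not in $\mathbb{Z}[q]$, so a priori there is a denominator $p^s$ obstructing a clean Bézout identity over $R[q]$. The resolution uses that in the inverse limit $R[q]^S$ one works modulo products of cyclotomic polynomials, and that $\Phi_p(1)=p$ (and more generally the relation $q^{p^t}-1=\prod_{d\mid p^t}\Phi_d(q)$) ties powers of $p$ to powers of $\Phi_1(q)=q-1$, so that the ``lost'' factor of $p$ can be absorbed after multiplying the putative relation by a high enough power and using $p$-adic separatedness of $R$ to conclude that an element divisible by $p^s$ for all $s$ is zero. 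Making this absorption precise — tracking which power $N$ of $\Phi_n(q)$ one must start from to kill a given $f$, uniformly enough to pass to the limit — is the technical core; everything else (the two reductions, CRT, the telescoping) is bookkeeping. I would organize the write-up as: (1) reduce to single-point injectivity; (2) a lemma that for adjacent $n,m$ and any $a,b,N$ there is $N'$ with $\Phi_n(q)^{N'}\in(\Phi_m(q)^b)+(\text{ideal detecting }p\text{-torsion})$ over $R[q]$, proved via Lemma \ref{IdealGeneratedByPhinPhim}(b) and $p$-adic separatedness; (3) iterate along connecting chains; (4) combine over the finitely many $n_i$ by CRT and pass to the limit.
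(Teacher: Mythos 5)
The paper itself gives no proof of this theorem: it is quoted from Habiro and the ``proof'' is just the citation of \cite[Theorem 4.1]{Ha1}, so your proposal has to be measured against Habiro's actual argument. Two of your ingredients are right. The reduction to single-point injectivity is fine (in fact injectivity of $\rho^R_{S,\{n\}}$ for a \emph{single} $n\in S'$ already suffices, and $S'$ must be nonempty). And the lemma you announce in step (2) is indeed the correct engine: for adjacent $n,m$ with associated prime $p$ one has $\Phi_m(q)\mid \Phi_n(q^{p^e})$ and $\Phi_n(q^{p^e})\equiv \Phi_n(q)^{p^e}\pmod p$, whence $\Phi_n(q)^{N}\in (\Phi_m(q)^{b})+(p^{s})$ with $s\to\infty$ as $N\to\infty$; combined with $p$-adic separatedness of the free $R$-module $R[q]/(\Phi_m(q)^b)$ this is what makes everything work (not the ``$\Phi_p(1)=p$ absorption'' you gesture at).

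The assembly, however, has genuine gaps. (i) The claim that separatedness upgrades Lemma~\ref{IdealGeneratedByPhinPhim}(b) to coprimality of $\Phi_n,\Phi_m$ ``over $R$ in the completed sense'' is backwards: for adjacent indices the resultant is a power of $p$, and $p$ is \emph{not} invertible in a $p$-adically separated $R$, so the two polynomials are genuinely not comaximal in $R[q]$; if they were, CRT would split $R[q]^{\{n,m\}}\simeq R[q]^{\{n\}}\times R[q]^{\{m\}}$ and the projection to $R[q]^{\{n\}}$ would \emph{fail} to be injective. The theorem holds precisely because adjacent cyclotomic polynomials stay entangled modulo $p$. (ii) The telescoping step is false as stated: ``divisible by a high power of $\Phi_n$ forces divisibility by $\Phi_{n_i}^{a_i}$'' breaks down for non-adjacent indices joined by a chain. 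Take $R=\Z$, $S=\{1,2,6\}$: since $\Phi_6(1)=1$, $q-1$ maps to a unit of $\Z[q]/(\Phi_6(q)^a)$, so $\bigcap_N\bigl((\Phi_1(q)^N)+(\Phi_6(q)^a)\bigr)$ is the whole ring and vanishing modulo all $\Phi_1^N$ imposes no condition modulo $\Phi_6^a$. One cannot argue with the single base modulus against the single target; one must propagate through the intermediate single-point completions along the chain, using the coherence of the inverse system: from $x_{\Phi_n^N}=0$ for all $N$ deduce, via the components $x_{\Phi_n^N\Phi_{m_1}^j}$ and the step-(2) lemma plus separatedness, that $x_{\Phi_{m_1}^j}=0$ for all $j$, then iterate along the chain. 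Your write-up treats $x$ as a single polynomial with a well-defined divisibility, which is exactly where this is obscured. (iii) The concluding CRT step is also misjustified: for adjacent $n_i,n_j\in S$ (which an $R$-connected $S$ necessarily contains) the factors $\Phi_{n_i}^{a_i},\Phi_{n_j}^{a_j}$ are \emph{not} coprime in $R[q]$ (e.g. $(\Phi_2(q))+(\Phi_6(q))\subseteq (3,q+1)$ in $\Z[q]$). This last point is repairable, since you only need injectivity of $R[q]/(f)\to\prod_i R[q]/(\Phi_{n_i}^{a_i})$, which holds because the distinct monic $\Phi_{n_i}$ are coprime over $\Q$ — the same intersection argument used in the proof of Theorem~\ref{frob} — but points (i) and (ii) are where the proof as proposed would actually fail and need to be rebuilt along the lines just described, which is essentially Habiro's argument.
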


\begin{proof}
 See \cite[Theorem 4.1]{Ha1}.
\end{proof}

\section{The rings $\cS_b$ and $\cR_b$}
For any positive integer $b$, we  define
\be
\label{ab} \cR_b:=\inverselim{k}\frac{\Z[1/b][q]}{\left((q;q^2)_k\right)}
\quad\quad\text{and}\quad\quad
\cS_b :=\inverselim{k}\frac{\Z[1/b][q]}{((q;q)_k)}\; .
\ee

For every integer $a$, we put $\N_a := \{ n \in \N \mid (a,n)=1\}$. Since the sets $\Phi_{\N}^{*}$ and $\{(q;q)_n\mid n\in \N\}$, as well as $\Phi_{\N_2}^*$ and $\{(q;q^2)_n \mid n\in \N\}$, are cofinal we have due to Lemma \ref{cofinal}
\[
\cR_b\iso \Z[1/b][q]^{\N_2} \quad \quad\text{and}\quad\quad \cS_b\iso \Z[1/b][q]^{\N}.
\]

\begin{remark}
For $b=1$, we have $\cS_1=\Habiro$. Further, if $p$ is a prime divisor of $b$, we have $\cR_p \subset \cR_b$ and $\cS_p\subset \cS_b$.
\end{remark}

\subsection{Splitting of $\cS_b$ and $\cR_b$}

Observe that $\N$ is not $\Z[1/b]$--connected for $b>1$. In fact, for a prime $p$ one has 
$\N =\amalg_{j=0}^\infty \; p^j \N_p$, where each $p^j\N_p$ is $\Z[1/p]$--connected.

Suppose $p$ is a prime divisor of $b$. Let us define
\[ 
\cS_{b}^{p,0} := \Z[1/b][q]^{\N_p}\;, \qquad 
\cS_{b}^{p,\bz} := 
\prod_{j>0}\Z[1/b][q]^{p^j\N_p}\;,
\quad \text {and } \;\;
\cS_{p,j}:= \Z[1/p][q]^{p^j \N_p}.
\]
Notice that $\cS_{p}^{p,0}=\cS_{p,0}$ and $\cS_p^{p,\bz}=\prod_{j> 0} \cS_{p,j}$. Further is $\cS_p^{p,\epsilon}\subset \cS_b^{p,\epsilon}$ for $\epsilon$ either $0$ or $\bz$.

\begin{proposition} 
For $p$ a prime divisor of $b$, we have
\begin{equation} \label{SplittingSb}
\cS_b \simeq \cS_{b}^{p,0} \times \cS_{b}^{p,\bz}
\end{equation}
and therefore there are canonical projections
\[
\pi^p_0   : \cS_b \to \cS_b^{p,0}\;\; \text{ and } \;\;
\pi^p_\bz : \cS_b \to \cS_b^{p,\bz}\;. 
\]
In particular, for every prime $p$ one has
\[
\cS_p\simeq \prod_{j=0}^\infty \cS_{p,j}
\]
and canonical projections $\pi_j : \cS_p \to \cS_{p,j}$.
\end{proposition}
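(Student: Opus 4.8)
The plan is to reduce the proposition to Habiro's splitting theorem combined with the elementary decomposition $\N=\amalg_{j\ge 0} p^j\N_p$ already noted in the text. First I would record the ring-theoretic content of this set partition: since $\Phi^*_{\N}$ is generated by $\{\Phi_n(q)\mid n\in\N\}$ and the sets $p^j\N_p$ are pairwise disjoint with union $\N$, any cofinal sequence in $\Phi^*_{\N}$ (for instance $(q;q)_k$) eventually involves cyclotomic factors from finitely many of the blocks at a time. I would make this precise by using the cofinality Lemma \ref{cofinal} to replace $\cS_b\simeq\Z[1/b][q]^{\N}$ by an inverse limit over the directed set $\Phi^*_{\N}$, then observe that a compatible system of residues modulo products of cyclotomic polynomials splits, by the Chinese Remainder Theorem, according to which block each $\Phi_n$ belongs to — the key point being that distinct blocks are ``far apart'' in the sense of Lemma \ref{IdealGeneratedByPhinPhim}.

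The crucial coprimality input is exactly Lemma \ref{IdealGeneratedByPhinPhim}(b) together with Remark \ref{xytoxkyl}: if $n\in p^i\N_p$ and $m\in p^j\N_p$ with $i\ne j$, then $m/n$ is \emph{not} a power of any prime $\ell\ne p$ (the $p$-adic valuations differ), so in $\Z[1/p][q]$ — hence a fortiori in $\Z[1/b][q]$ since $p\mid b$ — the ideals $(\Phi_n(q))$ and $(\Phi_m(q))$ are comaximal, and the same holds for all powers. This is what lets the CRT argument go through block-by-block. Thus
\[
\cS_b=\Z[1/b][q]^{\N}\simeq \Z[1/b][q]^{\N_p}\times\prod_{j>0}\Z[1/b][q]^{p^j\N_p}=\cS_b^{p,0}\times\cS_b^{p,\bz},
\]
and the canonical projections $\pi^p_0,\pi^p_\bz$ are the two coordinate maps of this product decomposition. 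For the special case $b=p$, one iterates: within $\cS_p^{p,\bz}=\prod_{j>0}\cS_{p,j}$ the same argument separates each individual block $p^j\N_p$ (now comaximality for \emph{every} pair of distinct blocks, including $0$, holds over $\Z[1/p][q]$), giving $\cS_p\simeq\prod_{j\ge 0}\cS_{p,j}$ with coordinate projections $\pi_j$.

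I would also remark that the $\cR_b$ version (which the paper will presumably want in parallel) follows identically, replacing $\N$ by $\N_2$ throughout and using that $\N_2=\amalg_{j\ge 0} p^j\N_{2p}$ for $p$ an odd prime divisor of $b$, with $(q;q^2)_k$ as the cofinal sequence. The main obstacle I anticipate is not conceptual but bookkeeping: one must argue carefully that the inverse limit over the multiplicative set $\Phi^*_{\N}$ genuinely decomposes as a product of inverse limits over the sub-semigroups generated by $\Phi_{p^j\N_p}$ — i.e. that the ``mixed'' denominators $f(q)=\prod_n \Phi_n(q)^{e_n}$ with $n$ ranging over several blocks contribute nothing new, because $\Z[1/b][q]/(f(q))\simeq\prod_{\text{blocks}}\Z[1/b][q]/(\text{block part of }f)$ compatibly with the transition maps. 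This is a routine but slightly delicate application of CRT in the inverse limit; alternatively one can invoke Theorem \ref{HabiroTheorem4.1} of Habiro directly, since each $p^j\N_p$ is $\Z[1/b]$-connected (as $\Z[1/b]$ is $p$-adically separated for $p\mid b$) and the blocks are mutually non-adjacent, which is precisely the hypothesis under which Habiro's structure theory yields such a product splitting.
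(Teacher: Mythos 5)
Your main argument --- comaximality of $\Phi_{n}(q)$ and $\Phi_{m}(q)$ in $\Z[1/b][q]$ for $n,m$ lying in distinct blocks $p^{j}\N_p$ (via Lemma \ref{IdealGeneratedByPhinPhim} and Remark \ref{xytoxkyl}, since the ratio is either not a prime power or a nonzero power of $p$), followed by the Chinese remainder theorem and passage to the inverse limit --- is correct and is essentially the paper's own proof. Only your optional closing aside is off: Theorem \ref{HabiroTheorem4.1} yields embeddings rather than product splittings, and $\Z[1/b]$ is \emph{not} $p$-adically separated when $p\mid b$ (indeed $p^{j}\N_p$ need not be $\Z[1/b]$-connected, e.g.\ $b=6$, $p=3$), but none of this is needed for the CRT argument.
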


\begin{proof}
Suppose $n_i \in p^{j_i}\N_p$ for $ i=1,\dots, m$, with  distinct $j_i$'s. Then $n_{i}/n_{s}$, with $i \neq s$, is either not a power of a prime or a non--zero power of $p$. Hence by Lemma \ref{IdealGeneratedByPhinPhim} (and Remark \ref{xytoxkyl}), for any positive integers $k_1,\dots, k_m$, we have 
\[
(\Phi_{n_i}^{k_i}(q))+ (\Phi_{n_s}^{k_s}(q))=(1) \quad \text{ in }\Z[1/b][q].
\]
By the Chinese remainder theorem, we have
\[
\frac{ \Z[1/b][q]}{\left(\prod_{i=1}^m \Phi_{n_i}^{k_i}(q)\right)}\;\simeq\;
\prod_{i=1}^m \frac{\Z[1/b][q]}{\left(\Phi_{n_i}^{k_i}(q)\right)} .
\]
Taking the inverse limit, we get \eqref{SplittingSb}.
\end{proof}

We will also use the notation $\cS_{b,0}:=\Z[1/b][q]^{\N_b}$ and as above one can see that we have the projection $\pi_0 : \cS_b \to \cS_{b,0}$.
\\

A completely similar splitting exists for $\cR_b$, where $\cR_b^{p,\epsilon}$, $\epsilon \in \{0,\bar{0}\}$, are defined analogously by replacing $\N_p$ by $\N_{2p}$ (only odd numbers coprime to $p$) as
\[ 
\cR_{b}^{p,0} := \Z[1/b][q]^{\N_{2p}}\;
\quad \text {and } \;\;
\cR_{b}^{p,\bz} := 
\prod_{j>0}\Z[1/b][q]^{p^j\N_{2p}}\;
\]
and the projections $\pi_{\epsilon}^p$ are defined analogously as above in the $\cS_b$ case. If $2\mid b$, then $\cR^{2,0}_b$ coincides with $\cR_b$. 
\\

We get the following.

\begin{corollary}\label{Splitting}
For any odd divisor $p$ of $b$, an element $ x \in \cR_b$ (or $\cS_b$) determines and is totally determined  by the pair $(\pi^p_0(x), \pi^p_\bz(x))$. If $p=2$ divides $b$, then for any $x\in \cR_b$, $x=\pi^p_0(x)$.
\end{corollary}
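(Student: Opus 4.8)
The plan is to read off both assertions directly from the splitting \eqref{SplittingSb} established in the Proposition together with its stated analogue for $\cR_b$; the argument is essentially a matter of unwinding definitions, and no substantial new idea is required.

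For the first assertion, recall that \eqref{SplittingSb} (and, with $\N_p$ replaced by $\N_{2p}$, the analogous isomorphism $\cR_b\simeq\cR_b^{p,0}\times\cR_b^{p,\bz}$) provides a ring isomorphism $\Psi$ from $\cS_b$ (resp. $\cR_b$) onto the product $\cS_b^{p,0}\times\cS_b^{p,\bz}$ (resp. $\cR_b^{p,0}\times\cR_b^{p,\bz}$), and that the maps $\pi^p_0$ and $\pi^p_\bz$ are by construction the composites of $\Psi$ with the two coordinate projections of the product. Hence $\Psi(x)=(\pi^p_0(x),\pi^p_\bz(x))$ for every $x$, and since $\Psi$ is bijective, $x\mapsto(\pi^p_0(x),\pi^p_\bz(x))$ is a bijective correspondence; this is exactly the claim that $x$ is totally determined by, and in turn determines, the pair.

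For the second assertion I would proceed as follows. Assume $2\mid b$. By definition $\cR_b\iso\Z[1/b][q]^{\N_2}$ with $\N_2$ the set of odd positive integers, whereas $\cR_b^{2,0}$ is $\Z[1/b][q]^{\N_{2\cdot 2}}=\Z[1/b][q]^{\N_4}$. The key point is the elementary equality of index sets $\N_4=\N_2$ (since $(4,n)=1$ if and only if $n$ is odd), which forces the two cyclotomic completions to be formed over the very same directed family $\{\Phi_n\mid n\text{ odd}\}$; consequently $\cR_b^{2,0}=\cR_b$ and, under this identification, $\pi^2_0$ is the identity map, giving $x=\pi^2_0(x)$ for all $x\in\cR_b$.

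The statement is thus a bookkeeping corollary of the Proposition and I do not expect a genuine obstacle. The two points meriting a line of verification are: (i) that the $\cR_b$-analogue of \eqref{SplittingSb} indeed holds, which follows because the Chinese-remainder argument in the proof of the Proposition applies verbatim once one observes that all the moduli $n_i$ involved lie in $\N_2$ and that $n_i/n_s$ for $i\neq s$ is either not a prime power or a nonzero power of $p$; and (ii) keeping the index sets $\N_p$ and $\N_{2p}$ carefully distinct, noting in particular the collapse $\N_4=\N_2$ that makes the $p=2$ case degenerate.
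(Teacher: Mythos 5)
Your proposal is correct and takes essentially the same route as the paper: the corollary is simply read off from the product decomposition \eqref{SplittingSb} and its $\cR_b$--analogue (with $\N_p$ replaced by $\N_{2p}$), the maps $\pi^p_0,\pi^p_\bz$ being the coordinate projections of that isomorphism. Your handling of the case $2\mid b$ via the equality of index sets $\N_4=\N_2$ is exactly the paper's remark that $\cR^{2,0}_b$ coincides with $\cR_b$, so $\pi^2_0$ is the identity.
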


\subsection{Further splitting of $\cS_b$ and $\cR_b$}\label{FurtherSplitting}

Let $\{p_i\,|\, i=1,\dots,m\}$ be the set of all distinct {\em odd} prime divisors of $b$. For $\bn=(n_1,\dots,n_m)$, a tuple of numbers $n_i\in \N$, let $\bp^{\bn}=\prod_{i}p_i^{n_i}$. Let $A_{\bn}=\bp^{\bn}\N_b$ and $O_{\bn}:=\bp^{\bn}\N_{2b}$. Then $\N_2=\amalg_{\bn}\, O_\bn$ and, if $b$ odd, $\N=\amalg_{\bn} A_\bn$. 
Moreover, for $a\in O_{\bn},$ $a'\in O_{\bn'}$, we have $(\Phi_{a}(q),\Phi_{a'}(q))=(1)$ in $\Z[1/b]$ if $\bn\neq\bn'$. The same holds for $a\in A_{\bn}$ and $a'\in A_{\bn'}$. In addition, each $O_\bn$ and $A_{\bn}$ is $\Z[1/b]$--connected. An argument similar  to that for Equation \eqref{SplittingSb} gives
\[
\cR_b\simeq\prod_{\bn}\Z[1/b][q]^{O_\bn} \quad\quad\text{and}\quad\text{if $b$ odd }\;\;
\cS_b\simeq\prod_{\bn}\Z[1/b][q]^{A_\bn}.
\]

\begin{proposition}\label{Sb20}
For odd $b$, the natural homomorphism $\rho_{\N,\N_2}:\cS_b\to \cR_b$ is injective. If $2\mid b$, then the natural homomorphism $\cS_{b}^{2,0}\to \cR_{b}$ is an isomorphism.
\end{proposition}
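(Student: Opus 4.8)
The plan is to deduce both statements from the splitting results already established, namely the further splitting of Section~\ref{FurtherSplitting} together with Habiro's embedding Theorem~\ref{HabiroTheorem4.1} and the cofinality Lemma~\ref{cofinal}. First I would treat the case $b$ odd. Write $\{p_i\}_{i=1}^m$ for the odd prime divisors of $b$. By Section~\ref{FurtherSplitting} we have the product decompositions $\cS_b\simeq\prod_{\bn}\Z[1/b][q]^{A_\bn}$ and $\cR_b\simeq\prod_{\bn}\Z[1/b][q]^{O_\bn}$, where $A_\bn=\bp^\bn\N_b$ and $O_\bn=\bp^\bn\N_{2b}$, and the map $\rho_{\N,\N_2}$ respects these decompositions componentwise. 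So it suffices to show that for each fixed $\bn$ the natural map $\Z[1/b][q]^{A_\bn}\to\Z[1/b][q]^{O_\bn}$ is injective. Since $O_\bn\subset A_\bn$ and $A_\bn$ is $\Z[1/b]$--connected (as recorded in Section~\ref{FurtherSplitting}), this map is exactly the map $\rho^R_{S,S'}$ of Theorem~\ref{HabiroTheorem4.1} with $R=\Z[1/b]$, $S=A_\bn$, $S'=O_\bn$, hence an embedding. Taking the product over all $\bn$ gives injectivity of $\rho_{\N,\N_2}:\cS_b\to\cR_b$.

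For the second statement, suppose $2\mid b$. Now the odd prime divisors of $b$ are the same $p_i$ as before, and by definition $\cS_b^{2,0}=\Z[1/b][q]^{\N_2}$. Applying the further splitting of Section~\ref{FurtherSplitting} to $\N_2=\amalg_\bn O_\bn$ gives $\cS_b^{2,0}=\Z[1/b][q]^{\N_2}\simeq\prod_\bn\Z[1/b][q]^{O_\bn}$, and we already noted $\cR_b\simeq\prod_\bn\Z[1/b][q]^{O_\bn}$ as well, the identical decomposition. One checks that the natural homomorphism $\cS_b^{2,0}\to\cR_b$ is precisely the one induced by the inclusion of index sets $\N_{2b}\subset\N_2$ on each factor, i.e.\ the identity on each $\Z[1/b][q]^{O_\bn}$ (since the defining cyclotomic sets for the $\bn$-th factor agree: the primes $2$ and the $p_i$ are all inverted in $\Z[1/b]$, so $\N_2\cap(\text{$\bn$-th block})=O_\bn=\N_{2b}\cap(\text{$\bn$-th block})$). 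Hence the map is an isomorphism.

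The main point requiring care — the step I expect to be the real obstacle — is verifying that the splittings of $\cS_b$ (resp.\ $\cS_b^{2,0}$) and of $\cR_b$ indexed by $\bn$ are genuinely \emph{compatible} with the map $\rho_{\N,\N_2}$ (resp.\ the natural map $\cS_b^{2,0}\to\cR_b$), and, in the $2\mid b$ case, that the two $\bn$-indexed product decompositions really use the \emph{same} factors. This amounts to chasing through the Chinese-remainder-theorem argument behind Equation~\eqref{SplittingSb} and checking that the projections $\pi$ commute with the inclusion-of-index-set maps; it is essentially bookkeeping with the multiplicative sets $\Phi^*_S$, but it must be done honestly. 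Once that compatibility is in place, injectivity for odd $b$ is immediate from Theorem~\ref{HabiroTheorem4.1} applied factor by factor, and the isomorphism for even $b$ follows because on each factor the source and target are literally the same cyclotomic completion. A small additional remark worth including: when $b$ is odd one may alternatively argue directly without the full $\bn$-splitting, using only that $\N$ itself decomposes as $\N=\amalg_\bn A_\bn$ with each $A_\bn$ connected and $\N_2=\amalg_\bn O_\bn$ refining it, but the factorwise argument above is cleanest.
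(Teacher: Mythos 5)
Your argument is correct and is essentially the paper's proof: the paper likewise applies Habiro's Theorem~\ref{HabiroTheorem4.1} to the embeddings $\Z[1/b][q]^{A_\bn}\hookrightarrow\Z[1/b][q]^{O_\bn}$ for each $\bn$ and passes to the product over $\bn$ (inverse limit), the compatibility you flag being exactly the Chinese-remainder splitting already recorded in Section~\ref{FurtherSplitting}. For $2\mid b$ the paper is even shorter than your detour through the $\bn$-indexed factors: since $\cS_b^{2,0}:=\Z[1/b][q]^{\N_2}$ and $\cR_b\simeq\Z[1/b][q]^{\N_2}$ by cofinality, the two rings coincide outright.
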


\begin{proof}
By Theorem \ref{HabiroTheorem4.1} of Habiro, the map
\[ 
\Z[1/b][q]^{A_\bn}\hookrightarrow \Z[1/b][q]^{O_{\bn}}
\]
is an embedding. Taking the inverse limit we get the result. If $2\mid b$, then $\cS^{2,0}_b:=\Z[1/b][q]^{\N_2}\simeq\cR_b$.
\end{proof}

\section{Evaluation}
Let $\xi$ be a root of unity and $R$ be a ring. We define the evaluation map 
\begin{eqnarray*}
 \ev_{\xi} : R[q]&\to& R[\xi]\\
q&\mapsto& \xi.
\end{eqnarray*}
Since for $r=\ord(\xi)$
\[
R[\xi]\simeq \frac{R[q]}{(\Phi_r(q))}, 
\]
the evaluation map $\ev_{\xi}$ can be defined analogously on $R[q]^S$ if $\xi$ is a root of unity of order in $S$.

For a set $\Xi$ of roots of unity whose orders form a subset $\cT\subset S$, one defines the evaluation 
\[
\ev_\Xi: R[q]^S \to \prod_{\zeta \in \Xi} R[\zeta].
\]

\begin{theorem}[Habiro]\label{HabiroTheorem6.1}
If $R\subset \Q$, $S$ is $R$--connected and there exists $n\in S$ that is adjacent to infinitely many elements in $\cT$, then $\ev_\Xi$ is injective.
\end{theorem}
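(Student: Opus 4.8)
The plan is to reduce, by repeated use of Habiro's embedding theorem (Theorem~\ref{HabiroTheorem4.1}), to a divisibility statement about the images of $f$ in the finite quotients $R[q]/(\Phi_n(q)^k)$, and then to settle that statement using the arithmetic of cyclotomic polynomials (Lemma~\ref{IdealGeneratedByPhinPhim}) together with the fact that the rings $R[\zeta]$ are finitely generated as $R$--modules.

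First I would translate the hypothesis into congruences. Let $f\in R[q]^S$ with $\ev_\Xi(f)=0$. For each $m\in\cT$ there is $\zeta\in\Xi$ of order $m$, and since $\Phi_m$ is monic and (by Gauss) irreducible over $R$ we have $R[q]/(\Phi_m)\cong R[\zeta]$; hence $\ev_\zeta(f)=0$ says exactly that $f$ maps to $0$ in $R[q]/(\Phi_m)$, i.e. $f\equiv 0\pmod{\Phi_m}$ in $R[q]^S$. Because $S$ is $R$--connected, Theorem~\ref{HabiroTheorem4.1} applied to $\{n\}\subseteq S$ gives an embedding $R[q]^S\hookrightarrow R[q]^{\{n\}}=\inverselim{k}R[q]/(\Phi_n(q)^k)$, so $f=0$ is equivalent to: for every $k\ge1$ the image $\bar f_k$ of $f$ in $R[q]/(\Phi_n(q)^k)$ is zero. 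Fixing $k$ and using that $\Phi_n^k\Phi_m\in\Phi^*_S$, the congruence $f\equiv0\pmod{\Phi_m}$ forces $\bar f_k\in(\bar\Phi_m)$, where $\bar\Phi_m$ is the image of $\Phi_m$ in $R[q]/(\Phi_n^k)$; thus $\bar f_k\in\bigcap_{m\in\cT_0}(\bar\Phi_m)$, where $\cT_0\subseteq\cT$ is the (infinite) set of elements adjacent to $n$, and only these $m$ will be used.

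The arithmetic input: if $m\in\cT_0$ is adjacent to $n$ via the prime $p=p_m$ (so $R$ is $p$--adically separated), then Lemma~\ref{IdealGeneratedByPhinPhim}(b) and Remark~\ref{xytoxkyl} give $(\Phi_n)+(\Phi_m)=(1)$ in $R[1/p][q]$; equivalently $\mathrm{Res}(\Phi_n,\Phi_m)$ is a nonzero power of $p$, so a power of $p$ lies in $(\Phi_m)+(\Phi_n^k)$ in $R[q]$. Consequently $\bar\Phi_m$ is a non--zero--divisor of $R[q]/(\Phi_n^k)$ which becomes invertible after inverting $p_m$, its ideal contains a power of $p_m$, and the image of $\Phi_m$ in $R[q]/(\Phi_n)\cong R[e_n]$ is a non--unit divisible by at least one prime of $R[e_n]$ lying over $p_m$. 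Now split into cases on the set $\{p_m:m\in\cT_0\}$. If it is infinite, pick $m_1,m_2,\dots\in\cT_0$ with pairwise distinct $p_i$; the ideals $(\bar\Phi_{m_i})$ are then pairwise comaximal (each contains a power of $p_i$), so any $x\in\bigcap_i(\bar\Phi_{m_i})$ satisfies $x=\bar\Phi_{m_1}\cdots\bar\Phi_{m_N}\,y_N$ for every $N$. Reducing modulo $\Phi_n$ shows the image of $x$ in $R[e_n]$ is divisible by a prime over each of $p_1,\dots,p_N$, for all $N$; since $R[e_n]$ is a finitely generated $R$--module and a domain (here $R\subseteq\Q$), an element divisible by infinitely many distinct primes is $0$, whence $x\in(\bar\Phi_n)$, and an induction on $k$ down the $\Phi_n$--adic filtration of $R[q]/(\Phi_n^k)$ then yields $x=0$. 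So $\bar f_k=0$ for all $k$ and $f=0$.

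The main obstacle is the remaining case, where a single prime $p$ occurs infinitely often, i.e. $\cT_0$ contains $np^e$ for infinitely many $e$. Here the ideals $(\bar\Phi_{np^e})$ all reduce to essentially $(p)$ modulo $\Phi_n$, so the crude divisibility argument of the previous paragraph fails. Instead one must exploit the finer relation $\Phi_{np^e}(q)\mid\Phi_n(q^{p^e})$ and the congruence $\Phi_n(q^{p^e})\equiv\Phi_n(q)^{p^e}\pmod p$, together with a $p$--adic (Weierstrass / Newton--polygon) estimate over the highly ramified extensions $R[e_{np^e}]$, in which the infinitude of the available exponents $e$ is what allows the first--order vanishing of $f$ along the $\Phi_{np^e}$ to be upgraded to vanishing modulo $\Phi_n(q)^k$. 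This is the technical heart of the proof and is exactly the argument underlying Habiro's Theorem~4.1 in \cite{Ha1}; by contrast the reductions above and the divisibility in the infinitely--many--primes case are routine once Theorem~\ref{HabiroTheorem4.1} and Lemma~\ref{IdealGeneratedByPhinPhim} are available.
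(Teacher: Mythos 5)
First, note that the thesis does not prove this statement at all: its ``proof'' is the citation \cite[Theorem 6.1]{Ha1}, so there is no in-paper argument to compare yours with; the only question is whether your reconstruction is complete. Your reduction is sound as far as it goes: using Theorem~\ref{HabiroTheorem4.1} to embed $R[q]^S$ into $R[q]^{\{n\}}=\inverselim{k}R[q]/(\Phi_n^k(q))$, translating $\ev_\zeta(f)=0$ into $f\in(\Phi_m(q))$, and then, in the case where the elements of $\cT$ adjacent to $n$ involve infinitely many distinct primes $p_m$, using Lemma~\ref{IdealGeneratedByPhinPhim} and the congruence $\Phi_{np^e}(q)\equiv\Phi_n(q)^{c}\pmod p$ to force divisibility of $\bar f_k$ by arbitrarily many distinct primes in the finitely generated $R$-module $R[q]/(\Phi_n^k)$. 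Even there your final sentence ``an induction on $k$ down the $\Phi_n$--adic filtration then yields $x=0$'' hides a real (though repairable) point: after dividing by $\Phi_n$ the divisibility by $\bar\Phi_{m_i}$ only survives over $R[1/p_i]$, which destroys exactly the $p_i$-divisibility you need; one has to rerun the congruence argument at each level, using e.g.\ that $\varphi(p_i^{e_i})\ge k$ for all but finitely many $i$, so this step should be written out.

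The genuine gap is the other case, and it is the one that matters. When the infinitely many adjacencies all go through a single prime $p$ (i.e.\ $\cT$ contains $np^{e}$ for infinitely many $e$), you explicitly do not give an argument: you appeal to ``a $p$--adic (Weierstrass / Newton--polygon) estimate'' and assert this is ``exactly the argument underlying Habiro's Theorem~4.1''. That attribution is wrong: Theorem~\ref{HabiroTheorem4.1} is the embedding theorem $R[q]^S\hookrightarrow R[q]^{S'}$, which you have already used as a black box, and it does not contain the statement you need; the assertion that no nonzero element of $R[q]^{\{n\}}$ is divisible by $\Phi_{np^e}(q)$ for infinitely many $e$ is precisely the content of Habiro's Theorem 6.1 itself (his Section 6, via an embedding into a complete local power-series ring over $\Z_p[e_n]$ and a Weierstrass-type factorization). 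So in the single-prime case your proof assumes what is to be proved. This is not a corner case: in the thesis the theorem is applied through Proposition~\ref{f=gFollowsEvf=Evg} and Corollary~\ref{UniquenessAfterEvaluation} with orders of the form $\bp^{\bn}p^k$, i.e.\ exactly an infinite family adjacent to a fixed $\bp^{\bn}$ via the single prime $p$, so the missing case is the one the unified invariants actually rely on. To close the gap you would need to carry out the $p$-adic argument: embed $R[q]^{\{n\}}$ (after completing $R$ at $p$) into a ring where each $\Phi_{np^e}(q)$ becomes, up to a unit, a distinguished (Eisenstein-type) polynomial in $q-e_n$ of degree tending to infinity, and conclude by Weierstrass division or unique factorization that only $0$ admits infinitely many such pairwise non-associate divisors.
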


\begin{proof}
 See \cite[Theorem 6.1]{Ha1}.
\end{proof}

For a prime $p$, while for every $f \in \cS_p$ the evaluation $\ev_\xi(f)$ can be defined for every root of unity $\xi$, for $f\in \cS_{p,j}$ the evaluation $\ev_\xi(f)$ can only be defined  when $\xi$ is a root of unity of order in $p^j\N_p$. Actually we have the following.

\begin{lemma} \label{1100}
Suppose $\xi$ is a root of unity of order $r= p^j r'$, with $(r',p)=1$. Then for any $f\in \cS_p$, one has 
\[ 
\ev_\xi(f) = \ev_\xi(\pi_j(f)). 
\]
If $i\neq j$, then $\ev_\xi(\pi_i(f))=0$.
\end{lemma}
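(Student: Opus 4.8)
The plan is to exploit the product decomposition $\cS_p \simeq \prod_{j=0}^\infty \cS_{p,j}$ from the Proposition preceding the splitting discussion, together with the fact that the factors $\cS_{p,j} = \Z[1/p][q]^{p^j\N_p}$ are the cyclotomic completions associated to the pairwise ``coprime'' (in the sense of Lemma \ref{IdealGeneratedByPhinPhim}) index sets $p^j\N_p$. Write $f = (f_0, f_1, f_2, \dots)$ under this isomorphism, so that $\pi_j(f) = f_j$ and, abusing notation, $f = \sum_j \pi_j(f)$ in an appropriate completed sense. The evaluation $\ev_\xi$ on $\cS_p$ factors through $\cS_p / (\Phi_r(q))$, since $\xi$ has order $r = p^j r'$ with $r \in p^j\N_p$ (as $(r',p)=1$ forces $r' \in \N_p$ and hence $r \in p^j\N_p$).

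First I would observe that for $i \neq j$, the index $r$ does \emph{not} lie in $p^i\N_p$: an element of $p^i\N_p$ has $p$-adic valuation exactly $i$, whereas $\nu_p(r) = j$. Consequently $\Phi_r(q)$ and $\Phi_n(q)$ for $n \in p^i \N_p$ are coprime in $\Z[1/p][q]$ — indeed $r/n$ is either not a prime power or a nonzero power of $p$, so Lemma \ref{IdealGeneratedByPhinPhim}(b) applies (with $p$ inverted), and by Remark \ref{xytoxkyl} this persists for powers. This means $\Phi_r(q)$ is invertible in $\Z[1/p][q]^{p^i\N_p}$, so the composite $\cS_{p,i} \to \cS_{p,i}/(\Phi_r(q)) = 0$ is zero. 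Since $\ev_\xi$ on $\cS_{p,i}$ factors through reduction mod $\Phi_r(q)$, we get $\ev_\xi(\pi_i(f)) = 0$ for $i \neq j$.

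Next, for the $i = j$ term: the evaluation map $\ev_\xi : \cS_p \to \Z[1/p][\xi]$ is itself obtained by reducing modulo $\Phi_r(q)$ and is additive (a ring homomorphism). Writing $f$ as the sum of its components (using that only finitely many contribute mod any fixed $\Phi_r(q)$, or more precisely that the product decomposition is compatible with the quotient $\cS_p \to \cS_p/(\Phi_r(q)) \simeq \Z[1/p][\xi]$, which by the Chinese Remainder argument of the Proposition identifies with $\cS_{p,j}/(\Phi_r(q))$ since all other factors die), we conclude $\ev_\xi(f) = \ev_\xi(\pi_j(f))$. Concretely: apply $- \otimes \Z[1/p][q]/(\Phi_r(q))$ to the isomorphism $\cS_p \simeq \prod_i \cS_{p,i}$; the right side collapses to the single factor $\cS_{p,j}/(\Phi_r(q))$, and the projection $\cS_p \to \cS_{p,j}/(\Phi_r(q))$ is exactly ``first $\pi_j$, then $\ev_\xi$'', while the direct map is ``$\ev_\xi$''.

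The main obstacle — really the only subtle point — is justifying that the quotient of the \emph{infinite} product $\prod_i \cS_{p,i}$ by $(\Phi_r(q))$ is the product of the quotients, i.e. that $\Phi_r(q)$ being a unit in all but one factor forces the quotient to be that one remaining factor's quotient. For a genuine infinite product this requires a small argument: an element of $\prod_i \cS_{p,i}$ lies in the ideal generated by $\Phi_r(q)$ iff each component does, and since $\Phi_r(q)$ is a unit in $\cS_{p,i}$ for $i\neq j$, every element is congruent mod $\Phi_r(q)$ to one supported in the $j$-th factor. I would spell this out at the level of the defining inverse limits — each $\cS_{p,i}$ is a limit of rings $\Z[1/p][q]/(g(q))$ with $g$ a product of $\Phi_n$'s, $n \in p^i\N_p$, all coprime to $\Phi_r$ — so that reduction mod $\Phi_r(q)$ annihilates the $i \neq j$ part term by term, and the coherence conditions pass to the limit cleanly. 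This is routine commutative algebra but deserves a sentence or two; everything else follows formally.
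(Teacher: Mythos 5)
Your proposal is correct and follows essentially the same route as the paper's proof, which simply observes that $\ev_\xi$ is the projection $\cS_p \to \cS_p/(\Phi_r(q)) = \Z[1/p][\xi]$ and that $\cS_{p,i}/(\Phi_r(q))=0$ for $i\neq j$; your write-up just makes explicit the coprimality of $\Phi_r(q)$ with $\Phi_n(q)$, $n\in p^i\N_p$, and the collapse of the infinite product modulo $\Phi_r(q)$. (Only a cosmetic remark: when $r/n$ is not a prime power you are invoking part (a), not part (b), of Lemma \ref{IdealGeneratedByPhinPhim}, but coprimality in $\Z[1/p][q]$ holds in both cases, so nothing is affected.)
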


\begin{proof} Note that $\ev_\xi(f)$ is the image of $f$ under the projection 
$\cS_p \to \cS_p/(\Phi_r(q))= \Z[1/p][\xi]$. It remains to notice that $\cS_{p,i}/(\Phi_r(q))=0$ if $i\neq j$.
\end{proof}

Similarly, if $f \in \cS_b$ and $\xi$ is a root of unity of order coprime with $p$, then $\ev_\xi(f) = \ev_\xi(\pi^p_0(f))$. If the order of $\xi$ is divisible by $p$, then $\ev_\xi(f) = \ev_\xi(\pi^p_\bz(f))$. The same holds when $f\in \cR_b$.
\\

Let $T$ be an infinite set  of powers of an odd prime not dividing $b$ and let $P$ be an infinite set of odd primes not dividing $b$. As above in Section \ref{FurtherSplitting}, let $\{p_i\,|\, i=1,\dots,m\}$ be the set of all distinct {\em odd} prime divisors of $b$ and for $\bn=(n_1,\dots,n_m)$, let $\bp^{\bn}=\prod_{i}p_i^{n_i}$.

\begin{proposition}\label{f=gFollowsEvf=Evg} 
For a given $\bn=(n_1,\ldots,n_m)$, suppose $f, g\in \Z[1/b][q]^{O_{\bn}}$ or $f, g\in \Z[1/b][q]^{A_{\bn}}$ such that $\ev_\xi(f)=\ev_\xi(g)$ for any root of unity $\xi$ with $\text{ord}(\xi)\in \bp^\bn T$, then $f=g$. The same holds true if $\bp^\bn T$ is replaced by $\bp^\bn P$.
\end{proposition}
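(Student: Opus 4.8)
The plan is to deduce this from Habiro's injectivity criterion for evaluation, namely Theorem \ref{HabiroTheorem6.1}. Set $R = \Z[1/b]$, which is a subring of $\Q$, and take $S = O_{\bn}$ (respectively $S = A_{\bn}$). As recorded in Section \ref{FurtherSplitting}, each of the sets $O_{\bn} = \bp^{\bn}\N_{2b}$ and $A_{\bn} = \bp^{\bn}\N_b$ is $\Z[1/b]$--connected, so the first hypothesis of Theorem \ref{HabiroTheorem6.1} is satisfied. It therefore suffices to exhibit, for the given set of evaluation points, a single element $n \in S$ that is adjacent (in the $\Z[1/b]$--sense) to infinitely many of the orders occurring among the chosen roots of unity.

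First I would treat the case $\bp^{\bn}T$. Here $T$ is an infinite set of powers of a single odd prime $\ell$ not dividing $b$; write $T \subseteq \{\ell, \ell^2, \ell^3, \dots\}$ infinite. Take $n := \bp^{\bn} \in S$ (this lies in $S$ since $1 \in \N_{2b}$ and $1\in\N_b$, as $\ell \nmid b$ is irrelevant and $1$ is coprime to everything). For any $\ell^a \in T$ the order $\bp^{\bn}\ell^a$ lies in $S$ as well (it is $\bp^\bn$ times a number coprime to $2b$, resp. to $b$), and the ratio $(\bp^{\bn}\ell^a)/n = \ell^a$ is a nonzero power of the prime $\ell$; since $\Z[1/b]$ is $\ell$--adically separated (because $\ell \nmid b$, so $\bigcap_k (\ell^k) = 0$ in $\Z[1/b]$), the numbers $n$ and $\bp^\bn\ell^a$ are adjacent. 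As $T$ is infinite, $n$ is adjacent to infinitely many elements of $\cT := \{\bp^\bn\ell^a : \ell^a \in T\}$, and $\cT \subseteq S$. Hence $\ev_\Xi$ is injective on $\Z[1/b][q]^S$, and from $\ev_\xi(f) = \ev_\xi(g)$ for all $\xi$ with $\ord(\xi) \in \bp^\bn T$ we conclude $f = g$.

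For the case $\bp^\bn P$, with $P$ an infinite set of odd primes not dividing $b$, the same element $n := \bp^\bn$ works: for each $p \in P$ the order $\bp^\bn p$ lies in $S$, the ratio is the prime $p$, and $\Z[1/b]$ is $p$--adically separated since $p \nmid b$; so $n$ is adjacent to each $\bp^\bn p$, infinitely many of them. Again Theorem \ref{HabiroTheorem6.1} gives injectivity of $\ev_\Xi$ and hence $f = g$. The only point that requires a little care -- and the place I expect to be the main obstacle -- is checking all the bookkeeping that makes the hypotheses of Theorem \ref{HabiroTheorem6.1} literally applicable: that the chosen $n$ genuinely lies in $S$, that the relevant orders lie in $S$ (so that $\ev_\xi$ is even defined on $\Z[1/b][q]^S$ and the theorem's set $\cT$ is a subset of $S$), and that ``$\ell\nmid b$'' and ``$p\nmid b$'' really do give $\ell$-- and $p$--adic separatedness of $\Z[1/b]$. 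All of these are routine once unwound, so no serious difficulty remains.
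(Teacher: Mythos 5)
Your proposal is correct and follows essentially the same route as the paper: the paper's one-line proof also invokes Theorem \ref{HabiroTheorem6.1}, using the $\Z[1/b]$--connectedness of $O_{\bn}$ and $A_{\bn}$ recorded in Section \ref{FurtherSplitting} and the fact that $\bp^{\bn}$ is adjacent to the infinitely many orders $\bp^{\bn}\ell^a$ (resp.\ $\bp^{\bn}p$), since the ratios are powers of primes not dividing $b$ and $\Z[1/b]$ is separated for such primes. Your write-up merely makes explicit the bookkeeping that the paper leaves implicit.
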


\begin{proof}
Since both sets $T$ and $P$ contain infinitely many
numbers adjacent to $\bp^\bn$, the claims follows from Theorem \ref{HabiroTheorem6.1}.
\end{proof}

We can infer from this directly the following important result.

\begin{corollary}\label{UniquenessAfterEvaluation}
Let $p$ be an odd prime not dividing $b$ and $T$ the set of all integers of the form $p^k b'$ with $k\in \N$ and $b'$ any odd divisor of $b^n$ for some $n$. Any element $f(q)\in\cR_b$ or $f(q)\in \cS_b$ is totally determined by the values at roots of unity with orders in $T$.
\end{corollary}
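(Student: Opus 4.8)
The plan is to reduce the statement to Proposition~\ref{f=gFollowsEvf=Evg} by using the splittings of $\cR_b$ and $\cS_b$ established in Section~\ref{FurtherSplitting}. Fix an element $f(q)$ lying in $\cR_b$ (respectively $\cS_b$, in which case $b$ is odd). Recall that we have the decompositions
\[
\cR_b\simeq\prod_{\bn}\Z[1/b][q]^{O_\bn}
\qquad\text{and}\qquad
\cS_b\simeq\prod_{\bn}\Z[1/b][q]^{A_\bn},
\]
where $\bn=(n_1,\dots,n_m)$ ranges over all $m$--tuples of non--negative integers, $m$ being the number of distinct odd prime divisors $p_1,\dots,p_m$ of $b$. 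So $f$ is totally determined by its components $f_\bn\in\Z[1/b][q]^{O_\bn}$ (resp.\ $\Z[1/b][q]^{A_\bn}$). It therefore suffices to show that each $f_\bn$ is determined by the values $\ev_\xi(f)$ for $\ord(\xi)\in T$.

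First I would identify, inside $T$, the subset $\bp^\bn P_b$, where $\bp^\bn=\prod_i p_i^{n_i}$ and $P_b$ is the set of all $p^k$ with $p$ our fixed odd prime not dividing $b$ and $k\in\N$: indeed every such $\bp^\bn p^k$ is of the form $p^k b'$ with $b'=\bp^\bn\mid b^n$ for $n=\max_i n_i$, hence lies in $T$. Next I would invoke the compatibility of evaluation with the splitting, as recorded after Lemma~\ref{1100}: if $\xi$ has order $\bp^\bn p^k$, then the order of $\xi$ determines which factor of the product is ``seen'', namely $\ev_\xi(f)=\ev_\xi(f_\bn)$, because the order is coprime to each $p_i$ to exactly the power $n_i$ and $\ev_\xi$ kills the component $\Z[1/b][q]^{O_{\bn'}}$ for $\bn'\neq\bn$ (the quotient $\Z[1/b][q]^{O_{\bn'}}/(\Phi_{\bp^\bn p^k}(q))$ vanishes since $\bp^\bn p^k\notin O_{\bn'}$). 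So knowing $\ev_\xi(f)$ for all $\xi$ of order in $\bp^\bn p^k$, $k\in\N$, is the same as knowing $\ev_\xi(f_\bn)$ for all such $\xi$.

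Now apply Proposition~\ref{f=gFollowsEvf=Evg} with the infinite set $P$ taken to be $\{p\}$-powers --- more precisely, one uses the $T$-version of that proposition, with $T$ there equal to the set of powers of $p$, which is an infinite set of powers of an odd prime not dividing $b$. The proposition says that an element of $\Z[1/b][q]^{O_\bn}$ (resp.\ $\Z[1/b][q]^{A_\bn}$) is determined by its evaluations at roots of unity of order in $\bp^\bn T$; concretely, if two such elements agree on all these evaluations they are equal. Hence $f_\bn$ is determined, for every $\bn$, and therefore $f$ is determined by the values at roots of unity of orders in $T$, as claimed.

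The only delicate point is bookkeeping: one must check that every order appearing in Proposition~\ref{f=gFollowsEvf=Evg} for a given $\bn$ really does lie in the set $T$ as defined in the corollary (so that the hypothesis ``values at roots of unity with orders in $T$'' genuinely supplies the needed data), and conversely that $T$ does not force comparisons across different $\bn$ that would be vacuous --- this is handled precisely by Lemma~\ref{1100} and the remarks following it, which tell us each $\ev_\xi(f)$ only ever constrains a single component $f_\bn$. I expect this indexing verification to be the main (though routine) obstacle; the underlying analytic input is entirely Habiro's Theorem~\ref{HabiroTheorem6.1}, already packaged for us in Proposition~\ref{f=gFollowsEvf=Evg}.
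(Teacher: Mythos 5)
Your proposal is correct and is essentially the paper's intended argument: the paper derives the corollary "directly" from Proposition \ref{f=gFollowsEvf=Evg} via the splittings $\cR_b\simeq\prod_{\bn}\Z[1/b][q]^{O_\bn}$ and (for odd $b$) $\cS_b\simeq\prod_{\bn}\Z[1/b][q]^{A_\bn}$ of Section \ref{FurtherSplitting}, exactly as you do, with the same bookkeeping that each evaluation of order $\bp^\bn p^k$ only sees the component $f_\bn$.
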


\chapter{Unified invariant}
\label{UnifiedInvariant}

In \cite{Le,BBL, BL}, T. Le together with A. Beliakova and C. Blanchet defined unified invariants for rational homology 3--spheres with the restriction that one can only evaluate these invariants at roots of unity with order coprime to the order of the first homology group of the manifold. 

In this chapter, we define unified invariants for the quantum (WRT) $SU(2)$ and $SO(3)$ invariants of rational homology 3--spheres with links inside, which can be evaluated at any root of unity. The only restriction which remains is that we assume for the $SU(2)$ case the order of the first homology group of the rational homology 3--sphere to be odd.

To be more precise, let $M$ be a rational homology 3--sphere with $b:=|H_1(M)|$ and let $L\subset M$ be a framed link with $l$ components. Assume that $L$ is colored by fixed $\bj=(j_1,\dots, j_l)$ with $j_i$ odd for all $i$.
The following theorem is the main result of this thesis.

\begin{theorem}\label{MainTheorem}
There exist invariants $I^{SO(3)}_{M,L} \in \cR_b$ and, for $b$ odd, $I^{G}_{M,L} \in \cS_b$, such that
\be\label{EvI=Tau'}
\ev_{\xi}(I^G_{M,L}) = \tau'^G_{M,L}(\xi)
\ee
for any root of unity $\xi$ (of odd order if $G=SO(3)$).
Further, the invariants are multiplicative with respect to the connected sum, i.e. for $L\subset M$ and $L'\subset M'$,
\[
I_{M\# M', L\sqcup L'}=I_{M,L}\cdot I_{M',L'}.
\]
\end{theorem}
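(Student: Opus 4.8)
The plan is to construct $I^G_{M,L}$ explicitly from a surgery presentation, verify that its evaluation recovers $\tau'^G$, and then obtain invariance and multiplicativity by soft arguments. \textbf{Construction.} Present $M=S^3(L_0)$ for a framed link $L_0\subset S^3$ with non--degenerate linking matrix of determinant $\pm b$, and regard $L\subset M$ as a link in $S^3$ disjoint from $L_0$, colored by the fixed odd tuple $\bj$. After the standard surgery manipulations that bring $L_0$ into a form to which Theorem \ref{GeneralizedHabiro} applies (splitting off the framings via Lemma \ref{framing} and reducing the linking part), substitute the cyclotomic expansion of $J_{L_0\sqcup L}(\bn,\bj)\prod_i[n_i]$ into $F^G_{L_0\sqcup L}(\xi)$ and interchange the two sums, which is legitimate by Remark \ref{FiniteSum}. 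Applying the Laplace transform of Chapter \ref{laplace} term by term replaces $\sum_{n_i}^{\xi,G}$ of $A(\bn,\bk)$ (together with the Gaussian framing factors) by $\ev_\xi$ of an explicit element of $\Z[1/b][q]$, up to the global Gauss--sum factors $\gamma_{b_i}^G(\xi)$. Dividing by $(F_{U^{+1}})^{\sigma_+}(F_{U^{-1}})^{\sigma_-}\prod_i\tau_{L(|b_i|,1)}$ as prescribed by \eqref{eq:DefinitionOfWRT} and \eqref{eq:DefinitionRenormalizedWRT}, and identifying the resulting normalization via the lens--space computations of Chapter \ref{LensSpaces}, the Gauss--sum factors cancel and what is left is $\ev_\xi$ of a single element; that the various roots needed to write this element down actually exist in $\cR_b$ (resp.\ $\cS_b$ for $b$ odd) is precisely the content of Chapter \ref{RootsInSp}. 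We take this element to be $I^G_{M,L}$.

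\textbf{Evaluation and invariance.} By construction the term--by--term Laplace identity yields $\ev_\xi(I^G_{M,L})=\tau'^G_{M,L}(\xi)$; one checks that the manipulations remain valid for all admissible $\xi$ (all $\xi$ of odd order when $G=SO(3)$, all $\xi$ when $G=SU(2)$ and $b$ is odd), the only possible obstruction being vanishing denominators, which is excluded by Example \ref{TauNonZero} and Lemma \ref{GammabNonzero}. The element just built depends a priori on the chosen surgery presentation, but two presentations of $(M,L)$ are related by Fenn--Rourke moves, and $\tau'^G_{M,L}$ is already a topological invariant by Theorem \ref{InvarianceOfWRT} together with the definition \eqref{eq:DefinitionRenormalizedWRT}. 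Hence the two constructed elements of $\cR_b$ (resp.\ $\cS_b$) have the same evaluation at all roots of unity of order in the set $T$ of Corollary \ref{UniquenessAfterEvaluation}, so they coincide by that corollary; thus $I^G_{M,L}$ depends only on $(M,L)$.

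\textbf{Multiplicativity.} A surgery presentation of $M\#M'$ is the split union $L_0\sqcup L_0'$ of presentations of $M$ and $M'$, with $L\sqcup L'$ the split union of the links; the colored Jones polynomial of a split link is the product, the linking matrix is block--diagonal, and the cyclotomic coefficients, the Laplace transform and the normalization are all multiplicative over split unions, so $I_{M\#M',L\sqcup L'}=I_{M,L}\cdot I_{M',L'}$. Alternatively this follows at once from the evaluation formula, the multiplicativity of $\tau$ in Theorem \ref{PropertiesOfWRTInvariant}, and Corollary \ref{UniquenessAfterEvaluation}.

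\textbf{Main obstacle.} The hard part is the construction step: showing that after the Laplace transform the surgery sum assembles into an honest element of $\cR_b$ (resp.\ $\cS_b$), not merely a compatible family of complex numbers. This rests on the strong integrality of the cyclotomic coefficients $C_{L_0\sqcup L}(\bk,\bj)$ from Theorem \ref{GeneralizedHabiro}, on the Laplace computation of Chapter \ref{laplace} producing denominators that are controlled (lying in $\Z[1/b][q]$ after normalization), and on the existence--of--roots results of Chapter \ref{RootsInSp}; dealing with a non--diagonal linking matrix, and in the $SU(2)$ case with the extra choice of a $4$th root of $\xi$ implicit in $\gamma_b^{SU(2)}$ (Lemma \ref{GammabNonzero}), is where the bulk of the technical effort lies.
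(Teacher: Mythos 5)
Your overall skeleton agrees with the paper in two respects: the use of Corollary \ref{UniquenessAfterEvaluation} to get topological invariance and multiplicativity "for free" once an element with the right evaluations is exhibited, and the treatment of the case where the surgery presentation has diagonal linking matrix with prime--power entries (cyclotomic expansion from Theorem \ref{GeneralizedHabiro}, termwise Laplace transform via Theorem \ref{Qbk}, lens--space normalization via Lemma \ref{ExistensLensSpaces}). Those parts of your argument are essentially the paper's Sections \ref{DefinitionDiagonal} and the uniqueness observation preceding them.

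However, there is a genuine gap in your construction step: you start from an arbitrary presentation $M=S^3(L_0)$ and claim that "splitting off the framings via Lemma \ref{framing} and reducing the linking part" brings it into a form where Theorem \ref{GeneralizedHabiro} applies. Lemma \ref{framing} only changes diagonal entries of the linking matrix; off--diagonal linking numbers cannot be removed this way, and Theorem \ref{GeneralizedHabiro} requires the varying--color sublink to have \emph{zero} linking matrix. Moreover Theorem \ref{Qbk} is only available for $b=\pm p^l$ a prime power, so even a diagonal but non--prime--power framing is not covered directly. The paper's actual route through the general case is to diagonalize by connect--summing with lens spaces $L(b_i,a_i)$ (Lemma \ref{diagonalization}) and then divide by their unified invariants --- and here the central difficulty, which your proposal never addresses, is that the unified invariants of lens spaces are \emph{not} invertible in $\cR_b$ or $\cS_b$. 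The paper's key new idea is to insert an unknot colored by $d(\ve)$ into the lens space (the pairs $M^\ve(b,a)$), prove invertibility of $I^\ve_{M^\ve(b,a)}$ in the factor rings $\cR_p^{p,\ve}$, resp.\ $\cS_p^{p,\ve}$ (Lemma \ref{ExistensLensSpaces}), and define $I_{M,L}$ componentwise through the splitting $\cS_b\simeq \cS_b^{p,0}\times\cS_b^{p,\bz}$ of Corollary \ref{Splitting}, setting $I^\ve_{M,L}=I^\ve_{M^\ve}\cdot(I^\ve_{M^\ve(b_1,a_1)})^{-1}$ by induction on the number of lens spaces. Without this mechanism your claim that "the Gauss--sum factors cancel and what is left is $\ev_\xi$ of a single element" of the ring is unsupported for non--diagonal presentations: you obtain a compatible family of values, but no division inside $\cR_b$ or $\cS_b$ that produces an honest ring element. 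So the hard part you flag as the "main obstacle" is precisely the part that is missing, and it cannot be closed by the tools you cite alone.
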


The rest of this chapter is devoted to the proof of this theorem using technical results that will be proven later.
\\

The following observation is important. By Corollary \ref{UniquenessAfterEvaluation}, there is {\em at most one} element $f^{SO(3)}(q)\in \cR_b$ and, if $b$ odd, {\em at most one} element $f^{SU(2)}(q)\in \cS_b$, such that for every root $\xi$ of odd order one has  
\[ 
\tau'^G_{M,L} (\xi) = \ev_\xi\left( f^G(q)\right).
\]
That is, if  we can find such an element, it is unique. Therefore, since $\tau'^G$ is an invariant of manifolds with links inside, so must be $f^G$  and we put $I^G_{M,L'} := f^G(q)$. It follows directly from Theorem \ref{PropertiesOfWRTInvariant} that this unified invariant must be multiplicative with respect to the connected sum. 
\\

We say that $M$ is \emph{diagonal} if it can be obtained from $S^3$ by surgery along a framed link $L_M$  with diagonal linking matrix where  the diagonal entries are of the form $\pm p^k$ with $p=0,1$ or a prime.
\\

To define  the unified invariant for a general rational homology 3--sphere $M$, one first adds to $M$ lens spaces to get a diagonal manifold $M'$, for which
the unified invariant $I_{M'}$ will be defined in Section \ref{DefinitionDiagonal}. Then $I_M$ is the quotient of $I_{M'}$ by the unified invariants of the lens spaces (see Section \ref{DefinitionLensSpaces}), which were added. But unlike the simpler case of \cite{Le} (where the orders of the roots of unity are always chosen to be coprime to $|H_1(M)|$), the unified invariant of lens spaces are {\em not} invertible in general.
To overcome this difficulty we insert knots in lens spaces and split the unified invariant into different components.

\section{Unified invariant of lens spaces}\label{DefinitionLensSpaces}
Suppose $b,a,d$ are integers with $(b,a)=1$ and $b\neq 0$. Let $M(b,a;d)$ be the pair of a lens space $L(b,a)$ and a knot $K\subset L(b,a)$, colored by $d$, as described in Figure \ref{fig:LensSpaceKnot}. 
Among these pairs we want to single out some whose quantum invariants are invertible.

\begin{figure}[htbp]
\begin{center}
\input{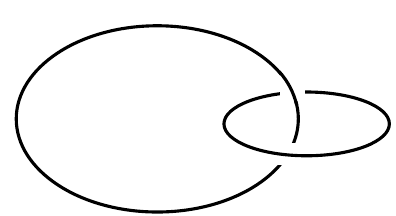_t}
\caption{The lens space  $(L(b,a),K_d)$ is obtained by
$b/a$ surgery on the first component of the link. The second
component is the knot $K$ colored by $d$.}
\label{fig:LensSpaceKnot}
\end{center}
\end{figure}

\vspace{-1cm}

\begin{remark}
It is known that if the color of a link component is $1$, then the component can be removed from the link without affecting the value of quantum invariants (see \cite[Lemma 4.14]{KM}). Hence $\tau_{M(b,a;1)} = \tau_{L(b,a)}$.
\end{remark}

Let $p$ be any prime divisor of $b$. Due to Corollary \ref{Splitting}, to define $I_M$ it is enough to fix $I^0_M= \pi^p_0(I_M)$ and $I^\bz_M=\pi^p_\bz(I_M)$. 
\\

For $\ve\in \{0,\bz\}$, let $M^\ve(b,a) := M(b,a;d(\ve))$, where $d(0):=1$ and $d(\bz)$ is the smallest odd positive integer such that $|a|d(\bz) \equiv 1 \pmod {b}$. 
First observe that such $d(\bz)$ always exists. Indeed, if $b$ is odd, we can achieve this by adding $b$, otherwise the inverse of any odd number modulo an even number is again odd. Further observe that if $|a|=1$, $d(0)=d(\bz)=1$. 

\begin{lemma}\label{ExistensLensSpaces} 
Suppose $b=\pm p^{l}$ is a prime power. For $\ve \in \{0,\bz\}$, there exists an invertible invariant $I^{\ve}_{M^\ve(b,a)} \in \cR^{p,\ve}_p$ such that 
\[
\tau'_{M^\ve(b,a)}(\xi)\;=\; \ev_{\xi}\left(I^{\ve}_{M^\ve(b,a)}\right) 
\]
where $\ve=0$ if the order of $\xi$ is not divisible by $p$ (and odd if $G=SO(3)$), and $\ve=\bz$ otherwise. Moreover, if $p$ is odd, then $I^{\ve}_{M^\ve(b,a)}$ belongs to and is  invertible in $\cS^{p,\ve}_p$.
\end{lemma}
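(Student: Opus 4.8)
The strategy is to reduce everything to explicit computation with Gauss sums and then to package the result inside the cyclotomic completion. First I would write out $\tau'_{M^\ve(b,a)}(\xi)$ explicitly: by Example \ref{WRTOfLensSpace} and the renormalization \eqref{eq:DefinitionRenormalizedWRT}, $\tau'_{M^\ve(b,a)}(\xi) = \tau_{M^\ve(b,a)}(\xi)/\tau_{L(|b|,1)}(\xi)$, and the numerator is computed from $F_{U^b\sqcup K_{d(\ve)}}(\xi)$ divided by $F_{U^{\sn(b)}}(\xi)$. The link here is the Hopf link with framing $b$ on one component and the knot $K$ colored by $d(\ve)$; its colored Jones polynomial is $J_{U^b\sqcup K}(n,d) = q^{b(n^2-1)/4}[nd]/[d]$ (up to the usual normalization of the unknot factors), so that $F_{U^b\sqcup K_{d}}(\xi) = \sum_n^{\xi,G} q^{b(n^2-1)/4}[n][nd]/[d]$. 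Expanding $[n][nd]$ as a sum of four powers of $q$ and applying Lemma \ref{GaussSumFormulas}, this sum evaluates to $\gamma_b^G(\xi)$ times an evaluation at $\xi$ of an element of $\Z[1/b][q^{\pm 1}]$. Dividing by $F_{U^{\sn(b)}}(\xi) = F_{U^{\pm 1}}(\xi)$ (computed in Example \ref{TauNonZero}) and then by $\tau_{L(|b|,1)}(\xi)$, the Gauss sum factors $\gamma_b^G$, $\gamma_{\sn(b)}^G$, $\gamma_{|b|}^G$ cancel (this is where $b = \pm p^l$ being a prime power and the choice of $d(\ve)$ matter: one needs $(r,b)$ to behave the same way in all three Gauss sums, i.e. the argument reduces to the $c=1$ stratum in the relevant component), leaving a quotient of elements of $\Z[1/b][q^{\pm 1}]$ evaluated at $\xi$, with denominators of the form $(1-q^s)$ for $s$ coprime to $b$ in the $\ve=0$ component and $s$ a unit mod the relevant cyclotomic in the $\ve=\bz$ component.

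Next I would promote this $\xi$-wise formula to an element of the ring. Working in the component $\cR_p^{p,\ve}$ (respectively $\cS_p^{p,\ve}$), the orders of roots of unity that occur are exactly those in $\N_{2p}$ for $\ve=0$ and those divisible by $p$ (i.e.\ in $\bigcup_{j>0} p^j\N_{2p}$) for $\ve=\bz$. For the $\ve=0$ case, every denominator $1-q^s$ with $(s,p)=1$ is invertible in $\Z[1/p][q]^{\N_{2p}}$ — this is precisely the kind of statement prepared in Chapter \ref{RootsInSp}, where one shows such inverses (and the relevant fourth roots, needed in the $SU(2)$ case for $\gamma_b^{SU(2)}$) exist in $\cR_b$ and $\cS_b$. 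For the $\ve=\bz$ case, one argues similarly on each factor $\Z[1/p][q]^{p^j\N_{2p}}$: the surviving denominators are units there for the same coprimality reason, and one also needs $d(\bz)$ to be chosen so that $|a|d(\bz)\equiv 1 \pmod b$, which makes the colored contribution reduce to something with invertible denominator. This produces a well-defined element $I^\ve_{M^\ve(b,a)}$ of $\cR_p^{p,\ve}$ (resp.\ $\cS_p^{p,\ve}$), and by Corollary \ref{UniquenessAfterEvaluation} / Proposition \ref{f=gFollowsEvf=Evg} it is the unique one evaluating to $\tau'_{M^\ve(b,a)}$ at the relevant roots of unity; uniqueness also gives invariance (it depends only on the homeomorphism type of the pair).

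For invertibility: I would exhibit the explicit candidate inverse as the analogous expression with $b$ replaced by $-b$ and $a$ adjusted, or more directly observe that $\tau'_{M^\ve(b,a)}(\xi)$ is nonzero at all relevant $\xi$ (using Example \ref{TauNonZero}: $F_{U^b}$ and $F_{U^{\sn(b)}}$ are nonzero in all cases allowed here, since in the $SU(2)$ case $b$ is odd so the vanishing condition $\tfrac{b}{(r,b)}\equiv 2 \pmod 4$ never occurs), and that the ring $\cR_p^{p,\ve}$ — being a product of cyclotomic completions over a field-like base after inverting $p$ — detects invertibility componentwise; an element whose every evaluation at a root of unity of order in the relevant set is a unit, and which has invertible leading denominator structure, is itself invertible. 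More carefully, I'd write $I^\ve_{M^\ve(b,a)}$ as an explicit fraction $u/v$ with $u,v$ each a product of a power of $q$, a unit of the form $(1-q^s)$-type factor, and the Gauss-sum quotient rewritten via the Laplace-transform identity \eqref{1122}-style manipulation; then both $u$ and $v$ are manifestly invertible in the completion, so the quotient is too. The main obstacle is the bookkeeping of Gauss sums: verifying that the factors $\gamma_b^G$, $\gamma_{\sn(b)}^G$ and $\gamma_{|b|}^G$ truly cancel in each of the two strata $\ve\in\{0,\bz\}$, keeping careful track of the fourth-root ambiguity $\xi^{1/4}$ in the $SU(2)$ case and of the Jacobi-symbol and $\epsilon$-factors in Lemma \ref{GaussSumFormulas}, so that what remains is genuinely a ratio of Laurent polynomials over $\Z[1/b]$ with unit denominators — and then invoking the existence-of-roots results of Chapter \ref{RootsInSp} to land inside $\cR_p^{p,\ve}$ and $\cS_p^{p,\ve}$ rather than some larger ring.
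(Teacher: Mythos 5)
Your overall shape (compute $\tau'$ at roots of unity, then realize it as the evaluation of a ring element using the roots constructed in Chapter \ref{RootsInSp}, with uniqueness/invariance from Corollary \ref{UniquenessAfterEvaluation}) is the paper's shape, but the two places where you wave your hands are exactly where the content of the lemma lies. First, your computation of $\tau'_{M^\ve(b,a)}$ treats the surgery link as a Hopf link with \emph{integer} framing $b$, i.e.\ it only covers $a=\pm1$; for general $a$ coprime to $b$ (which is what the diagonalization Lemma \ref{diagonalization} forces on us) one must expand $b/a$ into a continued fraction, replace the rationally framed unknot by a Hopf chain, and evaluate the resulting multiple Gauss sums — in the paper via Li--Li's formulas for $SO(3)$ and via Gauss sum reciprocity for $SU(2)$. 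This is where the Dedekind sums $s(a,b)$, the Jacobi symbol $\left(\frac{|a|}{c}\right)$ and the exponents $u^{G}$ in Proposition \ref{WRTGLensSpaces} come from, and the exponent $3s(1,b)-3\sn(b)s(a,b)\in\frac1b\Z$ is precisely the reason the $b$th root of $q$ from Proposition \ref{cor9} is needed in the $j=0$ component. None of this structure is visible in your Gauss-sum cancellation sketch.

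Second, your route to membership in the ring and to invertibility does not work as stated. Factors of the form $1-q^s$ are \emph{not} invertible in $\Z[1/p][q]^{\N_{2p}}$ (already $\Phi_n(q)\mid 1-q^s$ for odd $n\mid s$ coprime to $p$), and there is no principle in the paper that an element all of whose evaluations are units is itself invertible. The actual mechanism is the specific choice of colors: with $d(0)=1$ and $|a|d(\bz)\equiv1\pmod b$, the ratio $\bigl(\frac{1-\xi^{\pm\sn(a)d b_{*r}}}{1-\xi^{\pm\sn(b)b_{*r}}}\bigr)^{\chi(c)}$ in \eqref{tau'SO3}, \eqref{tau'SU2} collapses to $\pm$ a power of $\xi$ when $c=1$ and disappears when $c>1$, so $I^\ve_{M^\ve(b,a)}$ can be \emph{defined} componentwise as an explicit monomial (sign $\times$ Jacobi symbol $\times$ power of $q$ or $q^{1/b}$), whose invertibility is immediate because $q$ and $q^{1/b}$ are units; well-definedness rests on the integrality statements of Lemma \ref{0622}. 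Relatedly, your appeal to ``the relevant fourth roots'' existing in the ring for the $SU(2)$ case is false: Chapter \ref{RootsInSp} provides only $b$th roots of $q$ with $b=\pm p^l$ and $p$ inverted, and no fourth root of $q$ exists in $\cS_p$ for odd $p$. The paper sidesteps this by showing $4\mid u'$ for $d=d(\bz)$ (Lemma \ref{0622}(d)), so the invariant is independent of the fourth root of $\xi$; without that observation your construction does not land in $\cS^{p,\ve}_p$ at all.
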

A proof of Lemma \ref{ExistensLensSpaces} will be given in Chapter \ref{LensSpaces}.

\section{Unified invariant of diagonal manifolds} \label{DefinitionDiagonal}

Remember the definition given in Section \ref{CyclotomicJones} 
\[ 
A(n,k) = \frac{\prod^{k}_{i=0}
\left(q^{n}+q^{-n}-q^i -q^{-i}\right)}{(1-q) \, (q^{k+1};q)_{k+1}}
\]
for non--negative integers $n,k$.

We need the following main technical result of this thesis.

\begin{theorem} \label{Qbk}
Suppose $b=\pm 1$ or $b= \pm p^l$ where $p$  is a  prime and $l$ is positive. For any non--negative integer $k$, there exists an element $Q^G_{b,k} \in \cR_b $ such that for every root $\xi$ of order $r$ (odd, if $G=SO(3)$), one has
\[
\frac{{\sum\limits_{n}}^{\xi,G} \, q^{b\frac{n^2-1}{4}} A(n,k) }{F^G_{U^b}(\xi)}
= \ev_\xi (Q^G_{b,k}).
\]
In addition, if $b$ is odd, $Q^G_{b,k} \in \cS_b $.
\end{theorem}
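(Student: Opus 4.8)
The plan is to reduce the sum $\sum_n^{\xi,G} q^{b(n^2-1)/4} A(n,k)$ over representations to a \emph{Laplace-transform computation}, i.e. to a Gauss sum multiplied by a ``rational'' factor that lies in $\cR_b$ (or $\cS_b$ for $b$ odd). First I would recall from Chapter \ref{CyclotomicJones} that $A(n,k)$ is, up to the universal prefactor $(1-q)^{-1}(q^{k+1};q)_{k+1}^{-1}$, a product $\prod_{i=0}^k(q^n+q^{-n}-q^i-q^{-i})$ which can be rewritten as a Laurent polynomial in $q^{\pm n}$ — concretely as a $\Z[q^{\pm1}]$-linear combination of the ``balanced monomials'' $q^{ns}+q^{-ns}$ (and the constant) for $0\le s\le k+1$. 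This is the crucial structural fact: each $A(n,k)$ is a finite combination $\sum_s c_{s}(q)\,(q^{ns}+q^{-ns})$ with $c_s(q)$ in the localization $\Z[q^{\pm1}][\tfrac{1}{(q;q)_{k+1}}\cdot(1-q)]$, so that applying the twisted sum $\sum_n^{\xi,G} q^{b(n^2-1)/4}(\,\cdot\,)$ termwise produces shifted Gauss sums $\gamma^G$-type objects $\sum_n^{\xi,G} q^{b(n^2-1)/4+ ns}$.

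The second step is to evaluate each such shifted Gauss sum. Completing the square, $\sum_n^{\xi,G} q^{b(n^2-1)/4 + ns}$ is, via Lemma \ref{GaussSumFormulas} and Lemma \ref{GammabNonzero}, of the form $\gamma^G_b(\xi)$ times a root of unity depending polynomially (after the substitution $b_{*r}$) on $s$ — but since we are dividing by $F^G_{U^b}(\xi) = 2\gamma^G_b(\xi)\,\ev_\xi\!\big((1-q^{-b_{*r}})^{\chi(c)}/((1-q)(1-q^{-1}))\big)$ from Example \ref{TauNonZero}, the transcendental factors $\gamma^G_b(\xi)$ and $\sqrt r$ cancel. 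What remains is, for each fixed $k$, an expression $\ev_\xi$ of a fixed element of the fraction field of $\Z[1/b][q]$ whose only potential denominators are powers of $(1-q)$, $(1-q^{-1})$, $(q^{k+1};q)_{k+1}$, and — this is where the prime power hypothesis on $b$ enters — powers of $(1-q^{-b_{*r}})$, which one must recognize as invertible in $\cR_b$ and $\cS_b$ (this uses that $b$ is a prime power so that $\N_2$, resp. $\N$, splits into $\Z[1/b]$-connected pieces as in Section \ref{FurtherSplitting}, and that $1-q^{b}$-type cyclotomic factors become units away from the relevant prime). The existence of these roots/inverses is precisely what Chapter \ref{RootsInSp} is set up to provide, so I would cite the relevant lemma there rather than redo it.

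The third step is to assemble: having written, for each root of unity $\xi$ of order $r$, the quotient as $\ev_\xi$ of one and the same element $Q^G_{b,k}$ of the localization sitting inside $\cR_b$ (resp. $\cS_b$ when $b$ is odd), I would invoke Corollary \ref{UniquenessAfterEvaluation} / Proposition \ref{f=gFollowsEvf=Evg} to conclude that this element is well-defined and independent of $\xi$: two candidate elements agreeing at all roots of unity of orders in the test set $T$ must coincide. A subtlety to handle carefully is the dependence on $r$ through $b_{*r}$ (the inverse of $b$ mod $r$): the exponent $-b_{*r}$ is not literally an integer independent of $r$, so I would argue — exactly as in the $SO(3)$ analysis of \cite{BeBuLe} — that within each connected component $\Z[1/b][q]^{O_{\bn}}$ (resp. $A_{\bn}$) the map $q\mapsto q^{b}$ is invertible, so ``$q^{b_{*r}}$'' makes sense as a genuine ring element there, independent of $r$; this is really the heart of why the naive formula assembles into a single element of $\cR_b$.

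The main obstacle I expect is exactly this last point: controlling the $b_{*r}$-dependence and checking that the $(1-q^{-b_{*r}})$-type factors are genuinely invertible in $\cR_b$/$\cS_b$ uniformly in $r$ — i.e. transporting the pointwise Gauss-sum identities into a single statement in the cyclotomic completion. Once the splittings of $\cR_b$ and $\cS_b$ from Section \ref{FurtherSplitting} and the root-existence results of Chapter \ref{RootsInSp} are in hand, the rest is bookkeeping; but getting the ring element to be \emph{canonical} (not just defined componentwise) is where the real work lies. The $SU(2)$ case adds only the need to track the fixed $4$th root $\xi^{1/4}$ and the extra $\varphi(G(r,b,0))$ summand in $\gamma^{SU(2)}_b$, which by Lemma \ref{GammabNonzero} behaves identically for the cancellation, given the hypothesis that $b$ is odd.
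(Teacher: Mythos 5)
Your framework matches the paper's: the reduction of the twisted sum to termwise Gauss sums (the paper's Laplace transform $\cL_b$ and Lemma \ref{1001}), the cancellation of $\gamma^G_b(\xi)$ against $F^G_{U^b}(\xi)$ via \eqref{1122}, the use of the elements $x_b$, $z_{b,a}$ from Chapter \ref{RootsInSp} to make ``$q^{b_{*r}}$'' a genuine, $r$--independent ring element, and the appeal to Corollary \ref{UniquenessAfterEvaluation} for canonicity are all exactly how Chapter \ref{laplace} is organized. But there is a genuine gap at the decisive step. You assert that after the cancellation one is left with $\ev_\xi$ of a fixed element of the fraction field of $\Z[1/b][q]$ whose denominators --- powers of $(1-q)$, $(q^{k+1};q)_{k+1}$, and $(1-q^{-b_{*r}})$--type factors --- ``one must recognize as invertible in $\cR_b$ and $\cS_b$''. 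They are not: $\cR_b$ and $\cS_b$ are by definition the completions of $\Z[1/b][q]$ along the ideals generated by exactly these products $(q;q^2)_k$, $(q;q)_k$, so $(1-q)$ and, more importantly, $(q^{k+1};q)_{k+1}$ (whose cyclotomic factors $\Phi_d(q)$ run over all $d$ dividing some $k+1\le n\le 2k+2$) are never units there, and the localization you invoke does not embed into $\cR_b$. Consequently your third step never actually produces an element of $\cR_b$; what has to be proved is a divisibility statement, namely that $\cL_{b;j}\bigl(\prod_{i=0}^k(z+z^{-1}-q^i-q^{-i})\bigr)$, an alternating sum of quadratic powers $x_{b;j}^{n^2}$ with $q$--binomial coefficients (Lemma \ref{S_{b;j}(k,q)}), is divisible by $(q^{k+1};q)_{k+1}$ inside the completion. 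This is the actual content of Theorem \ref{Qbk} and the reason it is called the main technical result.

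The paper supplies this missing mechanism with Andrews' identity (3.43) of \cite{And}: after identifying $S_{b;j}(k,q)$ with the left--hand side of that identity for a carefully chosen Bailey pair and parameter set (Lemma \ref{LHS}), the right--hand side rewrites it as $\frac{(t;t)_{2m}}{(q^{N+1};q)_{cm}}\,T_k(q,t)$ with $T_k(q,t)\in\Z[q^{\pm1},t^{\pm1}]$, so the division by $(q^{k+1};q)_{k+1}$ leaves only denominators of the form $\widetilde{(q;q)}_{N+cm}$ and $(-t;t)_N$, whose cyclotomic factors are coprime (in $\Z[1/p][q]$) to all $\Phi_{p^jn}(q)$ with $n\in\N_p$ and hence genuinely invertible in $\cS_{p,j}$ (Part 2 of the proof). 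Your proposal treats this stage as ``bookkeeping'', but without Andrews' identity (or a substitute integrality argument in the spirit of Habiro's cyclotomic expansion) there is no way to pass from the pointwise Gauss--sum formula to an element of $\cR_b$, so the proof as proposed does not go through. The correct handling of $b_{*r}$ and of the $4$th root in the $SU(2)$ case, which you flag as the main obstacle, is by comparison the easier part and is already packaged in Proposition \ref{eval_z}.
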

A proof will be given in Chapter \ref{laplace}.
\\

Let $L_M\sqcup L$ be a framed link in $S^3$ with disjoint sublinks $L_M$ and $L$, with $m$ and $l$ components, respectively. Assume that $L$ is colored by a fixed $\bj=(j_1,\dots, j_l)$ with $j_i$ odd for all $i$. Surgery along the framed link $L_M$ transforms $(S^3,L)$ into  $(M,L)$. Assume now that $L_M$ has diagonal linking matrix with nonzero entries $b_i=p_i^{l_i}$, $p_i$ prime or 1, on the diagonal. Then $M=S^3(L_M)$ is a diagonal rational homology 3--sphere. 
Using \eqref{Jones} and Remark \ref{FiniteSum}, taking into account the framings $b_i$, we have
\[
J_{L_M\sqcup L}(\bn,\bj)\prod_{i=1}^m [n_i] = \sum_{\bk\ge 0} C_{L_{M,0} \sqcup L}
(\bk,\bj) \, \prod_{i=1}^m q^{b_i \frac{n_i^2-1}{4}} A(n_i,k_i)
\]
where $L_{M,0}$ denotes the link $L_M$ with all framings switched to zero. 
By the Definition \eqref{F} of $F^G_{L_M\sqcup L}$, we get
\[ F^G_{L_M\sqcup L}(\xi)= \sum_{\bk \ge 0}
\ev_\xi(C_{L_{M,0} \sqcup L}(\bk,\bj)) \, \prod_{i=1}^m
{\sum_{n_i}}^{\xi,G} \,  q^{b_i \frac{n_i^2-1}{4}} A(n_i,k_i).
\]
From \eqref{0077} and  Theorem \ref{Qbk}, we get
\[
\tau'_{M,L}(\xi) = \ev_\xi \left \{  \prod_{i=1}^m  I_{L(b_i,1)} \,
 \sum_{\bk} C_{L_{M,0} \sqcup L}(\bk,\bj) \, \prod_{i=1}^m  Q_{b_i,k_i} \right \}
\]
where the unified invariant of the lens space $I_{L(b_i,1)}\in \cR_b$, with $\ev_\xi(I_{L(b_i,1)})=\tau'_{L(b_i,1)}(\xi)$, exists by  Lemma \ref{ExistensLensSpaces}. Thus if we define
\[
I_{(M,L)}:= \prod_{i=1}^m  I_{L(b_i,1)} \, \sum_{\bk}
C_{L_{M,0} \sqcup L}(\bk,\bj) \, \prod_{i=1}^m  Q_{b_i,k_i}\, ,
\]
then \eqref{EvI=Tau'} is satisfied. By Theorem \ref{GeneralizedHabiro}, $C_{L_{M,0} \sqcup L}(\bk,\bj)$ is divisible by $(q^{k+1};q)_{k+1}/(1-q)$ which is divisible by $(q;q)_k$ where $k = \max k_i$. It follows that $I^{SO(3)}_{(M,L)} \in \cR_b$ and, if $b$ is odd, $I^G_{(M,L)} \in \cS_b$.

\section{Definition of the unified invariant: general case}\label{DefinitionGeneral}

The general case reduces to the diagonal case by the well--known trick of diagonalization using lens spaces:

\begin{lemma}[Le]\label{diagonalization} 
For every rational homology sphere $M$, there are lens spaces $L(b_i,a_i)$ such that the connected sum of $M$ and these lens spaces is diagonal. Moreover, each $b_i$ is a prime power divisor of $|H_1(M,\Z)|$.
\end{lemma}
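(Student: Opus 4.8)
The plan is to prove Lemma~\ref{diagonalization} by exhibiting the classical diagonalization trick in its standard refined form, keeping track of the prime powers involved. First I would recall that $M$ is a rational homology $3$--sphere, so $H_1(M;\Z)$ is finite, with the prime--power decomposition $H_1(M;\Z)=\bigoplus_i \Z/b_i\Z$ already fixed in Section~\ref{RenormalizedWRTInvariants}; set $b=\prod_i b_i$. Pick a surgery presentation $M=S^3(L_M)$ with integral framed link $L_M$ and linking matrix $B$. Since $B$ presents $H_1(M;\Z)$, we have $|\det B|=b$. The core observation is that the connected sum operation on manifolds corresponds to taking disjoint (split) union of surgery links, and that connected--summing with $L(b',a')$ amounts to adding to $L_M$ a split unknot component with rational framing $b'/a'$, which after slam--dunk/blow--up moves can be realized by integral surgery. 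So the linking matrix of $M\# \big(\#_i L(b_i,a_i)\big)$ is the block sum $B\oplus (\text{diagonal block})$, up to the Kirby/Fenn--Rourke moves that do not change the manifold.

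The key algebraic step is a statement about symmetric integral matrices: any symmetric nondegenerate integral matrix $B$ can be brought, by the moves induced on linking matrices by Kirby (handle--slide) moves together with stabilization by $(\pm 1)$--blocks, to a diagonal matrix whose diagonal entries are $\pm 1$ except possibly for one entry equal to $\pm \det B$ up to units --- and, iterating over the prime--power factors, to a diagonal matrix with entries of the form $\pm p^{k}$ with $p$ prime or $p=1$. Concretely I would invoke the classification of quadratic forms over $\Z$ (or over the localizations $\Z_{(p)}$): over each $\Z_{(p)}$ the form $B$ diagonalizes with diagonal entries that are units times powers of $p$; handle--slides realize exactly the $\mathrm{GL}(n,\Z)$--congruences, and the obstruction to simultaneously diagonalizing over all $p$ is precisely removed by stabilizing with $\pm 1$ blocks (this is where the lens--space summands $L(b_i,a_i)$ enter). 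Since each elementary divisor $b_i$ of $B$ is, by construction, a prime power dividing $b=|H_1(M;\Z)|$, the $a_i$ are chosen coprime to $b_i$ so that $L(b_i,a_i)$ contributes the block $(b_i)$, and all remaining diagonal entries can be taken to be $\pm 1$ or $\pm p^{k}$.

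The main obstacle is the bookkeeping that guarantees the added $b_i$ are genuinely prime--power divisors of $|H_1(M;\Z)|$ rather than arbitrary integers: this requires working with the Smith normal form / elementary divisors of $B$ and arguing that the handle--slide moves, which act by $B\mapsto P^T B P$ for $P\in \mathrm{GL}(n,\Z)$, suffice to pull the form into a shape where each nontrivial invariant factor sits in its own $1\times 1$ block. I would cite T.~Le's diagonalization lemma (as referenced, e.g.\ the argument underlying \cite{Le}, \cite{BBL}) for this refined form rather than reprove the quadratic--form theory from scratch. Once the matrix statement is in hand, translating back through the surgery dictionary --- split union of links $\leftrightarrow$ connected sum, rational framing $b_i/a_i$ unknot $\leftrightarrow$ $L(b_i,a_i)$ summand --- gives the lens spaces $L(b_i,a_i)$ with the asserted properties, completing the proof.
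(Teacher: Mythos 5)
The paper itself gives no independent argument for Lemma \ref{diagonalization}: its entire proof is the citation to \cite[Proposition 3.2 (a)]{Le}, and since you too ultimately defer to Le's diagonalization lemma, the operative content of your proposal coincides with the paper's treatment.

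That said, the sketch you wrap around the citation contains a step that is false as stated and would fail if taken as a standalone proof. It is not true that every nondegenerate symmetric integral matrix $B$ can be brought, by $\mathrm{GL}(n,\Z)$--congruence (handle slides) together with stabilization by $(\pm 1)$--blocks, to a diagonal matrix with entries $\pm p^k$. Both of these operations are Kirby moves, hence preserve the homeomorphism type of $S^3(L_M)$ and in particular the linking pairing on $H_1$; but a manifold with $2$--torsion may have a pairing containing an indecomposable block such as the hyperbolic pairing on $\Z/2\oplus\Z/2$, presented by $\left(\begin{smallmatrix}0&2\\2&0\end{smallmatrix}\right)$, which is not an orthogonal sum of cyclic pairings and therefore cannot arise from any diagonal linking matrix with prime--power entries. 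This is exactly why the lemma must genuinely change the manifold by connected sum with lens spaces; your phrase attributing the removal of the obstruction to ``stabilizing with $\pm 1$ blocks (this is where the lens--space summands enter)'' conflates blow-ups, which change nothing, with the lens--space summands, which carry all the content. A smaller slip: a split unknot with rational framing $b_i/a_i$ contributes, after conversion to integral surgery, a continued--fraction chain block rather than the $1\times 1$ block $(b_i)$ unless $a_i=\pm 1$; what it contributes invariantly is the cyclic linking pairing of $L(b_i,a_i)$, so any honest proof (including the one you would cite) has to be organized around the linking pairing and its decomposition into cyclic prime--power pieces, not around integral congruence of a fixed linking matrix. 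Since you explicitly fall back on \cite{Le} for the real argument, the proposal is acceptable in the same sense the paper's proof is, but the heuristic justification should not be read as a correct outline of that argument.
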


\begin{proof}
  See \cite[Proposition 3.2 (a)]{Le}.
\end{proof}

Suppose $(M,L)$ is an arbitrary pair of a rational homology 3--sphere and a link $L$ in it colored by odd numbers $j_1,\dots, j_l$. Let $L(b_i,a_i)$ for  $i=1,\dots, m$  be the lens spaces of Lemma \ref{diagonalization}. To construct the unified invariant of $(M,L)$, we use induction on $m$. If $m=0$, then $M$ is diagonal and $I_{M,L}$  has been defined in Section \ref{DefinitionDiagonal}.

Since $(M,L) \# M(b_1,a_1;d)$ becomes diagonal after adding $m-1$ lens spaces, the unified invariant of $(M,L) \# M(b_1,a_1;d)$ can be defined by  induction for any odd integer $d$. In particular, one can define $I_{M^\ve}$ where $M^\ve := (M,L) \# M^\ve(b_1,a_1)$. Here $\ve = 0$ or $\ve =\bz$ and $b_1$ is a power of a prime $p$ dividing $b$. It follows that  the components $\pi^p_\ve(I^{SO(3)}_{M^\ve}) \in \cR^{p,\ve}_b$ and $\pi^p_\ve(I^{G}_{M^\ve}) \in \cS^{p,\ve}_b$, for $b$ odd, are defined.
By Lemma \ref{ExistensLensSpaces}, $I^\ve_{M^\ve(b_1,a_1)}$ is defined and invertible. We put
\[
I^\ve_{M,L} := I^\ve_{M^\ve} \cdot  (I^\ve_{M^\ve(b_1,a_1)})^{-1}
\]
and due to our construction $I_{M,L}:= (I^0_{M,L}, I^\bz_{M,L})$ satisfies \eqref{EvI=Tau'}. This completes the construction of $I_{M,L}$. 

\begin{remark}
The part $I^0_M= \pi^p_0(I_M)$, when $b=p$, was  defined by T. Le \cite{Le} (up to normalization), where Le considered the case when the order of the roots of unity is coprime to $b$. 
More precisely, the invariant defined in \cite{Le} for $M$ divided by the invariant of $\#_i L(b^{k_i}_i,1)$ (which is invertible in $\cS_{b,0}$, see \cite[Subsection 4.1]{Le} and Remark \ref{Le'sRing} below) coincides with $\pi_0 I_M$ up to a factor $q^{\frac{1-b}{4}}$ by Theorem \ref{MainTheorem}, \cite[Theorem 3]{Le} and Proposition \ref{f=gFollowsEvf=Evg}. Nevertheless, we give a self--contained definition of $I^0_M$ here.
\end{remark}

It remains to prove Lemma \ref{ExistensLensSpaces} (see Chapter \ref{LensSpaces}) and Theorem \ref{Qbk} (see Chapter \ref{laplace}).

\chapter{Roots in $\cS_p$}\label{RootsInSp}

The proof of the main theorem uses the Laplace transform method introduced in Chapter \ref{laplace}. The aim of this chapter is to show that the image of the Laplace transform belongs to $\cR_b$ (respectively $\cS_b$ if $b$ odd), i.e. that certain roots of $q$ exist in $\cR_b$ (respectively $\cS_b$). We achieve this by showing that a certain type of Frobenius endomorphism of $\cS_{b,0}$ is in fact an isomorphism.

\section{On the module $\Z[q]/(\Phi^k_{n}(q))$} 
Since cyclotomic completions of polynomial rings are built from modules like $\Z[q]/(\Phi^k_{n}(q))$,
we first consider these modules. Fix $n,k\ge 1$. Let
\[ 
E := \frac{\Z[q]}{(\Phi^k_{n}(q))} \quad \text { and } \quad 
G := \frac{\Z[e_{n}][x]}{(x^k)}\; .
\]

The following is probably well--known.
\begin{proposition}\label{h_kInjektive}
$\text{ }$
\begin{enumerate}
\item Both $E$ and $G$ are free $\Z$--modules of rank $k \varphi(n)$.

\item The algebra map $h: \Z[q] \to \Z[e_n][x]$ defined by
\[ 
h (q) = e_n + x 
\]
descends to a well--defined algebra homomorphism, also denoted by $h$, from $E$ to $G$.
Moreover, the algebra homomorphism $h: E \to G$ is injective.

\end{enumerate}
\end{proposition}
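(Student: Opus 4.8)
The plan is to prove part (1) first by a direct dimension count, then deduce the well-definedness of $h$ in part (2) from the divisibility $\Phi_{n}(q^p)$-type relations and finally establish injectivity by a filtration argument. For part (1): $\Phi_n(q)\in\Z[q]$ is monic of degree $\varphi(n)$, so $\Phi_n^k(q)$ is monic of degree $k\varphi(n)$, and hence $E=\Z[q]/(\Phi_n^k(q))$ is a free $\Z$--module with basis $1,q,\dots,q^{k\varphi(n)-1}$. Similarly $\Z[e_n]$ is free of rank $\varphi(n)$ over $\Z$ (it is the ring of integers of the $n$-th cyclotomic field, or more elementarily $\Z[q]/(\Phi_n(q))$), so $G=\Z[e_n][x]/(x^k)$ is free over $\Z[e_n]$ of rank $k$ with basis $1,x,\dots,x^{k-1}$, hence free of rank $k\varphi(n)$ over $\Z$.

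For the well-definedness of $h$ in part (2): the assignment $q\mapsto e_n+x$ defines an algebra map $\Z[q]\to\Z[e_n][x]$, and to see it descends to $E\to G$ we must check $h(\Phi_n^k(q))=0$ in $G$, i.e. $\Phi_n(e_n+x)^k\equiv 0\pmod{x^k}$. This follows because $\Phi_n(e_n+x)\in\Z[e_n][x]$ has zero constant term: evaluating at $x=0$ gives $\Phi_n(e_n)=0$. Therefore $x\mid\Phi_n(e_n+x)$ in $\Z[e_n][x]$, so $x^k\mid\Phi_n(e_n+x)^k$, which is exactly what is needed. Thus $h\colon E\to G$ is a well-defined algebra homomorphism.

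For injectivity, I would argue as follows. Both $E$ and $G$ carry the filtration by powers of the ideal generated by $\Phi_n(q)$ (resp. by $x$): set $\mathfrak{m}=(\Phi_n(q))\subset E$ and $\mathfrak{n}=(x)\subset G$. Since $h(\Phi_n(q))=\Phi_n(e_n+x)$ and, as just noted, $\Phi_n(e_n+x)=x\cdot u(x)$ where $u(0)=\Phi_n'(e_n)\neq 0$ (the cyclotomic polynomial has simple roots, $e_n$ being one of them), the unit $u$ is invertible in the local-type ring $\Z[e_n][x]/(x^k)$ after inverting the single nonzero integer $N=\mathrm{Norm}(\Phi_n'(e_n))$; but we want an integral statement, so instead I track the associated graded. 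The map $h$ sends $\mathfrak{m}^i$ into $\mathfrak{n}^i$ for each $i$, and on the associated graded pieces $\mathfrak{m}^i/\mathfrak{m}^{i+1}\to\mathfrak{n}^i/\mathfrak{n}^{i+1}$ it is multiplication by $u(0)^i=\Phi_n'(e_n)^i$ under the identifications $\mathfrak{m}^i/\mathfrak{m}^{i+1}\cong\Z[e_n]$ and $\mathfrak{n}^i/\mathfrak{n}^{i+1}\cong\Z[e_n]$. Since $\Phi_n'(e_n)\neq 0$ and $\Z[e_n]$ is a domain, each graded map is injective, hence $\gr(h)$ is injective, hence $h$ is injective because the filtrations are finite and exhaustive ($\mathfrak{m}^k=0$, $\mathfrak{n}^k=0$).

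I expect the main obstacle to be the injectivity in part (2): one must be careful that injectivity on the associated graded does genuinely imply injectivity of $h$ here — this works because the filtrations have finite length $k$, so a standard dévissage applies, but one should spell out that if $h(a)=0$ with $a\in\mathfrak{m}^i\setminus\mathfrak{m}^{i+1}$ then its image in $\mathfrak{n}^i/\mathfrak{n}^{i+1}$ is both zero and equal to $\Phi_n'(e_n)^i$ times the nonzero class of $a$, a contradiction in the domain $\Z[e_n]$. A minor point to verify carefully is the identification of $\mathfrak m^i/\mathfrak m^{i+1}$ with $\Z[e_n]$: since $E/\mathfrak m\cong\Z[e_n]$ and $\Phi_n(q)$ generates $\mathfrak m$ with $\mathfrak m^k=0$, multiplication by $\Phi_n(q)^i$ gives a surjection $E/\mathfrak m\twoheadrightarrow\mathfrak m^i/\mathfrak m^{i+1}$ which is an isomorphism by the rank count in part (1). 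With these identifications in place, the computation of $\gr(h)$ as multiplication by powers of $\Phi_n'(e_n)$ is a direct Taylor-expansion check and the proof concludes.
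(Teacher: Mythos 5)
Your proof is correct, but the injectivity argument takes a genuinely different route from the paper's. The paper argues directly with divisibility in the polynomial ring: if $h(f)=0$ then $f(x+e_n)$ is divisible by $x^k$, so $f$ is divisible by $(x-e_n)^k$, hence (since $f$ has integer coefficients) by every Galois conjugate $(x-e_n^j)^k$ with $(j,n)=1$, and therefore by $\Phi_n^k(q)$, giving $f=0$ in $E$. You instead filter $E$ by powers of $(\Phi_n(q))$ and $G$ by powers of $(x)$, identify each graded piece with $\Z[e_n]$ (your rank-count justification of the isomorphism $E/\mathfrak m\cong \mathfrak m^i/\mathfrak m^{i+1}$ is valid, since a rank-$\varphi(n)$ quotient of the torsion-free $\Z[e_n]$ has zero kernel), compute the graded map as multiplication by $\Phi_n'(e_n)^i$, and conclude by dévissage using $\Phi_n'(e_n)\neq 0$ in the domain $\Z[e_n]$; the finite length $\mathfrak m^k=0$ makes the dévissage legitimate, and you spell it out. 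Both arguments ultimately rest on the simplicity of the roots of $\Phi_n$, but the paper's Galois-conjugate argument is shorter and stays entirely inside $\Z[q]$, while your associated-graded computation is more structural: it exhibits $h$ as upper triangular with powers of $\Phi_n'(e_n)$ on the diagonal, which also bounds the cokernel of $h$ and is exactly in the spirit of the determinant/index computation the paper performs next in Proposition \ref{Fb(E)}.
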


\begin{proof} 
\begin{enumerate}

\item Since $\Phi^k_n(q)$ is a monic polynomial in $q$ of degree $k \varphi(n)$, it is clear that
\[
E=  \Z[q]/(\Phi^k_n(q))
\]
is a free $\Z$--module of rank $k\varphi(n)$. Since $G = \Z[e_n]\otimes_\Z \Z[x]/(x^k)$, we see  that $G$ is free over $\Z$ of rank $k\varphi(n)$.

\item To prove that $h$ descends to a map $E \to G$, one needs to verify that $h(\Phi^k_n(q))=0$.
Note that 
\[
h(\Phi^k_n(q))= \Phi^k_{n}(x + e_n) = \prod_{(j,n)=1} (x+e_n - e_n^j)^k.
\]
When $j=1$, the factor is $x^k$, which is 0 in $\Z[e_n][x]/(x^k)$. Hence $h(\Phi^k_n(q))=0$.

Now we prove that $h$ is injective. Let $f(q)\in\Z[q]$. Suppose $h(f(q))=0$, or $f(x+e_n)=0$ in
$\Z[e_n][x]/(x^k)$. It follows that $f(x+e_n)$ is divisible by $x^k$; or that $f(x)$ is divisible by
$(x-e_n)^k$. Since $f$ is a polynomial with coefficients in $\Z$, it follows that $f(x)$ is divisible by all Galois conjugates $(x-e_n^j)^k$ with $(j,n)=1$. Then $f$ is divisible by $\Phi^k_n(q)$. In other words, $f=0$ in $E=  \Z[q]/(\Phi^k_n(q))$. 
\end{enumerate}
\end{proof}

\section{Frobenius maps}
\subsection{A Frobenius homomorphism} We use $E$ and $G$ of the previous section. Suppose $b$ is a positive integer coprime with $n$. If $\xi$ is a primitive $n$th root of 1, i.e. $\Phi_n(\xi)=0$, then $\xi^b$ is also a primitive $n$th root of $1$, i.e. $\Phi_n(\xi^b)=0$. It follows that $\Phi_n(q^b)$ is divisible by $\Phi_n(q)$.

Therefore the algebra map $F_b: \Z[q] \to \Z[q]$, defined by $F_b(q)=q^b$, descends to a well--defined algebra map, also denoted by $F_b$, from $E$ to $E$. We want to understand the image $F_b(E)$. 

\begin{proposition} \label{Fb(E)}
The image $F_b(E)$ is a free $\Z$--submodule of $E$ of maximal rank, i.e. 
\[
\rk(F_b(E)) = \rk(E).
\] 
Moreover, the index of $F_b(E)$ in $E$ is $b^{k(k-1)\varphi(n)/2}$.
\end{proposition}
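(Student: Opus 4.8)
The plan is to identify the index $[E:F_b(E)]$ with the absolute value of the determinant of $F_b$ as a $\Z$-linear endomorphism of the free $\Z$-module $E$, and then to compute that determinant by transporting the situation, through the embedding $h\colon E\hookrightarrow G$ of Proposition \ref{h_kInjektive}, to the ring $G=\Z[e_n][x]/(x^k)$, where the $x$-adic filtration trivializes the computation. First, $F_b$ is injective: if $\Phi_n^k(q)\mid f(q^b)$ in $\Z[q]$, then $f(q^b)$ vanishes to order $\ge k$ at every primitive $n$th root of unity, and since $q\mapsto q^b$ has nonvanishing derivative $bq^{b-1}$ at such roots and (as $b$ is coprime to $n$) permutes them, $f$ itself vanishes to order $\ge k$ at every primitive $n$th root of unity, so $\Phi_n^k(q)\mid f(q)$, i.e. $f=0$ in $E$. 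Hence $F_b(E)$ has maximal rank and $[E:F_b(E)]=|\det_\Z F_b|$, so it remains to compute this determinant.

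Next I would transport the computation to $G$. Define the ring endomorphism $\widetilde F\colon G\to G$ by $\widetilde F(e_n)=e_n^b$ and $\widetilde F(x)=(e_n+x)^b-e_n^b$; this is well defined because $e_n^b$ is again a primitive $n$th root of unity and because $(e_n+x)^b-e_n^b$ lies in the ideal $(x)$, so its $k$th power lies in $(x^k)$. Evaluating on the generator $q$ shows $h\circ F_b=\widetilde F\circ h$. Since $h$ is injective with finite cokernel, say of order $d$, and $\widetilde F$ is injective (which follows once $\det_\Z\widetilde F\neq0$ is established below), the relation $h(F_b(E))=\widetilde F(h(E))$ combined with multiplicativity of indices yields $d\,[E:F_b(E)]=[G:\widetilde F(h(E))]=|\det_\Z\widetilde F|\cdot d$, hence $[E:F_b(E)]=|\det_\Z\widetilde F|$.

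It remains to compute $\det_\Z\widetilde F$ using the filtration $G\supset(x)\supset(x^2)\supset\cdots\supset(x^k)=0$. Because $\widetilde F(x)\in(x)$, this filtration is preserved, so $|\det_\Z\widetilde F|=\prod_{i=0}^{k-1}|\det_\Z\bar F_i|$, where $\bar F_i$ is the endomorphism induced on $(x^i)/(x^{i+1})$. Identifying $(x^i)/(x^{i+1})$ with the free $\Z$-module $\Z[e_n]$ through $c\mapsto c\,x^i$ and using $(e_n+x)^b-e_n^b=be_n^{b-1}x+\binom b2 e_n^{b-2}x^2+\cdots$, one finds $\bar F_i(c)=(be_n^{b-1})^i\,\sigma_b(c)$, with $\sigma_b$ the Galois automorphism $e_n\mapsto e_n^b$ of $\Q(e_n)$. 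As a composition of multiplication by $(be_n^{b-1})^i$ with the integral automorphism $\sigma_b$, its determinant satisfies $|\det_\Z\bar F_i|=|N_{\Q(e_n)/\Q}(be_n^{b-1})|^i=b^{i\varphi(n)}$, where we use that $e_n$ is a unit of $\Z[e_n]$ so $|N_{\Q(e_n)/\Q}(e_n)|=1$ and that $N_{\Q(e_n)/\Q}(b)=b^{\varphi(n)}$. Multiplying over $i=0,\dots,k-1$ gives $|\det_\Z\widetilde F|=b^{\varphi(n)\sum_{i=0}^{k-1}i}=b^{k(k-1)\varphi(n)/2}$, the claimed index; its finiteness also re-establishes that $F_b(E)$ has maximal rank.

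The step I expect to need the most care is the transport in the second paragraph: verifying that $\widetilde F$ is well defined on $G$ and genuinely intertwines $F_b$ through $h$, and carrying out the index bookkeeping so that the cokernel order $d$ cancels cleanly (and, in the last step, keeping track that the determinants of the graded pieces are taken over $\Z$, which is why the harmless factor $\det_\Z\sigma_b=\pm1$ appears and may be discarded). The injectivity of $F_b$ and the norm identity $|N_{\Q(e_n)/\Q}(e_n)|=1$ are routine by comparison.
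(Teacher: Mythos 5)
Your proposal is correct and follows essentially the same route as the paper: you embed $E$ into $G=\Z[e_n][x]/(x^k)$ via $h$, extend $F_b$ to the same algebra endomorphism $\tilde F$ with $\tilde F(x)=(x+e_n)^b-e_n^b$, and obtain the index as $|\det_\Z\tilde F|$ via its triangularity with respect to powers of $x$. The only differences are presentational: the paper reduces to an upper triangular matrix with $b^l$'s on the diagonal by composing with two explicit determinant-$\pm1$ maps instead of taking norms $|N_{\Q(e_n)/\Q}(be_n^{b-1})|^i$ on the graded pieces $(x^i)/(x^{i+1})$, and it does not need your separate root-multiplicity argument for injectivity, which in any case follows from the nonvanishing determinant.
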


\begin{proof}
Using Proposition \ref{h_kInjektive} we identify $E$ with its image $h(E)$ in $G$.

Let  $\tilde F_b: G \to G$ be the $\Z$--algebra homomorphism defined by $\tilde F_b(e_n) =e_n^b, \tilde F_b(x)= (x+e_n)^b - e_n^b$.
Note that $\tilde F_b(x) = b e_n^{b-1} x + O(x^2)$, hence $\tilde F_b(x^k)=0$. Further, $\tilde F_b$ is a well--defined algebra homomorphism since $\tilde F_b(e_n+x)=e_n^b+(x+e_n)^b-e_n^b=(x+e_n)^b$, and $\tilde F_b$ restricted to $E$ is exactly $F_b$. Since $E$ is a lattice of maximal rank in $G\otimes \Q$, it follows that the index of $F_b$ is exactly the determinant of $\tilde F_b$, acting on $G\otimes \Q$.

The elements $e_n^j x^l$ with $0\le l < k$ and $(j,n)=1$ for $0<j<n$ or $j=0$ form a basis of $G$. Note that
\[
\tilde F_b(e_n^j x^l) = b^le_n^{jb}  e_n^{(b-1)l} x^l + O(x^{l+1}).
\]
Since $(b,n)=1$, the set $e_n^{jb}$ with $(j,n)=1$ is the same as the set $e_n^{j}$ with $(j,n)=1$. Let $f_1: G\to G$ be the $\Z$--linear map defined by $f_1(e_n^{jb} x^l) = e_n^{j} x^l$. Since $f_1$ permutes the basis elements, its determinant  is $\pm 1$. Let $f_2: G\to G$ be the $\Z$--linear map defined by $f_2(e_n^{j} x^l) = e_n^{j} (e_n^{1-b}x)^l$. The determinant of $f_2$ is again $\pm 1$ because, for any fixed $l$, $f_2$ restricts to the automorphism of $\Z[e_n]$ sending $a$ to $ e^s_n a$, each of these maps has a well--defined inverse: $a \mapsto e^{-s}_n a$. 
Now 
\[ 
f_1 f_2 \tilde F_b(e_n^j x^l) = b^l e_n^j x^l + O(x^{l+1})
\]
can be described by an upper triangular matrix with $b^l$'s on the diagonal; its determinant is equal to $b^{k(k-1)\varphi(n)/2}$.
\end{proof}

From Proposition \ref{Fb(E)} we see that if $b$ is invertible, then the index is equal to 1, and we have:

\begin{proposition} \label{onen}
For $k\in \N$ and any $n$ coprime with $b$, the Frobenius homomorphism 
$F_b: \Z[1/b][q]/\left(\Phi^k_n(q)\right) \to \Z[1/b][q]/\left(\Phi^k_n(q)\right)$, defined by $F_b(q)= q^b$, is an isomorphism.
\end{proposition}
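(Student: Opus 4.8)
The plan is to read the statement off directly from Proposition \ref{Fb(E)} by inverting $b$. First I would record the two structural facts already available: $E := \Z[q]/(\Phi_n^k(q))$ is a free $\Z$--module of rank $k\varphi(n)$ (Proposition \ref{h_kInjektive}(1)), and, since $n$ is coprime with $b$, $\Phi_n(q)$ divides $\Phi_n(q^b)$ in $\Z[q]$, so $\Phi_n^k(q)$ divides $\Phi_n^k(q^b)$ and $F_b(q)=q^b$ descends to a ring endomorphism both of $E$ and of $\Z[1/b][q]/(\Phi_n^k(q))$.

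Next I would use flatness of the localization $\Z \to \Z[1/b]$ (equivalently, monicity of $\Phi_n^k(q)$, which makes the quotients free) to identify
\[
\frac{\Z[1/b][q]}{(\Phi_n^k(q))} \;\cong\; E \otimes_\Z \Z[1/b],
\]
under which the Frobenius on the left corresponds to $F_b \otimes \id_{\Z[1/b]}$. Choosing a $\Z$--basis of $E$, the endomorphism $F_b$ of $E$ is represented by a matrix $A \in M_{k\varphi(n)}(\Z)$. By Proposition \ref{Fb(E)} the map $F_b$ is injective with image of index $b^{k(k-1)\varphi(n)/2}$ in $E$, and for an injective $\Z$--linear self-map of a finitely generated free abelian group the index of the image equals $|\det|$; hence $\det A = \pm\, b^{k(k-1)\varphi(n)/2}$.

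Finally, $\det A$ is a unit in $\Z[1/b]$, so $A$ is invertible over $\Z[1/b]$ and $F_b \otimes \id$ is a bijective $\Z[1/b]$--linear endomorphism of $\Z[1/b][q]/(\Phi_n^k(q))$. A bijective ring homomorphism has a set-theoretic inverse that is automatically a ring homomorphism, so $F_b$ is a ring isomorphism, as claimed.

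The only points needing any care — and neither is a genuine obstacle — are the base-change identification (flatness of $\Z[1/b]$ over $\Z$, or monicity of $\Phi_n^k(q)$) and the elementary fact that an injective endomorphism of a finite-rank free $\Z$--module has $[\,E : \mathrm{image}\,] = |\det|$. All the substantive content is already contained in Proposition \ref{Fb(E)}; Proposition \ref{onen} is just its specialization to the case where $b$ is made invertible.
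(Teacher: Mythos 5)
Your proposal is correct and follows the same route as the paper: the paper deduces Proposition \ref{onen} directly from Proposition \ref{Fb(E)}, observing that once $b$ is invertible the index $b^{k(k-1)\varphi(n)/2}$ of $F_b(E)$ in $E$ becomes trivial, which is exactly your determinant/base-change argument with the implicit steps written out. No gaps; you have simply made explicit the flatness identification and the index-equals-determinant fact that the paper leaves unstated.
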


\subsection{Frobenius endomorphism of $\cS_{b,0}$}
For finitely many $n_i\in\N_b$ and $k_i\in \N$, the Frobenius endomorphism
\[
F_b :\frac{\Z[1/b][q]}{\left(\prod_i\Phi^{k_i}_{n_i}(q)\right)}\to
\frac{\Z[1/b][q]}{\left(\prod_i\Phi^{k_i}_{n_i}(q)\right)}
\]
sending  $q$ to  $q^b$, is again well--defined. Taking the inverse limit, we get an algebra endomorphism
\[
F_b: \Z[1/b][q]^{\N_b} \to \Z[1/b][q]^{\N_b}.
\]

\begin{theorem}\label{frob}
The Frobenius endomorphism $F_b: \Z[1/b][q]^{\N_b} \to \Z[1/b][q]^{\N_b}$, sending $q$ to $q^b$, is an isomorphism.
\end{theorem}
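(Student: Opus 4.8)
The plan is to reduce the statement to the finite levels of the inverse limit. Recall that $\Z[1/b][q]^{\N_b}$ is the inverse limit of the rings $E_f:=\Z[1/b][q]/(f(q))$, where $f$ runs over the multiplicative set $\Phi^*_{\N_b}$ generated by $\{\Phi_n(q)\mid n\in\N_b\}$ and directed by divisibility; a cofinal family of such $f$ is given by the monic polynomials $f=\prod_{i=1}^{s}\Phi_{n_i}^{k_i}(q)$ with the $n_i\in\N_b$ pairwise distinct and $k_i\ge 1$. Since $(b,n_i)=1$ forces $\Phi_{n_i}(q)\mid\Phi_{n_i}(q^b)$, we get $f(q)\mid f(q^b)$, so $F_b$ descends to a $\Z[1/b]$--algebra endomorphism of every $E_f$, and these maps are compatible with the transition maps and assemble into the $F_b$ of the statement. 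It therefore suffices to show that each $F_b\colon E_f\to E_f$ is bijective: the inverse maps then automatically commute with the transition maps, so their inverse limit is a two--sided inverse of $F_b$ on $\Z[1/b][q]^{\N_b}$.

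To prove bijectivity on $E_f$, I would argue by a determinant count. As $f$ is monic, $E_f$ is a free $\Z[1/b]$--module of finite rank and $F_b$ is $\Z[1/b]$--linear, so $F_b$ is an isomorphism of $E_f$ if and only if $\det(F_b\mid E_f)\in(\Z[1/b])^{\times}$. This determinant is that of an explicit matrix with entries in $\Z[1/b]\subset\Q$, hence is unchanged if we compute it after extending scalars to $\Q$. Over $\Q$ the $\Phi_{n_i}$ are distinct irreducible polynomials, so the Chinese Remainder Theorem gives a ring isomorphism $E_f\otimes_{\Z[1/b]}\Q\cong\prod_{i=1}^{s}\Q[q]/(\Phi_{n_i}^{k_i}(q))$, each factor being local.

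Next I would check that $F_b$ respects this decomposition. Because $\Phi_{n_i}(q)\mid\Phi_{n_i}(q^b)$, the endomorphism $F_b$ of $\Q[q]/(f)$ maps the ideal $(\Phi_{n_i}^{k_i}(q))$ into itself; comparing this with the congruences that characterise the idempotent $\epsilon_i$ associated with the $i$--th factor, and using that each factor is local (so has only the trivial idempotents), one finds $F_b(\epsilon_i)=\epsilon_i$ for all $i$. Hence $F_b$ preserves each factor $\Q[q]/(\Phi_{n_i}^{k_i}(q))$ and acts there as the reduction of $q\mapsto q^b$, which by Proposition \ref{onen} is an automorphism of $\Z[1/b][q]/(\Phi_{n_i}^{k_i}(q))$; in particular its determinant lies in $(\Z[1/b])^{\times}$. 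Taking the product over $i$ shows $\det(F_b\mid E_f)\in(\Z[1/b])^{\times}$, so $F_b\mid E_f$ is an isomorphism, and by the first paragraph this proves the theorem.

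The only real obstacle is the following point, which dictates the shape of the argument: over $\Z[1/b]$ the ideals $(\Phi_{n_i}(q))$ with $n_i\in\N_b$ are in general \emph{not} pairwise comaximal --- their ratios are prime powers $p^{e}$ with $p\nmid b$, so comaximality holds only after inverting the extra prime $p$, which $\Z[1/b]$ does not allow --- so one cannot split $E_f$ as a product over $\Z[1/b]$ and must work over $\Q$, taking care that the determinant (equivalently, the index of $F_b(E_f)$ in $E_f$) is unaffected by this base change. The remaining ingredients --- the descent of $F_b$ to the finite levels via $\Phi_{n_i}(q)\mid\Phi_{n_i}(q^b)$, the idempotent bookkeeping, and the compatibility of the inverse maps with the inverse system --- are routine.
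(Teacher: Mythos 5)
Your proof is correct, and it reaches the finite levels the same way the paper does (the factorization $f=\prod_i\Phi_{n_i}^{k_i}$ with $n_i\in\N_b$, descent of $q\mapsto q^b$ via $\Phi_{n_i}(q)\mid\Phi_{n_i}(q^b)$, and Proposition \ref{onen} as the single-factor input), but the mechanism at the key step is genuinely different. The paper does not base-change to $\Q$ or compute a determinant: it uses the natural map $J:\Z[1/b][q]/(\prod_i\Phi_{n_i}^{k_i})\to\prod_i\Z[1/b][q]/(\Phi_{n_i}^{k_i})$, notes that $J$ is injective because in the UFD $\Z[1/b][q]$ the ideal $(\prod_i\Phi_{n_i}^{k_i})$ is the intersection $\bigcap_i(\Phi_{n_i}^{k_i})$, and then transports the isomorphism from the target of $J$ (where Proposition \ref{onen} applies factorwise) back to the source, before taking the inverse limit. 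Your route instead invokes the unit-determinant criterion for endomorphisms of a finite free $\Z[1/b]$-module, computes the determinant after extending scalars to $\Q$ where the Chinese Remainder Theorem does apply, and checks $F_b$-stability of the factors via the idempotents (the locality remark is not really needed -- the congruences $\epsilon_i\equiv\delta_{ij}\pmod{\Phi_{n_j}^{k_j}}$ are preserved by $q\mapsto q^b$ and CRT uniqueness already forces $F_b(\epsilon_i)=\epsilon_i$). What your version buys is an explicit, self-contained proof of bijectivity at each finite level: in particular surjectivity comes for free from the determinant being a unit, whereas the paper's one-line transport along $J$ leaves the surjectivity of $F_b$ on the (possibly proper) sublattice $J(D)\subset\prod_i\Z[1/b][q]/(\Phi_{n_i}^{k_i})$ to the reader; your argument is also close in spirit to the index computation of Proposition \ref{Fb(E)}, which the paper proves but does not reuse here. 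What the paper's version buys is brevity and the avoidance of any base change or determinant bookkeeping. You also correctly identified the obstacle that dictates both shapes of argument: comaximality of the $(\Phi_{n_i})$ fails over $\Z[1/b]$ when $n_i/n_j$ is a power of a prime not dividing $b$, so one cannot split $E_f$ itself as a product.
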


\begin{proof} For finitely many $n_i\in\N_b$ and $k_i\in \N$, consider the natural algebra homomorphism 
\[
J:\frac{\Z[1/b][q]}{\left(\prod_i\Phi^{k_i}_{n_i}(q)\right)}
\to
\prod_i \frac{ \Z[1/b][q]}{\left(\Phi^{k_i}_{n_i}(q)\right)} .
\]
This map is injective, because  in the unique factorization domain $\Z[1/b][q]$ one has
\[
(\Phi_{n_1}(q)^{k_1} \dots \Phi_{n_s}(q)^{k_s})
= \bigcap_{j=1}^s \Phi_{n_j}(q)^{k_j} \, .
\]
Since the Frobenius homomorphism  commutes with $J$ and is an isomorphism on the target of $J$ by Proposition \ref{onen}, it is an isomorphism on the domain of $J$. Taking the inverse limit, we get the claim. 
\end{proof}

\section{Existence of $b$th root of $q$ in $\cS_{b,0}$}\label{qthroot}

We want to show that there exists a $b$th root of $q$ in $\cS_{b,0}$. First we need the two following Lemmas.

\begin{lemma} \label{0923}
Suppose that $n$ and $b$ are coprime positive integers and $y \in \Q[e_n]$ with $y^b=1$. Then $y =\pm 1$. If $b$ is odd then $y=1$.
\end{lemma}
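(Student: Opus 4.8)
The statement asserts that the only $b$th roots of unity lying in $\Q(e_n)$ are $\pm 1$ when $(b,n)=1$, and only $+1$ when in addition $b$ is odd. The plan is to argue through the structure of roots of unity inside cyclotomic fields. First I would recall the classical fact that the group of roots of unity in $\Q(e_n)$ is exactly $\langle e_m\rangle$ where $m=n$ if $n$ is even and $m=2n$ if $n$ is odd; equivalently, $\Q(e_n)=\Q(e_{2n})$ when $n$ is odd, and the torsion subgroup of $\Q(e_n)^\times$ is cyclic of order $n$ or $2n$ accordingly. So any $y\in\Q(e_n)$ with $y^b=1$ is a root of unity, hence $y=e_m^t$ for some $t$, and its order $d:=\ord(y)$ divides both $b$ and $m$.

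Next I would split into the two cases. If $n$ is odd, then $m=2n$, so $d\mid\gcd(b,2n)$. Since $(b,n)=1$, we get $\gcd(b,2n)=\gcd(b,2)$, which is $1$ or $2$. Hence $d\in\{1,2\}$, i.e. $y=\pm 1$; and if moreover $b$ is odd then $\gcd(b,2)=1$, forcing $d=1$ and $y=1$. If $n$ is even, then $m=n$, so $d\mid\gcd(b,n)=1$ (here $(b,n)=1$ directly), giving $d=1$ and $y=1$, which is consistent with the claim (and a fortiori $y=\pm1$); note that when $n$ is even, $b$ is automatically odd since $(b,n)=1$, so both assertions collapse to $y=1$. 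In all cases we conclude $y=\pm1$, with $y=1$ whenever $b$ is odd.

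The only point needing a touch of care — and the part I would state carefully rather than leave implicit — is the identification of the torsion of $\Q(e_n)^\times$, in particular the statement that $\Q(e_n)$ contains a primitive $2n$th root of unity precisely when $n$ is odd (because then $-e_n$ is a primitive $2n$th root of $1$) and contains no roots of unity of order exceeding $2n$. This follows from the fact that $\Q(e_a)\subseteq\Q(e_n)$ iff $a\mid n$ or ($a\mid 2n$ and $n$ odd), which is standard (e.g. \cite{Lang}). Given that, the divisibility bookkeeping above is routine. I do not expect any real obstacle; the lemma is essentially a packaging of elementary cyclotomic field theory, and the proof is short.
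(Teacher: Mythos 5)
Your proof is correct, and it reaches the conclusion by a slightly different (though closely related) piece of cyclotomic field theory than the paper uses. You invoke the classification of the torsion of $\Q(e_n)^\times$ — the roots of unity in $\Q(e_n)$ form the cyclic group generated by $e_m$ with $m=n$ for $n$ even and $m=2n$ for $n$ odd — and then do divisibility bookkeeping on $\ord(y)\mid\gcd(b,m)$, split according to the parity of $n$. The paper instead lets $d=\ord(y)$, notes $d\mid b$ so $(d,n)=1$, and uses the single fact that $\Q[e_n]\cap\Q[e_d]=\Q$ for coprime $n,d$ (Lang); then $e_d\in\Q$ forces $d\in\{1,2\}$, and $d\mid b$ odd rules out $d=2$. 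The paper's route is marginally shorter and needs no case analysis on the parity of $n$, since coprimality of $d$ with $n$ does all the work at once; your route requires the finer statement about exactly which roots of unity lie in $\Q(e_n)$ (including the $n$ odd versus $n$ even adjustment), but in exchange it makes completely explicit where the possible $-1$ comes from. Both arguments are complete and rest on standard facts, so there is no gap — just make sure, as you note, to justify the torsion classification (e.g.\ via the inclusion criterion $\Q(e_a)\subseteq\Q(e_n)$ iff $a\mid n$, up to the factor $2$ when $n$ is odd), since that is the only nontrivial input in your version.
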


\begin{proof}
Let $d\mid b$ be the order of $y$, i.e. $y$ is a primitive $d$th root of $1$. Then $\Q[e_n]$ contains $y$ and hence $e_d$. Since $(n,d)=1$, one has $\Q[e_n]\cap \Q[e_d]=\Q$ (see e.g.  \cite[Corollary of IV.3.2]{Lang}). Hence if $e_d \in \Q[e_n]$, then $e_d\in\Q$ and it follows that $d=1$ or $2$. Thus $y=1$ or $y=-1$. If $b$ is odd, then $y$ cannot be $-1$.
\end{proof}

\begin{lemma} \label{0913}
Let $b$ be a positive integer, $T\subset \N_b$, and $y \in \Q[q]^T$ satisfying $y^b=1$. Then $y =\pm 1$. If $b$ is odd then $y=1$. 
\end{lemma}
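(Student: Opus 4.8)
The plan is to reduce the statement over the cyclotomic completion $\Q[q]^T$ to the statement over each individual factor $\Q[q]/(\Phi_n(q)^{k}) \cong \Q[e_n][x]/(x^k)$ with $n\in T$, and then to Lemma \ref{0923}. First I would fix a defining sequence of ideals: since $T\subset \N_b$, the ring $\Q[q]^T$ is the inverse limit of $\Q[q]/(f(q))$ over $f\in\Phi_T^*$, and by the Chinese remainder theorem (valid because $\Q[q]$ is a UFD and the $\Phi_n(q)$ for distinct $n$ are coprime, exactly as in the proof of Theorem \ref{frob}) each such quotient splits as a finite product $\prod_i \Q[q]/(\Phi_{n_i}(q)^{k_i})$. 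Hence it suffices to show: if $y\in\Q[q]/(\Phi_n(q)^k)$ with $n\in\N_b$ and $y^b=1$, then $y=\pm 1$ in that ring, and $y=1$ when $b$ is odd; the compatible family of these local values then assembles to $y=\pm1$ (resp. $1$) in the inverse limit, using that $\pm1$ are the only idempotent-free such global solutions — more precisely, the component of $y$ in each factor is forced, and consistency under the transition maps $\Q[q]/(\Phi_n(q)^{k'})\to\Q[q]/(\Phi_n(q)^k)$ pins down a single global sign.

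The core is therefore the local claim in $A:=\Q[e_n][x]/(x^k)$ (using the isomorphism $h$ of Proposition \ref{h_kInjektive}, tensored with $\Q$, which identifies $\Q[q]/(\Phi_n(q)^k)$ with $A$). Write $y = y_0 + y_1 x + \dots + y_{k-1}x^{k-1}$ with $y_j\in\Q[e_n]$. The condition $y^b=1$ gives, reading off the constant term, $y_0^b = 1$ in $\Q[e_n]$, so by Lemma \ref{0923} $y_0=\pm1$ (and $y_0=1$ if $b$ is odd). It remains to show $y_1=\dots=y_{k-1}=0$, i.e. that $y$ is already the scalar $y_0$. For this I would argue by induction on the nilpotency degree: suppose $y = y_0 + z$ with $z$ in the ideal $(x^m)$ but $z\notin(x^{m+1})$ for some $1\le m\le k-1$, and write $z \equiv c x^m \pmod{x^{m+1}}$ with $c\in\Q[e_n]$, $c\neq0$. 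Then $y^b = y_0^b + b\,y_0^{b-1} z + (\text{terms in }(x^{m+1})) \equiv 1 + b\,y_0^{b-1} c\, x^m \pmod{x^{m+1}}$, so $y^b=1$ forces $b\,y_0^{b-1} c = 0$. Since $b\neq0$, $y_0=\pm1$ is a unit, and $\Q[e_n]$ is an integral domain, we get $c=0$, a contradiction. Hence no such $m$ exists and $y=y_0=\pm1$, as claimed.

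Reassembling: for each $n\in T$ and each exponent $k$, the local value of $y$ lies in $\{1,-1\}\subset\Q[q]/(\Phi_n(q)^k)$, and the transition maps send $1\mapsto1$, $-1\mapsto-1$, so a compatible family is either the constant family $1$ or the constant family $-1$ (it cannot mix, since the image of a global element in two different local factors must agree after passing to any common further quotient — and here one can compare, say, with the residue at a fixed root of unity). Therefore $y=\pm1$ in $\Q[q]^T$, and $y=1$ if $b$ is odd.

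The main obstacle I anticipate is purely bookkeeping: being careful that "the only units $y$ with $y^b=1$ in the inverse limit are the global constants $\pm1$" genuinely follows from the local statements, i.e. that a root of unity in $\Q[q]^T$ cannot have different components $+1$ and $-1$ in different cyclotomic factors. This is handled by observing that $\Q[q]^T$ embeds into $\prod_{n\in T}\Q[q]/(\Phi_n(q))\cong\prod_{n\in T}\Q(e_n)$ (a quotient of the target of $J$ in the proof of Theorem \ref{frob}), and in that product $y$ has constant-term component $y_0^{(n)}\in\{\pm1\}$; but $y$ is itself a \emph{single} polynomial modulo each $\Phi_n(q)$, and two of the $\Q(e_n)$ share the common further quotient only trivially, so one instead notes that $y-1$ and $y+1$ are each elements of $\Q[q]^T$ whose product is $0$; since $\Q[q]^T$ for $T$ infinite is not in general a domain, one falls back on the idempotent splitting already recorded in Corollary \ref{Splitting} and its analogues — $y$ being a root of unity means $y\pm1$ is, in each connected block, either zero or a unit, and the block structure then forces a uniform sign. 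In the application to Section \ref{qthroot} only the case relevant to $\cS_{b,0}$ (where the connectedness of $\N_b$ is available) is needed, so this subtlety does not actually cause trouble there.
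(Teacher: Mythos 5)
Your reduction (inverse limit, CRT over $\Q[q]$, the identification $\Q[q]/(\Phi_n^k(q))\cong \Q[e_n][x]/(x^k)$ via Proposition \ref{h_kInjektive}, constant term handled by Lemma \ref{0923}) is exactly the paper's route, and your induction on the nilpotency degree is a correct and useful expansion of the paper's terse ``one can easily see that $a_1=\dots=a_{k-1}=0$'': in characteristic zero the step $b\,y_0^{b-1}c=0\Rightarrow c=0$ is sound. For odd $b$ your argument is complete: every local component of $y$ is $1$, hence so is the component at every level $\prod_i\Phi_{n_i}^{k_i}(q)$, hence $y=1$ in the inverse limit.

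The even case is where your patch fails. Over $\Q$ no two integers are adjacent ($\Q$ is not $p$--adically separated for any $p$), so there are no nontrivial ``connected blocks'': $\Q[q]^T$ splits as the full product $\prod_{n\in T}\Q[q]^{\{n\}}$, two distinct factors have no nontrivial common quotient, and neither compatibility under the transition maps, nor ``the residue at a fixed root of unity'', nor Corollary \ref{Splitting} (which lives over $\Z[1/b]$ and splits along primes dividing $b$) puts any constraint relating the signs in different factors. Indeed, for even $b$ and $|T|\ge 2$ the mixed element (say $+1$ in the $\Phi_{n_1}$--factor and $-1$ in the $\Phi_{n_2}$--factor) is a genuine $b$th root of $1$ in $\Q[q]^T$ different from $\pm 1$, so no argument can force a uniform sign; what your local computation actually yields for even $b$ is only that $y$ equals $\pm1$ in each factor $\Q[q]/(\Phi_n^k(q))$. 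To be fair, the paper's own proof shares this defect --- it asserts that $\Q[q]/(\Phi_{n_1}^{k_1}\cdots\Phi_{n_m}^{k_m})$ contains no $b$th roots of $1$ other than $\pm1$, which the CRT--glued element contradicts for $m\ge 2$ --- and the defect is harmless later on, since only the odd case (uniqueness of $q^{1/b}$) and the local $\pm1$ statement are used; a genuinely uniform sign for even $b$ would have to be extracted over $\Z[1/b]$ with a connected index set, by comparing adjacent $n,n'$ through the nonzero odd--characteristic quotient provided by Lemma \ref{IdealGeneratedByPhinPhim}, not over $\Q$.
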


\begin{proof}  It suffices to show that for any $n_1, n_2 \dots n_m \in T$, the ring $\Q[q]/(\Phi^{k_1}_{n_1}\dots\Phi^{k_m}_{n_m})$ does not contains a $b$th root of $1$ except possibly for $\pm 1$. Using the Chinese remainder theorem, it is enough to consider the case where $m=1$. 

The ring $\Q[q]/(\Phi_{n}^k(q))$ is isomorphic to $\Q[e_n][x]/(x^k)$, by Proposition \ref{h_kInjektive}. If
\[
y= \sum_{j=0}^{k-1} a_j x^j, \quad  a_j \in \Q[e_n]
\]
satisfies  $y^b=1$, then it follows that $a_0^b=1$. By Lemma \ref{0923}, we have $a_0=\pm1$. One can easily see that $a_1=\dots =a_{k-1}=0$. Thus $y=\pm 1$. 
\end{proof}

In contrast with Lemma \ref{0913}, we have the following.
\begin{proposition}\label{cor9}
For any odd positive $b$ and any subset $T\subset \N_b$, the ring $\Z[1/b][q]^{T}$ contains a unique $b$th root of $q$, which is invertible in $\Z[1/b][q]^T$.

For any even positive $b$ and any subset $T\subset \N_b$, the ring $\Z[1/b][q]^{T}$ contains two $b$th roots of $q$ which are invertible in $\Z[1/b][q]^T$; one is the negative of the other. 
\end{proposition}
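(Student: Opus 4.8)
The plan is to reduce everything to two facts already available: the Frobenius isomorphism of Theorem \ref{frob} and the rigidity Lemma \ref{0913}. The key observation for existence is that, although $F_b$ is the substitution $q\mapsto q^b$ rather than the $b$th power map, the element $F_b^{-1}(q)$ is nonetheless a genuine $b$th root of $q$, for a purely formal reason. Existence in a general $\Z[1/b][q]^T$ ($T\subset\N_b$) then follows by projecting from $\cS_{b,0}=\Z[1/b][q]^{\N_b}$, and the ``two versus one'' dichotomy is a direct count via Lemma \ref{0913}.

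\textbf{Existence and invertibility.} By Theorem \ref{frob}, the Frobenius endomorphism $F_b\colon\cS_{b,0}\to\cS_{b,0}$, $q\mapsto q^b$, is a ring isomorphism; set $w:=F_b^{-1}(q)$. Since $F_b$ is a ring homomorphism, $F_b(w^b)=F_b(w)^b=q^b=F_b(q)$, and injectivity of $F_b$ forces $w^b=q$, so $w$ is a $b$th root of $q$ in $\cS_{b,0}$. For any $T\subset\N_b$ the inclusion $\Phi^*_T\subset\Phi^*_{\N_b}$ gives a natural ring homomorphism $\cS_{b,0}\to\Z[1/b][q]^T$, whose value $v_0$ on $w$ satisfies $v_0^b=q$. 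Moreover $q$ is a unit in $\Z[1/b][q]^T$: for $n\in\N_b$ the constant term $\Phi_n(0)$ is $\pm1$ or a prime not dividing $b$, hence invertible in $\Z[1/b]$, so $q$ is coprime to $\Phi_n(q)$ and therefore (Remark \ref{xytoxkyl}) a unit modulo $\Phi_n^k(q)$, hence modulo every finite product of such, hence in the inverse limit. Consequently every $b$th root $v$ of $q$ is a unit, with explicit inverse $v^{-1}=q^{-1}v^{b-1}$.

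\textbf{Counting the roots.} Let $v_1,v_2$ be $b$th roots of $q$ in $\Z[1/b][q]^T$; then $y:=v_1v_2^{-1}$ satisfies $y^b=1$. Since the elements of $\Phi^*_T$ are monic, each $\Z[1/b][q]/(f)$ is $\Z[1/b]$-free, hence embeds into $\Q[q]/(f)$, and the inverse limit functor is left exact, so $\Z[1/b][q]^T$ embeds into $\Q[q]^T$. Thus $y\in\Q[q]^T$ with $y^b=1$, and Lemma \ref{0913} gives $y=\pm1$, with $y=1$ when $b$ is odd. Hence for odd $b$ the $b$th root of $q$ is unique; for even $b$ the set of $b$th roots is exactly $\{v_0,-v_0\}$, and these are distinct since $\Z[1/b][q]^T$ has characteristic $0$ and $v_0\neq0$ (as $v_0^b=q\neq0$); both are invertible by the previous paragraph. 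I do not expect a serious obstacle: the substantive work is entirely inside Theorem \ref{frob}, and the only points requiring care are verifying that $q$ is a unit in the inverse limit and that $\Z[1/b][q]^T$ embeds into $\Q[q]^T$ so that Lemma \ref{0913} applies.
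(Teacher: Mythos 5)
Your proposal is correct and follows essentially the same route as the paper: existence of the root as $F_b^{-1}(q)$ via Theorem \ref{frob}, uniqueness up to sign via Lemma \ref{0913}, invertibility deduced from invertibility of $q$, and the general $T$ handled by the natural map from $\Z[1/b][q]^{\N_b}$; your explicit justification that $\Z[1/b][q]^T$ embeds into $\Q[q]^T$ is a detail the paper leaves implicit. One small slip: your reason that $q$ is a unit asserts that $\Phi_n(0)$ is ``$\pm1$ or a prime not dividing $b$, hence invertible in $\Z[1/b]$'' --- a prime \emph{not} dividing $b$ is exactly one that is \emph{not} inverted in $\Z[1/b]$ (you are likely thinking of $\Phi_n(1)$); the slip is harmless because in fact $\Phi_n(0)=\pm1$ for every $n$, and alternatively one can argue as the paper does, using Habiro's identity $q^{-1}=\sum_n q^n\,(q;q)_n$ in $\Z[q]^\N$ and pushing it forward to $\Z[1/b][q]^T$.
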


\begin{proof} 
Let us first consider the case $T=\N_b$. Since $F_b$ is an isomorphism by Theorem \ref{frob}, we can define a $b$th root of $q$ by 
\[
q^{1/b}:=F^{-1}_b (q)  \in \cS_{b,0}\,.
\]
If $y_1$ and $y_2$ are two $b$th root of the same element, then their ratio $y_1/y_2$ is a $b$th root of 1. From Lemma \ref{0913} it follows that  if $b$ is odd, there is only one $b$th root of $q$ in $\Z[1/b][q]^{\N_b}$, and if $b$ is even, there are 2 such roots, one is the minus of the other. We will denote them $\pm q^{1/b}$.

Further it is known that $q$ is invertible in $\Z[q]^\N$ (see \cite{Ha1}). Actually, there is an explicit expression $q^{-1}=\sum_n q^n (q;q)_n $. Hence  $q^{-1}\in \Z[1/b][q]^{\N_b}$, since the natural homomorphism from $\Z[q]^\N$ to $\Z[1/b][q]^{\N_b}$ maps $q$ to $q$.  In a commutative ring, if $x\mid y$ and $y$ is invertible, then so is $x$. Hence any root of $q$ is invertible.

In the general case of $T \subset \N_b$, we use the natural map 
$\Z[1/b][q]^{\N_b}\hookrightarrow \Z[1/b][q]^T$. 
\end{proof}

\begin{remark}\label{Le'sRing}
By Proposition \ref{cor9}, $\cS_{b,0}$ is isomorphic to the ring $\Lambda^{\N_b}_b:=\Z[1/b][q^{1/b}]^{\N_b}$ used in \cite{Le}. 
\end{remark}

\section{Realization of $q^{a^2/b}$ in $\cS_p$}  \label{defxb}

We define  another Frobenius type algebra homomorphism. The difference of the two types of Frobenius homomorphisms is in the target spaces of these homomorphisms. 

Suppose $m$ is a positive integer. Define the algebra homomorphism 
\[
G_m : R[q]^T \to R[q]^{mT} \quad \text{ by } \qquad G_m(q) = q^m.
\]
Since $\Phi_{mr}(q)$ always divides $\Phi_{r}(q^m)$,  $G_m$ is well--defined.
\\

Throughout this section, let $p$ be a prime or $1$. Suppose $b= \pm p^l$ for an $l\in \N$ and let $a$ be an integer. Let $B_{p,j} = G_{p^j}(\cS_{p,0})$. Note that $B_{p,j}\subset \cS_{p,j}$. If $b$ is odd, by Proposition \ref{cor9}  there is a unique $b$th root of $q$ in $\cS_{p,0}$; we denote it by $x_{b;0}$. If $b$ is even, by Proposition \ref{cor9} there are exactly two $b$th root of $q$, namely $\pm q^{1/b}$. We put $x_{b;0}=q^{1/b}$. 
\\

We define the element $z_{b,a} \in \cS_p$ as follows:
\begin{itemize}
\item If $b\mid a$, let $z_{b,a} := q^{a^2/b} \in \cS_p$.

\item If $ b=\pm p^l \nmid a$, then $z_{b,a}\in \cS_p$ is defined by specifying its projections  $\pi_j(z_{b,a}):=z_{b,a;j}\in \cS_{p,j}$  as follows. Suppose $a = p^s e$, with $(e,p)=1$. Then $s<l$. 

For $j >s$ let $z_{b,a;j}:=0$.

For $ 0\le j \le s$ let
$
z_{b,a;j} := [G_{p^j}(x_{b;0})]^{a^2/p^j} = [G_{p^j}(x_{b;0})]^{e^2 \, p^{2s-j}}
\in B_{p,j} \subset \cS_{p,j}.
$
\end{itemize}

$\text{ }$

Similarly, for $b=\pm p^l$ we define the element $x_b \in \cS_p$ as follows:
\begin{itemize}
 \item We put $\pi_0(x_b): =x_{b;0}$. 
 \item For $j<l$, $\pi_j(x_b):= [G_{p^j}(x_{b;0})]^{p^j}$. 
 \item If $j\geq l$, $\pi_j(x_b):=q^{b}$. 
\end{itemize}
Notice that for $c=(b,p^j)$ we have
\be\label{piOfxb}
\pi_j(x_b)=z_{b,c;j}.
\ee

\begin{proposition} \label{eval_z}
Suppose $\xi$ is a root of unity of order $r = c r'$, where $c= (r,b)$. Then 
\[
\ev_{\xi}(z_{b,a}) = \begin{cases} 0 & \text{ if } c \nmid a \\
(\xi^c)^{a_1^2 b'_{*r'}} &  \text{ if } a=ca_1, 
\end{cases}
\]
where $b'_{*r'}$ is the unique element in $\Z/r'\Z$ such that $b'_{*r'} (b/c) \equiv 1 \pmod{r'}$. Moreover, 
\[
\ev_\xi(x_b)=(\xi^c)^{b'_{*r'}}\; .
\]
\end{proposition}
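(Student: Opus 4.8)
The plan is to reduce the whole statement to one computation about the canonical $b$th root $x_{b;0}=F_b^{-1}(q)\in\cS_{p,0}$ and then transport it through the maps $G_{p^j}$, splitting according to the $p$-adic data of $\xi$, $a$ and $b$.

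First write $r=p^jr'$ with $(r',p)=1$; since $b=\pm p^l$ one has $c=(r,b)=p^{\min(j,l)}$. By Lemma~\ref{1100}, $\ev_\xi(z_{b,a})=\ev_\xi(\pi_j(z_{b,a}))$ while $\ev_\xi(\pi_i(z_{b,a}))=0$ for $i\neq j$, and likewise for $x_b$, so only the $j$th projection matters. The key sub-lemma is: for any root of unity $\zeta$ of order $r''$ coprime to $p$ (hence to $b$), $\ev_\zeta(x_{b;0})=\zeta^{b_{*r''}}$, where $b_{*r''}$ is the inverse of $b$ mod $r''$. Indeed $\ev_\zeta\circ F_b$ sends $f(q)$ to $f(\zeta^b)$, so $\ev_\zeta\circ F_b=\ev_{\zeta^b}$; taking $\eta=\zeta^{b_{*r''}}$ (of order $r''$, since $b_{*r''}$ is a unit mod $r''$) gives $\eta^b=\zeta$, hence $\ev_\eta\circ F_b=\ev_\zeta$, and composing on the right with $F_b^{-1}$, which is an isomorphism by Theorem~\ref{frob}, yields $\ev_\zeta\circ F_b^{-1}=\ev_\eta$; evaluating at $q$ proves the claim. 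Since $G_{p^j}$ is "substitute $q^{p^j}$", for $f\in\cS_{p,0}$ and $\xi$ of order $p^jr'$ one has $\ev_\xi(G_{p^j}(f))=\ev_{\xi^{p^j}}(f)$, so together with the sub-lemma and $\ord(\xi^{p^j})=r'$ we get $\ev_\xi(G_{p^j}(x_{b;0}))=\xi^{p^jb_{*r'}}$.

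With these in hand I would run three cases. (i) If $b\mid a$, then $z_{b,a}=q^{a^2/b}$, so $\ev_\xi(z_{b,a})=\xi^{a^2/b}$; writing $a=ca_1$, $b=cb'$ and noting $b\mid a$ forces $b'\mid a_1$, say $a_1=b'a_2$, this is $\xi^{cb'a_2^2}=\xi^{ba_2^2}$, which matches $(\xi^c)^{a_1^2b'_{*r'}}$ using $b'_{*r'}\equiv c\,b_{*r'}\pmod{r'}$ (from $cb'b_{*r'}\equiv1\equiv b'b'_{*r'}$) and the fact that $cb'=b$ absorbs the resulting $r'$-multiple against $\ord(\xi)=p^jr'$. (ii) If $b\nmid a$ with $a=p^se$, $(e,p)=1$, $s<l$, and $j>s$, then $v_p(c)=\min(j,l)>s=v_p(a)$, so $c\nmid a$, and $\pi_j(z_{b,a})=0$ gives $\ev_\xi(z_{b,a})=0$ as required. (iii) If $b\nmid a$ and $j\le s$, then $c=p^j$ and $z_{b,a;j}=[G_{p^j}(x_{b;0})]^{a^2/p^j}$ with $a^2/p^j=ca_1^2$ ($a_1=a/c$), so $\ev_\xi(z_{b,a})=(\xi^{p^jb_{*r'}})^{ca_1^2}=\xi^{a^2b_{*r'}}$; comparing with $(\xi^c)^{a_1^2b'_{*r'}}=\xi^{p^{2s-j}e^2b'_{*r'}}$ and using $b'_{*r'}\equiv p^jb_{*r'}\pmod{r'}$ together with $2s-j\ge j$ gives equality modulo $\ord(\xi)=p^jr'$. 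Finally, the formula for $x_b$ follows at once: by \eqref{piOfxb} $\pi_j(x_b)=z_{b,c;j}$ with $c=(b,p^j)=(r,b)$, so $\ev_\xi(x_b)=\ev_\xi(z_{b,c})=(\xi^c)^{1\cdot b'_{*r'}}=(\xi^c)^{b'_{*r'}}$.

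The main obstacle is not any single deduction but the bookkeeping: one must correctly relate the two inverses $b_{*r'}$ and $b'_{*r'}$, keep precise track of the $p$-adic valuations of $a$, $c$ and $b$, and check in each case that congruences obtained modulo $r'$ actually lift to congruences modulo $\ord(\xi)=p^jr'$. The edge situations — $j>l$, where $b'=\pm1$ and the congruence becomes an exact identity, and the boundary case $j=s$ — should be verified separately to be safe.
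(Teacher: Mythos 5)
Your proof is correct, and its skeleton --- reduction to the $j$th projection via Lemma \ref{1100}, the case split $b\mid a$ / $c\nmid a$ / $c\mid a$, and deducing the $x_b$ formula from \eqref{piOfxb} with $a=c$ --- coincides with the paper's. Where you genuinely differ is the central case $b\nmid a$, $c\mid a$: the paper never evaluates $x_{b;0}$ itself. It raises $z_{b,a;i}$ to the power $b/c$, using $(z_{b,a;i})^{b/c}=(q^c)^{a^2/c^2}$, so that the claimed value and the actual value have equal $(b/c)$-th powers in $\Z[1/p][\xi^c]$, and then invokes Lemma \ref{0923} to conclude they agree up to sign, the sign for even $b$ being fixed by the normalization $\ev_{1}(q^{1/b})=1$. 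You instead compute $\ev_\zeta(x_{b;0})=\zeta^{b_{*r''}}$ exactly, from $x_{b;0}=F_b^{-1}(q)$ together with the compatibility $\ev_\zeta\circ F_b=\ev_{\zeta^b}$ and the invertibility of $F_b$ (Theorem \ref{frob}), push it through $G_{p^j}$ via $\ev_\xi\circ G_{p^j}=\ev_{\xi^{p^j}}$, and finish by exponent arithmetic modulo $p^j r'$. Your route buys an exact value with no sign ambiguity --- arguably cleaner than the paper's terse resolution of the even-$b$ case --- at the price of the congruence bookkeeping relating $b_{*r'}$ and $b'_{*r'}$, which you carry out correctly: the factor $c$ in the exponent (case $b\mid a$), respectively the inequality $2s-j\ge j$ (case $j\le s$), is exactly what lifts the mod-$r'$ congruences to congruences modulo $\ord(\xi)=p^jr'$. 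The paper's power-trick avoids this bookkeeping by comparing directly with the target expression. One small caveat, shared equally with the paper: for $b=-p^l$ the identification $x_{b;0}=F_b^{-1}(q)$ and the isomorphism property of $F_b$ require the (easy) extension of Theorem \ref{frob} to negative exponents, using that $q$ is invertible in the cyclotomic completion.
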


\begin{proof} Let us compute $\ev_\xi(z_{b,a})$. The case of $\ev_\xi(x_b)$ follows then from \eqref{piOfxb}.

If $b\mid a$, then $c\mid a$, and the proof is obvious.

Suppose $b\nmid a$. Let $a=p^s e$ and $c= p^i$. Then $s <l$. Recall that $z_{b,a}=\prod_{j=0}^\infty z_{b,a;j}$. By Lemma \ref{1100}, 
\[
\ev_\xi(z_{b,a}) = \ev_\xi(z_{b,a;i}).
\]

If $c\nmid a$, then $i > s$. By definition, $z_{b,a;i}=0$, hence the statement holds true.  
The case $c\mid a$, i.e.  $ i \le s$, remains. Note that $\zeta = \xi^c$ is a primitive root of order $r'$ and  $(p, r')=1$. Since $z_{b,a;i} \in B_{p,i}$, 
\[
\ev_\xi(z_{b,a;i}) \in \Z[1/p][\zeta].
\]
From the definition of $z_{b,a;i}$ it follows that $(z_{b,a;i})^{b/c}=(q^c)^{a^2/c^2}$, hence after evaluation we have 
\[
[\ev_\xi(z_{b,a;i})]^{b/c} =  (\zeta)^{a^2_1}.
\]
Note also that 
\[
[(\xi^c)^{a^2_1 b'_{*r'}}]^{b/c} = (\zeta)^{a^2_1}.
\]
Using Lemma \ref{0923} we conclude $\ev_\xi(z_{b,a;i})=(\xi^c)^{a^2_1 b'_{*r'}}$ if $b$ is odd, and $\ev_\xi(z_{b,a;i}) =(\xi^c)^{a^2_1 b'_{*r'}}$ or $\ev_\xi(z_{b,a;i}) =-(\xi^c)^{a^2_1 b'_{*r'}}$ if $b$ is even. Since $\ev_{1}(q^{1/b})=1$ and therefore $\ev_{\xi}(q^{1/b})=\xi^{b_{*r}}$ (and not $-\xi^{b_{*r}}$), we get the claim. 
\end{proof}

\chapter{Unified invariant of lens spaces}\label{LensSpaces}

The purpose of this chapter is to prove Lemma \ref{ExistensLensSpaces}. Recall that $M(b,a;d)$ is the lens space $L(b,a)$ together with an unknot $K$ colored by $d$ inside (see Figure \ref{fig:LensSpaceKnot}). In Section \ref{SectionWRTLensSpaces}, we compute  the renormalized quantum invariant of $M(b,a;d)$ for arbitrary $d$. We then define in Section \ref{UnifiedLensSpaces} the unified invariant of $M(b,a;d(\epsilon))$ (see Section \ref{DefinitionLensSpaces} for the definition of $d(\epsilon)$).
\\

Let us introduce the following notation. 
For $a,b\in \Z$, the Dedekind sum (see e.g. \cite{KM-Dedekind}) is defined by
\[
 s(a,b)=\sum_{n=0}^{b-1} \left( \left( \frac{n}{b} \right) \right) \left( \left( \frac{an}{b} \right) \right)
\]
where 
\[
((x))= 
\begin{cases} 
x-\lfloor x\rfloor - \frac{1}{2} &\mbox{if } x\in\mathbb{R}\setminus\mathbb{Z}\\ 0&\mbox{if } x\in\mathbb{Z} 
\end{cases}
\]
and $\lfloor x \rfloor$ denotes the largest integer not greater than $x$. 

For $n,m\in \Z$ coprime and $0<|n|<|m|$, we define $n_{*m}$ and $m_{*n}$ such that
\[
nn_{*m}+mm_{*n}=1, \text{ with } 0<\sn(n)n_{*m}<|m|.
\]
Notice that for $n=1$, $m>1$ we have $1_{*m}=1$ and $m_{*1}=0$.

Let $r$ be a fixed integer denoting the order of $\xi$, a primitive root of unity. If $G=SO(3)$, $r$ is always assumed to be odd.

When we write $\pm$ respectively $\mp$ in a formula, one can either choose everywhere the upper or everywhere the lower signs and the formula holds in both cases.

\section{The quantum invariant of lens spaces with colored unknot inside}
\label{SectionWRTLensSpaces}

\begin{proposition}\label{WRTGLensSpaces}
 Suppose $c=(b,r)$ divides $|a|d\pm 1$. Then
\be\label{tau'SO3}
\tau'^{SO(3)}_{M(b,a;d)}(\xi)= (-1)^{\frac{c+1}{2}\,\frac{\sn(ab)-1}{2}}
\left(\frac{|a|}{c}\right)
\left(\frac{1-\xi^{\pm\sn(a)db_{*r}}}{1-\xi^{\pm\sn(b)b_{*r}}}\right)^{\chi(c)}
\xi^{
4_{*r}u^{SO(3)}-4_{*r}b'_{*r'} \frac{a(\pm a_{*b}-\sn(a)d)^2}{c}
}
\ee
where
\begin{eqnarray}\label{uSO3}
u^{SO(3)}&:=&
12s(1,b)-12 \sn(b)s(a,b) \\
&&\hspace{1.5cm}+\frac{1}{b}\left(a(1-d^2)+2(\mp \sn(a)d-\sn(b))+a(a_{*b}\pm\sn(a)d)^2\right) \in \Z \nonumber 
\end{eqnarray}
and for $b$ odd 
\be\label{tau'SU2}
\tau'^{SU(2)}_{M(b,a;d)}(\xi) = 
(-1)^{\frac{b'+1}{2}\frac{\sn(ab)-1}{2}}
\left(\frac{|a|}{|b'|}\right)
\left(\frac{
1-\xi^{\pm \sn(a)db_{*r}}
}{
1-\xi^{\pm \sn(b)b_{*r}}}\right)^{\chi(c)}
\xi^{\frac{u^{SU(2)}}{4}-\frac{b'_{*r'}a_{*b}(\sn(a)ad\pm 1)^2(\sn(b)b'-1)^2}{4c}}
\ee
where 
\begin{eqnarray}\label{uSU2}
u^{SU(2)}&:=&12s(1,b)-12\sn(b)s(a,b)+\frac{1}{b}(a(1-d^2))\\
&&\hspace{1.5cm}+ \frac{1}{b}(2(\mp\sn(a)d-\sn(b))+a_{*b}(\sn(a)ad\pm 1)^2(\sn(b)b'-1)^2) \in\Z
\nonumber
\end{eqnarray}
and $\chi(c)=1$ if $c=1$ and is zero otherwise.
If $c\nmid (\sn(a)ad\pm 1)$, $\tau'^{G}_{M(b,a;d)}(\xi)=0\; .$
\label{lens}
\end{proposition}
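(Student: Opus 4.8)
The plan is to reduce the computation of $\tau'^{G}_{M(b,a;d)}(\xi)$ to the evaluation of a one--dimensional generalized Gauss sum via surgery calculus, to apply Lemma~\ref{GaussSumFormulas}, and then to recognize the resulting phase by means of the reciprocity law for Dedekind sums.

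First I would replace the rational surgery presentation of $M(b,a;d)$ by an integral one. A continued fraction expansion of $b/a$ produces a linear chain link $L_M=L_1\cup\dots\cup L_t$ whose tridiagonal integral linking matrix $A$ satisfies $\det A=\pm b$; I would record the numbers $\sigma_\pm$ of positive and negative eigenvalues of $A$, as well as the relations among the convergents, which express $a$, $a_{*b}$ and the relevant inverses modulo $r$, $r'$ and $b$ in terms of the $a_i$. Under this change the colored unknot $K_d$ becomes a meridian of one of the end components of the chain. Since the colored Jones polynomial of a chain link factors into Hopf--link building blocks, combining this with the framing--change rule (Lemma~\ref{framing}) and the normalization $J_U(n)=[n]$ of Example~\ref{unknot} rewrites $F^{G}_{L_M\sqcup K_d}(\xi)$ as an iterated sum ${\sum}^{\xi,G}_{n_1,\dots,n_t}$ of $[n_1]^2$ times a product of quadratic monomials $q^{\ast n_i^2+\ast n_i n_{i+1}}$, with a single coupling of $n_1$ to the color $d$.

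Next I would carry out the $t$ summations one variable at a time. Each inner sum over an $n_i$ is a quadratic Gauss sum, so Lemma~\ref{GaussSumFormulas} applies: the divisibility condition $(r,x)\mid y$ there forces the whole expression to vanish exactly when $c=(b,r)\nmid\sn(a)ad\pm1$, which is the last assertion of the proposition; otherwise each step lowers the residual modulus and accumulates a phase, a power of $\sqrt r$, a Legendre symbol and an $\epsilon(\cdot)$--factor. Iterating and collecting terms, the multi--sum collapses to $\gamma^{G}_b(\xi)$ (the part responsible for surgery on the whole chain) times an explicit power of $\xi$ whose exponent is a sum of fractions with denominators $a_i$, multiplied by a product of Legendre symbols $\left(\tfrac{\ast}{a_i}\right)$. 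Using Example~\ref{WRTOfLensSpace} and Example~\ref{TauNonZero} for the denominators $F^{G}_{U^{\pm1}}(\xi)$ and $F^{G}_{U^{b_i}}(\xi)$, passing from $\tau$ to $\tau'$, i.e.\ dividing by $\prod_i\tau_{L(|b_i|,1)}(\xi)$, cancels the remaining $\gamma^{G}$--factors and leaves an elementary function of $\xi$. Multiplicativity of the Legendre symbol together with quadratic reciprocity then turns $\prod_i\left(\tfrac{\ast}{a_i}\right)$ into the single Jacobi symbol $\left(\tfrac{|a|}{c}\right)$ (resp.\ $\left(\tfrac{|a|}{|b'|}\right)$ in the $SU(2)$ case), while the $\epsilon(\cdot)$--factors of Lemma~\ref{GaussSumFormulas} and the signature $\sigma_\pm$ of $A$ combine into the sign $(-1)^{\frac{c+1}{2}\frac{\sn(ab)-1}{2}}$ (resp.\ $(-1)^{\frac{b'+1}{2}\frac{\sn(ab)-1}{2}}$).

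The heart of the argument, and the part I expect to be the longest, is identifying the accumulated phase with the closed expressions \eqref{uSO3} and \eqref{uSU2}, and proving $u^{SO(3)},u^{SU(2)}\in\Z$. Here the continued--fraction sum $\sum_i \ast/a_i$ is exactly the kind of quantity controlled by the reciprocity law $s(a,b)+s(b,a)=-\tfrac14+\tfrac1{12}\bigl(\tfrac ab+\tfrac ba+\tfrac1{ab}\bigr)$, and an induction on the length $t$ of the continued fraction, combining this reciprocity with the elementary relations among $a_{*b}$, $b_{*r}$, $b'_{*r'}$ and $c$ collected in the first step, should yield the stated formulas; integrality of $u^{G}$ then follows from the divisibility hypothesis $c\mid\sn(a)ad\pm1$ together with the standard fact that the appropriate denominator times $12\,s(\cdot,\cdot)$ is an integer, and one must also check that the two sign choices in the $\pm$'s are consistent whenever both are admissible (i.e.\ $c\mid2$). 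Finally, the $SU(2)$ case requires splitting each Gauss sum into its even and odd parts as in the proof of Lemma~\ref{GammabNonzero}, keeping track of the chosen fourth root $\xi^{1/4}$, and, when $b$ is even, handling the $2$--adic contributions (the last two cases of Lemma~\ref{GaussSumFormulas}) and the appearance of $b'$ in place of $c$; these subcases I expect to be routine but lengthy.
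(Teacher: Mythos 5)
Your setup (continued--fraction chain presentation, Hopf--link factorization of the colored Jones polynomial, Dedekind--sum bookkeeping of the framing phase, Galois/fourth--root care at the end) coincides with the paper's, but the central step of your plan has a genuine gap. You propose to collapse the $t$--fold sum by evaluating one variable at a time with Lemma \ref{GaussSumFormulas}. Completing the square in $n_1$ produces, in the exponent of the remaining sum, the multiplicative inverse of $m_1$ (and, after the next step, of the intermediate convergents) \emph{modulo the current order}; these inverses need not exist, since the coefficients $m_i$ of the continued fraction are arbitrary integers $\ge 2$ and may share factors with $r$. Lemma \ref{GaussSumFormulas} then only lets you factor out $(r,m_i)$ at the cost of dropping to a smaller modulus and imposing a divisibility condition on the \emph{next} summation variable, so the congruence constraints thread through all subsequent sums. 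Your assertion that the iteration ``vanishes exactly when $c=(b,r)\nmid \sn(a)ad\pm 1$'' and otherwise assembles into a single Jacobi symbol and the phase in \eqref{tau'SO3}, \eqref{tau'SU2} is precisely the nontrivial content: the intermediate gcds $(r,m_i)$ are not controlled by $c$, and nothing in the proposal shows the accumulated conditions and partial Gauss factors depend only on $(b,a,d,c)$ and not on the chosen expansion. This is exactly the point where the paper avoids your route: for $SO(3)$ it invokes the Li--Li evaluation of the multi--dimensional Gauss sum (their Lemmas 4.11, 4.12, 4.20, which already contain this bookkeeping), and for $SU(2)$ it replaces completion of squares by the Cauchy--Kronecker reciprocity formula (Proposition \ref{GSRF}), which needs no coprimality and lets the induction of Lemma \ref{SndInduction} run uniformly, the gcd condition $c\mid ad\pm1$ appearing only once, in the final evaluation (Lemma \ref{SndSU(2)}).

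Two further points are underestimated rather than wrong. First, in the $SU(2)$ case $r$ may be even, so ``completing the square'' with denominators $4$ is not available at all inside $\Z/r\Z$; the splitting into even and odd parts and the fourth--root bookkeeping that you defer as ``routine but lengthy'' is in fact the bulk of the paper's Subsection \ref{SU(2)Case} (Lemmas \ref{SndInduction}, \ref{GS}, \ref{SndSU(2)}), and it is organized around reciprocity precisely because the direct Gauss--sum evaluation does not go through. Second, Lemma \ref{GaussSumFormulas} is a statement at $e_r$, so after your computation you still need the Galois argument of Subsection \ref{ArbitraryRoots} (with the compatible choice $\xi^{1/4}=e_{4r}^l$) to pass to an arbitrary primitive $r$th root of unity; your phase identification via Dedekind reciprocity, by contrast, is fine and is equivalent to the paper's use of Formula \eqref{Dedekind}.
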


In particular, it follows that $\tau'^{G}_{L(b,a)}(\xi)=0$ if $c\nmid |a|\pm 1$.

\begin{remark}
For $G=SU(2)$, the quantum $SU(2)$ invariant of $M(b,a;d)$ is in general dependent on a $4$th root of $\xi$ (denoted by $\xi^{\frac{1}{4}}$). Here, we have only calculated the (renormalized) quantum invariant for a certain $4$th root of $\xi$, namely $\xi^{\frac{1}{4}}=e_{4r}^{l}$ for $\xi=e_r^{l}$ where $l$ and $r$ are coprime.
For the definition of the unified invariant of lens spaces in Section \ref{UnifiedLensSpaces}, we will choose $d$ such that the quantum invariant is \emph{independent} of the $4$th root of $\xi$.
\end{remark}

The rest of this section is devoted to the proof of Proposition \ref{WRTGLensSpaces}. 

\subsection{The positive case}
To start with, we consider the case when
$b,a>0$. Since two lens spaces $L(b,a_1)$ and $L(b,a_2)$ are homeomorphic if $a_1 \equiv a_2 \pmod{b}$, we can assume $a<b$. Let $b/a$ be given by  a continued fraction
\[
\frac{b}{a}=m_n-\frac{1}{\displaystyle m_{n-1}-\frac{1}{\displaystyle
    		m_{n-2}-\dots \frac{1}{\displaystyle m_2-\frac{1}{\displaystyle m_1}}}}.
\]
We can  assume $m_i\geq 2$ for all $i$ (see \cite[Lemma 3.1]{Je}).

Representing the $b/a$--framed unknot in Figure \ref{fig:LensSpaceKnot}
by a Hopf chain (as e.g. in Lemma 3.1 of \cite{BL}), $M(b,a;d)$ is obtained by integral surgery along the link $L_{M(b,a;d)}$ in Figure \ref{fig:HopfChain}, where the $m_i$ are the framing coefficients and $K_d$ denotes the unknot with fixed color $d$ and zero framing.

\begin{figure}[h]
\begin{center}
\input{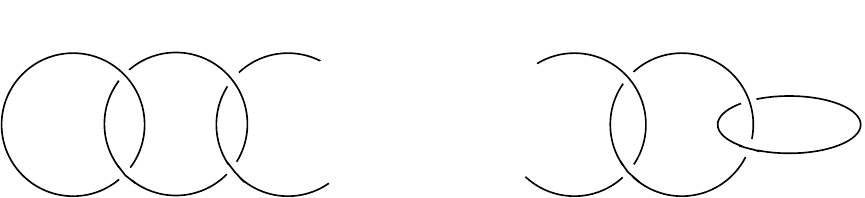_t}
\caption{Surgery link $L_{M(b,a;d)}$ of $M(b,a;d)$ with integral framing.}
\label{fig:HopfChain}
\end{center}
\end{figure}

The colored Jones polynomial of $L_{M(b,a;d)}$ is given by:
\[
J_{L_{M(b,a;d)}}(j_1,\ldots,j_n,d)=
[j_1]
\cdot\prod_{i=1}^{n-1} \frac{[j_i j_{i+1}]}{[j_i]}
\cdot\frac{[j_n d]}{[j_n]}
\]
(see e.g. \cite[Lemma 3.2]{KM}). Applying \eqref{F} and taking the framing into account (let $L_{M(b,a;d),0}$ denote the link $L_{M(b,a;d)}$ with framing zero everywhere), we get
\[
F^G_{L_{M(b,a;d),0}}(\xi)={\sum_{j_i}}^{\xi,G}\prod_{i=1}^{n}
q^{m_i  \frac{j_i^2-1}{4}}
\prod_{i=1}^{n-1} [j_i j_{i+1}]\cdot[j_n d][j_1]
=\ev_\xi\left(\frac{q^{-\frac{1}{4}\sum_{i=1}^{n}m_i}}{(q^{\frac{1}{2}}-q^{-\frac{1}{2}})^{n+1}}\right) \cdot S_n(a,d,\xi)
\]
where
\[
S_n(a,d,\xi)
=
q^{\frac{1}{4}\sum m_ij^2_i}
(q^{\frac{1}{2}j_1}-q^{-\frac{1}{2}j_1})
(q^{\frac{1}{2}j_1j_2}-q^{-\frac{1}{2}j_1j_2})\dots
(q^{\frac{1}{2}j_{n-1}j_n}-q^{-\frac{1}{2}j_{n-1}j_n})
(q^{\frac{1}{2}j_n d}-q^{-\frac{1}{2}j_n d})\, .
\]
Inserting these formulas into the Definition \eqref{eq:DefinitionOfWRT} using $\sigma_+=n$ and $\sigma_-=0$ (compare \cite[p. 243]{KM-Dedekind}) as well as \eqref{eq:F_U_b}, we get
\[
 \tau_{M(b,a;d)}(\xi)=
\frac{
\ev_{\xi}\left(q^{-\frac{1}{4}\sum_{i=1}^{n}m_i+\frac{n}{2}}\right) \cdot S_n(a,d,\xi)
}{
(-2)^n(\gamma_1^G(\xi))^n \ev_{\xi}(q^{\frac{1}{2}}-q^{-\frac{1}{2}})
}.
\]
We restrict us now to the root of unity $e_r=\exp(\frac{2\pi i}{r})$. We look at arbitrary primitive roots of unity in Subsection \ref{ArbitraryRoots}.

Applying \eqref{Gamma1SO3}, \eqref{Gamma1SU2} and
the following
formula for the Dedekind sum (compare \cite[Theorem 1.12]{KM-Dedekind})
\be\label{Dedekind}
3n- \sum_i m_i=-12 s(a,b)+\frac{a+a_{*b}}{b},
\ee
we get 
\be\label{tauM(b,a;d)}
  \tau_{M(b,a;d)}(e_r)=
\ev_{e_r}\left(
\frac{
q^{\frac{1}{4}(-12s(a,b)+\frac{a+a_{*b}}{b})}
}{
q^{\frac{1}{2}}-q^{-\frac{1}{2}}
}
\right)
\cdot 
\frac{
S_n(a,d,e_r)
}{
(-2)^n \sqrt{r}^n 
\epsilon^G(r)^{n}},
\ee
where $\epsilon^{SO(3)}(r)=1$ if $r\con 1 \pmod{4}$, $\epsilon^{SO(3)}(r)=i$ if $r\con 3\pmod{4}$  and $\epsilon^{SU(2)}(r)=1+i$.

Finally, for the renormalized version, we have to divide by
\[
\tau^G_{L(b,1)}(e_r)= 
\ev_{e_r}\left(
\frac{
q^{\frac{1}{4}(-12s(1,b)+\frac{2}{b})}
}{
q^{\frac{1}{2}}-q^{-\frac{1}{2}}
}
\right)
\cdot 
\frac{
S_1(1,1,e_r)
}{
(-2) \sqrt{r}\epsilon^G(r)
}.
\]
This gives
\be\label{tau'M(b,a;d)}
\tau'_{M(b,a;d)}(e_r)=
(-2\sqrt{r}\;\epsilon^G(r))^{1-n} 
\cdot
\frac{
S_n(a,d,e_r)
}{
S_1(1,1,e_r)
}
\cdot
\ev_{e_r}\left(
q^{\frac{1}{4}(-12s(a,b)+\frac{a+a_{*b}}{b}+12s(1,b)-\frac{2}{b})}
\right).
\ee

We put $S_n(d):=S_n(a,d,e_r)$. To calculate $S_n(d)$, we need to look separately at the $SO(3)$ and the $SU(2)$ case.

\subsubsection{The $SO(3)$ case}
We follow the arguments of \cite{LiLi}. 
The $\tau_{M(b,a;d)}(e_r)$ can be computed in the same way as the invariant $\xi_r(L(b,a),A)$ in \cite{LiLi}, after replacing $A^2$ (respectively $A$) by $e_r^{2_{*r}}$ (respectively $e_r^{4_{*r}}$).

Using Lemmas  4.11, 4.12 and 4.20 of \cite{LiLi}\footnote{There are misprints
in Lemma 4.21 of \cite{LiLi}:
$q^*\pm n$ should be replaced by $q^*\mp n$ for
$n=1,2$.}
(and replacing
 $e_r$ by $e_r^{4_{*r}}$, $c_n$ by $c$, $N_{n,1}=p$ by $b$,
$N_{n-1,1}=q$  by $a$, $N_{n,2}=q^*$
 by $a_{*b}$ and $-N_{n-1,2}=p^*$ by $b_{*a}$), we get
\begin{eqnarray}\label{Snd}
S_n(d)&=&(-2)^n (\sqrt{r} \epsilon(r))^n \sqrt{c} \epsilon(c)
\left(\frac{\frac{b}{c}}{\frac{r}{c}}\right)\left(\frac{a}{c}\right)
(-1)^{\frac{r-1}{2}\frac{c-1}{2}}
\\
&&\hspace{4cm}
\cdot\sum_{\pm}\chi^{\pm}(d)
e_r^{-ca4_{*r}b'_{*r'}
\left(\frac{d\mp a_{*b}}{c}\right)^2
\pm 2_{*r}b_{*a}(d\mp a_{*b}) +4_{*r}a_{*b} b_{*a}}\nonumber
\end{eqnarray}
where $\epsilon(x)=1$ if $x \equiv 1 \pmod 4$ and $ \epsilon(x)=i$ if $x \equiv 3 \pmod 4$. 
Further, $\chi^{\pm}(d)=\pm 1$ if $c\mid d\mp a_{*b}$ and is zero otherwise. Since $(a,c)=(a,b)=1$ and $a(d\mp a_{*b})=ad\mp aa_{*b}= ad \mp (1-bb_{*a})$ and $c\mid b$, we have
$c\mid d\mp a_{*b}$ if and only if $c\mid ad\mp 1$.
This implies the last claim of Proposition \ref{WRTGLensSpaces} for $G=SO(3)$.

Note that when $c=1$, both $\chi^\pm(d)$ are nonzero.
If $c>1$ and  $c \mid (d-a_{*b})$, $\chi^+(d)=1$, but $\chi^- (d)=0$. Indeed, for $c$ dividing $d-a_{*b}$, $c \mid (d+a_{*b})$ if and only if $c\mid a_{*b}$, which is impossible, because $c\mid b$ but $(b,a_{*b})=1$. For the same reason, if $c\mid d+a_{*b}$, then $\chi^+(d)=0$ and $\chi^{-}(d)=-1$.

Inserting \eqref{Snd} into \eqref{tau'M(b,a;d)}
we get
\[
\tau'^{SO(3)}_{M(b,a;d)}(e_r)=
\left(\frac{a}{c}\right)
\left(\frac{1-e_r^{\pm db'_{*r'}}}{1-e_r^{\pm b'_{*r'}}}\right)^{\chi(c)}
e_r^{4_{*r}u-4_{*r}b'_{*r'}\frac{a(d\pm a_{*b})^2}{c}}
\]
where
\[
u=
-12s(a,b)+12s(1,b)+
\frac{1}{b}\left(a+a_{*b}
-2
-b_{*a}b(a_{*b}\pm 2d)\right).
\]
Notice that $u\in\Z$. Further observe that by using $aa_{*b}+bb_{*a}=1$, we get
\[
a+a_{*b}-2-b_{*a}b(a_{*b}\pm 2d)=
2(\mp d-1)+a(1-d^2)+a(a_{*b}\pm d)^2.
\]
Since $\tau_{M(b,a;d)}(\xi)=\tau_{M(-b,-a;d)}(\xi)$, this implies the result \eqref{tau'SO3} for  $0<a<b$ and $0>a>b$ for the root of unity $e_r$.

\subsubsection{The SU(2) case}\label{SU(2)Case}

The way we calculate $S_n(d)$ in the $SO(3)$ case can not be adapted to the $SU(2)$ case. We use instead a Gauss sum reciprocity formula following the arguments of \cite{Je}.

We use a well--known result by Cauchy and Kronecker.
\begin{proposition}[Gauss sum reciprocity formula in one dimension]\label{GSRF}
For $m,n, \psi, \varphi \in \Z$ such that $nm$ is even and $\varphi \mid n\psi$
we have
\[
\sum_{\lambda =0}^{n-1} e_{2n}^{m\lambda^2}e_{\varphi}^{\psi \lambda}
=(1+i)\sqrt{\frac{n}{2m}}\sum_{\lambda=0}^{m-1} e_{2m\varphi^2}^{-n(\lambda\varphi+\psi)^2}.
\]
In particular for $n=\varphi$ even, we have
\[
\sum_{\lambda=0}^{n-1}e_{2n}^{m\lambda^2+2\psi\lambda}=(1+i)\sqrt{\frac{n}{2m}}
\sum_{\lambda=0}^{m-1}e_{2mn}^{-(n\lambda+\psi)^2}.
\]
\end{proposition}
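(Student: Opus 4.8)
The plan is to deduce Proposition~\ref{GSRF} from the classical one--dimensional quadratic Gauss sum reciprocity law with a linear term (Cauchy--Kronecker, a.k.a. the linear Landsberg--Schaar relation): for integers $a,c>0$ and $b\in\Z$ with $ac+b$ even,
\[
\sum_{\lambda=0}^{c-1} \exp\!\Big(\frac{\pi i (a\lambda^2+b\lambda)}{c}\Big)
= \sqrt{\tfrac{c}{a}}\;\exp\!\Big(\frac{\pi i (ac-b^2)}{4ac}\Big)\,
\sum_{\lambda=0}^{a-1} \exp\!\Big(-\frac{\pi i (c\lambda^2+b\lambda)}{a}\Big).
\]
Assuming, as in the application, that $m,n>0$, I would apply this with $a=m$, $c=n$, and $b=2n\psi/\varphi$. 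The hypothesis $\varphi\mid n\psi$ guarantees that $b$ is an \emph{even} integer, and then $ac+b=nm+b$ is even because $nm$ is even by hypothesis, so the reciprocity law applies.

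It then remains to match both sides. On the left, since $b/n=2\psi/\varphi$, one has $\exp(\pi i(m\lambda^2+b\lambda)/n)=e_{2n}^{m\lambda^2}\,e_\varphi^{\psi\lambda}$, which is exactly the left side of the proposition. For the right side, the scalar prefactor becomes
\[
\sqrt{\tfrac{n}{m}}\;\exp\!\Big(\frac{\pi i}{4}\Big)\exp\!\Big(-\frac{\pi i b^2}{4mn}\Big)
=(1+i)\sqrt{\tfrac{n}{2m}}\;\exp\!\Big(-\frac{\pi i b^2}{4mn}\Big),
\]
using $e^{\pi i/4}=(1+i)/\sqrt2$. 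Finally I would complete the square: combining the surviving exponential $\exp(-\pi i b^2/(4mn))$ with the summand $\exp(-\pi i(n\lambda^2+b\lambda)/m)$ gives $\exp\big(-\tfrac{\pi i}{mn}(n\lambda+\tfrac{b}{2})^2\big)$, and since $b/2=n\psi/\varphi$ one has $n\lambda+\tfrac{b}{2}=\tfrac{n(\lambda\varphi+\psi)}{\varphi}$, so this equals $e_{2m\varphi^2}^{-n(\lambda\varphi+\psi)^2}$. Summing over $\lambda=0,\dots,m-1$ yields the right side of the proposition. The displayed special case ($n=\varphi$ even) is just the specialization $\varphi=n$, for which $\varphi\mid n\psi$ is automatic and $e_\varphi^{\psi\lambda}=e_{2n}^{2\psi\lambda}$.

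The only substantive point is the classical reciprocity formula itself. If a self--contained argument is wanted, I would obtain it by Poisson summation applied to the damped Gaussian $f(x)=\exp(\pi i((a+i\varepsilon)x^2+bx)/c)$ with $\varepsilon\to0^+$, or equivalently from the modular transformation law of the Jacobi theta function; everything after that is the bookkeeping of signs and the square completion above. A minor loose end is the sign convention when $m$ or $n$ is negative: the statement as written presupposes $m,n>0$ (so that $\sqrt{n/2m}$ makes sense), and the general signed version follows by complex conjugation together with the substitution $\lambda\mapsto-\lambda$; in the application to lens spaces one always has $m,n>0$, so this is not needed here. I expect the square completion to be the only step where a sign error could creep in, so that is the computation I would check most carefully.
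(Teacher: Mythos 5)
Your derivation is correct: with $a=m$, $c=n$, $b=2n\psi/\varphi$ (an even integer by the hypothesis $\varphi\mid n\psi$, so $ac+b$ is even since $nm$ is), the left side matches, the prefactor $\sqrt{n/m}\,e^{\pi i/4}=(1+i)\sqrt{n/2m}$ is right, and the square completion $n\lambda+\tfrac b2=\tfrac{n(\lambda\varphi+\psi)}{\varphi}$ turns the summand into $e_{2m\varphi^2}^{-n(\lambda\varphi+\psi)^2}$; the special case $n=\varphi$ follows as you say. The paper itself gives no argument here — it quotes the result as classical and refers to \cite[Chapter IX, Theorem 1]{Ch} (with \cite{Je} and \cite{DT} for generalizations) — so your proposal essentially unpacks that citation: it exhibits the proposition as the Cauchy--Kronecker (Landsberg--Schaar) reciprocity law after a change of variables, with the theta-function/Poisson-summation proof of that law left as the standard ingredient, which is exactly the content the paper delegates to the reference. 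Your remark that the statement presupposes $m,n>0$ is also apt and consistent with how the formula is used in the lens-space computation.
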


\begin{proof}
A proof can be found in e.g. \cite[Chapter IX, Theorem 1]{Ch}. For a generalization of this result see also \cite[Propositions 2.3 and 4.3]{Je} and \cite{DT}.
\end{proof}

\begin{lemma}\label{SndInduction}
For $a,b>0$, we have
\[
S_n(d)= C_n
\sum_{\substack{\gamma =0\\ \gamma\equiv d \hspace{-2mm}\pmod{2r}}}^{2rb-1} 
\left(e_{4abr}^{-(a\gamma-1)^2}-e_{4abr}^{-(a\gamma+1)^2}\right)
\]
where
\[
C_n:=(-2(1+i)\sqrt{r})^n \frac{1}{\sqrt{b}} 
e_{4ar}^{b_{*a}}.
\]
\end{lemma}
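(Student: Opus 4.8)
The strategy is to evaluate $S_n(d)$ at $\xi=e_r$ by summing out $j_1,\dots,j_n$ one variable at a time, each time applying the one--dimensional Gauss sum reciprocity of Proposition \ref{GSRF}, with the continued fraction numerators $b_k$ recording the cumulative effect. To set up, write $q=e_r$, so $q^{1/2}=e_{2r}$, $q^{1/4}=e_{4r}$, and put $j_0:=1$, $j_{n+1}:=d$. Expanding $q^{j_ij_{i+1}/2}-q^{-j_ij_{i+1}/2}=\sum_{\ve_i=\pm1}\ve_i\,e_{2r}^{\ve_ij_ij_{i+1}}$ for $i=0,\dots,n$ gives
\[
S_n(d)=\sum_{\vec\ve\in\{\pm1\}^{n+1}}\Big(\prod_{i=0}^n\ve_i\Big)\sum_{j_1,\dots,j_n=0}^{2r-1}e_{4r}^{\,Q_{\vec\ve}(j_1,\dots,j_n)},
\qquad Q_{\vec\ve}=\sum_{i=1}^nm_ij_i^2+2\sum_{i=0}^n\ve_ij_ij_{i+1},
\]
a tridiagonal quadratic form whose only linear terms are $2\ve_0j_1$ and $2\ve_ndj_n$.

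Now I iterate the reciprocity. Summing out $j_1$: its part of the exponent is $m_1j_1^2+2\psi_1j_1$ with $\psi_1=\ve_0+\ve_1j_2$, and since $\varphi=2r$ is even, Proposition \ref{GSRF} yields $\sum_{j_1=0}^{2r-1}e_{4r}^{m_1j_1^2+2\psi_1j_1}=(1+i)\sqrt{r/m_1}\sum_{\lambda=0}^{m_1-1}e_{4m_1r}^{-(2r\lambda+\psi_1)^2}$. In the expansion of $-(2r\lambda+\ve_0+\ve_1j_2)^2$ the term $-j_2^2$ merges with $e_{4r}^{m_2j_2^2}=e_{4m_1r}^{m_1m_2j_2^2}$ to give $e_{4m_1r}^{\,b_2j_2^2}$ with $b_2=m_1m_2-1$, while $\lambda$ recombines with $j_2$ into a single variable $\gamma_2:=2r\lambda+\ve_1j_2$ ranging over $\{0,\dots,2rb_1-1\}$, $b_1:=m_1$ (this substitution being a bijection modulo $2rb_1$ once $j_2$ is read off as $\ve_1\gamma_2\bmod 2r$). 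One is back in the same situation with one variable fewer. Iterating, after eliminating the $k$--th variable one has a single variable $\gamma_{k+1}$ over $\{0,\dots,2rb_k-1\}$ with self--coupling $b_{k+1}$ over index $4rb_k$, where $b_{-1}=0$, $b_0=1$ and $b_k=m_kb_{k-1}-b_{k-2}$ (so $b_{n-1}=a$, $b_n=b$), and one picks up a factor $(1+i)\sqrt{rb_{k-1}/b_k}$; the hypotheses of Proposition \ref{GSRF} ($nm$ even, $\varphi\mid n\psi$) hold throughout because $2r$ is even and $b_{k-1}\mid 2rb_{k-1}$.

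After $n$ steps only $d$ remains, hidden in the linear term of the last Gaussian, whose index is $4rb_{n-1}b_n=4abr$ and whose remaining sum runs over $\delta\in\{0,\dots,b-1\}$; renaming $\gamma:=d+2r\delta$ (so $\gamma\equiv d\pmod{2r}$ and $0\le\gamma<2rb$), its linear term takes the form $-(a\gamma\mp1)^2$, the coefficient $a=b_{n-1}$ arising from the couplings accumulated along the chain and the $\mp1$ from a surviving bond sign. The product of Gauss factors telescopes,
\[
\prod_{k=1}^n(1+i)\sqrt{rb_{k-1}/b_k}=(1+i)^n(\sqrt r)^n\,b^{-1/2},
\]
and together with the $2^n$ collapses of the sign sums over $n$ of the $\ve_i$ this gives the factor $(-2(1+i)\sqrt r)^n\,b^{-1/2}$ of $C_n$, while the sum over the remaining $\ve_i$ produces exactly $e_{4abr}^{-(a\gamma-1)^2}-e_{4abr}^{-(a\gamma+1)^2}$. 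The leftover constant, accumulated from all the $-\psi^2$ terms, is $e_{4ar}^{\,b_{*a}}$, giving the claimed identity. The main obstacle is precisely this last bookkeeping: verifying that the residual phase is exactly $e_{4ar}^{b_{*a}}$ and that the linear coefficient of $\gamma$ is exactly $a$, which requires carrying the ``starred'' continued fraction quantities through the recursion $b_k=m_kb_{k-1}-b_{k-2}$ together with Dedekind--sum type identities (as in \eqref{Dedekind}) and the relation $aa_{*b}+bb_{*a}=1$; the base case $n=1$, where $a=1$, $b=m_1$ and $b_{*a}=0$, is a direct instance of Proposition \ref{GSRF}.
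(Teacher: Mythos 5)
Your overall mechanism is the same as the paper's: eliminate the chain variables one at a time with the one--dimensional reciprocity of Proposition \ref{GSRF}, letting the continued--fraction convergents $b_k=m_kb_{k-1}-b_{k-2}$ record the quadratic coefficients, so that the last step produces a sum over $\gamma\equiv d\pmod{2r}$ with index $4abr$. The paper packages this as an induction on $n$ (the inductive hypothesis is exactly your ``state after $n-1$ eliminations'', applied to the shortened fraction $\tilde b/\tilde a$), while you run a forward elimination after expanding all the factors $e^{x}-e^{-x}$ into $2^{n+1}$ sign terms; the paper instead collapses the signs two at a time via the symmetry $j\mapsto -j$ (identity \eqref{replace-j}), which is where the factor $(-2)^n$ and the precise surviving pair of exponentials come from. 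Structurally these are the same proof.

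The genuine gap is the part you yourself defer: the identification of the accumulated constant phase as $e_{4ar}^{b_{*a}}$, the verification that the linear coefficient in the last exponent is exactly $a$ (with the residual $\mp 1$), and the sign in $(-2(1+i)\sqrt r)^n$. These are not routine afterthoughts --- they are the entire content of the lemma, since everything else is a formal iteration of Proposition \ref{GSRF}. Your pointer to ``Dedekind--sum type identities (as in \eqref{Dedekind})'' is off target: \eqref{Dedekind} enters only later, when $S_n(d)$ is inserted into $\tau'$, and plays no role here. What is actually needed is the continued--fraction bookkeeping the paper does in the inductive step: with $\tilde b/\tilde a$ the one--step--shorter fraction one has $a=\tilde b$, $b=\tilde b m_n-\tilde a$, and, crucially, $b_{*a}=-\tilde a_{*\tilde b}$, which the paper proves by the explicit bound $0<\tilde b_{*\tilde a}+\tilde a_{*\tilde b}m_n<b$ forced by the normalization $0<\sn(n)n_{*m}<|m|$; this is what makes the phase recursion $e_{4\tilde a r}^{\tilde b_{*\tilde a}}e_{4\tilde a\tilde b r}^{-1}=e_{4ar}^{b_{*a}}$ close up and yields $C_n$ from $C_{n-1}$. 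Until you carry out this step (or an equivalent tracking of the ``starred'' quantities through your recursion, including the orientation of the pair $e^{-(a\gamma-1)^2}-e^{-(a\gamma+1)^2}$ that produces the minus sign), the proposal is an outline of the correct strategy rather than a proof.
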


\begin{proof}
We prove the claim by induction on $n$. Notice first, that
for $x,y,z \in \Z$ we have
\begin{eqnarray}\label{replace-j}
\sum_{j=0}^{2r-1}e_r^{zj^2}(e_r^{xj}-e_r^{-xj})(e_r^{yj}-e_r^{-yj})
=
2\sum_{j=0}^{2r-1}e_r^{zj^2}(e_r^{(y+x)j}-e_r^{(y-x)j})
\end{eqnarray}
by replacing $j$ by $-j$ in $-\sum_{j=0}^{2r-1}e_r^{zj^2}(e_r^{xj}-e_r^{-xj})e_r^{-yj}$.
Using this and Proposition \ref{GSRF},
we have for $n=1$
\begin{eqnarray*}
S_1(d)&=&\sum_{j=0}^{2r-1}e_{4r}^{bj^2}(e_{2r}^{j}-e_{2r}^{-j})(e_{2r}^{jd}-e_{2r}^{-jd})\\
&=& 2\sum_{j=0}^{2r-1}e_{4r}^{bj^2}(e_{2r}^{j(d+1)}-e_{2r}^{j(d-1)})\\
&=& 2(1+i)\sqrt{\frac{r}{b}}\sum_{j=0}^{b-1}(e_{4br}^{-(2rj+d+1)^2}-e_{4br}^{-(2rj+d-1)^2})\\
&=& 2(1+i)\sqrt{\frac{r}{b}}\sum_{\substack{\gamma =0\\ \gamma\equiv d\hspace{-2mm}\pmod{2r}}}^{2rb-1}
(e_{4br}^{-(\gamma+1)^2}-e_{4br}^{-(\gamma-1)^2}).
\end{eqnarray*}
Now, we set
\[
\frac{\tilde{b}}{\tilde{a}}=m_{n-1}-\frac{1}{m_{n-2}-\cdots\frac{1}{m_2-\frac{1}{m_1}}}.
\]
Notice that $\tilde{b}>\tilde{a}$. Assume the result of the lemma inductively. We have
\begin{eqnarray*}
S_n(d)&=&\sum_{j_n=0}^{2r-1}e_{4r}^{m_nj_n^2}(e_{2r}^{j_n d}-e_{2r}^{-j_n d}) S_{n-1}(j_n)\\
&=&\sum_{j_n=0}^{2r-1} e_{4r}^{m_nj_n^2}(e_{2r}^{j_nd}-e_{2r}^{-j_nd}) C_{n-1}
\sum_{\substack{\gamma=0 \\ \gamma\equiv j_n \hspace{-2mm}\pmod{2r}}}^{2\tilde{b}r-1}
(e_{4\tilde{a}\tilde{b}r}^{-(\tilde{a}\gamma -1)^2}-e_{4\tilde{a}\tilde{b}r}^{-(\tilde{a}\gamma +1)^2}) 
\end{eqnarray*}
We replace $j_n$ by $\gamma$ everywhere and get 
\begin{eqnarray*}
S_n(d)
&=&
C_{n-1}\sum_{\gamma=0}^{2\tilde{b}r-1}
e_{4r}^{m_n\gamma^2}(e_{2r}^{\gamma d}-e_{2r}^{-\gamma d})
(e_{4\tilde{a}\tilde{b}r}^{-\tilde{a}^2\gamma^2+2\tilde{a}\gamma-1}
-e_{4\tilde{a}\tilde{b}r}^{-\tilde{a}^2\gamma^2-2\tilde{a}\gamma-1}) \\
&=&
C_{n-1}
e_{4\tilde{a}\tilde{b}r}^{-1}
\sum_{\gamma=0}^{2\tilde{b}r-1}
e_{4\tilde{b}r}^{(\tilde{b}m_n-\tilde{a})\gamma^2}(e_{2r}^{\gamma d}-e_{2r}^{-\gamma d})
(e_{2\tilde{b}r}^{\gamma}
-e_{2\tilde{b}r}^{-\gamma}) \\
&=&
2C_{n-1}
e_{4\tilde{a}\tilde{b}r}^{-1}
\sum_{\gamma=0}^{2\tilde{b}r-1}
e_{4\tilde{b}r}^{(\tilde{b}m_n-\tilde{a})\gamma^2}
(e_{2\tilde{b}r}^{(\tilde{b}d+1)\gamma }
-e_{2\tilde{b}r}^{(\tilde{b}d-1)\gamma })\\
&=&
2C_{n-1}
e_{4\tilde{a}\tilde{b}r}^{-1}
\sum_{\gamma=0}^{2ar-1}
e_{4ar}^{b\gamma^2}
(e_{2ar}^{(ad+1)\gamma }-e_{2ar}^{(ad-1)\gamma })
\end{eqnarray*}
using first (\ref{replace-j}) and then $\tilde{b}=a$ and $\tilde{b}m_n-\tilde{a}=b$. Applying again 
Proposition \ref{GSRF} gives 
\begin{eqnarray*}
S_n(d)
&=&
2(1+i)C_{n-1}e_{4\tilde{a}\tilde{b}r}^{-1}\sqrt{\frac{ar}{b}}
\sum_{\lambda=0}^{b-1}(e_{4abr}^{-(2ar\lambda+ad+1)}-e_{4abr}^{-(2ar\lambda+ad-1)})\\
&=&
-2(1+i)C_{n-1}e_{4\tilde{a}\tilde{b}r}^{-1}\sqrt{\frac{ar}{b}}
\sum_{\substack{\gamma=0\\ \gamma\equiv d\hspace{-2mm}\pmod{2r-1}}}^{2rb-1}
(e_{4abr}^{-(a\gamma-1)}-e_{4abr}^{-(a\gamma+1)})
\end{eqnarray*}
where we put $\gamma=2r\lambda+d$. Since 
\[
1=\tilde{a}\tilde{a}_{*\tilde{b}}+\tilde{b}\tilde{b}_{*\tilde{a}}
=\tilde{a}_{*\tilde{b}}(am_n-b)+\tilde{b}_{*\tilde{a}}a
=a(\tilde{b}_{*\tilde{a}}+\tilde{a}_{*\tilde{b}}m_n)-b\tilde{a}_{*\tilde{b}}
=aa_{*b}+bb_{*a}
\]
and
\[
0<a(\tilde{b}_{*\tilde{a}}+\tilde{a}_{*\tilde{b}}m_n)
=\tilde{b}\tilde{b}_{*\tilde{a}}+\tilde{b}\tilde{a}_{*\tilde{b}}m_n
=1+\tilde{a}_{*\tilde{b}}(\tilde{b}m_n-\tilde{a})=1+\tilde{a}_{*\tilde{b}}b<
1+\tilde{b}b=1+ab
\]
we have $0<\tilde{b}_{*\tilde{a}}+\tilde{a}_{\tilde{*b}}m_n<b$ and therefore $b_{*a}=-\tilde{a}_{*\tilde{b}}$ . Therefore
\[
 e_{4\tilde{a}r}^{\tilde{b}_{*\tilde{a}}}e_{4\tilde{a}\tilde{b}r}^{-1}=
e_{4\tilde{a}\tilde{b}r}^{-\tilde{a}_{*\tilde{b}}\tilde{a}+1}e_{4\tilde{a}\tilde{b}r}^{-1}=
e_{4\tilde{b}r}^{-\tilde{a}_{*\tilde{b}}}=
e_{4ar}^{b_{*a}}
\]
and we get
\[
-2(1+i)C_{n-1}e_{4\tilde{a}\tilde{b}r}^{-1}\sqrt{\frac{ar}{b}}=
C_n.
\]
\end{proof}

For further calculations on $S_n(d)$ we need the following result.
\begin{lemma}\label{GS}
 For $x,y\in \N$ and $b$ odd with $(b,x)=1$ we have
\[
\sum_{j=0}^{b-1}e_b^{-xj^2-yj}
=\epsilon(b)\left(\frac{-x}{b}\right)\sqrt{b}e_b^{(b-1)^2\frac{x_{*b}y^2}{4}}.
\]
\end{lemma}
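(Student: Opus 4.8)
The plan is to identify the sum with the complex conjugate of a standard quadratic Gauss sum and then read off its value from Lemma~\ref{GaussSumFormulas}. Since $|e_b|=1$, I would first observe
\[
\sum_{j=0}^{b-1}e_b^{-xj^2-yj}=\overline{\sum_{j=0}^{b-1}e_b^{xj^2+yj}}=\overline{G(b,x,y)},
\]
so everything reduces to conjugating a known quantity. As $b$ is odd and $(b,x)=1$, the $r$ odd case of Lemma~\ref{GaussSumFormulas} applies with $r=b$ and gives
\[
G(b,x,y)=\epsilon(b)\left(\frac{x}{b}\right)\sqrt{b}\;e_b^{-\frac{x_{*b}y^2}{4}(b+1)^2}.
\]
Here the exponent is a genuine integer, because $b$ odd forces $4\mid(b+1)^2$; hence conjugation merely reverses its sign:
\[
\sum_{j=0}^{b-1}e_b^{-xj^2-yj}=\overline{\epsilon(b)}\left(\frac{x}{b}\right)\sqrt{b}\;e_b^{\frac{x_{*b}y^2}{4}(b+1)^2}.
\]

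It then remains to match this with the right-hand side of the stated formula, which is a matter of two short bookkeeping steps. For the prefactor, I would write $\left(\frac{-x}{b}\right)=\left(\frac{-1}{b}\right)\left(\frac{x}{b}\right)$ and use the standard Jacobi-symbol identity $\left(\frac{-1}{b}\right)=(-1)^{(b-1)/2}$; checking the two residues $b\equiv 1$ and $b\equiv 3\pmod 4$ shows that $\epsilon(b)\left(\frac{-1}{b}\right)=\overline{\epsilon(b)}$ (both sides equal $1$ in the first case and $-i$ in the second), so $\overline{\epsilon(b)}\left(\frac{x}{b}\right)=\epsilon(b)\left(\frac{-x}{b}\right)$, exactly the prefactor in the statement. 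For the exponent, both $\frac{(b+1)^2}{4}$ and $\frac{(b-1)^2}{4}$ are integers (again since $b$ is odd) and their difference is $\frac{(b+1)^2-(b-1)^2}{4}=b$, so $x_{*b}y^2\frac{(b+1)^2}{4}\equiv x_{*b}y^2\frac{(b-1)^2}{4}\pmod b$ and therefore $e_b^{\frac{x_{*b}y^2}{4}(b+1)^2}=e_b^{(b-1)^2\frac{x_{*b}y^2}{4}}$. Substituting both identities into the previous display produces precisely the claimed formula.

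This argument has essentially no obstacle: the only point requiring care is that the symbol $e_b^{a/4}$ must be read consistently as $e_b$ raised to the integer $a/4$, which is legitimate here exactly because $b$ being odd makes $(b\pm 1)^2$ divisible by $4$. Everything else — the conjugation, the Jacobi-symbol identity, and the reduction of the exponent modulo $b$ — is routine.
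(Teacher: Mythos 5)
Your proof is correct and takes essentially the same route as the paper: both reduce the sum to the standard evaluation of a quadratic Gauss sum via Lemma \ref{GaussSumFormulas} in the odd-modulus case. The only cosmetic difference is that you implement the sign change by complex conjugation of $G(b,x,y)$ and then convert $\overline{\epsilon(b)}\left(\frac{x}{b}\right)$ into $\epsilon(b)\left(\frac{-x}{b}\right)$, whereas the paper writes $e_b^{-1}=e_b^{\,b-1}$ and applies the formula directly to $G(b,(b-1)x,(b-1)y)$ using $(b-1)_{*b}\equiv -1\pmod{b}$; the bookkeeping is equivalent.
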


\begin{proof}
 This follows from Lemma \ref{GaussSumFormulas} using $e_b^{-1}=e_b^{(b-1)}$ with $(b,(b-1)x)=(b,x)=1$ and $(b-1)_{*b}\equiv -1\pmod{b}$.
\end{proof}

\begin{lemma}\label{SndSU(2)}
For $c=(b,r)$ and $b'=\frac bc$, $r'=\frac rc$ we have
\[
S_n(d)=(-2(1+i)\sqrt{r})^n\sqrt{c}\epsilon(b')\left(\frac{-ar'}{b'}\right)
\sum_{\pm}\chi(d)e_{4rb}^{-a_{*b}-ad^2\mp 2d}
e_{r'b'}^{-\frac{a_{*b}(ad\pm 1)^2(b'-1)^2(b'_{*r'}b'-1)}{4c^2}}
\]
where $\chi(d)=\left\{\begin{array}{ll} 0 & \text{ if } c \nmid ad\pm 1 \\ \mp 1 &\text{ if } c\mid ad\pm 1 \end{array}\right.$.
\end{lemma}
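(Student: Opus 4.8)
The plan is to begin from the closed form for $S_n(d)$ established in Lemma~\ref{SndInduction} and to evaluate the remaining one--dimensional Gauss sum in closed form. Parametrising the summation index as $\gamma = 2r\lambda + d$ with $\lambda = 0,\dots, b-1$ (so that $0\le\gamma\le 2rb-1$ and $\gamma\equiv d \pmod{2r}$) and expanding $(a(2r\lambda+d)\pm 1)^2 = 4a^2r^2\lambda^2 + 4ar\lambda(ad\pm 1) + (ad\pm 1)^2$, the identities $e_{4abr}^{4a^2r^2\lambda^2} = e_b^{ar\lambda^2}$ and $e_{4abr}^{4ar\lambda(ad\pm 1)} = e_b^{\lambda(ad\pm 1)}$ let me pull the $\lambda$--independent factor $e_{4abr}^{-(ad\pm 1)^2}$ out of the sum. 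This turns the formula of Lemma~\ref{SndInduction} into
\[
S_n(d) = C_n\Bigl(e_{4abr}^{-(ad-1)^2}\,G_- \;-\; e_{4abr}^{-(ad+1)^2}\,G_+\Bigr), \qquad G_\pm := \sum_{\lambda=0}^{b-1} e_b^{-ar\lambda^2 - \lambda(ad\pm 1)},
\]
where $C_n = (-2(1+i)\sqrt r)^n\,\tfrac{1}{\sqrt b}\,e_{4ar}^{b_{*a}}$.

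Next I would evaluate $G_\pm$. Since $(a,b)=1$ we have $(b,ar)=(b,r)=c$, so Lemma~\ref{GaussSumFormulas} (applied after reducing the coefficients modulo $b$) gives $G_\pm = 0$ unless $c\mid (ad\pm 1)$ --- this is precisely where the factor $\chi(d)$ comes from --- and otherwise $G_\pm = c\sum_{\mu=0}^{b'-1} e_{b'}^{-ar'\mu^2 - \mu(ad\pm 1)/c}$, with $b = cb'$, $r = cr'$. Because $c=\gcd(b,r)$ one has $\gcd(b',r')=1$, and since $b'\mid b$ with $(a,b)=1$ also $(b',a)=1$; hence $(b',ar')=1$, and $b'$ is odd because $b$ is (we are in the case $b$ odd). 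Lemma~\ref{GS} then yields, whenever $c\mid (ad\pm 1)$,
\[
G_\pm = c\,\epsilon(b')\Bigl(\tfrac{-ar'}{b'}\Bigr)\sqrt{b'}\; e_{b'}^{(b'-1)^2 (ar')_{*b'}(ad\pm 1)^2/(4c^2)}.
\]
Substituting this back and using $\tfrac{1}{\sqrt b}\cdot c\cdot\sqrt{b'} = \sqrt c$ already produces the prefactor $(-2(1+i)\sqrt r)^n\sqrt c\,\epsilon(b')\bigl(\tfrac{-ar'}{b'}\bigr)$ of the statement.

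It then remains to identify the two roots of unity. For the first, I combine $e_{4ar}^{b_{*a}}$ from $C_n$ with $e_{4abr}^{-(ad\pm 1)^2}$: using $b\,b_{*a} = 1 - a\,a_{*b}$ one finds $e_{4abr}^{b b_{*a} - (ad\pm 1)^2} = e_{4abr}^{-a(a_{*b} + ad^2 \pm 2d)} = e_{4rb}^{-a_{*b} - ad^2 \mp 2d}$, which is the exponent in the statement. For the second factor, the relation $b'_{*r'}b'\equiv 1 \pmod{r'}$ gives $r'\mid (b'_{*r'}b'-1)$, so $e_{r'b'}^{-a_{*b}(ad\pm1)^2(b'-1)^2(b'_{*r'}b'-1)/(4c^2)} = e_{b'}^{-a_{*b}(ad\pm1)^2(b'-1)^2(b'_{*r'}b'-1)/(4c^2 r')}$; comparing with the exponent coming from $G_\pm$ it suffices to prove the congruence $-a_{*b}\,\tfrac{b'_{*r'}b'-1}{r'} \equiv (ar')_{*b'} \pmod{b'}$ and then multiply it through by the common integer $(b'-1)^2(ad\pm1)^2/(4c^2)$. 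Multiplying the congruence by $ar'$ (which is invertible mod $b'$) and using $a\,a_{*b}\equiv 1$ and $b'_{*r'}b'-1\equiv -1 \pmod{b'}$ reduces it to $-a\,a_{*b}(b'_{*r'}b'-1)\equiv 1 \pmod{b'}$, which is immediate.

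Collecting terms, the $(ad-1)^2$--piece (coefficient $+1$, present iff $c\mid ad-1$) becomes the $\mp = +$ summand and the $(ad+1)^2$--piece (coefficient $-1$, present iff $c\mid ad+1$) the $\mp=-$ summand, which is exactly $\sum_\pm \chi(d)(\cdots)$ with $\chi(d) = \mp 1$. I expect the main obstacle to be nothing deep but rather the arithmetic bookkeeping of the third paragraph: keeping the various $*$--inverses, the $\pm/\mp$ conventions, and the successive applications of Lemmas~\ref{GaussSumFormulas} and~\ref{GS} mutually consistent (in particular verifying that $(b',ar')=1$ and that all the coefficients reduced modulo the relevant moduli are as stated).
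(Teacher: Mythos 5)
Your proposal is correct and follows essentially the same route as the paper's proof: substitute $\gamma=2r\lambda+d$ into Lemma \ref{SndInduction}, factor out the $\lambda$--independent phases, reduce the resulting Gauss sum modulo $c$, evaluate it with Lemma \ref{GS}, and match exponents via congruences among the $*$--inverses. The only cosmetic differences are that you invoke Lemma \ref{GaussSumFormulas} for the mod--$c$ reduction where the paper cites Theorem 2.1 of \cite{LiLi}, and you verify the single congruence $(ar')_{*b'}\equiv -a_{*b}\frac{b'b'_{*r'}-1}{r'}\pmod{b'}$ directly instead of the paper's chain $a_{*b}\equiv a_{*b'}$, $(ar')_{*b'}\equiv a_{*b'}r'_{*b'}$, $r'_{*b'}r'=1-b'b'_{*r'}$, which is equivalent.
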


\begin{proof}
We put $\gamma=d+2r\lambda$ and get
\[
S_n(d)=C_n\sum_{\pm} \sum_{\lambda=0}^{b-1} \mp e_{4abr}^{-(a\gamma\pm 1)^2}.
\]
Notice that $(a\gamma \pm 1)^2=a^2d^2+1\pm 2ad + 4ar(ar\lambda^2+ad\lambda\pm \lambda)$ and therefore
\[
S_n(d)=C_n e_{4abr}^{-a^2d^2-1} \sum_{\pm}\sum_{\lambda=0}^{b-1} \mp e_{b}^{-ar\lambda^2-(ad\pm 1)\lambda}e_{2br}^{\mp d}
\]
and
\[
C_n e_{4rab}^{-a^2d^2-1} = (-2(1+i)\sqrt{r})^n \frac{1}{\sqrt{b}}e_{4rb}^{-a_{*b}-ad^2}.
\]
We use \cite[Theorem 2.1]{LiLi} as well as Lemma \ref{GS} to get
\begin{eqnarray*}
\mp\sum_{\lambda=0}^{b-1} e_{b}^{-ar\lambda^2-(ad\pm 1)\lambda}e_{2br}^{\mp d}
&=& \mp e_{2br}^{\mp d}\cdot c\sum_{\substack{\lambda=0\\c\mid ad\pm 1}}^{b'-1}
e_{b'}^{-ar'\lambda^2-\frac{(ad\pm 1)}{c}\lambda}\\
&=& \chi(d) c e_{2br}^{\mp d} \epsilon(b')\left(\frac{-ar'}{b'}\right)\sqrt{b'}
e_{b'r'}^{a_{*b}(1-b'b'_{*r})\left(\frac{ad\pm 1}{c}\right)^2\frac{(b'-1)^2}{4}}
\end{eqnarray*}
where we used that $a_{*b}\equiv a_{*b'}\pmod{b'}$, $(ar')_{*b'}\equiv a_{*b'}r'_{*b'}\pmod{b'}$ and 
$
e_{b'}^{r'_{*b'}}=e_{b'r'}^{r'_{*b'}r'}=e_{b'r'}^{1-b'b'_{*r'}}.
$
\end{proof}

Lemma \ref{SndSU(2)} implies the last claim of Proposition \ref{WRTGLensSpaces} for $G=SU(2)$.
\\

Since
\[
\left(\frac{
1-e_{4rb}^{\pm 4d}e_{rb}^{\pm a_{*b}ad(b-1)^2(b_{*r}b-1)}
}{
1-e_{rb}^{\pm 1}e_{rb}^{\pm (b-1)^2(b_{*r}b-1)}}\right)^{\chi(c)}
=
\left(\frac{
1-e_{rb}^{\pm a_{*b}ad(b_{*r}b-1)-d}
}{
1-e_{r}^{\pm b_{*r}}}\right)^{\chi(c)}
=\left(\frac{
1-e_r^{\pm db_{*r}}
}{
1-e_{r}^{\pm b_{*r}}}\right)^{\chi(c)},
\]
inserting the formula of $S^{SU(2)}_n(d)$ of Lemma \ref{SndSU(2)} into \eqref{tau'M(b,a;d)}, we get
\[
 \tau'^{SU(2)}_{M(b,a;d)}(e_r)
=
\left(\frac{a}{b'}\right)
\left(\frac{
1-e_r^{\pm db_{*r}}
}{
1-e_{r}^{\pm b_{*r}}}\right)^{\chi(c)}
\hspace{-1mm}
e_{4r}^{-12s(a,b)+12s(1,b)+\frac{1}{b}(a+a_{*b}-2)}
e_{4br}^{-a_{*b}-ad^2\mp 2d}
e_{br}^{\frac{-a_{*b}(ad\pm 1)^2(b'-1)^2(b'_{*r'}b'-1)}{4}}
.
\]
Therefore
\[
\tau'^{SU(2)}_{M(b,a;d)}(e_r)=
\left(\frac{a}{b'}\right)
\left(\frac{
1-e_r^{\pm db_{*r}}
}{
1-e_{r}^{\pm b_{*r}}}\right)^{\chi(c)}
e_{r}^{\frac{u}{4}-\frac{b'_{*r'}a_{*b}(ad\pm 1)^2(b'-1)^2}{4c}}
\]
where $u:=-12s(a,b)+12s(1,b)
+\frac{1}{b}(a(1-d^2)+ 2(\mp d-1)+a_{*b}(ad\pm 1)^2(b'-1)^2)$.
This implies \eqref{tau'SU2} for $0<a<b$ or $0>a>b$ at $e_r$. 

\subsection{The negative case}
To compute $\tau'^{G}_{M(-b,a;d)}(e_r)$, observe that, since $L(b,a)$ and $L(-b,a)$ are homeomorphic with opposite orientation, $\tau^{G}_{M(-b,a;d)}(\xi)=\tau^{G}_{M(b,-a;d)}(\xi)$ is equal to the complex conjugate of $\tau^{G}_{M(b,a;d)}(\xi)$ due to Theorem \ref{PropertiesOfWRTInvariant}. The ratio
\[
\tau'^{SO(3)}_{M(-b,a;d)}(\xi)=\frac{\;\;\overline{\tau^{SO(3)}_{M(b,a;d)}(\xi)}\;\;}
{\tau^{SO(3)}_{L(b,1)}(\xi)}
\]
can be computed analogously to the positive case. Using $\overline{\epsilon(c)}=(-1)^{\frac{c-1}{2}}\epsilon(c)$,
 we have for $a,b>0$
\[
\tau'^{SO(3)}_{M(-b,a,d)}(e_r) =(-1)^{\frac{c+1}{2}}
\left(\frac{a}{c}\right)
\left(\frac{1- e_r^{\mp db_{*r}}}{1-e_r^{\pm b_{*r}}}\right)^{\chi(c)}
e_r^{4_{*r}\tilde{u}^{SO(3)}+4_{*r}b'_{*r'}\frac{a(d\pm a_{*b})^2}{c}}
\]
where
\begin{eqnarray*}
\tilde{u}^{SO(3)}&=&12s(a,b)+12s(1,b)+\frac{1}{b}
\left(
-a-a_{*b}-2+b_{*a}b(a_{*b}\pm 2d)
\right)\\
&=&12s(a,b)+12s(1,b)+\frac{1}{b}
\left(
2(\pm d-1)-a(1-d^2)-a(a_{*b}\pm d)^2
\right)
\end{eqnarray*}
and
\begin{eqnarray*}
 \tau'^{SU(2)}_{M(-b,a:d)}(e_r)
&=&
(-1)^{\frac{b'+1}{2}}\left(\frac{a}{b'}\right)
\left(\frac{1-e_r^{\mp db_{*r}}}{1-e_r^{\pm b_{*r}}}\right)^{\chi(c)}
e_r^{\frac{\tilde{u}^{SU(2)}}{4}+\frac{b'_{*r'}a_{*b}(ad\pm 1)^2(b'-1)^2}{4c}}
\end{eqnarray*}
where
\[
\tilde{u}^{SU(2)}=12 s(a,b)+12 s(1,b)+ \frac{1}{b}(-a(1-d^2)+2(\pm d-1)-a_{*b}(ad\pm 1)^2(b'-1)^2).
\]
Using $s(a,b)=s(a,-b)=-s(-a,b)$, we get the claim of Proposition \ref{WRTGLensSpaces} at $e_r$ if either $a$ or $b$ is negative. 

\subsection{Arbitrary primitive roots of unity}\label{ArbitraryRoots}
To get the result of Proposition \ref{WRTGLensSpaces} for an arbitrary primitive root of unity $\xi$ of order $r$, notice that we can regard $\tau'^{G}$ as a map 
\[
\tau'^{G}:\{\xi^{\frac{1}{4}} \in \mathbb{C} \mid \xi=e_r^l, (r,l)=1\} \to \Q(e_{4r})
\]
and notice that for $\xi=e_r^l$ for some $l$ coprime to $r$, we have the Galois transformation
\[
 \varphi:\Q(e_{4r}) \to \Q(\xi^{\frac{1}{4}}), \hspace{1cm} e_{4r}\mapsto e_{4r}^l
\]
which is a ring isomorphism and maps $e_r$ to $\xi$. Therefore 
$\tau'^{G}_{M(b,a;d)}(\xi)=\tau'^{G}_{M(b,a;d)}(\varphi(e_r))=\varphi(\tau'^{G}_{M(b,a;d)}(e_r))$ and we get \eqref{tau'SO3} and \eqref{tau'SU2} in general.

\qed

\begin{example*} For $b>0$, we have
\[
\tau'^{SO(3)}_{L(-b,1)}(\xi)
=
(-1)^{\frac{c+1}{2}-{\chi(c)}}\,
\xi^{2_{*r}(b-3)+b_{*r}\chi(c)}
\quad 
\text{ and }
\quad
\tau'^{SU(2)}_{L(-b,1)}(\xi)
=(-1)^{\frac{b'-1}{2}+\chi(c)}\xi^{\frac{b-3}{2}+b_{*r}\chi(c)}.
\]
\end{example*}

\section{Proof of Lemma \ref{ExistensLensSpaces}} \label{UnifiedLensSpaces}
Since $L(-b,a)$ and $L(b,-a)$ are homeomorphic, we can assume $b$ to be positive, i.e. $b=p^l$ for a prime $p$. We have to define the unified invariant of   $M^\ve(b,a) := M(b,a;d(\ve))$ where $d(0)=1$ and $d(\bz)$ is the smallest odd positive integer such that $\sn(a)ad(\bz) \equiv 1 \pmod {b}$. 

Recall from Section \ref{qthroot} that we denote the unique positive $b$th root of $q$ in $S_{p,0}$ by $q^{\frac{1}{b}}$. For $p\neq 2$, we define the unified invariant $I^{G}_{M^\ve(b,a)}\in \cS_p$ by specifying its projections
\[
\pi _j I^{SO(3)}_{M^\ve(b,a)}
:=
\,
\begin{cases}
q^{3 s(1,b)-3\sn(b)\, s(a,b)}
&\text{if } j=0, \;\ve =0
\\&\\
(-1)^{\frac{p^j+1}{2}\,\frac{\sn(a)-1}{2}}
\left(\frac{|a|}{p}\right)^j \,
q^{\frac{u'^{SO(3)}}{4}}
&\text{if } 0<j<l, \;\ve=\bz
\\&\\
(-1)^{\frac{p^l+1}{2}\,\frac{\sn(a)-1}{2}}
\left(\frac{|a|}{p}\right)^l q^{\frac{u'^{SO(3)}}{4}}
&\text{if } j\geq l, \;\ve=\bz
\end{cases}
\]
where $u'^{SO(3)}:=u^{SO(3)}-\frac{a(a_{*b}-\sn(a)d(\bz))^2}{b}$ and $u^{SO(3)}$ is defined in  \eqref{uSO3} and 
\[
\pi_j I^{SU(2)}_{M^{\epsilon}(b,a)}:=\left\{\begin{array}{ll}				(-1)^{\frac{b+3}{2}\frac{\sn(a)-1}{2}}\left(\frac{|a|}{p}\right)^{l}q^{3s(1,b)-3\sn(b)s(a,b)}&\text{if } j=0, \epsilon=0\\ (-1)^{\frac{p^{l-j}+1}{2}\frac{\sn(a)-1}{2}}\left(\frac{|a|}{p}\right)^{l-j}q^{\frac{u'^{SU(2)}}{4}} & \text{if } 0<j<l, \varepsilon=\bar{0}\\
				(-1)^{\frac{\sn(a)-1}{2}}q^{\frac{u'^{SU(2)}}{4}} & \text{if } j>l, \varepsilon=\bar{0}
                             \end{array}\right.
\]
where $u'^{SU(2)}:=u^{SU(2)}-\frac{a_{*b}(\sn(a)ad-1)^2(\sn(b)b'-1)^2}{b}$ and $u^{SU(2)}$ is defined in \eqref{uSU2}. 

For $G=SO(3)$ and $p=2$, only  $\pi_0 I^{SO(3)}_{M(b,a)}\in \cS_{2,0}=\cR_2$ is non--zero and it is defined to be $q^{3s(1,b)-3  s(a,b)}$. 

The $I^G_{M^{\varepsilon}(b,a)}$ is well--defined due to Lemma \ref{0622} below, i.e. all powers of $q$ in $I^G_{M^{\ve}(b,a)}$ are integers for $j>0$ or lie in $\frac{1}{b}\Z$ for $j=0$. Unlike the invariant for arbitrary $d$, there is no dependency on the $4$th root of $q$. Further, for b odd (respectively even) $I^G_{M^\ve(b,a)}$ is invertible in $\cS_{p}^{p,\varepsilon}$ (respectively $\cR_p^{p,\varepsilon}$) since $q$ and $q^{\frac{1}{b}}$ are invertible in these rings. 

In particular, for odd $b=p^l$, we have $I^G_{L(b,1)}=1$ and
\[
\pi _j I^{SO(3)}_{L(-b,1)}
=
\,
\begin{cases}
q^{\frac{b-3}{2}+\frac{1}{b}}
&\text{if } j=0
\\&\\
(-1)^{\frac{p^j+1}{2}}q^{\frac{b-3}{2}}
&\text{if } 0<j<l, \; p \text{ odd} \\&\\
(-1)^{\frac{p^l+1}{2}}q^{\frac{b-3}{2}}
&\text{if } j\geq l, \; p \text{ odd} \, 
\end{cases}
\]
and 
\[
\pi _j I^{SU(2)}_{L(-b,1)}
=
\,
\begin{cases}
(-1)^{\frac{-b+3}{2}}q^{\frac{b-3}{2}+\frac{1}{b}}
&\text{if } j=0
\\&\\
(-1)^{\frac{p^{l-j}+1}{2}}q^{\frac{b-3}{2}}
&\text{if } 0<j<l, \; p \text{ odd} \\&\\
q^{\frac{b-3}{2}}
&\text{if } j\geq l, \; p \text{ odd} \, .
\end{cases}
\]

It is left to show that for any $\xi$ of order $r$ coprime with $p$, we have
\[
\ev_\xi ( I^G_{M^0(b,a)})=\tau'^{G}_{M^0(b,a)}(\xi)\,
\]
and, if $r=p^j k$ with $j>0$, then
\[
\ev_\xi ( I^G_{M^{\bz}(b,a)})=\tau'^{G}_{M^\bz(b,a)}(\xi)\,.
\]
For $\ve=0$, this follows directly from Propositions \ref{eval_z} and \ref{WRTGLensSpaces} with $c=d=1$ using
\[
\frac{1-\xi^{-\sn(a)b_{*r}}}{1-\xi^{-b_{*r}}}=\begin{cases} 1 &\text{if } \sn(a)=1 \\ 
							-\xi^{-\sn(a)b_{*r}} &\text{if } \sn(a)=-1
                                             \end{cases}.
\]
For $\ve=\bz$, we have $c=(p^j,b)>1$ and we get the claim by using Proposition \ref{WRTGLensSpaces} and for the $SO(3)$ case
\be \label{0623} 
\xi^{\frac{a(a_{*b}-\sn(a)d(\bz))^2}{b}}=
\xi^{c\,\frac{a(a_{*b}-\sn(a)d(\bz))^2}{bc}}=
\xi^{bb'_{*r'}\,\frac{a(a_{*b}-\sn(a)d(\bz))^2}{bc}}=
\xi^{b'_{*r'}\,\frac{a(a_{*b}-\sn(a)d(\bz))^2}{c}},
\ee
respectively for the $SU(2)$ case 
\begin{equation} \label{0625} 
\xi^{\frac{a_{*b}(\sn(a)ad-1)^2}{b}\cdot\frac{(b'-1)^2}{4}}=
\xi^{c\,\frac{a_{*b}(\sn(a)ad-1)^2}{bc}\cdot\frac{(b'-1)^2}{4}}=
\xi^{bb'_{*r'}\,\frac{a_{*b}(\sn(a)ad-1)^2}{bc}\cdot\frac{(b'-1)^2}{4}}=
\xi^{b'_{*r'}\,\frac{a_{*b}(\sn(a)ad-1)^2}{c}\cdot\frac{(b'-1)^2}{4}},
\end{equation}
where for the second equalities in \eqref{0623} and \eqref{0625} we use $c\equiv bb'_{*r'}\pmod{r}$. For $G=SO(3)$, notice that due to part \emph{(b)} of Lemma \ref{0622} below, $b$ and $c$ divide $a_{*b}-\sn(a)d(\bz)$  and therefore all powers of $\xi$ in \eqref{0623} are integers. For $G=SU(2)$, we can see that all powers of $\xi$ in \eqref{0625} are integers using part \emph{(c)} of Lemma \ref{0622} and the fact that $b'$ is odd and therefore $4 \mid (b'-1)^2$.
\qed
\\

The following Lemma is used in the  proof of Lemma \ref{ExistensLensSpaces}.

\newpage
\begin{lemma} \label{0622}
We have
\begin{itemize}
\item[(a)] $ 3 s(1,b) - 3\sn(b)\, s(a,b) \in \frac{1}{b}\Z$,
\item[(b)] $b\mid a_{*b}-\sn(a)d(\bz)$ and therefore $u'\in\Z$, 
\item[(c)] $b\mid \sn(a)ad-1$ and therefore $u'\in\Z$, and
\item[(d)] $4\mid u'$ for $d=d(\bz)$.
\end{itemize}
\end{lemma}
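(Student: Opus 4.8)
The four parts are essentially independent, and only (a) requires real work: (b)--(d) follow quickly from (a), the defining congruence of $d(\bz)$, and the integrality of $u^{SO(3)},u^{SU(2)}$ already recorded in Proposition~\ref{WRTGLensSpaces}. The plan is to work under the standing hypotheses of Lemma~\ref{ExistensLensSpaces}, so $b=p^l>0$ with $p$ prime or $1$; for (c) and (d) we may moreover take $b$ (hence $p$) odd, since $u'$ occurs in $I^G_{M^\ve(b,a)}$ only in that case (for $SO(3)$ when $p$ is odd, for $SU(2)$ since $b$ is odd by hypothesis). Write $\bar n$ for $an\bmod b$ and $u'$ for $u'^{SO(3)}$ or $u'^{SU(2)}$ as appropriate.

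\textbf{(a).} Rather than appealing to Rademacher's congruences for Dedekind sums, I would argue elementarily. For $n\in\{1,\dots,b-1\}$ the residue $\bar n$ lies in $\{1,\dots,b-1\}$ (because $(a,b)=1$) and $n\mapsto\bar n$ permutes that set, so $((an/b))=\bar n/b-\tfrac12$ and $((n/b))=n/b-\tfrac12$. Subtracting termwise and using $\sum_n(\bar n-n)=0$ gives $s(a,b)-s(1,b)=b^{-2}\sum_{n=1}^{b-1}n(\bar n-n)=b^{-2}\bigl(\sum n\bar n-\sum n^2\bigr)$. Since $n\bar n\equiv an^2\pmod b$ we have $\sum n\bar n=a\sum n^2+bt$ with $t\in\Z$, and $6\sum_{n=1}^{b-1}n^2=b(b-1)(2b-1)$; hence $6b\,(s(a,b)-s(1,b))=(a-1)(b-1)(2b-1)+6t$. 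As $(a,b)=1$ forces $(a-1)(b-1)$ to be even, the right-hand side is even, so $3b\,(s(a,b)-s(1,b))\in\Z$. For $b>0$ this is precisely $3s(1,b)-3\sn(b)s(a,b)\in\tfrac1b\Z$; the case $b<0$ reduces to it via $s(a,-b)=s(a,b)$ and $s(-a,b)=-s(a,b)$.

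\textbf{(b) and (c).} The defining relation $|a|\,d(\bz)\equiv1\pmod b$ reads $\sn(a)\,a\,d(\bz)\equiv1\pmod b$; multiplying by $\sn(a)$ and using $a\,a_{*b}\equiv1\pmod b$ gives $a\,d(\bz)\equiv\sn(a)\,a\,a_{*b}\pmod b$, and cancelling the unit $a$ yields $\sn(a)d(\bz)\equiv a_{*b}\pmod b$, that is $b\mid a_{*b}-\sn(a)d(\bz)$. Then $b^2\mid(a_{*b}-\sn(a)d(\bz))^2$, so $a(a_{*b}-\sn(a)d(\bz))^2/b\in\Z$, and combined with $u^{SO(3)}\in\Z$ from Proposition~\ref{WRTGLensSpaces} this gives $u'^{SO(3)}\in\Z$. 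Part (c) is the same argument for the $SU(2)$ correction term: $b\mid\sn(a)ad(\bz)-1$ is literally the defining congruence, so $b^2$, hence $b$, divides $(\sn(a)ad-1)^2$, whence $a_{*b}(\sn(a)ad-1)^2(\sn(b)b'-1)^2/b\in\Z$ and $u'^{SU(2)}\in\Z$ follows from $u^{SU(2)}\in\Z$.

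\textbf{(d).} Here the plan is: after cancelling the square term of $u^{G}$ against the subtracted correction (choosing the branch of $\pm$ that makes the cancellation exact; the two branches of $u'$ differ by an integer multiple of $4$, by a short computation using $a\,a_{*b}\equiv1\pmod b$), both $u'^{SO(3)}$ and $u'^{SU(2)}$ equal the common value $u'=12\bigl(s(1,b)-s(a,b)\bigr)+b^{-1}\bigl(a(1-d^2)+2(\sn(a)d-1)\bigr)$ with $d=d(\bz)$ (recall $\sn(b)=1$). By (a), $12(s(1,b)-s(a,b))=4m/b$ for some $m\in\Z$, so $b\,u'=4m+a(1-d^2)+2(\sn(a)d-1)=:Y$. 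Since $d(\bz)$ is odd, $d^2\equiv1\pmod 8$, so $4\mid a(1-d^2)$; and $\sn(a)d$ is odd, so $\sn(a)d-1$ is even and $4\mid 2(\sn(a)d-1)$; hence $4\mid Y$. By (b)/(c) we also have $u'\in\Z$, so $b\mid Y$; since $b$ is odd, $\gcd(4,b)=1$, whence $4b\mid Y$, i.e.\ $4\mid u'=Y/b$. The only genuinely delicate step is (a): the bound $3b(s(1,b)-\sn(b)s(a,b))\in\Z$ sharpens the classical $6b\,s(a,b)\in\Z$, and the permutation-plus-parity argument above is its heart; once it and the elementary congruences governing $d(\bz)$ are in hand, (b)--(d) are bookkeeping.
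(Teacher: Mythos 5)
Your proof is correct, and the interesting divergence from the paper is in part (a): the paper simply quotes the mod--$8$ congruence $12bs(a,b)\equiv (b-1)(b+2)-4a(b-1)+4\sum_{j<b/2}\lfloor 2aj/b\rfloor \pmod 8$ from Apostol and subtracts the $a=1$ case to get $4\mid 12b\bigl(s(a,b)-s(1,b)\bigr)$, whereas you reprove the needed statement from scratch: the permutation $n\mapsto an\bmod b$ of $\{1,\dots,b-1\}$ gives $s(a,b)-s(1,b)=b^{-2}\sum n(\bar n-n)$, and then $6b\bigl(s(a,b)-s(1,b)\bigr)=(a-1)(b-1)(2b-1)+6t$ together with the parity observation that $(a,b)=1$ forces $(a-1)(b-1)$ even yields $3b\bigl(s(a,b)-s(1,b)\bigr)\in\Z$. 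Your route is self-contained and elementary (and works uniformly for negative $a$), at the cost of a slightly longer computation; the paper's is shorter but rests on an external Dedekind-sum congruence. Parts (b) and (c) coincide with the paper's argument (the defining congruence $\sn(a)ad(\bz)\equiv 1\pmod b$ plus $aa_{*b}\equiv 1\pmod b$, giving $b^2$ dividing the squared correction terms). For (d) the paper only records $4\mid(1-d^2)$ and $4\mid 2(\sn(a)d-\sn(b))$ and leaves the rest implicit; you make the missing steps explicit ($bu'\in 4\Z$ from (a) plus these facts, then $u'\in\Z$ and $(4,b)=1$ for odd $b$ give $4\mid u'$), which matches the intended use since $u'$ only enters the construction when $p$ is odd. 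One small caution: your parenthetical claim that the two $\pm$ branches of $u'$ differ by a multiple of $4$ is fine for $SO(3)$, but for $SU(2)$ the analogous difference involves $(\sn(b)b'-1)^2$ and need not even be an integer; this is harmless because the branch realized for $d=d(\bz)$ (namely $c\mid \sn(a)ad-1$) is exactly the one appearing in the definition of $u'$, which is the branch your main argument uses, so the remark can simply be dropped.
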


\begin{proof}
For claim \emph{(a)}, using the identity 
\[
 12bs(a,b)\equiv (b-1)(b+2)-4a(b-1) + 4 \sum_{j<\frac{b}{2}}\left\lfloor\frac{2aj}{b} \right\rfloor\pmod{8}
\]
(e.g. see \cite[Theorem 3.9]{Ap}), we get
\[
 12bs(a,b) - 12 bs(1,b)\equiv 4(1-a)(b-1) + 4 \sum_{j<\frac{b}{2}}\left\lfloor\frac{2aj}{b} \right\rfloor-\left\lfloor\frac{2j}{b} \right\rfloor\pmod{8}
\]
which is divisible by 4 in $\Z$. 

Claim \emph{(b)} follows from the fact that $(a,b)=1$ and 
\[
a(a_{*b}-\sn(a)d)=1-\sn(a)ad-bb_{*a}\equiv 0\pmod{b},
\]
since $d$ is chosen such that $\sn(a)ad\equiv 1\pmod{b}$, which also proves the Claim \emph{(c)}. For Claim \emph{(d)}, notice that for odd $d$ we have
\[
4\mid (1-d^2) \;\;\text{ and }\;\;
4\mid 2(\sn(a)d-\sn(b)).
\]
\end{proof}

\newpage

\chapter{Laplace transform} \label{laplace}

This chapter is devoted to the proof of Theorem \ref{Qbk} by using Andrew's identity. Throughout this chapter, let $p$ be a prime or $p=1$ and $b= \pm p^l$ for some $l\in \N$. 

\section{Definition of Laplace transform}
The Laplace transform is a
 $\Z[q^{\pm 1}]$--linear map
defined by
\begin{eqnarray*}
\cL_{b}: \Z[z^{\pm 1}, q^{\pm 1}] &\to& \cS_p \\
z^a &\mapsto& z_{b,a}.
\end{eqnarray*}
In particular, we put $\cL_{b;j}:=\pi_j \circ \cL_{b}$ and have $\cL_{b;j}(z^a)=z_{b,a;j} \in \cS_{p,j}$.

Further, for any
  $f \in \Z[z^{\pm 1},q^{\pm 1}]$ and $n\in \Z$, we define
\[
\hat f:=f|_{z=q^n}\in \Z[q^{\pm n}, q^{\pm 1}]\,.
\]

\begin{lemma}
Suppose  $f \in \Z[z^{\pm 1},q^{\pm 1}]$. Then
for any root of unity $\xi$ of order $r$ (odd for $G=SO(3)$),
\[
{\sum_{n}}^{\xi,G} q^{b\frac{n^2-1}{4}}  \hat f = \gamma^G_{b}(\xi) \,
\ev_\xi(\cL_{-b}(f)).
\]
\label{1001}
\end{lemma}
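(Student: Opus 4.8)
The plan is to reduce to a single monomial and then to a Gauss--sum computation. Since the Laplace transform is $\Z[q^{\pm 1}]$--linear, $\ev_\xi$ is a ring homomorphism, and $f\mapsto {\sum_n}^{\xi,G}q^{b\frac{n^2-1}{4}}\hat f$ is linear over the coefficient ring, writing $f=\sum_a c_a(q)z^a$ with $c_a\in\Z[q^{\pm 1}]$ reduces the claim to $f=z^a$, i.e. to
\[
{\sum_n}^{\xi,G}\xi^{b\frac{n^2-1}{4}+an}=\gamma^G_b(\xi)\,\ev_\xi(z_{-b,a}),\qquad a\in\Z .
\]
By Lemma \ref{GammabNonzero} one has $\gamma^G_b(\xi)=\varphi(\gamma^G_b(e_r))$, by Proposition \ref{eval_z} one has $\ev_\xi(z_{-b,a})=\varphi(\ev_{e_r}(z_{-b,a}))$, and the left side at $\xi$ is the image under $\varphi$ of the left side at $e_r$ (in the $SU(2)$ case using $\xi^{-b/4}=\varphi(e_r^{-b/4})$ for the $4$th roots fixed in the text). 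Hence it suffices to prove the identity at $\xi=e_r$.

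I would then express the left side through the generalized Gauss sums of Lemma \ref{GaussSumFormulas}. For $G=SO(3)$ ($r$ odd) the substitution $n=2k+1$, $k=0,\dots,r-1$, gives ${\sum_n}^{e_r,SO(3)}e_r^{b\frac{n^2-1}{4}+an}=e_r^{a}\,G(r,b,b+2a)$, while $\gamma^{SO(3)}_b(e_r)=G(r,b,b)$. For $G=SU(2)$, splitting the sum over $\{0,\dots,2r-1\}$ into its even part ($n=2k$) and odd part ($n=2k+1$) and using $e_r^{1/4}:=e_{4r}$ gives ${\sum_n}^{e_r,SU(2)}e_r^{b\frac{n^2-1}{4}+an}=e_r^{a}\,G(r,b,b+2a)+e_{4r}^{-b}\,G(r,b,2a)$, while $\gamma^{SU(2)}_b(e_r)=G(r,b,b)+e_{4r}^{-b}\,G(r,b,0)$; recall that $b$ is odd in the $SU(2)$ setting.

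Next I would reduce to the coprime case. Put $c=(r,b)$ (odd, because $r$ is odd for $SO(3)$ and $b$ is odd for $SU(2)$), $r'=r/c$ and $b'=b/c$. By Lemma \ref{GaussSumFormulas}, $G(r,b,y)=0$ unless $c\mid y$, in which case $G(r,b,y)=c\,G(r',b',y/c)$ with $(r',b')=1$. If $c\nmid a$ then $c\nmid b+2a$ and $c\nmid 2a$, so the left side vanishes; and $\ev_{e_r}(z_{-b,a})=0$ by Proposition \ref{eval_z}, so both sides are $0$. If $c\mid a$, write $a=ca_1$: using $e_r^{a}=e_{r'}^{a_1}$ and $e_{4r}^{-b}=e_{4r'}^{-b'}$, the left side becomes $c$ times the corresponding expression for $(r',b',a_1)$, $\gamma^G_b(e_r)=c\,\gamma^G_{b'}(e_{r'})$, and (again by Proposition \ref{eval_z}) $\ev_{e_r}(z_{-b,a})=\ev_{e_{r'}}(z_{-b',a_1})$. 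So the identity for $(r,b,a)$ follows from the one for $(r',b',a_1)$ with $(r',b')=1$, which I would prove directly: in the $SO(3)$ case ($r'$ odd) by inserting the closed form $G(r',b',y)=\epsilon(r')\left(\frac{b'}{r'}\right)\sqrt{r'}\,e_{r'}^{-b'_{*r'}y^2/4}$ and simplifying with $b'_{*r'}b'\equiv 1$, $4\cdot 4_{*r'}\equiv 1\pmod{r'}$; in the $SU(2)$ case by completing the square, $b'n^2+4a_1n\equiv b'(n+2a_1 b'_{*4r'})^2-4a_1^2 b'_{*4r'}\pmod{4r'}$, shifting the summation index, and reducing to the $a_1=0$ Gauss sum. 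Either way the left side becomes $e_{r'}^{-a_1^2 b'_{*r'}}\,\gamma^G_{b'}(e_{r'})$, which by Proposition \ref{eval_z} equals $\gamma^G_{b'}(e_{r'})\,\ev_{e_{r'}}(z_{-b',a_1})$; applying $\varphi$ then finishes the proof.

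The computation is essentially bookkeeping, but the genuinely delicate point is the $SU(2)$ case when the order $r$ of $\xi$ (hence $r'$) is even: one cannot complete the square modulo $r'$, and must instead work modulo $4r'$, using that $b'$ is odd and coprime to $r'$ (hence invertible modulo $4r'$), so that the phase produced by the index shift matches exactly the value of $\ev_{e_r}(z_{-b,a})$ from Proposition \ref{eval_z}.
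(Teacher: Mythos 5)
Your proof is correct and takes essentially the same route as the paper: reduce by linearity to a monomial $z^a$, show that ${\sum_{n}}^{\xi,G} q^{b\frac{n^2-1}{4}}q^{na}$ vanishes unless $c=(r,b)$ divides $a$ and otherwise equals $(\xi^c)^{-a_1^2 b'_{*r'}}\,\gamma^G_b(\xi)$, and then conclude with Proposition \ref{eval_z}. The only difference is that the paper obtains this Gauss--sum identity by invoking the same arguments as in \cite[Lemma 1.3]{BL}, whereas you derive it explicitly from Lemma \ref{GaussSumFormulas} (splitting into even and odd parts, reducing to the coprime case, and handling the even-order $SU(2)$ case by completing the square modulo $4r'$) -- a more self-contained write-up of the same argument rather than a different one.
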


\begin{proof}
It is sufficient to consider the case $f=z^a$.
Then, by the same arguments as in
the proof of \cite[Lemma 1.3]{BL},
we
have
\be\label{four}
{\sum_{n}}^{\xi,G} q^{b\frac{n^2-1}{4}}\,
q^{ na} = \begin{cases}
0
&\text{if $c\nmid  a$}\\
(\xi^c)^{-{a_1^2 b'_*}}\, \gamma^G_b(\xi)  & \text{if $a=ca_1$}.
 \end{cases}
\ee
The result follows now from Proposition \ref{eval_z}.
\end{proof}

\section{Proof of Theorem \ref{Qbk}}
Recall that
\[
A(n,k) = \frac{\prod^{k}_{i=0}
\left(q^{n}+q^{-n}-q^i -q^{-i}\right)}{(1-q) \, (q^{k+1};q)_{k+1}}.
\]
We have to show that there exists an element $Q_{b,k} \in \cR_b$ (respectively $Q_{b,k} \in \cS_b$ if $b$ odd), such that for every root of unity $\xi$ of order $r$ (odd if $G=SO(3)$), one has
\[
\frac{{\sum_n}^{\hspace{-1.8mm}\xi,G } \; q^{b\frac{n^2-1}{4}} A(n,k) }{F_{U^b}(\xi)}
= \ev_\xi (Q_{b,k}).
\]
Applying Lemma \ref{1001} to $F_{U^b}(\xi)={\sum\limits_ n}^{\xi,G} q^{b\frac{n^2-1}{4}}
[n]^2$, we get for $c=(b,r)$
\be\label{1122}
F_{U^b}(\xi)=2\gamma^G_b(\xi) \;\ev_{\xi}\left(\frac{(1-x_{-b})^{\chi(c)}}{(1-q^{-1})(1-q)}\right)
,\ee
where, as usual, $\chi(c)=1$ if $c=1$ and zero otherwise. We will prove that for an odd prime $p$ and any number $j\geq 0$, there exists an element $Q_k(q,x_b,j) \in \cS_{p,j}$ such that 
\be\label{imp}
\frac{1}{(q^{k+1};q)_{k+1}}
\,\cL_{b;j}\left( \prod_{i=0}^k (z+z^{-1} - q^i -q^{-i})   \right)
=
2\, Q_k(q^{\sn(b)}, x_{b},j).
\ee
If $p=2$ we will prove the claim for $j=0$ only, since $\cS_{2,0}\simeq\cR_{2}$.

\begin{remark}
The case $p=\pm 1$ was already done e.g. in \cite{BBL}.
\end{remark}

Theorem \ref{Qbk} follows then from Lemma \ref{1001} and \eqref{1122}
where $Q_{b,k}$ is defined by its projections
\[
\pi_j Q_{b,k}:=\;
\frac{1-q^{-1}}{(1-x_{-b})^{\chi(p^j)}}\; Q_k(q^{-\sn(b)},x_{-b},j).
\]
We split the proof of (\ref{imp})
into two parts. In the first part we will show that
there exists an element $Q_{k}(q,x_b,j)$ such
 that  Equality \eqref{imp}
holds. In the second part we show that $Q_k(q,x_b,j)$
lies in $\cS_{p,j}$.

\subsection{Part 1: Existence of $Q_k(q,x_b,j)$,  $b$ odd case}
Assume $b=\pm p^l$ with $p\not=2$.
We split the proof into several lemmas.

\begin{lemma}\label{S_{b;j}(k,q)}
For $x_{b;j}:=\pi_j(x_b)$ and $c=(b,p^j)$,
\[
\cL_{b;j}\left( \prod_{i=0}^k (z+z^{-1} - q^i -q^{-i})   \right)=
2\, (-1)^{k+1} \, \qbinom{2k+1}{k} \, S_{b;j}(k,q)
\]
where
\begin{equation}\label{unif-s}
S_{b;j}(k,q):=1+\sum_{n=1}^{\infty}\frac{q^{(k+1)cn}(q^{-k-1};q)_{cn}}
{(q^{k+2};q)_{cn}}
(1+q^{cn}) x_{b;j}^{n^2}.
\end{equation}
\end{lemma}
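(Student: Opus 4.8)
The plan is to expand the symmetric Laurent polynomial $P_k(z):=\prod_{i=0}^{k}(z+z^{-1}-q^i-q^{-i})$ in powers of $z$, apply $\cL_{b;j}$ term by term, and recognise the outcome. For the expansion I would use the factorisation $z+z^{-1}-q^i-q^{-i}=z^{-1}(z-q^i)(z-q^{-i})$, so that
\[
P_k(z)=z^{-(k+1)}\prod_{i=0}^{k}(z-q^{i})\prod_{i=0}^{k}(z-q^{-i}),
\]
and then the $q$-binomial theorem to write each of $\prod_{i=0}^{k}(z-q^{i})$ and $\prod_{i=0}^{k}(z-q^{-i})$ as an explicit sum of $q$-binomial coefficients times monomials in $z$ and $q^{\pm1}$. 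Multiplying the two factors and collecting powers of $z$ yields $P_k(z)=c_0(q)+\sum_{n\ge 1}c_n(q)\,(z^n+z^{-n})$, where each $c_n(q)\in\Z[q^{\pm1}]$ is an explicit convolution of two $q$-binomial coefficients (here $\qbinom{\cdot}{\cdot}$ is the balanced binomial of Section~\ref{Uh(sl2)}).

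Next I would apply $\cL_{b;j}$. Since $\cL_{b;j}$ is $\Z[q^{\pm1}]$-linear with $\cL_{b;j}(z^{a})=z_{b,a;j}$, and since by the definitions of Section~\ref{defxb} together with \eqref{piOfxb} one has, with $c=(b,p^j)$,
\[
z_{b,a;j}=
\begin{cases}
0 & \text{if } c\nmid a,\\
x_{b;j}^{\,(a/c)^2} & \text{if } c\mid a,
\end{cases}
\]
only the indices $a$ divisible by $c$ survive, and $z_{b,cn;j}=x_{b;j}^{\,n^2}$. Using $c_{n}=c_{-n}$ this gives
\[
\cL_{b;j}\Big(\prod_{i=0}^{k}(z+z^{-1}-q^i-q^{-i})\Big)=c_0(q)+\sum_{n\ge1}2\,c_{cn}(q)\,x_{b;j}^{\,n^2}.
\]
It therefore remains to prove the single $q$-binomial identity
\[
c_n(q)=(-1)^{k+1}\qbinom{2k+1}{k}\,\frac{q^{(k+1)n}\,(q^{-k-1};q)_{n}}{(q^{k+2};q)_{n}}\,(1+q^{n})\qquad(n\ge 0);
\]
substituting $n=cm$, pulling out the common factor $2(-1)^{k+1}\qbinom{2k+1}{k}$, and comparing with \eqref{unif-s} then produces exactly $2(-1)^{k+1}\qbinom{2k+1}{k}\,S_{b;j}(k,q)$.

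This $q$-binomial identity is the heart of the matter and the step I expect to cost the most work. The coefficient $c_n(q)$ is a terminating convolution sum, and after shifting indices it becomes a $q$-Chu--Vandermonde sum whose closed form is $\qbinom{2k+2}{k+1-n}$ up to an explicit monomial and sign; the identity then follows by re-expressing $\qbinom{2k+2}{k+1-n}$ through one Pascal-type step and the symmetry $\qbinom{2k+1}{k}=\qbinom{2k+1}{k+1}$, the latter also explaining the factor $2$ at $n=0$ where one gets $c_0(q)=2(-1)^{k+1}\qbinom{2k+1}{k}$. I would organise the computation around the cases $n=0$ and $1\le n\le k+1$, using as a built-in consistency check that $(q^{-k-1};q)_{n}$ vanishes for $n\ge k+2$, in agreement with $\deg_z P_k=k+1$. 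This is the point where the basic hypergeometric machinery of the chapter enters, though for this particular lemma the plain $q$-Chu--Vandermonde summation already suffices; once it is available, Lemma~\ref{S_{b;j}(k,q)} follows by assembling the pieces, with no further input from the structure of $\cS_{p,j}$.
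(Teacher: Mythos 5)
Your proposal follows the same overall reduction as the paper: expand the symmetric Laurent polynomial in powers of $z$, use that $\cL_{b;j}(z^a)=0$ unless $c\mid a$ together with $z_{b,cn;j}=x_{b;j}^{n^2}$, and then verify one closed-form $q$--binomial identity for the coefficients -- and the coefficient identity you isolate is exactly the right one (it is precisely what the paper proves). Where you genuinely differ is in how the $z$--expansion is produced. The paper exploits the invariance of $\cL_b$ under $z\mapsto z^{-1}$ to replace the product by $-2\,z^{-k}(zq^{-k};q)_{2k+1}$, so a single application of the $q$--binomial theorem gives single coefficients $(-1)^i\qbinom{2k+1}{k+i}$, and the rest is the identity $\qbinom{2k+1}{k+cn}+\qbinom{2k+1}{k-cn}=\frac{\{k+1\}}{\{2k+2\}}\qbinom{2k+2}{k+cn+1}(q^{cn/2}+q^{-cn/2})$ together with its rewriting through $(q^{-k-1};q)_{cn}/(q^{k+2};q)_{cn}$. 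You instead multiply the expansions of $\prod_{i=0}^k(z-q^i)$ and $\prod_{i=0}^k(z-q^{-i})$ and evaluate the convolution by $q$--Chu--Vandermonde; this is viable but costlier, and one detail in your sketch is off: because the two progressions share the factor at $i=0$, the product equals $(z-1)\prod_{i=-k}^{k}(z-q^i)$, so the convolution does \emph{not} collapse to a single $\qbinom{2k+2}{k+1-n}$ times a monomial and sign. The correct evaluation is
\[
c_n=(-1)^{k+n+1}\left(\qbinom{2k+1}{k+n}+\qbinom{2k+1}{k-n}\right)
=(-1)^{k+n+1}\,\frac{q^{n/2}+q^{-n/2}}{q^{(k+1)/2}+q^{-(k+1)/2}}\,\qbinom{2k+2}{k+1-n},
\]
which carries the non-monomial factor $(q^{n/2}+q^{-n/2})/(q^{(k+1)/2}+q^{-(k+1)/2})$. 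With this corrected closed form your reduction does go through: the value $c_0=2(-1)^{k+1}\qbinom{2k+1}{k}$, the vanishing for $n\ge k+2$, and the match with \eqref{unif-s} after setting $n=cm$ are all as you say, and your use of $z_{b,a;j}=x_{b;j}^{(a/c)^2}$ for $c\mid a$ (zero otherwise) is consistent with Section \ref{defxb} and \eqref{piOfxb}. What the paper's symmetrization buys is precisely the avoidance of the Vandermonde step: one expansion and one two-term binomial identity suffice.
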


Observe that for  $n> \frac{k+1}{c}$, the term $(q^{-k-1};q)_{cn}$ is zero and therefore the sum in \eqref{unif-s} is finite.

\begin{proof}
Since $\cL_{b}$ is invariant under $z \to z^{-1}$, one has
\[
\cL_b\left(\prod_{i=0}^k (z+z^{-1} - q^i -q^{-i})\right) =
-2\cL_{b} (z^{-k}(zq^{-k};q)_{2k+1}),
\]
and the $q$--binomial theorem (e.g. see \cite{GR}, II.3) gives
\begin{equation}\label{qbinomial}
z^{-k}(zq^{-k};q)_{2k+1}=
(-1)^k \sum_{i=-k}^{k+1}(-1)^i\qbinom{2k+1}{k+i}z^i.
\end{equation}
Notice that $\cL_{b;j}(z^a)\not=0$ if and only if $c \mid a$. Applying $\cL_{b;j}$ to the RHS of (\ref{qbinomial}), only the terms with
$c \mid i$ survive and therefore
\[
\cL_{b;j}\left(z^{-k}(zq^{-k};q)_{2k+1}\right)=
(-1)^{k} \sum_{n=-\lfloor k/c \rfloor}^{\lfloor (k+1)/c \rfloor} (-1)^{cn}
\qbinom{2k+1}{k+cn} z_{b,cn;j}.
\]
Separating the case $n=0$ and combining positive and negative $n$, this is equal to
\[
(-1)^{k}\qbinom{2k+1}{k}
+(-1)^k \sum_{n=1}^{\lfloor (k+1)/c \rfloor} (-1)^{cn}\left(\qbinom{2k+1}{k+cn}+\qbinom{2k+1}{k-cn}\right) z_{b,cn;j}
\]
where we use the convention that $\qbinom{x}{-1}$ is zero for positive $x$.
Further,
\[
\qbinom{2k+1}{k+cn} +\qbinom{2k+1}{k-cn} =\frac{\{k+1\}}{\{2k+2\}}\qbinom{2k+2}{k+cn+1}(q^{cn/2}+q^{-cn/2})
\]
and
\[
\frac{\{k+1\}}{\{2k+2\}}\qbinom{2k+2}{k+cn+1}\qbinom{2k+1}{k}^{-1}=
(-1)^{cn}q^{(k+1)cn+\frac{cn}{2}}\frac{(q^{-k-1};q)_{cn}}{(q^{k+2};q)_{cn}}
.
\]
Using $z_{b,cn;j}=(z_{b,c;j})^{n^2}=x_{b;j}^{n^2}$, we get the result.
\end{proof}

To define $Q_k(q,x_b,j)$, we will need Andrew's identity (3.43) of \cite{And}:
\begin{eqnarray*}
&&\hspace{-6mm}\sum_{n\geq 0} (-1)^n\alpha_n t^{-\frac{n(n-1)}{2}+sn+Nn} \frac{(t^{-N})_n}{(t^{N+1})_n}
\prod_{i=1}^{s}\frac{(b_i)_n (c_i)_n}{b_i^nc_i^n(\frac{t}{b_i})_n(\frac{t}{c_i})_n}
=\\
&&
\hspace{-4mm}
\frac{(t)_N(\frac{q}{b_sc_s})_N}{(\frac{t}{b_s})_N(\frac{t}{c_s})_N}
\sum_{n_s\geq \cdots\geq n_2\geq n_1 \geq 0}
\beta_{n_1}
\frac{t^{n_s}(t^{-N})_{n_s}(b_s)_{n_s}(c_s)_{n_s}}{(t^{-N} b_s c_s)_{n_s}}
\prod_{i=1}^{s-1}
\frac{t^{n_i}}{b_i^{n_{i}} c_i^{n_{i}}}
\frac{(b_i)_{n_{i}}(c_i)_{n_{i}}}{(\frac{t}{b_i})_{n_{i+1}}(\frac{t}{c_i})_{n_{i+1}}}
\frac{(\frac{t}{b_i c_i})_{n_{i+1}-n_i}}{(t)_{n_{i+1}-n_i}}\;.
\end{eqnarray*}
Here and in what follows we use the notation $(a)_n:=(a;t)_n$ .
The special Bailey pair $(\alpha_n,\beta_n)$ is chosen as follows
\[
\begin{array}{rclcrcl}
\alpha_0&=&1, &&\alpha_n&=&(-1)^nt^{\frac{n(n-1)}{2}}(1+t^n)\\
\beta_0&=&1, &&\beta_n&=&0 \;\;\;\text{ for } n\geq 1.
\end{array}
\]

\begin{lemma}\label{LHS}
$S_{b;j}(k,q)$ is equal to the LHS of Andrew's identity
with the parameters fixed below.
\end{lemma}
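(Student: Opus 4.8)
The plan is to read off, term by term, the left-hand side of Andrew's identity (3.43) once the prescribed values of $t,N,s$ and of $b_1,\dots,b_s,c_1,\dots,c_s$ have been substituted together with the special Bailey pair $(\alpha_n,\beta_n)$, and to check that what remains is exactly the series \eqref{unif-s} defining $S_{b;j}(k,q)$. The first step is purely formal and telescoping: for $n\ge 1$ one has $(-1)^n\alpha_n = t^{n(n-1)/2}(1+t^n)$, so the factor $t^{-n(n-1)/2}$ in the summand of (3.43) cancels against the $t^{n(n-1)/2}$ produced by $\alpha_n$, leaving
\[
(1+t^n)\,t^{sn+Nn}\,\frac{(t^{-N};t)_n}{(t^{N+1};t)_n}\prod_{i=1}^{s}\frac{(b_i;t)_n(c_i;t)_n}{b_i^{\,n}c_i^{\,n}\,(t/b_i;t)_n(t/c_i;t)_n},
\]
whereas the $n=0$ term is $1$, which matches the leading $1$ in \eqref{unif-s}. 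So the whole task is to choose the parameters so that this expression becomes the generic term of \eqref{unif-s}.

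The parameters then split into two groups. A fixed number of them are sent to infinity; using $\lim_{b\to\infty}(b;t)_n/b^{\,n}=(-1)^n t^{\binom n2}$ and $\lim_{b\to\infty}(t/b;t)_n=1$, each such limit contributes a Gaussian factor $(-1)^n t^{\binom n2}$, and collecting these together with the explicit monomial $t^{sn+Nn}$ produces — after expressing the relevant power of the base through $x_{b;j}$ via $x_{b;j}^{\,b/c}=q^{c}$, where $c=(b,p^j)$ — the quadratic factor $x_{b;j}^{\,n^2}$ and the monomial $q^{(k+1)cn}$ appearing in \eqref{unif-s}. The remaining parameters are set to suitable powers of $q$: regrouping $(q^{-k-1};q)_{cn}=\prod_{i=0}^{c-1}(q^{\,i-k-1};q^{c})_n$ and likewise $(q^{k+2};q)_{cn}=\prod_{i=0}^{c-1}(q^{\,k+2+i};q^{c})_n$, one matches these $c$ residue-class sub-products one at a time to the factors $(b_i;t)_n$ and $(t/b_i;t)_n$, the distinguished pair $(t^{-N};t)_n/(t^{N+1};t)_n$ accounting for one of them (which fixes $N=\lfloor (k+1)/c\rfloor$). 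One also checks that the $(1+t^n)$ issuing from $\alpha_n$ is the $(1+q^{cn})$ of \eqref{unif-s} and that the sums on both sides are finite in $n$ (the factor $(q^{-k-1};q)_{cn}$, resp. the distinguished $t$-Pochhammer, vanishes for $n$ large), which legitimizes taking the limits termwise.

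The main obstacle is the exponent bookkeeping: one has to pin down $s$, $N$ and the exact assignment of the $b_i,c_i$ so that all powers of $q$ — equivalently, of the chosen base and of $x_{b;j}$ — collapse to precisely $q^{(k+1)cn}\,x_{b;j}^{\,n^2}$, and so that the residue-class decomposition of the $q$-Pochhammers modulo $c$ lines up with the $t$-Pochhammers of (3.43); reconciling the colour $c=(b,p^j)$ with the base and with the element $x_{b;j}\in\cS_{p,j}$ is where the care is needed. This is elementary but delicate, and is really the only content of the lemma; once the parameters are fixed correctly, the identification of the two series is immediate, and in the subsequent lemma the right-hand side of Andrew's identity (which collapses drastically because $\beta_n=0$ for $n\ge 1$) will be used to evaluate $S_{b;j}(k,q)$ in closed form.
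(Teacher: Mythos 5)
Your proposal correctly identifies the general mechanism (the Bailey pair makes the $n=0$ term equal to $1$ and supplies $(1+t^n)$, the pair sent to infinity supplies the Gaussian power, the mod-$c$ regrouping of $(q^{-k-1};q)_{cn}$ and $(q^{k+2};q)_{cn}$, and termination of the sum), but it stops exactly where the lemma begins: you yourself call the exponent bookkeeping and the explicit assignment of $s$, $N$ and the $b_i,c_i$ ``the only content of the lemma'' and then do not carry it out, whereas the paper's proof consists precisely of that verification for the parameters listed before the lemma. Moreover, the matching you sketch breaks down whenever $c=(b,p^j)<|b|$, i.e. $b'=b/c>1$. The base of Andrew's identity is $t=x_{b;j}$ with $t^{b'}=q^c$, so a residue-class factor $(q^{y+l};q^c)_n$ is \emph{not} of the form $(b_i;t)_n$ for any single parameter; the paper needs the further factorization $(q^{yc};q^c)_n=\prod_{i=0}^{b'-1}(\omega^i t^{y};t)_n$ over a primitive $b'$th root of unity $\omega$, together with a chosen $c$th root of $t$ to write the fractional exponents $U_l,V_l\in\frac1c\Z$ -- none of which appears in your argument. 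For the same reason your claim that ``the $(1+t^n)$ issuing from $\alpha_n$ is the $(1+q^{cn})$ of \eqref{unif-s}'' is false unless $b'=1$: one has $(1+q^{cn})=(1+t^{b'n})$, and the conversion is achieved only through the dedicated parameter pairs $b_{g+i}=-\omega^{i}t$, $c_{g+i}=-\omega^{-i}t$, which produce the factor $\prod_{i=1}^{d}(-\omega^{\pm i}t)_n/(-\omega^{\pm i})_n=(1+t^{b'n})/(1+t^n)$.

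A further discrepancy: you fix Andrew's $N$ to be $\lfloor (k+1)/c\rfloor$ and let the distinguished quotient $(t^{-N})_n/(t^{N+1})_n$ absorb one residue class, whereas the paper sets $N=k+1$ (in base $t$) and obtains the termination at $n\le m=\lfloor (k+1)/c\rfloor$ from the extra pair $b_{s-1}=t^{-m}$, $c_{s-1}=t^{N+1}$, which cancels the distinguished quotient and replaces it by $(t^{-m})_n/(t^{m+1})_n$. Your alternative normalization changes the monomial $t^{Nn}$ in the summand and hence all the linear-exponent bookkeeping needed to land on $q^{(k+1)cn}x_{b;j}^{n^2}$, and it would also change the right-hand side that is exploited in the next step; since you never verify the exponents (in the paper this is the computation $n-1+s+N-y=n+Nb'$), the proposal cannot be accepted as a proof of the lemma, neither for the paper's parameters nor for your implicit ones. (You also omit the reduction $S_{b;j}(k,q)=S_{-b;j}(k,q^{-1})$ that lets one assume $b>0$, a minor but necessary point.)
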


\begin{proof}
Since
\[
 S_{b;j}(k,q)=S_{-b;j}(k,q^{-1}),
\]
it is enough to look at the case $b>0$. Define $b':=\frac{b}{c}$ and
let $\omega$ be a $b'$th primitive root of unity. For simplicity,
put $N:=k+1$ and $t:=x_{b;j}$.
Using the following identities
\begin{eqnarray*}
(q^{y};q)_{cn}&=&\prod_{l=0}^{c-1}(q^{y+l};q^c)_n\;,\\
(q^{yc};q^c)_n&=&\prod_{i=0}^{b'-1}(\omega^i t^y;t)_n\;,
\end{eqnarray*}
where the later is true due to $t^{b'}=x_{b;j}^{b'}=q^c$ for all $j$, and
choosing a $c$th root of $t$ denoted by $t^{\frac{1}{c}}$,
we can see that
\[
S_{b;j}(k,q)=1+\sum_{n=1}^{\infty}
\prod_{i=0}^{b'-1}\prod_{l=0}^{c-1}
\frac{
(\omega^i t^{\frac{-N+l}{c}})_{n}
}{
(\omega^i t^{\frac{N+1+l}{c}})_{n}
}
(1+t^{b'n})t^{n^2+b'Nn}.
\]

Now we choose the parameters for Andrew's identity as follows.
We put
 $a:=\frac{c-1}{2}$, $d:=\frac{b'-1}{2}$ and $m:=\lfloor \frac{N}{c}\rfloor$.
For $l\in\{1,\ldots,c-1\}$, there exist unique $u_l,v_l\in\{0,\ldots,c-1\}$, such that
$u_l\equiv N+l\pmod{c}$ and $v_l\equiv N-l\pmod{c}$. Note that $v_l=u_{c-l}$. We define
$U_l:=\frac{-N+u_l}{c}$ and $V_l:=\frac{-N+v_l}{c}$. Then
$U_l, V_l \in \frac{1}{c}\Z$ but $U_l+V_l \in \Z$.
We define
\[
\begin{array}{rclrcll}
b_l&:=&t^{U_l}, &c_l&:=&t^{V_l} & \text{for }\; l=1,\ldots, a, \\
b_{a+i}&:=&\omega^it^{-m},
&c_{a+i}&:=&\omega^{-i}t^{-m}
& \text{for }\;i=1,\ldots,d, \\
b_{a+ld+i}&:=&\omega^it^{U_l},
&c_{a+ld+i}&:=&\omega^{-i}t^{V_l}
& \text{for }\;i=1,\ldots,d \text{ and } l=1,\ldots, c-1, \\
b_{g+i}&:=&-\omega^{i}t, &c_{g+i}&:=&-\omega^{-i}t &\text{for }\;i=1,\ldots,d,\\
b_{s-1}&:=&t^{-m}, &c_{s-1}&:=&t^{N+1}, &\\
b_s&\rightarrow&\infty,&c_s&\rightarrow&\infty,&\\
\end{array}
\]
where $g=a+cd$ and $s=(c+1)\frac{b'}{2}+1$.

We now calculate the LHS of Andrew's identity.
Using the notation
\[
(\omega^{\pm 1} t^x)_n=(\omega t^x)_n(\omega^{-1} t^x)_n
\]
and the identities
\[
\lim_{c\to \infty} \frac{(c)_n}{c^n}=(-1)^n t^{\frac{n(n-1)}{2}} \;\;\text{ and }\;\; \lim_{c\to\infty} \left(\frac{t}{c}\right)_n=1,
\]
we get
\begin{eqnarray*}
LHS&=&1+
\sum_{n\geq 1}
t^{n(n-1+s+N-y)}\;
(1+t^n)\frac{(t^{-N})_n}{(t^{N+1})_n}
\cdot
\prod_{l=1}^{a}
	\frac{
		(t^{U_l})_n (t^{V_l})_n
	}{
		(t^{1-U_l})_n (t^{1-V_l})_n
	}	
\cdot
\prod_{i=1}^{d}
	\frac{
		(\omega^{\pm i} t^{-m})_n
	}{
		(\omega^{\pm i} t^{1+m})_n
	}
\\	&& \hspace{3cm}
\cdot\prod_{i=1}^{d}\prod_{l=1}^{c-1}
	\frac{
		(\omega^i t^{U_l})_n
		(\omega^{-i}t^{V_l})_n
	}{
	(\omega^{-i} t^{1-U_l})_n
	(\omega^{i}t^{1-V_l})_n	
	}
\cdot
\prod_{i=1}^{d}
	\frac{
		(-\omega^{\pm i}t)_n
	}{
		(-\omega^{\pm i})_n
	}
\cdot
\frac{
	(t^{-m})_n (t^{N+1})_n
}{
	(t^{1+m})_n (t^{-N})_n
}
\end{eqnarray*}
where
 \[
 y:=
\sum_{l=1}^{a} (U_l+V_l)+\sum_{i=1}^{d}\sum_{l=1}^{c-1}(U_l+V_l)-m(2d+1)+2d+1+N.
\]
Since
$\sum_{l=1}^{c-1}(U_l+V_l)=2\sum_{l=1}^{a}(U_l+V_l)=2(-N+m+\frac{c-1}{2})$ and
$2d+1=b'$, we have
\[
n-1+s+N-y=n+Nb'.
\]
Further,
\[
\prod_{i=1}^{d}	\frac{(-\omega^{\pm i}t)_n}{(-\omega^{\pm i})_n}
=
\prod_{i=1}^{b'-1}\frac{1+\omega^i t^n}{1+\omega^i}=\frac{1+t^{b'n}}{1+t^n}	
\]
and
\begin{eqnarray*}
&&
\prod_{l=1}^{a}
	\frac{
		(t^{U_l})_n (t^{V_l})_n
	}{
		(t^{1-U_l})_n (t^{1-V_l})_n
	}	
\cdot
\prod_{i=1}^{d}
	\frac{
		(\omega^{\pm i} t^{-m})_n
	}{
		(\omega^{\pm i} t^{1+m})_n
	}
\cdot
\prod_{i=1}^{d}\prod_{l=1}^{c-1}
	\frac{
		(\omega^i t^{U_l})_n
		(\omega^{-i}t^{V_l})_n
	}{
	(\omega^{-i} t^{1-U_l})_n
	(\omega^{i}t^{1-V_l})_n	
	}
\cdot
\frac{
	(t^{-m})_n
}{
	(t^{1+m})_n
}
\\
&&\hspace{105mm}=
\prod_{i=0}^{b'-1}\prod_{l=0}^{c-1}
\frac{
(\omega^i t^{\frac{-N+l}{c}})_{n}
}{
(\omega^i t^{\frac{N+1+l}{c}})_{n}
}.
\end{eqnarray*}
Taking all the results together, we see that the LHS is equal to
$S_{b;j}(k,q)$.

\end{proof}

Let us now calculate the RHS of Andrew's identity with parameters chosen as above.
For simplicity, we put $\delta_j:=n_{j+1}-n_j$.
Then the RHS is given by
\begin{eqnarray*}
RHS
&=&
(t)_N\sum_{n_s\geq \cdots\geq n_2 \geq n_1=0}
\frac{
t^{x}\cdot (t^{-N})_{n_s}(b_s)_{n_s}(c_s)_{n_s}
}{
\prod_{i=1}^{s-1} (t)_{\delta_{i}}(t^{-N}b_sc_s)_{n_s}
}
\cdot
\frac{
(t^{-m})_{n_{s-1}}(t^{N+1})_{n_{s-1}}
(t^{m-N})_{\delta_{s-1}}
}{
(t^{m+1})_{n_s}(t^{-N})_{n_s}
}
\\
&&\hspace{7mm}
\cdot
\prod_{l=1}^{a}
\frac{
(t^{U_l})_{n_l}(t^{V_l})_{n_l}
(t^{1-U_l-V_l})_{\delta_{l}}
}{
(t^{1-U_l})_{n_{l+1}}(t^{1-V_l})_{n_{l+1}}
}
\cdot
\prod_{i=1}^{d}
\frac{
(\omega^{\pm i}t^{-m})_{n_{a+i}}
(t^{2m+1})_{\delta_{a+i}}
}{
(\omega^{\pm i}t^{m+1})_{n_{a+i+1}}
}
\frac{
(-\omega^{\pm i}t)_{n_{g+i}}
(t^{-1})_{\delta_{g+i}}
}{
(-\omega^{\pm i})_{n_{g+i+1}}
}
\\
&&\hspace{7mm}
\cdot
\prod_{i=1}^{d}\prod_{l=1}^{c-1}
\frac{
(\omega^it^{U_l})_{n_{a+ld+i}}
(\omega^{-i}t^{V_l})_{n_{a+ld+i}}
(t^{1-U_l-V_l})_{\delta_{a+ld+i}}
}{
(\omega^{-i}t^{1-U_l})_{n_{a+ld+i+1}}
(\omega^{i}t^{1-V_l})_{n_{a+ld+i+1}}
}
\end{eqnarray*}
where
\begin{eqnarray*}
x
&=&\sum_{l=1}^{a}(1-U_l-V_l)\,n_l
+\sum_{i=1}^{d}(2m+1)\,n_{a+i}
\\&&\hspace{2cm}
+\sum_{i=1}^{d}\sum_{l=1}^{c-1}(1-U_l-V_l)\, n_{a+ld+i}
-\sum_{i=1}^{d}n_{g+i}+(m-N)\,n_{s-1}+n_s.
\end{eqnarray*}
For $c=1$ or $d=0$,
we use the convention that empty products  are equal to 1 and empty sums are equal to zero.

Let us now have a closer look at the RHS.
Notice that
\[
\lim_{b_s,c_s\to\infty}\frac{
(b_s)_{n_s}(c_s)_{n_s}
}{
(t^{-N}b_sc_s)_{n_s}
}
=
(-1)^{n_s}t^{\frac{n_s(n_s-1)}{2}}t^{Nn_s}.
\]
The term $(t^{-1})_{\delta_{g+i}}$ is zero unless
 $\delta_{g+i}\in \{0,1\}$.
Therefore, we get
\[
\prod_{i=1}^{d}\frac{
(-\omega^{\pm i}t)_{n_{g+i}}
}{
(-\omega^{\pm i})_{n_{g+i+1}}
}
=
\prod_{i=1}^{d}
(1+\omega^{\pm i}t^{n_{g+i}})^{1-\delta_{g+i}}.
\]
Due to the term $(t^{-m})_{n_s}$, we have
$n_s\leq m$ and therefore $n_i\leq m$ for all $i$.
Multiplying the numerator and denominator of each term of the RHS by
\begin{eqnarray*}
&&
\prod_{l=1}^{a} (t^{1-U_l+n_{l+1}})_{m-n_{l+1}}(t^{1-V_l+n_{l+1}})_{m-n_{l+1}}
\prod_{i=1}^{d} (\omega^{\pm i}t^{m+1+n_{a+i+1}})_{m-n_{a+i+1}}
\\
&&\hspace{35mm}\cdot\prod_{i=1}^{d}\prod_{l=1}^{c-1}
(\omega^{-i}t^{1-U_l+n_{a+ld+i+1}})_{m-n_{a+ld+i+1}}
(\omega^{i}t^{1-V_l+n_{a+ld+i+1}})_{m-n_{a+ld+i+1}}
\end{eqnarray*}
gives in the denominator
$\prod_{i=0}^{b'-1}\prod_{l=1}^{c-1}(\omega^it^{1-U_l})_m
\cdot
\prod_{i=1}^{b'-1}(\omega^it^{m+1})_m$.
This is equal to
\[
\prod_{l=1}^{c-1}(t^{b'(1-U_l)};t^{b'})_m
\cdot \frac{(t^{b'(m+1)};t^{b'})_m}{(t^{m+1};t)_m}
=
\frac{(q^{N+1};q)_{cm}}{(t^{m+1};t)_m}.
\]
Further,
\[
(t)_N(t^{N+1})_{n_{s-1}}=(t)_{N+n_{s-1}}=(t)_{m}(t^{m+1})_{N-m+n_{s-1}}.
\]
The term $(t^{-N+m})_{\delta_{s-1}}$ is zero unless $\delta_{s-1}\leq N-m$
and therefore
\[
\frac{(t^{m+1})_{N-m+n_{s-1}}}{(t^{m+1})_{n_s}}
=
(t^{m+1+n_s})_{N-m-\delta_{s-1}}.
\]
Using the above calculations, we get
\begin{equation}\label{RHS}
RHS=
\frac{(t;t)_{2m}}{(q^{N+1};q)_{cm}}
\cdot T_k(q,t)
\end{equation}
where
\begin{eqnarray*}
T_k(q,t)&:=&
\sum_{n_s\geq \cdots\geq n_2 \geq n_1=0}
(-1)^{n_s}t^{x'}\cdot
(t^{-m})_{n_{s-1}}\cdot
(t^{m+1+n_s})_{N-m-\delta_{s-1}}
\cdot\frac{
(t^{-N+m})_{\delta_{s-1}}
}{
\prod_{i=1}^{s-1} (t)_{\delta_i}
}
\\
&&\hspace{3mm}
\cdot
\prod_{l=1}^{a}(t^{1-U_l-V_l})_{\delta_l}
\cdot
\prod_{i=1}^{d} (t^{2m+1})_{\delta_{a+i}}(t^{-1})_{\delta_{g+i}}
\cdot
\prod_{i=1}^{d}\prod_{l=1}^{c-1}(t^{1-U_l-V_l})_{\delta_{a+ld+i}}
\\
&&\hspace{3mm}
\cdot\prod_{l=1}^{a}
(t^{U_l})_{n_l}(t^{V_l})_{n_l}
(t^{1-U_l+n_{l+1}})_{m-n_{l+1}}
(t^{1-V_l+n_{l+1}})_{m-n_{l+1}}
\cdot
\prod_{i=1}^{d}
(1+\omega^{\pm i}t^{n_{g+i}})^{1-\delta_{g+i}}
\\
&&\hspace{3mm}
\cdot
\prod_{i=1}^{d}
(\omega^{\pm i}t^{-m})_{n_{a+i}}
(\omega^{\pm i}t^{m+1+n_{a+i+1}})_{m-n_{a+i+1}}
\cdot
\prod_{i=1}^{d}
\prod_{l=1}^{c-1}
(\omega^{i}t^{U_l})_{n_{a+ld+i}}
(\omega^{-i}t^{V_l})_{n_{a+ld+i}}
\\
&&\hspace{3mm}
\cdot
\prod_{i=1}^{d}
\prod_{l=1}^{c-1}
(\omega^{-i}t^{1-U_l+n_{a+ld+i+1}})_{m-n_{a+ld+i+1}}
(\omega^{i}t^{1-V_l+n_{a+ld+i+1}})_{m-n_{a+ld+i+1}}
\end{eqnarray*}
and
$x':=x+\frac{n_s(n_s-1)}{2}+Nn_s$.

We define the element $Q_{k}(q,x_b,j)$   by
\[
Q_{k}(q,x_b,j):=
\left((-1)^{k+1}q^{-\frac{k(k+1)}{2}}\right)
^{\frac{1+\sn(b)}{2}}
\left(q^{(k+1)^2}\right)^{\frac{1-\sn(b)}{2}}
\frac{(x_{b;j};x_{b;j})_{2m}}{(q;q)_{N+cm}}
\;T_k(q,x_{b;j}).
\]

By Lemmas \ref{S_{b;j}(k,q)} and \ref{LHS}, Equation (\ref{RHS}) and the following Lemma \ref{bPosbNeg},
we  see that this element
satisfies Equation (\ref{imp}).

\begin{lemma}\label{bPosbNeg}
The following formula holds.
\[
(-1)^{k+1}\qbinom{2k+1}{k}(q^{k+1};q)_{k+1}^{-1}=
(-1)^{k+1}\frac{q^{-k(k+1)/2}}{(q;q)_{k+1}}
=
\frac{q^{-(k+1)^2}}{(q^{-1};q^{-1})_{k+1}}
\]
\end{lemma}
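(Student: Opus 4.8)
The plan is to reduce everything to a direct computation with $q$--Pochhammer symbols, using the dictionary between the symmetric notation $\{n\}!$ and the standard notation $(q;q)_n$. First I would record that, since $q=v^2$, one has $\{n\}=v^{n}-v^{-n}=v^{-n}(q^{n}-1)=-v^{-n}(1-q^{n})$, whence
\[
\{n\}! \;=\; \prod_{i=1}^{n}\{i\}\;=\;(-1)^{n}\,v^{-n(n+1)/2}\,(q;q)_{n}.
\]
Substituting this into $\qbinom{2k+1}{k}=\{2k+1\}!\,/\,(\{k\}!\,\{k+1\}!)$, the sign contributions $(-1)^{2k+1}$ and $(-1)^{k}(-1)^{k+1}$ cancel, the powers of $v$ collect to $v^{-k(k+1)}=q^{-k(k+1)/2}$ (here $k(k+1)$ is even), and one gets
\[
\qbinom{2k+1}{k}\;=\;q^{-k(k+1)/2}\,\frac{(q;q)_{2k+1}}{(q;q)_{k}\,(q;q)_{k+1}}.
\]

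For the first equality I would then invoke the elementary factorization $(q;q)_{k}\,(q^{k+1};q)_{k+1}=\prod_{m=1}^{k}(1-q^{m})\cdot\prod_{m=k+1}^{2k+1}(1-q^{m})=(q;q)_{2k+1}$, which upon substitution gives $\qbinom{2k+1}{k}\,(q^{k+1};q)_{k+1}^{-1}=q^{-k(k+1)/2}/(q;q)_{k+1}$; multiplying through by $(-1)^{k+1}$ yields the middle expression.

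For the second equality I would pass to base $q^{-1}$ via the reflection $1-q^{-j}=-q^{-j}(1-q^{j})$, so that
\[
(q^{-1};q^{-1})_{k+1}\;=\;\prod_{j=1}^{k+1}(1-q^{-j})\;=\;(-1)^{k+1}\,q^{-(k+1)(k+2)/2}\,(q;q)_{k+1};
\]
dividing $q^{-(k+1)^2}$ by this, using $1/(-1)^{k+1}=(-1)^{k+1}$ and simplifying the exponent $-(k+1)^{2}+(k+1)(k+2)/2=-k(k+1)/2$, reproduces $(-1)^{k+1}q^{-k(k+1)/2}/(q;q)_{k+1}$ exactly. I do not expect any real obstacle: the whole argument is bookkeeping of signs and exponents, and the only point requiring a moment's care is checking that $k(k+1)$ and $(k+1)(k+2)$ are even, so that the half-integer powers of $v$ that appear are genuine powers of $q$, and that the sign factors cancel as claimed.
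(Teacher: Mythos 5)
Your proof is correct, and it follows essentially the same route as the paper: the paper's one-line proof cites the conversion identity $(q^{k+1};q)_{k+1}=(-1)^{k+1}q^{(3k^2+5k+2)/4}\{2k+1\}!/\{k\}!$, which is exactly the kind of sign-and-exponent dictionary between $\{n\}!$ and $(q;q)_n$ that you spell out. Your bookkeeping of the signs and the exponents $-k(k+1)/2$ and $-(k+1)^2+(k+1)(k+2)/2$ checks out.
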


\begin{proof}
This is an easy calculation using
\[
(q^{k+1};q)_{k+1}=(-1)^{k+1} q^{(3k^2+5k+2)/4}\frac{\{2k+1\}!}{\{k\}!}.
\]
\end{proof}

\subsection{Part 1: Existence of $Q_k(q,x_b,j)$, $b$ even case.}
Let  $b=\pm 2^l$. We have to prove Equality (\ref{imp}) only for $j=0$, i.e.
we have to show
\[
\frac{1}{(q^{k+1};q)_{k+1}}
\,\cL_{b;0}\left( \prod_{i=0}^k (z+z^{-1} - q^i -q^{-i})   \right)
=
2\, Q_k(q^{\sn(b)}, x_{b},0).
\]
The calculation works similarly to the odd case. Note that we have $c=1$ here.
This case was already done in \cite{BL} and \cite{Le}. Since their approaches are slightly different and for the sake of completeness, we will give the parameters for Andrew's identity and the formula for $Q_{k}(q,x_{b},0)$ nevertheless.

We put $t:=x_{b;0}$, $d:=\frac{b}{2}-1$, $\omega$ a $b$th root of unity and choose a primitive square root $\nu$ of $\omega$. Define the parameters of Andrew's identity by
\[
\begin{array}{rclrcll}
b_{i}&:=&\omega^it^{-N},
&c_{i}&:=&\omega^{-i}t^{-N}
&  \;\text{for }\;i=1,\ldots,d, \\
b_{d+i}&:=&-\nu^{2i-1}t, &c_{d+i}&:=&-\nu^{-(2i-1)}t &\;\text{for }\;i=1,\ldots,d+1,\\
b_{b}&:=&-t^{-N},
&c_{b}&:=&-t^0=-1, &\\
b_{s-1}&:=&t^{-N}, &c_{s-1}&:=&t^{N+1}, &\\
b_s&\rightarrow&\infty,&c_s&\rightarrow&\infty,&
\end{array}
\]
where $s=b+2$.
Now we can define the element
\[
Q_{k}(q,x_b,0):=
\left((-1)^{k+1}q^{-\frac{k(k+1)}{2}}\right)
^{\frac{1+\sn(b)}{2}}
\left(q^{(k+1)^2}\right)^{\frac{1-\sn(b)}{2}}
\frac{(x_{b;0};x_{b;0})_{2N}}{(q;q)_{2N}}
\frac{1}{(-x_{b;0};x_{b;0})_N} T_k(q,x_{b;0})
\]
where
\begin{eqnarray*}
T_k(q,t)
&:=&
\sum_{n_{s-1}\geq \cdots \geq n_1=0}
(-1)^{n_{s-1}}t^{x''}
\cdot
\frac{
\prod_{i=1}^{d}
(t^{2N+1})_{\delta_{i}}
\cdot\prod_{i=1}^{d+1}(t^{-1})_{\delta_{d+i}}
\cdot
(t^{N+1})_{\delta_b}
}{
\prod_{i=1}^{s-2}(t)_{\delta_i}
}
\\
&&\hspace{13mm}
\cdot
(t^{-N})_{n_{s-1}}
\cdot (-t^{N+1+n_{s-1}})_{N-n_{s-1}}
\cdot(-t^{-N})_{n_{b}}
\cdot (-t)_{n_{b}-1}
\cdot (-t^{n_{s-1}+1})_{N-n_{s-1}}
\\
&&\hspace{13mm}
\cdot
\prod_{i=1}^{d}
(\omega^{\pm i}t^{-N})_{n_{i}}
(\omega^{\pm i}t^{N+1+n_{i+1}})_{N-n_{i+1}}
\cdot
\prod_{i=1}^{d+1}(1+\nu^{\pm(2i-1)}t^{n_{d+i}})^{1-\delta_{d+i}}
\end{eqnarray*}
and
$
x'':=\sum_{i=1}^{d}(2N+1)n_i-\sum_{i=1}^{d+1}n_{d+i}+\frac{n_{s-1}(n_{s-1}-1)}{2}+(N+1)(n_b+n_{s-1})$.
We use the notation $(a;b)_{-1}:=\frac{1}{1-ab^{-1}}$.

\subsection{Part 2: $Q_{k}(q,x_b,j) \in\cS_{p,j}$\;.}
We have to show  that $Q_{k}(q,x_b,j) \in\cS_{p,j}$,
where $j \in \N\cup\{0\}$ if $p$ is odd, and $j=0$ for $p=2$.
This follows from the next two lemmas.

\begin{lemma}
For $t=x_{b;j}$,
\[
T_k(q,t)\in \Z[q^{\pm 1}, t^{\pm1}].
\]
\end{lemma}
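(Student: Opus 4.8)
The plan is to show that $T_k(q,t)$, which by its definition is a finite sum of products of $q$-Pochhammer-type factors in the variables $q$ and $t = x_{b;j}$ (together with roots of unity $\omega, \nu$), actually lies in the Laurent polynomial ring $\Z[q^{\pm 1}, t^{\pm 1}]$. The only potentially problematic features are (i) the root of unity $\omega$ (a primitive $b'$th root of $1$ in the odd case, a primitive $b$th root in the even case) and $\nu$, which a priori take us out of $\Z$; (ii) the factors of the shape $(t^{-1})_{\delta}$ and, in the even case, $(a;b)_{-1} := \tfrac{1}{1-ab^{-1}}$, which involve negative indices and could contribute denominators; and (iii) the fractional powers: $U_l, V_l \in \tfrac1c\Z$, and a chosen $c$th root $t^{1/c}$ appears in intermediate steps. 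I would handle these three points in turn.

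First, for the root-of-unity factors: each product over $i=1,\dots,d$ (or $i=1,\dots,d+1$) that carries $\omega^i$ or $\nu^{2i-1}$ is \emph{symmetric} under $i \mapsto -i$ (resp. under $\nu^{2i-1}\mapsto \nu^{-(2i-1)}$) — this is exactly what the notation $(\omega^{\pm i}t^x)_n = (\omega^i t^x)_n(\omega^{-i}t^x)_n$ records. A product of the form $\prod_{i=1}^{d}(\omega^i u)_n(\omega^{-i} u)_n$ together with the $i=0$ term (which appears separately as a $U_l,V_l$-factor or as the $-t^{-N}$ factor) ranges over \emph{all} $b'$th (resp. $b$th) roots of unity, and $\prod_{\zeta^{b'}=1}(1-\zeta u q^{j-1}) = 1 - u^{b'}q^{b'(j-1)}$. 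Since $t^{b'} = x_{b;j}^{b'} = q^c$ for all $j$ (the identity already used in the proof of Lemma~\ref{LHS}), every such full product collapses to a Laurent polynomial in $q$ and $t$ with integer coefficients — in fact in $q$ alone after using $t^{b'}=q^c$. I would carry out this bookkeeping term by term: group the $\omega$-indexed factors attached to a common exponent $U_l$ (or $-N$, or $m$) with the corresponding $l=0$ factor, apply the cyclotomic identity, and observe that the fractional exponents $U_l, V_l$ only ever occur in combinations $U_l + V_l \in \Z$ or get multiplied by $b'$ (clearing the $c$ in the denominator), so the chosen root $t^{1/c}$ disappears from the final expression. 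The same mechanism disposes of $\nu$ in the even case, since $\nu^{2} = \omega$ and the $\nu^{2i-1}$ with $i=1,\dots,d+1$ together with appropriate sign factors run over the $b$th roots of $-1$.

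Second, the negative-index Pochhammer symbols: $(t^{-1};t)_\delta$ vanishes unless $\delta \in \{0,1\}$ (as already noted in the derivation of the formula for $T_k$), and when $\delta_{g+i}=1$ the factor is $1 - t^{-1}\cdot t^{0}\cdots$ — more precisely the surviving contribution is absorbed into $(1+\omega^{\pm i}t^{n_{g+i}})^{1-\delta_{g+i}}$, which is manifestly in $\Z[q^{\pm1},t^{\pm1},\omega]$. In the even case one must additionally check that $(-t)_{n_b - 1}$ with $n_b$ possibly $0$, interpreted via $(a;b)_{-1} = (1-ab^{-1})^{-1}$, combines with a matching $(1-ab^{-1})$ factor elsewhere (it pairs with the $i=0$ constituent of $\prod(−\nu^{\pm(2i-1)}t)$ or with $(-t^{-N})_{n_b}$) so that no genuine denominator survives. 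This is the kind of cancellation that must be tracked carefully but is purely formal.

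The main obstacle, and the step I expect to require the most care, is the simultaneous bookkeeping in point one: there are four or five different families of $\omega$-indexed products with different exponents ($U_l$, $V_l$, $-m$, $1$, $-N$), they are nested across the summation indices $n_1 \le \dots \le n_s$, and one has to verify that after pairing each with its $l=0$ (or sign-twisted) partner the fractional powers all clear and the roots of unity all disappear, leaving coefficients in $\Z$. I would organize this by first proving a clean auxiliary lemma: for any $u \in t^{(1/c)\Z}$ and any $n\ge 0$, $\prod_{\zeta^{b'}=1}(\zeta u; t)_n = (u^{b'}; t^{b'})_n = (u^{b'}; q^c)_n$, and then showing that every occurrence of a fractional or root-of-unity exponent in $T_k$ fits into a complete such product once the "$l=0$'' terms (the ones written without $\omega$, involving $t^{U_l}, t^{1-U_l}$, $t^{-m}, t^{m+1}$, etc.) are counted in. Granting that lemma, the rest is a finite, if tedious, verification that the resulting expression has no poles in $t$ or $q$ and integer coefficients, i.e. lies in $\Z[q^{\pm1}, t^{\pm1}]$.
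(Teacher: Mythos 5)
There is a genuine gap, and it sits exactly at the step you yourself flag as the main obstacle. Your plan rests on the auxiliary lemma that every $\omega$-indexed (or fractionally-powered) factor of $T_k$ can be completed, using its ``$l=0$'' partner, to a full product $\prod_{\zeta^{b'}=1}(\zeta u;t)_n=(u^{b'};q^c)_n$ and hence collapsed termwise. This cannot work for $T_k$: in each term of the nested sum over $n_1\le n_2\le\dots\le n_s$, the Pochhammer symbols attached to different powers of $\omega$ have \emph{different lengths}, e.g. $(\omega^{\pm i}t^{-m})_{n_{a+i}}$ for $i=1,\dots,d$ and the would-be partner $(t^{-m})_{n_{s-1}}$, or $(\omega^{i}t^{U_l})_{n_{a+ld+i}}$ versus $(t^{U_l})_{n_l}$. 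Since these lengths are independent summation variables, the factors do not assemble into a complete cyclotomic product inside a single term; likewise the factors $(1+\omega^{\pm i}t^{n_{g+i}})^{1-\delta_{g+i}}$ and the mixed expansions of $(t^{U_l})_{n_l}(t^{V_l})_{n_l}$ show that an \emph{individual} term genuinely involves $\omega$ and $t^{1/c}$. Only the total sum $T_k$ is free of them, and that fact is not accessible by the bookkeeping you propose. (The collapse you have in mind is precisely what happens on the \emph{left-hand} side of Andrews' identity in Lemma \ref{LHS}, where all factors share the common length $n$; passing to the multi-sum on the right destroys this structure, except for the prefactor already extracted in \eqref{RHS}, where all factors were first padded to the common length $m$.) You also leave the denominator $\prod_{i=1}^{s-1}(t)_{\delta_i}$ essentially unaddressed.

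The paper's argument is instead indirect and avoids any termwise cancellation. First, since $(t^a;t)_n$ is divisible by $(t;t)_n$ for $a\neq 0$, the denominator of each term of $T_k$ divides its numerator, so $T_k(q,t)\in\Z[t^{\pm 1/c},\omega]$ (no poles, but a priori involving $\omega$ and the $c$th root of $t$). Second, Andrews' identity in the form \eqref{Sbj}, i.e. $S_{b;j}(k,q)=\frac{(t;t)_{2m}}{(q^{N+1};q)_{cm}}\,T_k(q,t)$, writes $T_k$ as a quotient $f_0/g_0$ with $f_0,g_0\in\Z[q^{\pm1},t^{\pm1}]$, hence independent of $\omega$ and of the chosen $c$th root of $t$. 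Combining the two memberships forces $T_k(q,t)\in\Z[q^{\pm1},t^{\pm1}]$. Your proposal contains neither ingredient: without the global identity relating $T_k$ back to $S_{b;j}$ (or some equivalent Galois/independence argument) you have no mechanism to eliminate $\omega$ and $t^{1/c}$, and the termwise route you sketch fails at the level of individual summands.
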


\begin{proof}
Let us first look at the case $b$ odd and positive. Since for $a\not=0$, $(t^a)_n$ is always divisible by $(t)_n$, it is easy to see that the denominator of each term of $T_k(q,t)$ divides its numerator. Therefore we proved that $T_k(q,t)\in\Z[t^{\pm 1/c},\omega]$. Since
\be\label{Sbj}
S_{b;j}(k,q)=
\frac{(t;t)_{2m}}{(q^{N+1};q)_{cm}}
\cdot T_k(q,t),
\ee
there are $f_0,g_0\in\Z[q^{\pm 1},t^{\pm 1}]$ such that $T_k(q,t)= \frac{f_0}{g_0}$. This implies that $T_k(q,t)\in \Z[q^{\pm 1}, t^{\pm 1}]$ since $f_0$ and $g_0$ do not depend on $\omega$ and the $c$th root of $t$.

The proofs for the even and the negative case work analogously.
\end{proof}

\begin{lemma}
For $t=x_{b;j}$,
\[
\frac{(t;t)_{2m}}{(q;q)_{N+cm}}
\frac{1}{((-t;t)_{N})^{\lambda}}
\in\cS_{p,j}
\]
where $\lambda=1$ and $j=0$ if $p=2$,  and $\lambda=0$ and
$j\in \N\cup \{0\}$ otherwise.
\end{lemma}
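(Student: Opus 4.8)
The plan is to reduce the membership $Q_k(q,x_b,j)\in\cS_{p,j}$ to a counting of multiplicities of cyclotomic polynomials. First I would use the previous lemma, $T_k(q,x_{b;j})\in\Z[q^{\pm1},x_{b;j}^{\pm1}]$, together with the observation that $x_{b;j}$ is an invertible element of $\cS_{p,j}$: for $j\ge l$ it is $q^b$, invertible in $\Z[q]^{\N}$ (hence in $\cS_{p,j}$) via the formula $q^{-1}=\sum_n q^n(q;q)_n$ recalled in the proof of Proposition \ref{cor9}; for $j<l$ it is a power of $G_{p^j}(x_{b;0})\in B_{p,j}\subset\cS_{p,j}$, invertible because $x_{b;0}$ is a root of the invertible element $q$ (Proposition \ref{cor9}). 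Hence $T_k(q,x_{b;j})\in\cS_{p,j}$, and since the prefactor of $Q_k(q,x_b,j)$ is $\pm$ a power of $q$, everything comes down exactly to
\[
\frac{(t;t)_{2m}}{(q;q)_{N+cm}}\,\frac{1}{\big((-t;t)_{N}\big)^{\lambda}}\in\cS_{p,j},\qquad t:=x_{b;j},
\]
with $N=k+1$, $c=(b,p^j)$, $m=\lfloor N/c\rfloor$.

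Next I would set up a valuation framework. Since $\cS_{p,j}=\Z[1/p][q]^{p^j\N_p}$ is the inverse limit of the $\Z[1/p][q]/(f)$ over products $f$ of $\Phi_n(q)$ with $n\in p^j\N_p$, the Chinese remainder theorem reduces the task to producing compatible images in each $\Z[1/p][q]/(\Phi_d^k(q))$, $d\in p^j\N_p$, $k\ge1$; equivalently, to showing that for every $d\in p^j\N_p$ the $\Phi_d$-adic valuation of the numerator $(t;t)_{2m}$ dominates that of the denominator $(q;q)_{N+cm}(-t;t)_N^{\lambda}$. All cyclotomic factors $\Phi_e$ with $e\notin p^j\N_p$ can be discarded at once: by Lemma \ref{IdealGeneratedByPhinPhim} and Remark \ref{xytoxkyl} such a $\Phi_e$ is coprime to every $\Phi_d^k$, $d\in p^j\N_p$, already over $\Z[1/p]$ --- if $e/d$ were a nonzero prime power, the prime would have to be $p$, since otherwise $e$ would itself lie in $p^j\N_p$, so Lemma \ref{IdealGeneratedByPhinPhim}(b) applies --- hence $\Phi_e$ is a unit of $\cS_{p,j}$. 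For the factors $\Phi_d$, $d\in p^j\N_p$, I would use Proposition \ref{h_kInjektive} to embed $\Z[1/p][q]/(\Phi_d^k)\hookrightarrow\Z[1/p][e_d][x]/(x^k)$ by $q\mapsto e_d+x$, under which the $\Phi_d$-valuation becomes the $x$-valuation. Recalling $t^{b'}=q^c$ with $b'=|b|/c$ a power of $p$ (hence invertible over $\Z[1/p]$), and $\ev_{e_d}(t)$ a primitive $(d/c)$-th root of unity by Proposition \ref{eval_z}, one checks that $q^d-1$ has $x$-valuation $1$ and then, using that $b'$ and every positive integer are nonzero in the domain $\Z[1/p][e_d]$ (so that $x$-valuations add), that $1-t^i$ has $\Phi_d$-valuation $1$ if $(d/c)\mid i$ and $0$ otherwise, while $1-q^s$ has $\Phi_d$-valuation $1$ iff $d\mid s$. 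Thus the $\Phi_d$-valuation of $(t;t)_{2m}$ equals $\lfloor 2m/(d/c)\rfloor$ and that of $(q;q)_{N+cm}$ equals $\lfloor(N+cm)/d\rfloor$.

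The remaining point is the numerical inequality $\lfloor 2m/(d/c)\rfloor\ge\lfloor(N+cm)/d\rfloor$ for every $d\in p^j\N_p$. As $c\mid d$ in either regime ($c=p^j\mid d$ when $j\le l$, $c=p^l\mid d$ when $j>l$), I set $D:=d/c\in\N$; from $cm\le N<c(m+1)$ one gets $N+cm<c(2m+1)$, hence $(N+cm)/d<(2m+1)/D$, and the elementary fact that $x<(2m+1)/D$ forces $\lfloor x\rfloor\le\lfloor 2m/D\rfloor$ (separating whether $D$ divides $2m+1$) closes the case $\lambda=0$, i.e. $p$ odd. For $p=2$ (so $j=0$, $c=1$, $D=d$ odd) one has $\ev_{e_d}(1+t^i)=1+(\text{primitive }d\text{-th root})^i\neq0$, since an element of order $2$ cannot lie in the odd-order group generated by $e_d$; it is moreover a unit of $\Z[1/2][e_d]$ (equal to $2$ when $d=1$, and a cyclotomic unit when $d>1$ because $\Phi_d(-1)=1$), so $(-t;t)_N$ is invertible in $\cS_{2,0}$ and dividing by it preserves membership. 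Assembling the locally defined quotients gives the desired element of $\cS_{p,j}$, completing the lemma and with it the proof of Theorem \ref{Qbk}.

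I expect the main obstacle to be the precise determination of the $\Phi_d$-valuation of $1-t^i$ when $t$ is only a $b'$-th root of $q^c$ rather than of $q$ itself: this forces one to work locally in $\Z[1/p][e_d][x]/(x^k)$ and to exploit carefully that $b'$ is a power of $p$ and hence invertible over $\Z[1/p]$, while the integers $i/D$ need only be nonzero (not units) in $\Z[1/p][e_d]$ for valuations to add. The surrounding index bookkeeping ($2m$ against $N+cm$, across the two regimes $j\le l$ and $j>l$) and the verification that the local quotients glue in the inverse limit are then routine.
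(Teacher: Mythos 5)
Your reduction breaks down at the word ``equivalently''. To produce the image of the fraction in $\Z[1/p][q]/(\Phi_d^k(q))$ you must divide by the image of $(q;q)_{N+cm}(-t;t)_N^{\lambda}$ in that ring, and dominance of $\Phi_d$--adic valuations is necessary but not sufficient for this: $\Z[1/p][e_d]$ is not a discrete valuation ring, so elements of $x$--valuation zero need not be units, and your count simply discards them. Concretely, the denominator contains factors $1-q^s$ with $d\nmid s$ whose reduction mod $\Phi_d(q)$ is a non--unit of $\Z[1/p][e_d]$ --- e.g.\ for $p$ odd and $d\in p^j\N_p$ even, $1-q^{d/2}\equiv 2$; more generally $1-e_d^s$ is a non--unit whenever $e_d^s$ has prime--power order $\ell^a$ with $\ell\neq p$ --- and likewise the leading coefficients $-i\,t(0)^{i-1}t'(0)$ you use to pin down valuations involve integers $i$ that are not invertible in $\Z[1/p]$. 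Nothing in the inequality $\lfloor 2m/(d/c)\rfloor\ge\lfloor (N+cm)/d\rfloor$ shows that this arithmetic (non--cyclotomic) content of the denominator is cancelled by the numerator, and that cancellation is precisely the difficulty of the lemma. There is also a secondary point: when the denominator is a zero--divisor in $\Z[1/p][q]/(\Phi_d^k)$, a solution of $w\cdot B=A$ is not unique, so even granted divisibility you would still have to argue that your local choices glue to an element of the inverse limit.

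The paper's proof supplies exactly the missing cancellation, and in a way that avoids both problems. One splits $(q;q)_{N+cm}=\widetilde{(q;q)}_{N+cm}\,(q^c;q^c)_{2m}$ (the exponents divisible by $c$ give exactly $2m$ factors because $m=\lfloor N/c\rfloor$), rewrites $(q^c;q^c)_{2m}=(t^{b'};t^{b'})_{2m}$ so that its ratio with $(t;t)_{2m}$ is an honest element $\prod_{i=1}^{2m}(1+t^i+\dots+t^{(b'-1)i})$ of the ring, and then shows that this ratio, the leftover factors $\widetilde{(q;q)}_{N+cm}$, and (for $p=2$) $(-t;t)_N$ are all invertible modulo every $\Phi_{p^jn}(q)$, using Lemma \ref{IdealGeneratedByPhinPhim}, Remark \ref{xytoxkyl}, and the facts that modulo $\Phi_{p^jn}(q)$ the element $t$ is a primitive root of unity of order prime to $p$ and that the residual factors are either powers of $p$ or cyclotomic units. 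Thus the displayed fraction is not merely an element but a \emph{unit} of $\cS_{p,j}$, which also makes the passage to the inverse limit unambiguous. Your factor--by--factor matching of multiples of $d$ against multiples of $d/c$ is the shadow of this argument at the prime $\Phi_d$ alone; to repair your proof you would have to carry out the matching integrally, i.e.\ reproduce the identification $(q^c;q^c)_{2m}=(t^{b'};t^{b'})_{2m}$ and the unit computations, at which point you have the paper's proof.
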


\begin{proof}
Notice that
\[
(q;q)_{N+cm}=\widetilde{(q;q)}_{N+cm}(q^c;q^c)_{2m}
\]
where we use the notation
\[
\widetilde{(q^a;q)}_{n}:=
\prod_{\substack{j=0\\c\nmid (a+j)}}^{n-1}(1-q^{a+j}).
\]
We have to show
that
\[
\frac{(q^c;q^c)_{2m}}{(t;t)_{2m}}
\cdot
\widetilde{(q;q)}_{N+cm}\cdot
((-t;t)_{N})^{\lambda}
\]
is invertible in $\Z[1/p][q]$ modulo any ideal  $(f)=(\prod_{n} \Phi^{k_n}_n (q))$ where $n$ runs through a subset of $p^j\N_p$. Recall that in a commutative ring $A$, an element $a$ is invertible in $A/(d)$ if and only if $(a)+(d)=(1)$. If $(a)+(d)=(1)$ and $(a)+(e)=(1)$, we get $(a)+(de)=(1)$. Hence, it is enough to consider  $f=\Phi_{p^j n}(q)$ with $(n,p)=1$. For any $X\in\N$, we have
\begin{eqnarray}\label{denomin1}
{\widetilde{(q;q)}_X}&=&\prod_{\text{\scriptsize{$\begin{array}{c}i=1 \\ c\nmid i \end{array}$}}}^{X} \prod_{d\mid i} \Phi_d(q),
\\\label{denomin2}
(-t;t)_X&=&\frac{(t^2;t^2)_X}{(t;t)_X}=\prod_{i=1}^{X}\prod_{d\mid i}\Phi_{2d}(t),
\\\label{denomin3}
\frac{(q^c;q^c)_X}{(t;t)_X}
&=&\frac{(t^{b'};t^{b'})_X}{(t;t)_X}=
\frac{
\prod_{i=1}^{X}\prod_{d\mid ib'}\Phi_d(t)
}{
\prod_{i=1}^{X}\prod_{d\mid i}\Phi_d(t),
}
\end{eqnarray}
for $b'=b/c$. Recall that $(\Phi_r(q),\Phi_a(q))=(1)$ in $\Z[1/p][q]$ if either $r/a$ is not a power of a prime or a power of $p$. For $r=p^j n$ odd and $a$ such that $c\nmid a$, one of the conditions is always satisfied. Hence  \eqref{denomin1} is invertible in $\cS_{p,j}$. If $b=c$ or $b'=1$, \eqref{denomin2} and \eqref{denomin3} do not contribute. For $c<b$, notice that $q$ is a $cn$th primitive root of unity in $\Z[1/p][q]/(\Phi_{cn}(q)) = \Z[1/p][e_{cn}]$. Therefore $t^{b'}=q^{c}$ is an $n$th primitive root of unity. Since $(n,b')=1$, $t$ must be a primitive $n$th root of unity in $\Z[1/p][e_{cn}]$ too, and hence $\Phi_n(t) = 0$ in that ring. Since for $j$ with $(j, p) > 1$, $(\Phi_j(t),\Phi_n(t)) = (1)$ in $\Z[1/p][t]$, $\Phi_j(t)$ is invertible in $\Z[1/p][e_{cn}]$, and therefore \eqref{denomin2} and \eqref{denomin3} are invertible too.
\end{proof}

\newpage

\pagestyle{fancy}

\setlength{\headheight}{15.4pt}
\renewcommand{\chaptermark}[1]{\markboth{\MakeUppercase{Appendix}\ \thechapter. \ #1}{}}
\renewcommand{\sectionmark}[1]{\markright{\thesection. \ #1}{}}
\fancyhead{}
\fancyhead[RE]{\slshape \leftmark}
\fancyhead[LE]{\thepage}
\fancyhead[LO]{\slshape \rightmark}
\fancyhead[RO]{\thepage}
\fancyfoot{}
\renewcommand{\headrulewidth}{0pt} 
\renewcommand{\footrulewidth}{0pt}

\appendix

\numberwithin{equation}{section}
\numberwithin{figure}{section}

\chapter{Proof of Theorem \ref{GeneralizedHabiro}}
The appendix is devoted to the proof of Theorem \ref{GeneralizedHabiro}, a generalization of the  deep integrality result of Habiro, namely Theorem 8.2 of \cite{Ha}. The existence of this generalization and some ideas of the proof were kindly communicated to us by Habiro.

\section{Reduction to a result on values of the colored Jones polynomial}
We will use the notation of Section \ref{Uh(sl2)}.

Let $V_n$ be the unique $(n+1)$--dimensional irreducible $U_h$--module. In \cite{Ha}, Habiro defined a new basis $\tilde{P}'_k$, $k=0,1,2,\dots$,  for the Grothendieck ring of finite--dimensional $U_h(sl_2)$--modules with
\[
\tilde{P}_k':=\frac{v^{\frac{1}{2}k(1-k)}}{\{k\}!} \, \prod_{i=0}^{k-1}
(V_1-v^{2i+1}-v^{-2i-1}).
\]

Put $\tilde{P}'_{\bk}=\{\tilde{P}'_{k_1},\ldots,\tilde{P}'_{k_m}\}$. It follows from Lemma 6.1 of \cite{Ha}  that we will have identity \eqref{Jones} of Theorem \ref{GeneralizedHabiro} if we substitute \[
C_{L\sqcup L'}(\bk,\bj)=
J_{L\sqcup L'}\,(\tilde{P}'_{\bk},\bj)\,
\prod_{i}(-1)^{k_i}q^{k^2_i+k_i+1}\, .
\]

Hence, to prove Theorem \ref{GeneralizedHabiro}, it is enough to show the following.
\begin{Atheorem}\label{main-integrality}
Suppose $L\sqcup L'$ is a colored framed link in $S^3$ such that $L$ has 0 linking matrix and $L'$ has odd colors. Then  for $k=\max\{k_1,\dots, k_m\}$, we have
\[
J_{L\sqcup L'}(\tilde{P}'_{\bk},\bj) \in \frac{(q^{k+1};q)_{k+1}}{1-q}
\,\,\Z[q^{\pm 1}].
\]
\end{Atheorem}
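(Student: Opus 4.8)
The plan is to reduce the statement about the link invariant $J_{L\sqcup L'}(\tilde P'_{\bk},\bj)$ to a statement about \emph{individual} values of the colored Jones polynomial, and then to exploit the divisibility properties of the basis change $\tilde P'_k \leftrightarrow V_n$ together with the Laplace/bottom-tangle formalism. First I would recall, following Habiro, that a framed link $L$ with zero linking matrix can be presented as the closure of an algebraically split bottom tangle $T$, so that $J_{L\sqcup L'}(\tilde P'_{\bk},\bj)$ is obtained by applying the universal $\mathfrak{sl}_2$ invariant $J_T\in U_h^{\hat\otimes(m+l)}$ and then taking quantum traces in the modules $\tilde P'_{k_i}$ (on the $L$-strands) and $V_{j_s-1}$ (on the $L'$-strands). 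The key structural input is that, because $L$ has zero linking matrix, each $L$-component can be ``made local'': using the clasp/Habiro calculus one writes the contribution of the $i$-th component with color $\tilde P'_{k_i}$ in terms of the image of $J_T$ under the map that Habiro denotes by (a variant of) $\mathbf{t}$, landing in a subalgebra where the strong integrality is visible.

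The central quantitative fact I would isolate is the following: for a single knot $K$ with $0$ framing, and for any colors $\bj$ on a companion link $L'$ with all $j_s$ odd, one has
\[
J_{K\sqcup L'}(V_{n-1},\bj)\in \Z[q^{\pm1}],
\]
by the strong integrality theorem of Le quoted in the excerpt (the oddness of the $j_s$ is exactly what removes the $q^{p/4}$ ambiguity), and moreover $J_{K\sqcup L'}(V_{n-1},\bj)$, viewed as a function of $n$, is ``$q$-holonomic'' in the sense needed: it lies in the span of the $A(n,k)$ with integral coefficients. Then, since
\[
\tilde P'_k=\frac{v^{\frac12 k(1-k)}}{\{k\}!}\prod_{i=0}^{k-1}(V_1-v^{2i+1}-v^{-2i-1}),
\]
expanding $\tilde P'_k$ in the basis $\{V_{n-1}\}$ and substituting, the combinatorial identity for $A(n,k)$ (the same one underlying Theorem~\ref{GeneralizedHabiro}) shows that $J_{K\sqcup L'}(\tilde P'_k,\bj)$ acquires exactly the factor $(q^{k+1};q)_{k+1}/(1-q)$: the denominators $\{k\}!$ and $(1-q)$ appearing in $\tilde P'_k$ and in $A(n,k)$ are absorbed, and what survives is $(q^{k+1};q)_{k+1}/(1-q)$ times a Laurent polynomial. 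This is the one-variable ($m=1$) case.

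For general $m$ I would run an induction on the number of components of $L$, peeling off one $0$-framed, algebraically-split component at a time. The point is that after absorbing one component with color $\tilde P'_{k_1}$, the remaining object is again (the closure of) a bottom tangle with one fewer $L$-strand, with $L'$ enlarged by a component now colored by something built from $\tilde P'_{k_1}$; the hypothesis that $L$ has zero linking matrix guarantees that this reduction does not destroy the ``$0$ linking matrix'' property needed at the next step, and the oddness hypothesis on the original $\bj$ propagates because $\tilde P'_{k_1}$ can be written over $\Z[q^{\pm1}]$ with the relevant parity. Collecting the factors from each component yields divisibility by $\prod_i (q^{k_i+1};q)_{k_i+1}/(1-q)$, which in particular is divisible by $(q^{k+1};q)_{k+1}/(1-q)$ for $k=\max k_i$ since $(q;q)_{k_i}\mid (q;q)_k$.

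The main obstacle I expect is the integrality bookkeeping in the one-component reduction: showing that after expanding $\tilde P'_k$ in the $V_{n-1}$'s and summing against $J_{K\sqcup L'}(V_{n-1},\bj)$, the result genuinely lands in $\frac{(q^{k+1};q)_{k+1}}{1-q}\Z[q^{\pm1}]$ and not merely in $\frac{1}{(q;q)_k}\Z[q^{\pm1}]$ or in a ring with a half-integer power of $q$. This is precisely the place where Le's strong integrality (ensuring no $q^{1/4}$ denominators for odd colors) and Habiro's cyclotomic-expansion machinery (ensuring the $(q;q)_k$-type denominators cancel against the $q$-binomial identities for $A(n,k)$) must be combined carefully; the rest of the argument is a formal induction plus the elementary divisibility $(q;q)_{k_i}\mid(q;q)_k$. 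The detailed verification is the content of the Appendix, following \cite{Ha}.
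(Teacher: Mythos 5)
Your proposal does not go through as written, and the gap is at the very center of the argument. The step where you assert that $J_{K\sqcup L'}(V_{n-1},\bj)$, viewed as a function of $n$, ``lies in the span of the $A(n,k)$ with integral coefficients'' so that passing to $\tilde P'_k$ ``acquires exactly the factor $(q^{k+1};q)_{k+1}/(1-q)$'' is not an input you can quote: it \emph{is} the statement to be proved. Via Habiro's Lemma 6.1 the coefficients of the cyclotomic expansion are $C_{L\sqcup L'}(\bk,\bj)=J_{L\sqcup L'}(\tilde P'_{\bk},\bj)\prod_i(-1)^{k_i}q^{k_i^2+k_i+1}$, so the integrality of that expansion and Theorem \ref{main-integrality} are the same assertion, and invoking one to prove the other is circular. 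Le's strong integrality only tells you that each individual value $J_{K\sqcup L'}(V_{n-1},\bj)$ lies in $\Z[q^{\pm1}]$; this gives no control whatsoever on the denominators $\{k\}!$ (and the powers of $v$) introduced by the basis change to the $\tilde P'_k$, which is exactly the difficulty the paper flags when it says the existence of the $C$'s as rational functions is easy and only their integrality is hard. The paper's actual proof is of a different nature: it establishes a grading (``evenness'') statement for the \emph{universal} invariant of a zero-linking bottom tangle together with an odd-colored closed link (Theorem \ref{MMM}, proved by decomposing $X=W_2W_1W_0$ as in Proposition \ref{TWWB} and showing via skein-module arguments that good morphisms preserve the gradings, Proposition \ref{MM2}), and then deduces the divisibility by Habiro's center argument: the resulting element is central and even, hence a $\Z[q^{\pm1}]$-combination of the $\sigma_n$, whose coefficients carry the factor $(q^{n+1};q)_{n+1}/(1-q)$; the multi-component case is reduced to one component by partial quantum traces of even elements (Lemma 8.5 of \cite{Ha}) together with adjoint invariance.

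Your induction scheme also breaks down independently of the circularity. After ``peeling off'' one $L$-component you propose to regard it as a new component of $L'$ colored by $\tilde P'_{k_1}$; but the hypothesis on $L'$ (and everything that makes the parity/integrality machinery work) requires odd \emph{integer} colors, i.e.\ odd-dimensional irreducible modules, whereas $\tilde P'_{k_1}$ is a $\frac{1}{\{k_1\}!}\Z[v^{\pm1}]$-linear combination of modules whose parities are dictated by $k_1$; neither the oddness hypothesis nor integrality propagates, so the inductive step is not available. Moreover the conclusion you draw from it, divisibility by $\prod_i(q^{k_i+1};q)_{k_i+1}/(1-q)$, is strictly stronger than the theorem (which claims only the factor for $k=\max k_i$) and is not what any trace-type reduction yields: in the actual argument only one component contributes the cyclotomic factor through the expansion in the even center, while the remaining components are disposed of by the integrality of quantum traces of even elements.
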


In the case $L'=\emptyset$, this statement was proven
in \cite[Theorem 8.2]{Ha}. Since our proof is a modification
of the original one, we first sketch Habiro's original proof for the reader's convenience.

\section{Sketch of  the proof of Habiro's  integrality theorem}

Corollary 9.13 in \cite{Ha-b} states the following. 
\begin{Aproposition}\label{thmASL}{\em (Habiro)}
If the linking matrix of a bottom tangle $T$ is zero then $T$ can be presented as $T=W B^{\otimes k}$, where $k\geq 0$ and $W\in \modB(3k,n)$ is obtained by horizontal and vertical pasting of finitely many copies of $1_\modb$, $\psi_{\modb,\modb}$, $\psi^{-1}_{\modb,\modb}$, and 
\begin{equation*}
    \def\sss{1.5em}e
    \eta _\modb =\incl{\sss}{bot0},\quad
    \def\sss{2em}
    \mu _\modb =\incl{\sss}{bomu},\quad
    \gamma_+=\incl{2.5em}{gamma+},\quad
    \gamma_-=\incl{2.5em}{gamma-}.
\end{equation*}
\end{Aproposition}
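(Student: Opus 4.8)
The plan is to deduce this (it is Habiro's \cite[Corollary 9.13]{Ha-b}) from two ingredients of \cite{Ha-b}: the description of the braided category $\modB$ as the one generated by the Hopf algebra object $\modb$ together with the balanced morphisms $\gamma_{\pm}$, and Habiro's clasper calculus for bottom tangles. First I would reduce to the trivial tangle: since the linking matrix of $T$ vanishes, all pairwise linking numbers and all framings are $0$, and these account for all finite-type invariants of degree $\le 1$ of a framed bottom tangle, so by clasper calculus \cite{Ha-b} the tangle $T$ is $C_2$--equivalent to the trivial $n$--component bottom tangle $\eta_\modb^{\otimes n}$; equivalently, $T$ is obtained from $\eta_\modb^{\otimes n}$ by surgery along a clasper that is a disjoint union of finitely many $Y$--graphs.

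Next I would put these $Y$--graphs in a normal form. Applying the standard clasper moves -- leaf splitting (so that every leaf meets the tangle transversally in a single point, at the cost of extra claspers and higher--degree correction claspers) together with the expansion of a degree-$d$ tree clasper into a composition of $d$ surgeries along $Y$--graphs -- one may assume the clasper is a disjoint union of $k$ $Y$--graphs with unknotted, mutually unlinked edges, each leaf a small loop around a single strand of the evolving tangle, carrying at most a half--twist. Surgery along one such $Y$--graph is a local Borromean--rings surgery, hence bands the three strands of one copy of the Borromean tangle $B$ onto the three encircled strands. Translating into $\modB$: performing the $i$--th surgery amounts to composing with a morphism assembled from $\mu_\modb$ (banding a Borromean strand onto the encircled strand), $\psi^{\pm1}_{\modb,\modb}$ (routing the bands and matching the two orientations inside $\modb=\downarrow\uparrow$), $\gamma_{\pm}$ (absorbing the half--twists and framing corrections), and $\eta_\modb$ (supplying the trivial cores of $\eta_\modb^{\otimes n}$). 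Doing all $k$ surgeries simultaneously exhibits $T=W\,B^{\otimes k}$, where $W\in\modB(\modb^{\otimes 3k},\modb^{\otimes n})$ is the pasting of the $k$ attaching morphisms and uses only $1_\modb$, $\psi^{\pm1}_{\modb,\modb}$, $\eta_\modb$, $\mu_\modb$, $\gamma_{\pm}$, as required.

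The step I expect to be the real obstacle is the normalization in the previous paragraph, specifically arranging that no comultiplication $\Delta_\modb$ (nor counit $\epsilon_\modb$ or antipode $S_\modb$) is needed in $W$: the naive way to handle a leaf that grabs several parallel strands introduces a comultiplication, and eliminating it is exactly what forces the use of additional copies of $B$ and of the higher--degree correction claspers. Carrying out this reduction cleanly, while tracking the orientation data of $\modb=\downarrow\uparrow$ and the twist/framing contributions routed into $\gamma_{\pm}$, is the combinatorial core of the argument; concretely I would follow Habiro's clasper manipulations in \cite{Ha-b}, checking at each move that the attaching morphism stays within the admissible generator set.
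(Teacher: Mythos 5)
The thesis itself offers no proof of Proposition \ref{thmASL}: it is quoted directly from Habiro, \cite[Corollary 9.13]{Ha-b}. Your outline --- use the vanishing of the linking matrix to relate $T$ to the trivial bottom tangle $\eta_\modb^{\otimes n}$ by delta moves ($Y$--surgeries), then realize each such surgery as a copy of the Borromean tangle $B$ banded onto the tangle by $\mu_\modb$, $\psi^{\pm 1}_{\modb,\modb}$, $\gamma_\pm$, $\eta_\modb$ --- is indeed the route taken in \cite[Section 9]{Ha-b}, so the strategy is the right one. But as written your text is a plan, not a proof, and the gap sits exactly where you place it.

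The content of the proposition is not that $T$ is obtained from the trivial tangle by Borromean surgeries (that is the Murakami--Nakanishi delta--move theorem, in the framed bottom--tangle form stated in \cite{Ha-b}); it is that the whole sequence of surgeries can be rewritten as a single composition $T = W\,B^{\otimes k}$ with all copies of $B$ pulled to the top and with $W$ assembled from $1_\modb$, $\psi^{\pm 1}_{\modb,\modb}$, $\eta_\modb$, $\mu_\modb$, $\gamma_\pm$ only --- in particular with no comultiplication, counit or antipode, and compatibly with the orientation pattern of $\modb=\downarrow\uparrow$. You identify this normalization as ``the real obstacle'' and then dispose of it by saying you would follow ``Habiro's clasper manipulations in \cite{Ha-b}'', i.e.\ you defer precisely the step that needs proving to the reference the thesis already cites for the whole statement; as a blind proof this is circular. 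Two smaller points: the opening reduction should not be justified by a general ``degree $\le 1$ finite--type invariants classify $C_2$--equivalence'' principle (no such off--the--shelf correspondence is available for framed tangles in this generality --- at this degree the nontrivial implication \emph{is} the Murakami--Nakanishi theorem, and one must also account for the zero framings); and the detour through leaf--splitting with higher--degree correction claspers is unnecessary, since a delta move is already a $Y$--surgery whose three leaves each grab a single strand, so no comultiplication threatens at that stage --- the genuine difficulty is entirely the band and orientation bookkeeping needed to stay inside the stated generating set, which is what Habiro's arguments surrounding \cite[Corollary 9.13]{Ha-b} are designed to carry out.
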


Let  $K=v^{H}=e^\frac{hH}{2}$. Habiro introduced the integral version $\U_q$, which is the $\Z[q,q^{-1}]$--subalgebra of $U_h$ freely spanned by $\tilde{F}^{(i)}K^{j}e^k$ for $i,k\geq 0, j\in\Z$, where
\[
\tilde{F}^{(n)}=\frac{F^nK^n}{v^{\frac{n(n-1)}{2}}[n]!}\;\;\;\;\text{ and
 }\;\;\;\; e=(v-v^{-1})E.
\]
There is a $\Z/2\Z$--grading, $\U_q=\U^0_q\oplus \U^1_q$, where $\U^{0}_q $ (respectively $\U_q^{1}$) is spanned
by $\tilde{F}^{(i)}K^{2j}e^k$ (respectively $\tilde{F}^{(i)}K^{2j+1}e^k$). We call this the $\ve$--grading and $\U^{0}_q $ (respectively $\U_q^{1}$) the even (respectively odd) part.
 
The two--sided ideal $\F_p$ in $\U_q$ generated by $e^p$ induces a filtration on $(\U_q)^{\otimes n}$, $n\geq 1$, by
\[
\F_p((\U_q^{})^{\otimes n})=
\sum^n_{i=1} (\U_q^{})^{\otimes i-1}\otimes
\F_p(\U_q^{})\otimes (\U_q^{})^{\otimes n-i}
\subset (\U_q^{})^{\otimes n}\, .
\]
Let $(\tilde{\U}_q)^{\tilde\otimes n}$ be the image of the homomorphism
\[
\lim_{\overleftarrow{\hspace{1mm}{p\geq 0}\hspace{1mm}}} \;\;
\frac{(\U_q)^{\otimes n}}{\F_p((\U_q)^{\otimes n})} \to
U_h^{\hat \otimes n}
\]
where $\hat\otimes$ is the $h$--adically completed tensor product. By using $\F^\ve_p(\U_q^\ve):=\F_p(\U_q)\cap \U^{\ve}_q$ one defines $(\tilde{\U}^{\ve}_q)^{\tilde\otimes n}$ for $\ve\in\{0,1\}$ in a similar fashion. 

By definition (Section 4.2 of \cite{Ha}), the universal $sl_2$ invariant  $J_T$ of an $n$--component  bottom tangle $T$ is an element of $U_h^{\hat\otimes n}$.
Theorem 4.1 in \cite{Ha} states  that, in fact, for any bottom tangle $T$ with zero linking matrix, $J_T$ is even, i.e. 
\be\label{even}
J_T\in (\tilde \U^{0}_q)^{\tilde \otimes n}\, . 
\ee

Further, using the fact that $J_K$ of a $0$--framed bottom knot $K$ (i.e. a 1--component bottom tangle) belongs to the center of $\tilde \U^0_q$, Habiro showed that
\[
J_K=\sum_{n\geq 0}  (-1)^n q^{n(n+1)}\frac{(1-q)}{(q^{n+1};q)_{n+1}}\,  J_K(\tilde P'_n)\, \sigma_n
\]
where
\[
\sigma_n=\prod^n_{i=0}(C^2-(q^i+2+ q^{-i}))\, \quad {\rm with}\quad
C=(v-v^{-1})\tF^{(1)}K^{-1}e+vK+v^{-1}K^{-1}
\]
is the quantum Casimir operator. The $\sigma_n$ provide a basis for the even part of the center. From this, Habiro deduced that $J_K(\tilde P'_n)\in \frac{  (q^{n+1};q)_{n+1}}{(1-q)} \Z[q,q^{-1}]$. 

The case of $n$--component bottom tangles reduces to the 1--component case by partial trace, using certain integrality of traces of even element (Lemma 8.5 of \cite{Ha}) and the fact that $J_T$ is invariant under the adjoint action. 

The proof of \eqref{even} uses Proposition \ref{thmASL}, which allows us to build any bottom tangle $T$ with zero linking matrix from simple parts, i.e. $T=W(B^{\otimes k})$.

On the other hand, the construction of the universal invariant $J_T$ extends to the braided functor $J:\modB\to \operatorname{ Mod}_{U_h}$ from $\modB$ to the category of $U_h$--modules. 
This means that $J_{W(B^{\otimes k})}=J_W(J_{B^{\otimes k}})$. Therefore, in order to show \eqref{even}, we need to check that $J_B \in (\tilde \U^0_q)^{\tilde \otimes 3} $ and then verify that
$J_W$ maps the even part to itself. The first check can be done by a direct computation \cite[Section 4.3]{Ha}. The last verification is the content of Corollary 3.2 in \cite{Ha}.

\section{Strategy of the Proof of Theorem \ref{main-integrality}}

\subsection{Generalization of Equation  \eqref{even}}
To prove Theorem \ref{main-integrality}, we need a generalization of Equation \eqref{even} or Theorem 4.1 in \cite{Ha} to tangles with closed components. To state the result, we first introduce two gradings.

Suppose $T$ is an $n$--component bottom tangle in a cube, homeomorphic to the 3--ball  $D^3$. Let $\tcS(D^3\setminus T)$ be the $\Z[q^{\pm 1/4}]$--module freely generated by the isotopy classes of
framed unoriented colored links in $D^3\setminus T$, including the empty link. For such a link $L\subset D^3\setminus T$ with $m$--components colored by $n_1,\dots, n_m$, we define our new gradings as follows. First provide the components of $L$ with arbitrary orientations. Let $l_{ij}$ be the linking number between the $i$th component of $T$ and the $j$th component of $L$, and $p_{ij}$ be the linking number between the $i$th and the $j$th components of $L$. For $X=T\sqcup L$, we put \begin{equation}\label{3306}
\gr_\ve(X):= (\ve_1,\dots, \ve_n)\in (\Z/2\Z)^n \quad  \text{where}\quad
\ve_i := \sum_{j} l_{ij} n'_{j} \pmod 2, \quad \text{ and}
\end{equation}
\[
\gr_q(L) := \sum_{1\le i,j\le m }p_{ij} n_i'n_j' + 2\sum_{1\le j\le m}(p_{jj}+1) n_j' \pmod 4
\quad \text{where} \quad n_i':= n_i-1.
\]
It is easy to see that the definitions do not depend on the orientation of $L$.

The meaning of $\gr_q(L)$ is the following: The colored Jones polynomial of $L$,
a priori  a Laurent polynomial of $q^{1/4}$, is actually a Laurent polynomial of $q$ after dividing by $q^{\gr_q(L)/4}$; see \cite{LeDuke} for this result and its generalization to other Lie algebras. 

We further extend both gradings to $\tcS(D^3\setminus T)$ by
\[
\gr_\ve(q^{1/4})=0, \quad \gr_q(q^{1/4}) =1 \pmod 4.
\]

Recall that the universal invariant $J_X$ can also be defined when $X$ is the union of a bottom tangle and a colored link (see \cite[Section 7.3]{Ha-b}). In \cite{Ha-b}, it is proved that $J_X$ is adjoint invariant. The generalization of Theorem 4.1 of \cite{Ha} is the following.  
\begin{Atheorem}\label{MMM}
Suppose $X=T \sqcup L$ where $T$ is a $n$--component bottom tangle with zero linking matrix and $L$ is a framed unoriented colored link with $\gr_\ve(X)= (\ve_1,\dots,\ve_n)$. Then 
\[
J_X \in  q^{{\gr_q(L)/4}}\, \left ( \tilde \U_q^{\ve_1} \tilde \otimes
\dots \tilde \otimes \tilde \U_q^{\ve_n}\right ).
\]
\end{Atheorem}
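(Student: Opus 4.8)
The strategy is to mimic Habiro's proof of Theorem 4.1 in \cite{Ha} (i.e.\ Equation \eqref{even} above), upgrading it to keep track of the extra link $L$. The key structural input is a presentation analogous to Proposition \ref{thmASL}: since $T$ has zero linking matrix, the pair $X=T\sqcup L$ can be built from the Borromean bottom tangle $B$ together with $L$ by applying a morphism $W$ in (a suitable enlargement of) the category $\modB$ that also allows the colored strands of $L$ to be pasted in. Concretely I would write $X = W(B^{\otimes k}\sqcup L_0)$, where $L_0$ is a trivial (split, $0$--framed, unlinked) colored link sitting in a standard position and $W$ is obtained by horizontal and vertical pasting of the generators $1_\modb$, $\psi_{\modb,\modb}^{\pm 1}$, $\eta_\modb$, $\mu_\modb$, $\gamma_\pm$, \emph{and} the elementary tangles that move a colored strand past $\modb$ or introduce a crossing with it. Because $J$ is a braided functor, $J_X = J_W\bigl(J_{B^{\otimes k}}\otimes J_{L_0}\bigr)$, so the problem decomposes into: (i) an integrality/grading statement for the ``building blocks'' $J_B$ and $J_{L_0}$; and (ii) the statement that every generator of $W$ preserves the relevant subspaces (with a controlled shift of the $q$--grading).

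\textbf{Key steps.} First I would set up the bookkeeping: extend $\gr_\ve$ and $\gr_q$ to all of $\tcS(D^3\setminus T)$ as in the statement, and observe that both are additive under the pasting operations that build $W$, so it suffices to track how each elementary move changes them. Second, the base case: $J_B\in(\tilde\U_q^0)^{\tilde\otimes 3}$ is already Habiro's computation in \cite[Section 4.3]{Ha}, and for a split $0$--framed unknot colored by $n$ one computes directly that $J_{\text{unknot}_n}\in q^{(n^2-1)/4}\,\tilde\U_q$ lands in the even part $\tilde\U_q^0$ (since $n^2-1\equiv 0$ or $3\bmod 4$ matches $\gr_q$, and the relevant central element is even) — this is where the $q^{\gr_q(L)/4}$ prefactor is born. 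Third, the inductive step: I would prove that each generating morphism of $W$, when applied, sends $q^{a/4}(\tilde\U_q^{\ve_1}\tilde\otimes\cdots)$ into $q^{a'/4}(\tilde\U_q^{\ve_1'}\tilde\otimes\cdots)$ with $a'-a$ and $\ve'-\ve$ exactly the change in $\gr_q$ and $\gr_\ve$ prescribed by \eqref{3306}. For $1_\modb$, $\eta_\modb$, $\mu_\modb$ this is Corollary 3.2 of \cite{Ha} essentially verbatim; for $\gamma_\pm$ (the framing changes) one uses that the ribbon element acts on $V_n$ by $q^{(n^2-1)/4}$ times an even central element, giving the $2(p_{jj}+1)n_j'$ term in $\gr_q$; for a crossing between a $\modb$--strand and an $n$--colored strand of $L$ one feeds in the $R$--matrix and checks that conjugating by $K^{\pm n}$ flips the $\ve$--grading by $n\bmod 2$ and produces the $l_{ij}n_j'$ term; for a crossing between two colored strands of $L$ one gets the $p_{ij}n_i'n_j'$ contribution. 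Fourth, I would invoke adjoint invariance of $J_X$ (\cite[Section 7.3]{Ha-b}) exactly as Habiro does, so that the partial-trace/closing operations needed to realize the colored link components as genuine closed loops (rather than bottom components) preserve evenness — this is the analogue of Lemma 8.5 of \cite{Ha}, applied now to the strands of $L$ as well.

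\textbf{Main obstacle.} The hard part is establishing the right categorical presentation in step one — i.e.\ the precise analogue of Habiro's Proposition \ref{thmASL} for a bottom tangle \emph{together with} an ambient colored link — and then verifying the grading-shift lemma (step three) uniformly for the crossing generators that involve the colored strands, which do not occur in the $L'=\emptyset$ case. Habiro's original argument only needs to move $\modb$-strands, whose colors are ``universal'' (the invariant lives in $\U_q^{\otimes n}$ before evaluation), whereas here the strands of $L$ carry fixed numerical colors $n_j$, so the $\Z/2$ and $\Z/4$ gradings genuinely depend on those $n_j$ and one must check that the $R$--matrix summands $\sum v^{s(s-1)/2}(v-v^{-1})^s F^{(s)}K^s\otimes E^s/\ldots$ act with the parity and $q$--degree predicted by $\gr_\ve,\gr_q$ — using that $F^{(1)}K$ and $e=(v-v^{-1})E$ are the generators of $\tilde\U_q$ and that $K^2$ is even while $K$ is odd. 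Once this lemma is in hand the rest is a routine induction on the number of elementary tangles in $W$, closely parallel to \cite{Ha}, and the final descent from Theorem \ref{MMM} to Theorem \ref{main-integrality} (hence to Theorem \ref{GeneralizedHabiro}) goes through the same centrality-of-the-Casimir and integral-trace arguments used by Habiro, now applied componentwise and combined with the strong integrality result of \cite{LeDuke} that identifies $q^{\gr_q(L)/4}$ as the correct normalization.
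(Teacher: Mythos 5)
There is a genuine gap at precisely the point you flag as the ``main obstacle'', and as sketched it would not close. Your plan rests on a presentation $X = W(B^{\otimes k}\sqcup L_0)$ with $L_0$ a split trivial colored link and on the functoriality claim $J_X = J_W\bigl(J_{B^{\otimes k}}\otimes J_{L_0}\bigr)$. But the components of $L$ are \emph{closed}, and closed colored components are not morphisms in $\modB$: the universal invariant does not assign to them a tensor factor in $\U_q$ on which a morphism $W$ could act; their contribution enters only through (quantum) traces. So the proposed enlargement of Habiro's Proposition \ref{thmASL} by ``crossing generators with colored strands'' is not just unproven, it is not the right formalism, and your generator-by-generator grading check never confronts the step where the fixed-color strands are closed up. Without some normal form for how $L$ sits relative to $T$, the induction over elementary tangles of $W$ has no finite list of configurations to verify, and the parity/$q$-degree bookkeeping for an arbitrary colored link entangled with the $\modb$-strands does not reduce to the $R$-matrix checks you describe. (Minor additional slip: for a $0$-framed unknot colored $n$ the relevant normalization is $\gr_q=2(n-1)\bmod 4$, not $q^{(n^2-1)/4}$, which pertains to framing $\pm1$; and adjoint invariance is used to pass from Theorem \ref{MMM} to Theorem \ref{main-integrality}, not inside the proof of Theorem \ref{MMM} itself.)

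What the paper does instead, and what your proposal is missing, is a two-step reduction that tames the closed link. First (Proposition \ref{TWWB}), $T$ is decomposed à la Habiro as $W_2\tilde W_1(B^{\otimes k})$ with the $\mu_\modb$'s at the bottom, and then $L$ is \emph{isotoped} into the middle layer so that $X= W_2 W_1 W_0$ with $W_0$ a zero-linking bottom tangle (Habiro's even case applies), $W_2$ built from $\mu_\modb$ (the product of $\U_q$, which preserves both gradings), and $W_1$ a \emph{good morphism}: the identity tangle $I_m$ together with $L$ lying below all up arrows. Second, the grading statement for good morphisms (Proposition \ref{MM2}) is proved not generator-by-generator on crossings but by passing to the skein module of the complement: cabling and Kauffman-bracket skein relations preserve $\gr_\ve$ and $\gr_q$ (Lemma \ref{A8}), the subalgebra of good morphisms is generated by the $2^n$ standard curves $W_\gamma$ (Lemma \ref{A9}, as in Bullock), and for the basic curves $\hat Z_n$ one computes $J_{\hat Z_n}$ explicitly and inductively via the coproduct/$R$-matrix operator $\Phi$, using the $K$-grading bookkeeping of Lemmas \ref{3304} and \ref{3303} (Proposition \ref{3308}); general $W_\gamma$ follow by pasting with $\psi^{\pm 1}_{\modb,\modb}$. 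It is this skein-theoretic normal form that replaces your hypothetical enlarged categorical presentation and makes the fixed-color (indeed, color-$2$ after cabling) closed components tractable; without it, or an equivalent device, your induction cannot be carried out.
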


\begin{Acorollary}\label{MMM1}
Suppose $L$ is colored by a tuple of odd numbers, then
\[
J_X\in (\tilde \U_q^{0})^{\tilde\otimes n}\, .
\]
\end{Acorollary}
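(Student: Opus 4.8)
\textbf{Proof proposal for Corollary \ref{MMM1}.}

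The plan is to deduce Corollary \ref{MMM1} directly from Theorem \ref{MMM} by a parity computation on the $\varepsilon$--grading. Recall from \eqref{3306} that for $X = T \sqcup L$ with $T$ an $n$--component bottom tangle and $L$ a colored link with colors $n_1,\dots,n_m$, the $i$th component of the grading is $\varepsilon_i = \sum_j l_{ij}\,n_j' \pmod 2$, where $n_j' = n_j - 1$ and $l_{ij}$ is the linking number between the $i$th component of $T$ and the $j$th component of $L$. First I would observe that the hypothesis ``$L$ is colored by a tuple of odd numbers'' means every $n_j$ is odd, hence every $n_j' = n_j - 1$ is even. Therefore each summand $l_{ij} n_j'$ is even, so $\varepsilon_i \equiv 0 \pmod 2$ for all $i$; that is, $\gr_\varepsilon(X) = (0,\dots,0)$.

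Next I would simply feed this into Theorem \ref{MMM}: since $\varepsilon_1 = \dots = \varepsilon_n = 0$, the theorem gives
\[
J_X \in q^{\gr_q(L)/4}\,\bigl(\tilde\U_q^{0}\,\tilde\otimes\,\dots\,\tilde\otimes\,\tilde\U_q^{0}\bigr) = q^{\gr_q(L)/4}\,(\tilde\U_q^{0})^{\tilde\otimes n}.
\]
It then remains to argue that the scalar factor $q^{\gr_q(L)/4}$ can be absorbed, i.e. that $q^{\gr_q(L)/4} \in \Z[q^{\pm 1}]$ so that multiplication by it preserves $(\tilde\U_q^{0})^{\tilde\otimes n}$ (which is a $\Z[q^{\pm 1}]$--module via the grading structure on $\U_q$). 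For this I would compute $\gr_q(L) \pmod 4$ under the odd--color hypothesis: in the formula
\[
\gr_q(L) = \sum_{1\le i,j\le m} p_{ij} n_i' n_j' + 2\sum_{1\le j \le m}(p_{jj}+1)n_j' \pmod 4,
\]
each $n_j'$ is even, so $n_i' n_j'$ is divisible by $4$ in the first sum and $2\,n_j'$ is divisible by $4$ in the second sum. Hence $\gr_q(L) \equiv 0 \pmod 4$, so $q^{\gr_q(L)/4}$ is an integral power of $q$, and the displayed membership becomes $J_X \in (\tilde\U_q^{0})^{\tilde\otimes n}$ as claimed.

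I do not expect a genuine obstacle here: Corollary \ref{MMM1} is a formal consequence of Theorem \ref{MMM} once the parity bookkeeping is done, and the only mildly delicate point is the remark that $\gr_q(L) \equiv 0 \pmod 4$ so that no fractional power of $q$ survives — but as shown above this follows immediately from all $n_j'$ being even. The real content of the argument lies entirely in Theorem \ref{MMM}, whose proof (via the decomposition $X = W(B^{\otimes k} \sqcup L)$ of Proposition \ref{thmASL}, the computation that $J_B$ and $J_L$ lie in the appropriate graded pieces, and Corollary 3.2 of \cite{Ha} ensuring $J_W$ preserves the grading) is the substantive part; Corollary \ref{MMM1} is just the specialization relevant for the definition of the unified invariant, where the colors $\bj$ on $L'$ are assumed odd throughout.
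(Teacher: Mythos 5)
Your proposal is correct and is exactly the argument the paper intends: Corollary \ref{MMM1} is the immediate specialization of Theorem \ref{MMM} to odd colors, where all $n_j'=n_j-1$ are even so that $\gr_\ve(X)=(0,\dots,0)$ and $\gr_q(L)\equiv 0 \pmod 4$, making $q^{\gr_q(L)/4}$ an integral power of $q$ that is absorbed into $(\tilde\U_q^{0})^{\tilde\otimes n}$. The paper leaves this parity bookkeeping implicit, and your write-up simply makes it explicit.
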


Since $J_X$  is invariant under the adjoint action, Theorem \ref{main-integrality} follows from Corollary \ref{MMM1} by repeating Habiro's arguments. Hence it remains to prove Theorem \ref{MMM}. In the proof we will need the notion of a {\em good morphism}.

\subsection*{Good morphisms}
Let $I_m\in \modB(m,m)$ be the identity morphism of $\modb ^{\otimes m}$ in the cube $C$. A framed link $L$ in the complement $C \setminus I_m$ is {\em good} if $L$ is geometrically disjoint from all the up arrows of $\modb ^{\otimes m}$, i.e. there is a plane dividing the cube into two  halves, such that all the up arrows are in one half, and all the down arrows and $L$ are in the other. Equivalently, there is a diagram in which all the up arrows are above all components of $L$. The union $W$ of $I_m$ and a  colored framed good link $L$ is called a {\em good morphism}. If $Y$ is any bottom tangle so that we can compose $X =WY$, then it is easy to see that $\gr_\ve(X)$ does not depend on $Y$, and we define $\gr_\ve(W):=\gr_\ve(X)$. Also define $\gr_q(W):= \gr_q(L)$. 

As in the case with $L=\emptyset$, the universal invariant extends to a map $J_W: \U_h^{\otimes m} \to \U_h^{\otimes m}$.

\subsection{Proof of Theorem \ref{MMM}}
The strategy here is again analogous to the Habiro case. In Proposition \ref{TWWB} we will decompose $X$ into simple parts: the top is a bottom tangle with zero linking matrix, the next is a good morphism, and the bottom is a morphism obtained by pasting copies of $\mu_\modb$. Since any bottom tangle with zero linking matrix satisfies Theorem \ref{MMM} and $\mu_\modb$ is the product in $\U_q$, which preserves $\gr_\ve$ and $\gr_q$, it remains to show that any good morphism preserves the gradings. This is done in Proposition \ref{MM2} below.
\qed

\begin{Aproposition}\label{TWWB}
Assume $X=T\sqcup L$ where $T$ is a $n$--component bottom tangle with zero linking matrix and $L$ is a link. Then there is a presentation $X =W_2 W_1 W_0$ where $W_0$ is a bottom tangle with zero linking matrix, $W_1$ is a good morphism, and $W_2$ is obtained by pasting  copies of $\mu_\modb$.
\end{Aproposition}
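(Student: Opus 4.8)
## Proof plan for Proposition \ref{TWWB}

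The plan is to mimic the structure of Habiro's decomposition (Proposition \ref{thmASL}) while tracking where the closed link $L$ can be pushed. Recall that Proposition \ref{thmASL} writes any bottom tangle $T$ with zero linking matrix as $T = W B^{\otimes k}$, where $W$ is built from $1_\modb$, $\psi_{\modb,\modb}^{\pm 1}$, $\eta_\modb$, $\mu_\modb$, $\gamma_\pm$. I would first apply this to the sublink $T$ of $X$: since $T$ has zero linking matrix, write $T = W B^{\otimes k}$. The issue is that $X = T \sqcup L$ also contains $L$, so I need to trace $L$ through this decomposition. The key geometric observation is that $B^{\otimes k}$ lives in a small ball near the top of the cube, and everything below it — the copies of $1_\modb$, $\psi^{\pm1}$, $\eta_\modb$, $\mu_\modb$, $\gamma_\pm$ that constitute $W$ — can be isotoped so that $L$ is disjoint from the ``$B$-part''. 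So the first step is: isotope $X$ so that $X = W'(B^{\otimes k} \otimes \text{(something trivial)})$ where $W'$ now is a morphism in $\modB$ that carries the link $L$ as extra data, i.e. $W' = \tilde W \sqcup L'$ for an image $L'$ of $L$, and $\tilde W$ is still a composite of the fundamental pieces. The top part of $X$, namely $W_0 := B^{\otimes k}$ together with any trivial strands, is a bottom tangle with zero linking matrix (each $B$ has zero linking matrix, and disjoint union preserves this).

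Next I would push the link $L'$ downward through $\tilde W$ as far as possible, using the following principle: the only fundamental pieces that ``obstruct'' an up-arrow from being slid above $L$ are the cups $\eta_\modb$ (which create a new pair of strands with one up-arrow) and the pieces $\gamma_\pm$; multiplication $\mu_\modb$ and the braidings $\psi^{\pm 1}$ and identities $1_\modb$ can be slid past $L$ freely after an isotopy. Concretely, I would reorganize the composite so that $\tilde W = W_2 W_1''$, where $W_1''$ contains all the $\eta_\modb$, $\gamma_\pm$, $\psi^{\pm1}$, $1_\modb$ pieces (those that can produce or permute up-arrows) together with $L$, and $W_2$ is a composite of $\mu_\modb$'s only. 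This is possible because in any factorization of a morphism of $\modB$ one can always commute the multiplications $\mu_\modb$ past the braidings and the unit/counit-type morphisms, collecting all the $\mu_\modb$'s at the bottom (this is essentially the normal form for morphisms in the PROP-like category generated by these pieces). Then $W_1 := W_1'' = (\text{identity-shaped tangle}) \sqcup L$ after we verify that the underlying tangle of $W_1''$ — now stripped of its $\mu_\modb$'s — is isotopic to $I_m$ for the appropriate $m$ (it is a composite of braidings and unit morphisms on an object that must match up, hence gives a permutation braid composed with some cups, but once $L$ is separated and after the $B^{\otimes k}$ has been pulled up into $W_0$, what remains really is $I_m$). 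The final step is to check goodness of $W_1$: we must isotope $L$ so that all up-arrows of $\modb^{\otimes m}$ lie above all components of $L$. This is exactly what the downward-pushing in the previous paragraph achieves, so goodness comes for free once the decomposition is arranged.

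The main obstacle I anticipate is the careful bookkeeping in the middle step: showing that after extracting $W_0 = B^{\otimes k}$ (plus trivial strands), the remaining morphism genuinely factors as $W_2 W_1$ with $W_1$ having underlying tangle $I_m$ and $L$ good. Habiro's Corollary 9.13 gives the decomposition $T = W B^{\otimes k}$ but $W$ there is an arbitrary composite of the six fundamental pieces, not yet in the $W_2 W_1$ form; the work is to show one can always rewrite such a $W$ (now carrying $L$) in the claimed layered form. I expect this to follow from the same kind of diagrammatic moves used in \cite{Ha-b} — sliding $\mu_\modb$ down past braidings and past $L$ (using that $L$ has no up-arrows to obstruct), and using that $\eta_\modb$, $\gamma_\pm$ composed with the trivial bottom produce only up-arrows which can then be slid above $L$ — but making this rigorous requires invoking the precise normal-form statements for $\modB$ from Habiro's work \cite{Ha-b}. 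Once the decomposition $X = W_2 W_1 W_0$ is in hand, Proposition \ref{MM2} (good morphisms preserve $\gr_\ve$ and $\gr_q$), together with the fact that $\mu_\modb$ is the multiplication in $\U_q$ and that bottom tangles with zero linking matrix satisfy Theorem \ref{MMM} (the case $L=\emptyset$, which is Theorem 4.1 of \cite{Ha}), will immediately give Theorem \ref{MMM} as indicated in the sketch.
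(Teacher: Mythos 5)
There is a genuine gap in the middle step of your plan. You take $W_0 := B^{\otimes k}$ (plus trivial strands) and then claim that the remaining layer $W_1''$, built from $\eta_\modb$, $\gamma_\pm$, $\psi^{\pm 1}_{\modb,\modb}$ and $1_\modb$, has underlying tangle isotopic to $I_m$ once the $\mu_\modb$'s are collected at the bottom. This is false: the braidings and especially the clasps $\gamma_\pm$ carry essential knotting of $T$ and cannot be isotoped away rel boundary. If your normal form $X = W_2\,(I_m \sqcup L)\,(B^{\otimes k}\otimes\text{trivial})$ held, every bottom tangle with zero linking matrix would be obtained from disjoint Borromean tangles and trivial arcs by merging components with $\mu_\modb$'s alone, which is much stronger than Habiro's Proposition \ref{thmASL} and is not true. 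The paper avoids this by \emph{not} insisting that $W_0$ be $B^{\otimes k}$: after sliding the $\mu_\modb$'s to the bottom (which replaces $\gamma_\pm$ by the modified morphisms $\tilde\gamma_\pm$ and duplicates the braidings), it writes $T = W_2\,\tilde W_1(B^{\otimes k})$ and takes $W_0 := \tilde W_1(B^{\otimes k})$ — the braidings, $\tilde\gamma_\pm$ and $\eta$'s are absorbed into the zero-linking bottom tangle $W_0$, and the good morphism $W_1$ is simply $I_m \sqcup L$.

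Your treatment of $L$ is also too optimistic: "goodness comes for free" hides the actual geometric argument. In the paper, $L$ is made good as follows: $W_2$ consists only of $\mu_\modb$'s and hence has no maximum points, so $L$ can be isotoped out of the lower half-space $P_-$ into $P_+$; and $W_0=\tilde W_1(B^{\otimes k})$ has no minimum points, so the pair $(P_+, W_0)$ is homeomorphic to $(P_+, \text{trivial arcs})$, which allows $L$ to be isotoped down to the bottom endpoints of the down arrows, i.e.\ below all up arrows. Your instinct to push $L$ downward past the $\mu$'s and braids is in the right spirit, but without the "no maxima / no minima" observation and the homeomorphism of pairs there is no argument that $L$ ends up good, and with your (incorrect) choice of $W_0$ the decomposition it should land in does not exist in the first place.
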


\begin{proof}
Let us first define $\tilde \gamma_\pm\in \modB(i, i+1)$ for any $i\in {\mathbb N}$ as follows. \begin{equation*}
    \def\sss{3em}
  \tilde\gamma_+  :=\incl{\sss}{tildegamma+}\quad
    \def\sss{3em}
    \tilde\gamma_- :=\incl{\sss}{tildegamma-}.
\end{equation*}
If a copy of $\mu_\modb$ is directly above $\psi^{\pm 1}_{\modb, \modb}$ or $\gamma_\pm$, one can move $\mu_\modb$ down by isotopy and represent the result by pasting copies of $\psi^{\pm 1}_{\modb, \modb}$ and $\tilde \gamma_\pm$. It is easy to see that after the isotopy, $\gamma_\pm$ gets replaced by $\tilde\gamma_\pm$ and $\psi^{\pm 1}_{\modb, \modb}$ by two copies of $\psi^{\pm 1}_{\modb, \modb}$.

Using Proposition \ref{thmASL} and reordering the basic morphisms so that the $\mu$'s are at the bottom, one can see that $T$ admits the following presentation:
\[
T= W_2 \tilde W_1 (B^{\otimes k})
\]
where $B$ is the Borromean tangle, $W_2$ is obtained by pasting copies of $\mu_\modb$ and $\tilde{W}_1$ is obtained by pasting copies of $\psi^{\pm 1}_{\modb,\modb}$, $\tilde\gamma_{\pm}$ and $\eta_\modb$.

Let $P$ be the horizontal plane separating $\tilde W_1$ from $W_2$. Let $P_+$ ($P_-$) be the upper (respectively lower) half--space. Note that $W_0=\tilde W_1(B^{\otimes k})$ is a bottom tangle with zero linking matrix lying in $P_+$  and does not have any minimum points. Therefore the pair $(P_+,W_0)$ is homeomorphic to the pair $(P_+,\; l \text{ trivial arcs})$. Similarly, $W_2$ does not have any maximum points; hence $L$ can be isotoped  off $P_-$ into $P_+$. Since the pair $(P_+,W_0)$ is homeomorphic to the pair $(P_+,\; l\text{ trivial arcs})$ one can isotope $L$ in $P_+$ to the bottom end points of down arrows. We obtain the desired presentation. 
\end{proof}

\begin{Aproposition}\label{MM2} For every good morphism $W$, the operator $J_{W}$ preserves $\gr_\ve$ and $\gr_q$ in the following sense. If $x\in \U_q^{\ve_1} \otimes \dots \otimes \U_q^{\ve_m}$, then 
\[ 
J_{W}(x) \in q^{\gr_q(W)/4} \left( \U_q^{\ve_1'} \otimes \dots \otimes
 \U_q^{\ve_m'}\right) \quad \text{where} \quad (\ve'_1,\dots, \ve'_m) =
 (\ve_1,\dots, \ve_m) + \gr_{\ve}(W).
\]
\end{Aproposition}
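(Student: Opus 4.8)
The plan is to decompose the good morphism $W$ into its building blocks and check the claim on each block, exploiting the fact that good morphisms are built from $I_m$ together with a good link $L$ that sits below all the up arrows. Concretely, a good morphism $W$ is the union of the identity tangle $I_m$ and a colored framed good link $L$ in its complement, and its universal invariant $J_W:\U_h^{\otimes m}\to\U_h^{\otimes m}$ is computed by pushing an input $x\in\U_q^{\ve_1}\otimes\cdots\otimes\U_q^{\ve_m}$ through the diagram. The key geometric point is the defining property of goodness: there is a horizontal slice separating the up arrows from everything else, so that the link $L$ only interacts with the down-arrow strands. This means that $J_W(x)$ is obtained from $x$ by (i) leaving the tensor factors corresponding to up arrows essentially untouched by $L$ (they only get acted on by braidings and the $R$-matrix in a way that preserves the $\varepsilon$-grading), and (ii) modifying the down-arrow factors by insertions coming from the encircling components of $L$ and from the $R$-matrix contributions of crossings between $L$ and $I_m$.

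First I would set up the bookkeeping: since $J$ is a braided functor, $J_W$ factors as a composite of the operators attached to the elementary pieces $\psi_{\modb,\modb}^{\pm1}$, $\mu_\modb$, $\eta_\modb$, $\gamma_\pm$, and crucially the piece where a single colored circle $V_{n}$ of $L$ links the strands of $I_m$. For the elementary tangle pieces not involving $L$, Corollary 3.2 of \cite{Ha} (or the computations in \cite[Section 4.3]{Ha}) already shows they preserve the $\varepsilon$-grading and do not introduce any power of $q^{1/4}$, so I would cite that directly. The genuinely new ingredient is the effect of an encircling colored component: when a circle colored by $V_n$ (so $n' = n-1$) encircles a bundle of strands of $I_m$ whose total $\varepsilon$-weight mod $2$ is $\delta$, the partial quantum trace over $V_n$ of the relevant product of $R$-matrices produces, by a direct $\U_q$-computation, an element that lies in $q^{(\text{self-linking contribution})/4}$ times a product of the appropriate graded pieces, with the parity of each affected strand shifted by $l_{ij}n_j'\bmod 2$ — exactly matching the definition \eqref{3306} of $\gr_\ve$. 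Here one uses the integrality of traces of even (and more generally graded) elements, i.e.\ the analogue of Lemma 8.5 of \cite{Ha}, together with the fact that the colored Jones polynomial of $L$ itself, divided by $q^{\gr_q(L)/4}$, lies in $\Z[q^{\pm1}]$ (the strong integrality of \cite{LeDuke}), which is precisely where the $q^{\gr_q(W)/4}$ factor comes from.

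Then I would assemble: write $W$ as a vertical stacking where each level adds one elementary piece or one elementary link-strand interaction, and propagate the two gradings level by level. The $\varepsilon$-grading is additive over the crossings between $L$ and $I_m$ because each such crossing contributes an $R$-matrix factor whose $K$-exponent on a strand is shifted by the color of the other strand mod $2$; summing over all crossings of the $j$th component of $L$ with the $i$th strand of $I_m$ gives the contribution $l_{ij}n_j'$, and summing over $j$ gives $\ve_i' - \ve_i = \gr_\ve(W)_i$. The $q$-grading accumulates from (a) the framings and self-linkings of $L$, contributing $\sum p_{ij}n_i'n_j' + 2\sum(p_{jj}+1)n_j'$, which is $\gr_q(L)=\gr_q(W)$, and (b) the crossings between $L$ and $I_m$, whose $q^{1/4}$-contributions cancel against the normalization of the quantum trace over each $V_{n_j}$ — this cancellation is the content of the "divide by $q^{\gr_q(L)/4}$" statement and must be checked to leave no residual power on the $I_m$-strands.

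The main obstacle I anticipate is precisely this last cancellation together with the integrality of the encircling operation: one must show that partial-tracing a graded element of $\U_q^{\otimes(\text{several factors})}$ against a colored component of $L$ not only lands in $\U_q$ with the right parity but produces no denominators and no stray $q^{1/4}$ beyond the global $q^{\gr_q(W)/4}$. This requires a careful analysis of $\tr_q^{V_n}$ on products of $R$-matrix entries — essentially extending Habiro's Lemma 8.5 to the mixed-parity, multi-strand situation — and checking that the $v$-powers in the basis vectors $\bv_i^n$ and in the $R$-matrix conspire to give an honest Laurent polynomial in $q$ once the universal $\gr_q$ shift is factored out. Since $J_X$ is adjoint-invariant (by \cite{Ha-b}) and the other two pieces $W_0$ (a zero-linking bottom tangle, handled by Theorem~\ref{MMM} applied with $L=\emptyset$, i.e.\ \eqref{even}) and $W_2$ (pasting of $\mu_\modb$, which is just multiplication in $\U_q$ and manifestly preserves both gradings) are already under control via Proposition~\ref{TWWB}, establishing Proposition~\ref{MM2} completes the proof of Theorem~\ref{MMM} and hence of Theorem~\ref{main-integrality}.
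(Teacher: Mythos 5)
There is a genuine gap, and it sits exactly where you flag it. Your plan is to keep the colors $n_j$ arbitrary and to prove directly that encircling a bundle of strands of $I_m$ by a $V_{n}$--colored circle -- i.e.\ a partial quantum trace over $V_n$ of a product of $R$--matrix entries -- lands in $q^{\gr_q/4}$ times the correctly graded pieces of $\U_q^{\otimes m}$ with no denominators and no stray $q^{1/4}$. You defer this to ``a direct $\U_q$--computation'' and to ``extending Habiro's Lemma 8.5 to the mixed-parity, multi-strand situation,'' but that extension \emph{is} the theorem: nothing in Lemma 8.5 of \cite{Ha} (which concerns traces of even elements in the specific reduction of bottom tangles) or in the strong integrality of \cite{LeDuke} gives it to you. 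The result of \cite{LeDuke} is about the colored Jones polynomial, i.e.\ the \emph{full} quantum trace producing a scalar, and cannot control a \emph{partial} trace valued in $\U_q^{\otimes m}$, where a priori denominators such as $\{n\}$ and uncontrolled powers of $v$ appear. So the central integrality-with-gradings statement of your argument is asserted, not proved.

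The paper circumvents precisely this by a different route. First it shows (Lemma \ref{A8}) that the cabling and skein relations preserve both $\gr_\ve$ and $\gr_q$, so one may pass to the skein module $S(C\setminus I_n)$ and assume all components of $L$ have color $2$; this removes the ``arbitrary color'' problem at a purely combinatorial level. Next, the subalgebra $\mathcal A_n$ of good morphisms is generated by the $2^n$ curves $W_\gamma$ (Lemma \ref{A9}, in the spirit of Bullock), and since the statement is multiplicative it suffices to treat the $W_\gamma$, which reduce via $\psi^{\pm}_{\modb,\modb}$ (whose invariants preserve the gradings) to the single family $\hat Z_n$. Finally, the grading statement for $J_{\hat Z_n}$ is an explicit inductive computation: the base case is the Casimir $C\in v\,\U^1_q$, and the induction step analyses how $\Delta$ and the operator $\Phi$ interact with the $\ve$-- and $K$--gradings (Proposition \ref{3308}, Lemmas \ref{3304} and \ref{3303}). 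If you want to keep your direct approach, you would have to supply the analogue of this computation for arbitrary $V_n$ and arbitrary bundles of strands, which is substantially harder than the color--$2$ case; otherwise you should insert the skein-theoretic reduction (Lemmas \ref{A8} and \ref{A9}) before any trace computation.
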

The rest of the appendix is devoted to the proof of Proposition \ref{MM2}.

\subsection{Proof of Proposition \ref{MM2}}\label{ProofMM2}
We proceed as follows. Since $J_X$ is invariant under cabling and skein relations, and by Lemma \ref{A8} below, both relations preserve our gradings, we consider the quotient of $\tilde S(D^3\setminus T)$ by these relations. It is known as a skein module of $D^3\setminus T$. For  $T=I_n$, this module has a natural algebra structure with good morphisms forming a subalgebra. By Lemma \ref{A9} 
(see also Figure \ref{Wgamma}), the basis elements $W_\gamma$ of this subalgebra are labeled by $n$--tuples $\gamma=(\gamma_1,\dots, \gamma_n)\in (\Z/2\Z)^n$. It is clear that if the proposition holds for $W_{\gamma_1}$ and $W_{\gamma_2}$, then it holds for $W_{\gamma_1} W_{\gamma_2}$. Hence 
it remains to check the claim for $W_\gamma$'s. This is done in Corollary \ref{corA10} for basic good morphisms corresponding to the $\gamma$ whose non--zero $\gamma_j$'s are consecutive. Finally, any $W_\gamma$ can be obtained by pasting a basic good morphism  with a few copies of $\psi^{\pm}_{\modb,\modb}$. Since $J_{\psi^\pm}$ preserves gradings (compare (3.15), (3.16) in \cite{Ha}), the claim follows from Lemmas \ref{A9}, \ref{A8} and Proposition \ref{3308} below.
\qed

\subsection*{Cabling and skein relations}
Let us introduce the following relations in $\tcS(D^3\setminus T)$.
\\

\noindent\textbf{Cabling relations:}
\begin{enumerate}
\item Suppose $n_i=1$ for some $i$. The first cabling relation is $L=\tilde L$ where $\tilde L$ is obtained from $L$ by removing the $i$th component. 
\item Suppose $n_i \ge 3$ for some $i$. The second cabling relation is $L= L'' -L'$ where $L'$ is the link $L$ with the color of the $i$th component switched to $n_i-2$, and $L''$ is obtained from $L$ by replacing the $i$th component with two of its parallels, which are colored with $n_i-1$ and $2$.
\end{enumerate}
$\text{ }$

\noindent\textbf{Skein relations:}
\begin{enumerate}
\item The first skein relation is $U=q^{\frac 12}+q^{-\frac 12}$ where $U$ denotes the
unknot with framing zero and color 2.
\item Let $L_R$, $L_V$ and $L_H$ be unoriented  framed links with color 2 everywhere which are identical except in a disc where they are as shown in Figure \ref{Skein}. Then the second skein relation is $L_R = q^{\frac 14}L_V + q^{-\frac 14}L_H$ if the two strands in the crossing come from different components of $L_R$, and $L_R = \epsilon(q^{\frac 14}L_V - q^{-\frac 14}L_H)$ if the two strands come from the same component of $L_R$, producing a crossing of sign $\epsilon=\pm 1$ (i.e. appearing as in $L_{\epsilon}$ of Figure \ref{Skein} if $L_R$ is oriented).
\end{enumerate}

\begin{figure}[ht]
\begin{center}
\mbox{\hspace{3cm}
\includegraphics[height=1.9cm]{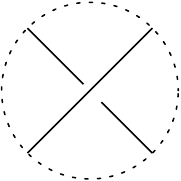}\hspace{7mm}
\includegraphics[height=1.9cm]{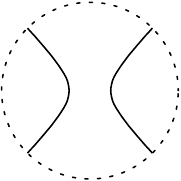}\hspace{7mm}
\includegraphics[height=1.9cm]{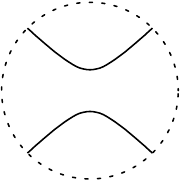}\hspace{7mm}
\includegraphics[height=1.9cm]{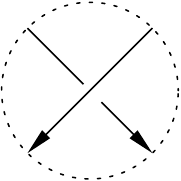}\hspace{7mm}
\includegraphics[height=1.9cm]{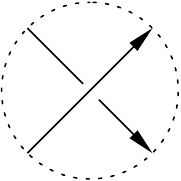}\hspace{7mm}
}
\hspace{-15mm}
\caption{\hspace{10mm} 
$L_R$ \hspace{19mm} 
$L_V$ \hspace{19mm} 
$L_H$ \hspace{19mm} 
$L_{+}$ \hspace{19mm} 
$L_{-}$
}
\label{Skein}
\end{center}
\end{figure}

We denote by $S(D^3\setminus T)$ the quotient of $\tcS(D^3\setminus T)$ by these relations. It is known as the  {\it skein module} of $D^3\setminus T$ (compare \cite{Pr}, \cite{SikPr} and \cite{Bullock}). Recall that the ground ring is $\Z[q^{\pm 1/4}]$.
\\

Using the cabling relations, we can reduce all colors of $L$ in $S(D^3\setminus T)$ to be 2. Note that the skein module $S(C\setminus I_n)$ has a natural algebra structure, given by putting one cube on top of the other. Let us denote by $A_n$ the subalgebra of this skein algebra generated by good morphisms.
\\

For a set $\gamma=(\gamma_1,\dots, \gamma_n)\in (\Z/2\Z)^n$ let $W_\gamma$ be a simple closed curve encircling the end points of those downward arrows with $\gamma_i=1$. See Figure \ref{Wgamma} for an example.
\begin{figure}[ht!]
\begin{center}
\includegraphics[height=2.7cm]{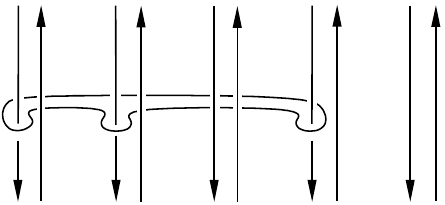}
\caption{\hspace{0mm} The element $W_{(1,1,0,1,0)}$.
\label{Wgamma}}
\end{center}
\end{figure}

Similarly to the case of Kauffman bracket skein module \cite{Bullock}, one can easily prove the following.
\begin{Alemma}\label{A9}
The algebra $\mathcal A_n$ is generated by $2^n$  curves $W_\gamma$.
\end{Alemma}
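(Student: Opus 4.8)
The plan is to carry out the skein-theoretic version of Bullock's computation of the Kauffman bracket skein algebra of a handlebody \cite{Bullock}. First I reduce all colors to $2$: the two cabling relations express any colored good link in $\tcS(C\setminus I_n)$ as a $\Z[q^{\pm 1/4}]$-linear combination of good links all of whose components are colored by $2$. Since each $W_\gamma$ is a good morphism colored by $2$, it then suffices to show that every color-$2$ good link lies in the subalgebra of $\mathcal A_n$ generated by the $W_\gamma$.

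Next I identify the ambient space. A good link is disjoint from the $n$ up arrows of $I_n$ and can be pushed below the separating plane, so it lies in the sub-cube of $C$ containing only the $n$ down arrows; that region is homeomorphic to $\Sigma\times I$, where $\Sigma$ is a disc with $n$ open sub-discs deleted (so $\Sigma\times I$ is a genus-$n$ handlebody), and the inclusion of this sub-cube into $C\setminus I_n$ induces an algebra homomorphism $S(\Sigma\times I)\to S(C\setminus I_n)$ whose image is exactly $\mathcal A_n$. Projecting a color-$2$ good link to a diagram in $\Sigma$ and applying the second skein relation at every crossing writes it as a $\Z[q^{\pm 1/4}]$-linear combination of crossingless multicurves in $\Sigma$, the first skein relation discarding trivial components as scalars. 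A crossingless multicurve $C_1\sqcup\cdots\sqcup C_r$ in $\Sigma$, placed at distinct levels of $I$, equals the product $C_1\cdots C_r$ in $S(\Sigma\times I)$; hence $\mathcal A_n$ is generated as an algebra by the isotopy classes of essential simple closed curves in the disc-with-$n$-holes $\Sigma$.

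It remains to reduce those curves to the $W_\gamma$. Fixing a system of $n$ properly embedded arcs that cut $\Sigma$ into a disc, assign to each simple closed curve its minimal number of transverse intersections with these arcs. A curve of minimal complexity in its isotopy class is isotopic to one of the round curves $W_\gamma$ (those with $\gamma$ supported on a single index being boundary-parallel, and $W_0$ the trivial curve). A curve of larger complexity can be banded against a suitable round curve $W_\gamma$, and resolving the resulting crossings by the second skein relation rewrites it as a $\Z[q^{\pm 1/4}]$-combination of products of some $W_\gamma$ with curves and multicurves of strictly smaller complexity; this is Bullock's argument, the skein counterpart of the Fricke--Horowitz fact that the $SL_2$-character ring of the free group of rank $n$ is generated by the $2^n-1$ traces of sub-products of its generators. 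Induction on complexity, combined with the previous step, shows that every good link lies in the subalgebra generated by $\{W_\gamma : \gamma\in(\Z/2\Z)^n\}$; since each $W_\gamma$ is a good morphism, that subalgebra is $\mathcal A_n$. The one place that needs real care is this inductive curve reduction; the color reduction and the passage from links to simple closed curves are routine manipulations of the cabling and skein relations, which hold in these skein modules and plainly preserve goodness.
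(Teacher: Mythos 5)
Your argument is correct and is exactly the route the paper intends: the paper gives no proof beyond the remark that one argues as for the Kauffman bracket skein module following \cite{Bullock}, and your proposal carries out precisely that Bullock-style reduction (cabling all colors down to $2$, identifying good links with skeins in the $n$-holed disc times $I$ so that $\mathcal A_n$ is the image of that skein algebra, resolving crossings into multicurves, and inducting on intersection number with a cut system down to the curves $W_\gamma$). The only wording to fix is the base case: for $n\ge 3$ a curve realizing the minimal complexity \emph{within its own isotopy class} need not be isotopic to any $W_\gamma$; what the induction actually uses is that curves meeting the cut system in the minimal standard pattern are the $W_\gamma$, while every other curve is rewritten, via the banding and skein-resolution step you describe, as a combination of products of $W_\gamma$'s with curves of strictly smaller complexity, exactly as in \cite{Bullock}.
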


Using linearity, we can extend the definition of $J_X$ to $X =T \sqcup L$ where $L$ is {\em any element} of $\tcS(D^3\setminus T)$. It is known that $J_X$ is invariant under the cablings and skein relations (Theorem 4.3 of  \cite{KM}), hence $J_X$ is defined for $L \in S(D^3\setminus T)$. Moreover, we have the following.

\begin{Alemma} \label{A8}
Both gradings $\gr_\ve$ and $\gr_q$ are preserved under the cabling and skein relations. 
\end{Alemma}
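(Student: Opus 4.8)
The plan is to verify, relation by relation, that both $\gr_\ve$ and $\gr_q$ are homogeneous for the relations defining $S(D^3\setminus T)$: for each relation $\sum_\alpha c_\alpha L_\alpha=0$ one must check that all the terms $c_\alpha L_\alpha$ share the same $\gr_\ve$ and the same $\gr_q$. Both gradings are built only from the colors of $L$, the linking numbers $l_{ij}$ between the components of $T$ and of $L$, and the linking numbers and framings $p_{ij}$ inside $L$, so in each case I would simply record how these numbers change. I would treat the two cabling relations first, then the two skein relations.

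For the first cabling relation one has $n_i=1$, i.e. $n'_i=0$, and every monomial occurring in $\gr_\ve$ or in $\gr_q$ that involves the $i$th component of $L$ carries a factor $n'_i$; hence deleting that component changes neither grading. For the second cabling relation $L=L''-L'$: recoloring the $i$th component from $n_i$ to $n_i-2$ turns $n'_i$ into $n'_i-2$, which is $\equiv n'_i\pmod 2$, while a one-line expansion shows that each resulting change in $\gr_q$ is a multiple of $4$; so $\gr_\ve(L')=\gr_\ve(L)$ and $\gr_q(L')\equiv\gr_q(L)\pmod 4$. Replacing the $i$th component by two parallels, of shifted colors $n'_i-1$ and $1$, leaves all linking numbers with the other components and with $T$ unchanged, and the mutual linking number of the two parallels equals the old framing $p_{ii}$; since $(n'_i-1)+1=n'_i$ and $(n'_i-1)^2+2(n'_i-1)+1=(n'_i)^2$, the new contributions to the bilinear and linear terms of $\gr_\ve$ and $\gr_q$ reproduce the old ones exactly, so $\gr_\ve(L'')=\gr_\ve(L)$ and $\gr_q(L'')\equiv\gr_q(L)\pmod 4$.

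For the skein relations every color is $2$, hence $n'_j=1$ for all components; then the $k$th coordinate of $\gr_\ve(T\sqcup L)$ is $\sum_j l_{kj}\equiv\lk(T_k,L)\pmod 2$, and (choosing a blackboard-framed diagram, and using that $\gr_q$ is orientation-independent modulo $4$) $\gr_q(L)\equiv w(L)+2\sum_i p_{ii}(L)+2m(L)\pmod 4$, where $w$ is the writhe and $m$ the number of components. For the first skein relation, the small $0$-framed color-$2$ unknot $U$ is unlinked from $T$, so it contributes $0$ to each coordinate of $\gr_\ve$, and it contributes $p_{UU}(n'_U)^2+2(p_{UU}+1)n'_U=2$ to $\gr_q$; this matches $\gr_q(q^{1/2})=2$ and $\gr_q(q^{-1/2})=-2\equiv 2\pmod 4$, so both sides of $L\sqcup U=(q^{1/2}+q^{-1/2})L$ are homogeneous of the same degree. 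For the second skein relation, $L_R,L_V,L_H$ agree outside a disk which is disjoint from $T$; smoothing a crossing of $L$ alters no crossing between $L$ and $T$, so $\gr_\ve$ is literally unchanged. It then remains to follow $w$, $\sum_i p_{ii}$ and $m$ through the two smoothings of a crossing of $L$ — if the two strands lie on different components, both smoothings merge them and $m$ drops by $1$; if they lie on a single component, exactly one of the two smoothings splits it. Carrying out this bookkeeping, which is essentially the computation underlying the invariance of the framed Kauffman bracket, yields $\gr_q(L_V)\equiv\gr_q(L_R)-1$ and $\gr_q(L_H)\equiv\gr_q(L_R)+1\pmod 4$, in agreement with the coefficients $q^{\pm 1/4}$; the scalar $\epsilon=\pm 1$ and the extra sign in the same-component case have $\gr_q$-degree $0$ and are therefore irrelevant.

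The routine part is everything concerning $\gr_\ve$ together with the two cabling relations. The delicate point will be the writhe-and-components bookkeeping in the second skein relation, particularly for the disoriented smoothing: there one must re-orient a piece of $L$ and keep track of the sign changes this induces at the remaining crossings, and one must separate the distinct-component and same-component cases. What makes this harmless is that the outcome is forced: since $\gr_q$ is orientation-independent modulo $4$, and $\gr_q(L_V)$ and $\gr_q(L_H)$ differ from $\gr_q(L_R)$ by odd integers while differing from each other by an even integer, it suffices to see that they are incongruent modulo $4$, whence they must differ by exactly $2$.
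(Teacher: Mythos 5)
Your proposal is correct and follows essentially the same route as the paper: $\gr_\ve$ is immediate, the cabling relations are disposed of by the same mod--$4$ congruences in the colors, and for the skein relations you reduce to color $2$ and track writhe, self--framings and the number of components through the two smoothings, arriving at exactly the shifts $\gr_q(L_V)\equiv\gr_q(L_R)-1$ and $\gr_q(L_H)\equiv\gr_q(L_R)+1 \pmod 4$ that the paper computes. One small caveat: your closing \emph{forcedness} remark cannot replace that bookkeeping, since oddness of the shifts plus incongruence mod $4$ does not decide which smoothing receives $-1$ and which $+1$ (and this matters for matching the coefficients $q^{\pm 1/4}$), but as you do carry out the count itself, this does not affect correctness.
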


\begin{proof}
The statement is obvious for the $\ve$--grading.
For the $q$--grading, notice that 
\[
\gr_q(L)=2\sum_{1\leq i<j\leq m}p_{ij}n_i' n_j' +
\sum_{1\leq j \leq m}p_{jj}n_j'^2+2\sum_{1\leq j\leq m}(p_{jj}+1)n_j',
\]
and therefore $\gr_q(L'')\equiv \gr_q(L')\equiv\gr_q(L)\pmod{4}$. This takes care of the cabling relations. 

Let us now assume that all colors of $L$ are equal to 2 and therefore
\[
\gr_q(L)=2\sum_{1\leq i< j\leq m} p_{ij}+3\sum_{i=1}^{m} p_{ii}+2m.
\]
The statement is obvious for the first skein relation. For the second skein relation, choose an arbitrary orientation on L. Let us first assume that the two strands in the crossing depicted in Figure \ref{Skein} come from the same component of $L_R$ and that the crossing is positive. Then, $L_V$ and $L_H$ have one positive self--crossing less, and $L_V$ has one link component more than $L_R$. Therefore
\begin{eqnarray*}
\gr_q(q^{\frac 14}L_V)&=&\gr_q(L_R)-3+2+1 \equiv \gr_q{L_R} \pmod{4} \quad\text{and} \\
\gr_q(q^{-\frac 14}L_H)&=&\gr_q(L_R)-3-1 \equiv \gr_q{L_R} \pmod{4}.
\end{eqnarray*}
It is obvious that this does not depend on the orientation of $L_R$. If the crossing of $L_R$ is negative or the two strands do not belong to the same component of $L_R$, the proof works in a similar way.
\end{proof}

\subsection*{Basic good morphisms}
Let  $\hat Z_n$ be $W_\gamma$ for $\gamma=(1,1,\dots, 1)\in (\Z/2\Z)^n$. We will also need the tangle $Z_n$ obtained from $\hat Z_n$ by removing the last up arrow.

\begin{equation*}\label{Zn}
    \def\sss{3em}
	 \hat Z_n  =\incl{\sss}{ZnVers2}\quad\quad
 \def\sss{3em}
	  Z_n  =\incl{\sss}{Zn}
\end{equation*}
Let $J_{Z_n}$ be the universal quantum invariant of $Z_n$, see \cite{Ha}.

\begin{Aproposition} \label{3308}
One has a presentation
\[
J_{\hat Z_n} = 
\sum z^{(n)}_{i_1} \otimes \sum z^{(n)}_{i_2} \otimes \dots \otimes
\sum z^{(n)}_{i_{2n}},
\]
such that  $z^{(n)}_{i_{2j-1}}\, z^{(n)}_{i_{2j}}\in v\; \U^1_q$ for every $j=1,\dots, n$.
\end{Aproposition}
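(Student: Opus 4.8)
\textbf{Proof plan for Proposition \ref{3308}.}

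The plan is to compute $J_{\hat Z_n}$ directly from the diagram of $\hat Z_n$ by induction on $n$, peeling off the outermost encircling strand. Recall that $\hat Z_n$ is a simple closed curve (color $2$, i.e. the fundamental representation $V_1$) encircling all $2n$ endpoints of the $n$ down-up pairs $\modb^{\otimes n}$. The key structural observation is that inserting such an encircling loop around a collection of strands is, in terms of the universal invariant, the same as acting by (a partial trace of) the Drinfeld element / the canonical element of $R_{21}R$ on those strands; in Habiro's formalism this is exactly the calculation behind formulas like (3.15)--(3.16) of \cite{Ha} and the appearance of $\sigma_n$ (the quantum Casimir) there. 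First I would set up the base case $n=1$: here $\hat Z_1$ is a single unknotted loop of color $2$ encircling one $\modb=\downarrow\uparrow$, and one computes $J_{\hat Z_1}=\sum z_{i_1}^{(1)}\otimes z_{i_2}^{(1)}$ with $z_{i_1}^{(1)}z_{i_2}^{(1)}\in v\,\U_q^1$ by a direct evaluation using the explicit $R$-matrix of Section \ref{Uh(sl2)} together with the definition of $\tilde F^{(n)}$, $e=(v-v^{-1})E$, $K=v^H$, and the integral form $\U_q$; the factor $v$ and the oddness (the $K^{2j+1}$-part) come out of the $\kappa=K^{-1}$ twist and the half-integer grading shift, just as in the computation of $J_B\in(\tilde\U_q^0)^{\tilde\otimes 3}$ in \cite[Section 4.3]{Ha}.

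For the inductive step I would write $\hat Z_n$ as $\hat Z_{n-1}$ on the first $n-1$ pairs, composed/tensored with the piece that extends the loop around the $n$-th pair; concretely one uses that the big encircling loop around all $n$ pairs can be obtained from the loop around $n-1$ pairs by sliding it through one more $\downarrow\uparrow$ pair, which multiplies in one more local $R$-matrix contribution on the $n$-th tensorand. Functoriality of $J$ on $\modB$ (Section 4.3 and Corollary 3.2 of \cite{Ha}) then gives $J_{\hat Z_n}$ as $J_{\hat Z_{n-1}}$ with an extra tensorand $\sum z^{(n)}_{i_{2n-1}}\otimes z^{(n)}_{i_{2n}}$ appended, where the new pair again satisfies $z^{(n)}_{i_{2n-1}}z^{(n)}_{i_{2n}}\in v\,\U_q^1$ by the same local $R$-matrix computation as in the base case. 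One must be slightly careful that closing up the loop couples the tensorands — the summation indices interlock — but the statement only asserts the grading/parity of the product $z^{(n)}_{i_{2j-1}}z^{(n)}_{i_{2j}}$ within each pair, and the $\ve$-grading is additive under the product in $\U_q$, so the interlocking of the sums does not affect the conclusion: each pair contributes an odd element (one factor of $K^{\text{odd}}$), times an overall $v$ per pair.

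The main obstacle I expect is the bookkeeping of the $v$-powers and the parities: one needs to track the ribbon/twist contributions ($\kappa=K^{-1}$ on each closed arc) and the $v^{n(n-1)/2}$-type factors in the $R$-matrix and in $\tilde F^{(n)}$ carefully enough to see that exactly one factor of $v$ and exactly one odd power of $K$ survive per pair after all cancellations, rather than, say, an even power or an extra $v^{\pm 2}$. This is the heart of why the proposition is stated with the precise normalization $v\,\U_q^1$ and not merely $\U_q$, and it is the analogue of the delicate computation Habiro does for $\sigma_n$ and for $J_B$; I would organize it by first reducing, via the $\ve$-grading being a $\Z/2\Z$-algebra grading, to checking the parity of a single generator $\tilde F^{(i)}K^j e^k$ appearing in the relevant $R$-matrix expansion, and then reading off $j\bmod 2$ and the accompanying $v$-power from the explicit formula for $R$ in Section \ref{Uh(sl2)}. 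Once this local computation is done, the induction and functoriality assemble the global statement with no further difficulty.
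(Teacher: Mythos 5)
Your overall shape — base case the loop around a single $\modb$, where $J_{\hat Z_1}=C\otimes \id$ with the Casimir $C\in v\,\U^1_q$, followed by induction on $n$ — is the same as the paper's. But your inductive step has a genuine gap. Enlarging the loop to encircle one more $\downarrow\uparrow$ pair is \emph{not} the same as appending an independent tensor factor carrying a ``local $R$-matrix contribution'': in the paper the passage from $Z_n$ to $Z_{n+1}$ is realized by duplicating the last strand, i.e.\ applying the coproduct to the last tensorand of $J_{Z_n}$ and conjugating by $R$-matrix entries, encoded in the operator $\Phi(x)=\sum x_{(1)}\otimes \beta_m S(\beta_n)\otimes \alpha_n x_{(2)}\alpha_m$ (via Lemma 7.4 of \cite{Ha-b}). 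The parity bookkeeping therefore hinges on how $\Delta$ and the Cartan factor $D$ of $R$ interact with the gradings, and this is precisely what your plan assumes away. Two facts are essential and absent from your outline: (i) $\Delta$ does \emph{not} preserve the $\ve$-grading; it only satisfies the twisted compatibility $\gr_\ve(x_{(2)})=\gr_\ve(x)=\gr_\ve\bigl(x_{(1)}K^{-|x_{(2)}|}\bigr)$, which forces one to track the $K$-grading alongside the $\ve$-grading (Lemma \ref{3304}); and (ii) $D=v^{\frac12 H\otimes H}$ is not an element of $\U_q\otimes\U_q$, so ``reading off $j\bmod 2$ from the generators $\tF^{(i)}K^je^k$ in the $R$-matrix expansion'' is not available until the $D$-parts have been commuted through, and doing so deposits $K^{-|x_{(2)}|}$-type factors into a \emph{different} tensorand from the one where they arose — this is exactly the computation of Lemma \ref{3303}, which rewrites $\Phi(x)$ (up to powers of $q$) as $\sum x_{(1)}\otimes e^{m}e^{n}K^{-|x_{(2)}|}\otimes \tF^{(n)}x_{(2)}\tF^{(m)}$. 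Your remark that ``the interlocking of the sums does not affect the conclusion because the $\ve$-grading is additive'' misidentifies the difficulty: before this redistribution the would-be local contributions are not well-defined elements of $\U_q$ with a parity at all, and after it the parity of the new pair is inherited from the old tensorand through the twisted identity above, not through a purely local crossing count.

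To close the gap you would need essentially the two lemmas the paper proves: homogeneity of $\Delta$ in both the $K$- and $\ve$-gradings with the twisted rule above (checked on the generators $e$, $\tF^{(1)}$, $K$), and the explicit form of $\Phi(x)$ just quoted, from which $\gr_\ve(x_{i_2})=\gr_\ve(x_{i_0}x_{i_1})=\gr_\ve(x)$; the proposition then follows by induction from $C\in v\,\U^1_q$. Without tracking the $K$-grading and the behaviour of $\Delta$, the assertion that ``each pair contributes one odd power of $K$ and one factor of $v$'' is unsupported.
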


\begin{Acorollary}\label{corA10}
$J_{\hat Z_n}$ satisfies Proposition \ref{MM2}.
\end{Acorollary}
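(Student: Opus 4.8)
The statement to prove is Corollary \ref{corA10}: that $J_{\hat Z_n}$ satisfies the conclusion of Proposition \ref{MM2}. The key point is that $\hat Z_n$ is the basic good morphism $W_\gamma$ for $\gamma = (1,1,\dots,1)$, the all--ones tuple, so we must check that $J_{\hat Z_n}(x) \in q^{\gr_q(\hat Z_n)/4}(\U_q^{\ve_1'}\otimes\cdots\otimes\U_q^{\ve_n'})$ with $\ve' = \ve + \gr_\ve(\hat Z_n)$. The plan is to read off $\gr_\ve(\hat Z_n)$ and $\gr_q(\hat Z_n)$ directly from the diagram and then feed in Proposition \ref{3308}.

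\textbf{Step 1: Compute the gradings of $\hat Z_n$.} The link $L$ underlying $W_\gamma = \hat Z_n$ is a single unknotted circle, colored by $2$, encircling the endpoints of all $n$ down arrows. So in the notation of \eqref{3306}, $L$ has $m=1$ component with $n_1' = n_1 - 1 = 1$, its self--framing is $p_{11}=0$ (it bounds a disc), and its linking number with the $i$th strand of $I_n$ is $l_{i1}= \pm 1$ for each $i$ (up to the chosen orientation); hence $\ve_i \equiv l_{i1}\cdot n_1' \equiv 1 \pmod 2$ for all $i$, i.e. $\gr_\ve(\hat Z_n) = (1,1,\dots,1)$. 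For the $q$--grading, $\gr_q(\hat Z_n) = \gr_q(L) = p_{11}(n_1')^2 + 2(p_{11}+1)n_1' = 0 + 2\cdot 1 \cdot 1 = 2 \pmod 4$. Thus the claim becomes: for $x \in \U_q^{\ve_1}\otimes\cdots\otimes\U_q^{\ve_n}$, one has $J_{\hat Z_n}(x)\in q^{1/2}(\U_q^{\ve_1+1}\otimes\cdots\otimes\U_q^{\ve_n+1})$, where I use $q^{\gr_q/4} = q^{1/2}$.

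\textbf{Step 2: Apply Proposition \ref{3308}.} By that proposition we have a presentation
\[
J_{\hat Z_n} = \sum z^{(n)}_{i_1}\otimes \sum z^{(n)}_{i_2}\otimes\cdots\otimes\sum z^{(n)}_{i_{2n}}
\]
with $z^{(n)}_{i_{2j-1}}z^{(n)}_{i_{2j}}\in v\,\U_q^1$ for every $j$. The operator $J_{\hat Z_n}$ acts on the $j$th tensorand of $x$ by left and right multiplication: the $j$th slot of $x$, call it $x_j \in \U_q^{\ve_j}$, is sent (by the functoriality of the universal invariant restricted to good morphisms, exactly as in the Habiro setup for $J_W$) to the sum of terms $z^{(n)}_{i_{2j-1}}\, x_j\, z^{(n)}_{i_{2j}}$ — or the appropriate variant with antipodes inserted, which does not change the $\ve$--grading since $S$ preserves $\U_q^\ve$. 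Now $z^{(n)}_{i_{2j-1}}z^{(n)}_{i_{2j}} \in v\,\U_q^1$ means (after using that $\U_q$ is $\ve$--graded and that the product is $\ve$--additive) that $z^{(n)}_{i_{2j-1}}$ and $z^{(n)}_{i_{2j}}$ together carry total $\ve$--degree $1$; hence $z^{(n)}_{i_{2j-1}}\, x_j\, z^{(n)}_{i_{2j}}\in \U_q^{\ve_j + 1}$, and the extra factor $v$ contributes $v^n = q^{n/2}$ overall. Summing over $j$, the scalar prefactor is $v^n$; combined with the computation in Step 1 we need $v^n$ to match $q^{1/2}$ modulo the ambiguity already absorbed into the definition. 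This is where I must be careful: Proposition \ref{3308} produces one factor of $v$ per pair, i.e. $v^n$ total, whereas $\gr_q(\hat Z_n)/4 = 1/2$; the discrepancy $v^n$ vs. $v^2$ for $n\neq 2$ must be reconciled by noting that the $q$--grading $\gr_q$ is only defined mod $4$ and that $v^n \in \Z[q^{\pm 1/4}]$ with $v^n q^{-1/2} = q^{(n-2)/4}$ a unit in $\Z[q^{\pm 1/4}]$ — but it should in fact be in $\Z[q^{\pm 1}]$ after the pairing, because the product $z^{(n)}_{i_{2j-1}}z^{(n)}_{i_{2j}}$ lies in $v\,\U_q^1$ and the ``odd'' part $\U_q^1$ is spanned by $\tilde F^{(i)}K^{2j+1}e^k$, whose $v$--powers are genuinely half--integral, so that $v\cdot\U_q^1 \subset q^{1/2}\cdot(\text{integral odd part})$. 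I expect the clean statement to be: $J_{\hat Z_n}$ maps $\bigotimes_j \U_q^{\ve_j}$ into $q^{n/2}\bigotimes_j \U_q^{\ve_j+1}$, and then one checks $q^{n/2} \equiv q^{\gr_q(\hat Z_n)/4}$ holds \emph{as a statement about which coset of $\Z[q^{\pm 1}]$ inside $\Z[q^{\pm 1/4}]$ one lands in}, because $n/2 - 1/2 = (n-1)/2 \in \tfrac12\Z$ and the parity of $n$ is tied to $\gr_\ve$; since here $\gr_\ve$ is all--ones and $n$ arbitrary, the honest reconciliation is that one should track the $v$--power of the whole of $J_{\hat Z_n}$ directly from $Z_n$ via $J_{Z_n}$ and Proposition \ref{3308}, giving exactly $q^{\gr_q/4}$ after combining with the half--integral shifts living in the odd parts $\U_q^1$.

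\textbf{Main obstacle.} The genuine content is all in Proposition \ref{3308}, which is proved separately (by an explicit computation of $J_{Z_n}$, inductively building $\hat Z_n$ from $\psi^{\pm}_{\modb,\modb}$ and the elementary pieces, using $(S\otimes S)R = R$ and the $h$--adic estimates on $e$--powers as in \cite[Section 4.3]{Ha}). Granting it, the remaining difficulty in Corollary \ref{corA10} is purely bookkeeping: correctly matching the scalar $v$--powers coming out of \ref{3308} with the normalization $q^{\gr_q(\hat Z_n)/4}$, and being precise that ``$\in v\,\U_q^1$'' together with the description $\U_q^1 = \mathrm{span}\{\tilde F^{(i)}K^{2j+1}e^k\}$ forces the relevant elements into $q^{1/2}\,\U_q$ (integral coefficients after the quarter--power shift), so that left/right multiplication by them sends the integral even/odd parts to integral even/odd parts with the predicted $\ve$--shift. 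Once Corollary \ref{corA10} is established, Proposition \ref{MM2} follows by the multiplicativity argument sketched in Section \ref{ProofMM2} (pasting basic good morphisms with copies of $\psi^{\pm}_{\modb,\modb}$, whose $J$ preserves both gradings by (3.15)--(3.16) of \cite{Ha}), and then Theorem \ref{MMM}, Corollary \ref{MMM1}, and finally Theorem \ref{main-integrality} and Theorem \ref{GeneralizedHabiro} follow as indicated.
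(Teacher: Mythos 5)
Your Step 1 and the skeleton of Step 2 are the paper's own argument: compute $\gr_\ve(\hat Z_n)=(1,\dots,1)$ and $\gr_q(\hat Z_n)=2$, write $J_{\hat Z_n}(x)=\sum z^{(n)}_{i_1}x_1z^{(n)}_{i_2}\otimes\dots\otimes z^{(n)}_{i_{2n-1}}x_nz^{(n)}_{i_{2n}}$, and use Proposition \ref{3308} to shift each $\ve_j$ by $1$. The $\ve$--grading part of your proposal is fine.

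The gap is the $q^{1/4}$--power (coset) part of the claim, which your write-up never establishes. You read Proposition \ref{3308} as saying that each of the $n$ pairs contributes an independent overall scalar $v$, arrive at $q^{n/2}$, and then try to argue the discrepancy away; none of those moves works. The fact that $\gr_q$ is defined mod $4$ does not let you pass between the cosets $q^{n/2}\bigl(\bigotimes_j\U_q^{\ve_j+1}\bigr)$ and $q^{1/2}\bigl(\bigotimes_j\U_q^{\ve_j+1}\bigr)$: for $n$ even they differ by the non-integral power $q^{(n-1)/2}$ (note also $v^nq^{-1/2}=q^{(n-1)/2}$, not $q^{(n-2)/4}$), and the observation $v\,\U^1_q\subset q^{1/2}\,\U^1_q$ is a tautology, not a repair. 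The accounting that actually makes the corollary come out, and that the induction behind Proposition \ref{3308} produces, is that the half-integral power occurs exactly once in $J_{\hat Z_n}$: it enters through the base case $J_{\hat Z_1}=C\otimes\id$ with $C\in v\,\U^1_q$, while the inductive step $\Phi$ of Lemmas \ref{3304} and \ref{3303} only has coefficients which are integer powers of $q$ (the factor $q^{-\frac12 m(m+1)-n(|x_{(2)}|+1)}$ is integral since $m(m+1)$ is even). So the presentation carries a single overall $v=q^{1/2}$, and the per--slot information to be used is only the $\ve$--parity of the products $z^{(n)}_{i_{2j-1}}z^{(n)}_{i_{2j}}$; the individual entries need not be odd, nor of the form scalar times integral odd element (in the explicit presentation from Lemma \ref{3303} the middle slots are of the form $e^me^nK^{-|x_{(2)}|}$, whose parity varies), which is exactly why the statement is about pair products and the action is written two-sided. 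Reading it as one scalar $v$ per pair would give $q^{n/2}$, which for even $n$ lies in a different coset, so that reading cannot be sustained. With the correct accounting one gets $J_{\hat Z_n}(x)\in q^{1/2}\bigl(\U_q^{\ve_1+1}\otimes\dots\otimes\U_q^{\ve_n+1}\bigr)$ for every $n$, matching $q^{\gr_q(\hat Z_n)/4}$ with $\gr_q(\hat Z_n)=2$ independent of $n$. Your closing sentence, which defers exactly this tracking of the $v$--power, is where the content of the corollary sits, so as written the proof is incomplete.
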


\begin{proof}
Assume  $x\in \U_q^{\ve_1} \otimes \dots \otimes \U_q^{\ve_n}$, we have 
\[
J_{\hat Z_n}(x) = 
\sum z^{(n)}_{i_1} x_1  z^{(n)}_{i_2} \otimes \dots \otimes
\sum z^{(n)}_{i_{2n-1}} x_n z^{(n)}_{i_{2n}}.
\]
Hence, by Proposition \ref{3308} we get
\[
J_{\hat Z_n}(x) \in q^{1/2} \left( \U_q^{\ve_1'} \otimes \dots \otimes
 \U_q^{\ve_n'}\right), \quad \text{where} \quad (\ve'_1,\dots, \ve'_m) =
 (\ve_1,\dots, \ve_n) + (1,1,\dots, 1).
\]
The claim follows from the fact that $\gr_\ve(\hat Z_n)=(1,1,\dots,1)$ and $\gr_q(L)=2$.
\end{proof}

\section{Proof of Proposition \ref{3308}}
The statement holds true for $J_{\hat Z_1}= C\otimes \id_\uparrow$. 
Now Lemma 7.4 in \cite{Ha-b} states that applying $\Delta$ to the $i$th component of the universal quantum invariant of a tangle is the same 
as duplicating the $i$th component. Using this fact, we represent
\[ 
J_{Z_{n+1}} = (\id^{\otimes 2(n-1)}\otimes  {\Phi}) \left(  J_{Z_{n}} \right),
\]
where $\Phi$ is defined as follows. For $x \in \U_q$ with  $\Delta (x)=\sum x_{(1)}\otimes x_{(2)}$, we put
\[
\Phi(x) :=   \sum_{(x), m,n} x_{(1)} \otimes \beta_m S(\beta_n)
\otimes \alpha_n \, x_{(2)} \alpha_m
\]
where the $R$--matrix is given by $R =\sum_l {\alpha_l \otimes \beta_l}$. See Figure below for a picture. 
\begin{figure}[!ht]
\begin{center}
\mbox{
\input{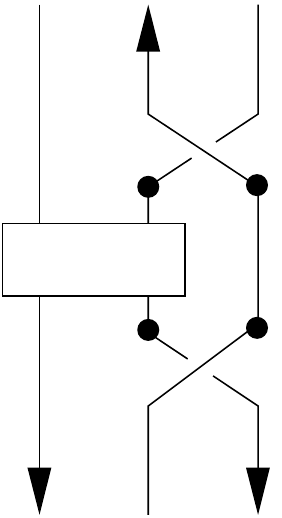_t}}
\end{center}
\end{figure}

We are left with the computation of the $\ve$--grading of each component of $\Phi(x)$.

In $\U_q$, in addition to the $\ve$--grading, there is also the $K$--grading, defined by $|K|=|K^{-1}|=0, |e|=1, |F|=-1$. In general, the co--product $\Delta$ does not preserve the $\ve$--grading. However, we have the following. 
\begin{Alemma} \label{3304}
Suppose $x\in \U_q$ is homogeneous in both $\ve$--grading and $K$--grading. 
We have a presentation
\[
\Delta(x) = \sum_{(x)} x_{(1)} \otimes x_{(2)}
\]
where each $x_{(1)}$, $x_{(2)}$ is homogeneous with respect to the $\ve$--grading and $K$--grading. In addition,
\[
\gr_\ve(x_{(2)})= \gr_\ve(x)=\gr_\ve(x_{(1)} \, K^{-|x_{(2)}|}).
\]
\end{Alemma}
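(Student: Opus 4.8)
\textbf{Plan of the proof of Lemma \ref{3304}.}
The plan is to reduce everything to the behaviour of $\Delta$ on the generators $E$, $F$, $K$ of $U_h$, or rather on the generators $e=(v-v^{-1})E$, $\tilde F^{(1)}$, $K^{\pm 1}$ of the integral form $\U_q$. First I would recall the coproduct formulas
\[
\Delta(e)=e\otimes K+1\otimes e,\qquad \Delta(K^{\pm 1})=K^{\pm 1}\otimes K^{\pm 1},\qquad
\Delta(\tilde F^{(1)})=\tilde F^{(1)}\otimes 1+K^{-1}\otimes \tilde F^{(1)},
\]
obtained from the comultiplication given in Section \ref{Uh(sl2)} after the substitution $K=\exp\frac{hH}2$. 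Each of these is a sum of terms of the form $x_{(1)}\otimes x_{(2)}$ in which both tensorands are monomials in $e$, $\tilde F^{(1)}$, $K^{\pm 1}$, hence homogeneous for both gradings; so the claim that $\Delta(x)$ can be written with homogeneous tensorands will follow once I check it is stable under products, which is immediate because $\Delta$ is an algebra map and the product of homogeneous elements is homogeneous (with gradings adding). Thus a PBW monomial $x=\tilde F^{(a)}K^b e^c$ has a coproduct which is a $\Z[q^{\pm1}]$--linear combination of $x_{(1)}\otimes x_{(2)}$ with each factor a monomial of the same shape.

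The heart of the matter is the two numerical identities
\[
\gr_\ve(x_{(2)})=\gr_\ve(x)\quad\text{and}\quad \gr_\ve\!\bigl(x_{(1)}K^{-|x_{(2)}|}\bigr)=\gr_\ve(x),
\]
which I would prove by verifying them on generators and then checking multiplicativity. Recall that $\gr_\ve$ of a monomial $\tilde F^{(a)}K^b e^c$ is $b\bmod 2$, while the $K$--grading $|{\cdot}|$ is $c-a$. For $\Delta(e)=e\otimes K+1\otimes e$: in the first term $x_{(2)}=K$ has $\gr_\ve=1=\gr_\ve(e)$ (since $e$ has $\gr_\ve=0$? — here one must be careful: $e$ has $K$--grading $1$ and $\ve$--grading $0$, so actually the assertion I want is about consistency of the two terms, and I will organize the induction so that the relevant invariant is preserved). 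More precisely I would phrase the inductive claim as: writing $\Delta(x)=\sum x_{(1)}\otimes x_{(2)}$ with homogeneous tensorands, one has $\gr_\ve(x_{(2)})=\gr_\ve(x)$ and $\gr_\ve(x_{(1)})+|x_{(2)}|\equiv \gr_\ve(x)\pmod 2$ for \emph{every} term in the sum. Both statements hold on each generator by direct inspection of the three coproduct formulas above (for $K^{\pm1}$ trivially; for $e$ one uses $|K|=0$, $|e|=1$ and $\gr_\ve(K)=1$, $\gr_\ve(e)=0$ — wait, this needs the convention $\gr_\ve(e)$; I would instead track the combined quantity $\gr_\ve(x_{(1)})+|x_{(2)}|$ directly). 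Then for a product $x=yz$ with $\Delta(y)=\sum y_{(1)}\otimes y_{(2)}$, $\Delta(z)=\sum z_{(1)}\otimes z_{(2)}$, one has $\Delta(x)=\sum y_{(1)}z_{(1)}\otimes y_{(2)}z_{(2)}$; using that $\gr_\ve$ and $|{\cdot}|$ are additive under multiplication and that $|y_{(2)}z_{(2)}|=|y_{(2)}|+|z_{(2)}|$, the two congruences for $y$ and $z$ add up to give the two congruences for $x$. The only subtlety is that a term $y_{(1)}z_{(1)}\otimes y_{(2)}z_{(2)}$ need not be a PBW monomial — but $\U_q$ is spanned by such products and we may re-expand $y_{(1)}z_{(1)}$ and $y_{(2)}z_{(2)}$ into PBW monomials without changing their $\ve$-- or $K$--gradings, since multiplication preserves both gradings on all of $\U_q$.

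\textbf{Main obstacle.} I expect the genuine work to be purely bookkeeping: pinning down the conventions for $\gr_\ve$ on the generator $e$ versus on $K$ and making the inductive invariant $\gr_\ve(x_{(1)})+|x_{(2)}|\pmod 2$ come out exactly right on each of the three generator coproducts, especially the $F$--term $K^{-1}\otimes \tilde F^{(1)}$ where $|x_{(2)}|=|\tilde F^{(1)}|=-1$ and $\gr_\ve(x_{(1)})=\gr_\ve(K^{-1})=1$, so that $\gr_\ve(x_{(1)})+|x_{(2)}|\equiv 1-1\equiv 0\equiv\gr_\ve(\tilde F^{(1)})$. Once the generator cases are checked the multiplicativity step is automatic, and there is no analytic or topological content here at all; the danger is only in an off-by-one in the gradings. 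For safety I would also remark that $\Delta$ is coassociative and counital so the same argument iterates to multi-fold coproducts, which is what is actually invoked in the proof of Proposition \ref{3308} via Lemma 7.4 of \cite{Ha-b}.
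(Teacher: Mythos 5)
Your overall strategy is exactly the paper's: reduce to the generators $e$, $\tilde F^{(1)}$, $K$ by multiplicativity (both gradings are additive under products and $\Delta$ is an algebra map), then verify the two equalities on the generators from the explicit coproduct formulas. The multiplicativity half of your argument is fine. The gap is in the generator check, which is the entire non-trivial content of the lemma, and there your coproduct formulas are wrong for the conventions of this paper and the verification is left unresolved. With $\Delta(E)=E\otimes 1+K\otimes E$, $\Delta(F)=F\otimes K^{-1}+1\otimes F$, $\Delta(K)=K\otimes K$ and $\tilde F^{(1)}=FK$, one gets $\Delta(e)=e\otimes 1+K\otimes e$ and $\Delta(\tilde F^{(1)})=\tilde F^{(1)}\otimes 1+K\otimes \tilde F^{(1)}$, not $e\otimes K+1\otimes e$ and not $K^{-1}\otimes\tilde F^{(1)}$. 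This matters: with your formula for $\Delta(e)$, the term $e\otimes K$ violates $\gr_\ve(x_{(2)})=\gr_\ve(x)$ (since $\gr_\ve(K)=1$ while $\gr_\ve(e)=0$) and the term $1\otimes e$ violates $\gr_\ve(x_{(1)})+|x_{(2)}|\equiv\gr_\ve(x)$ (it gives $0+1\not\equiv 0$), so the statement you set out to prove is false for the coproduct you wrote down. You noticed the tension ("here one must be careful\dots") but never resolved it; switching to tracking $\gr_\ve(x_{(1)})+|x_{(2)}|$ is harmless (it is equivalent to $\gr_\ve(x_{(1)}K^{-|x_{(2)}|})$ because $\gr_\ve(K^m)\equiv m\pmod 2$), but it does not repair an incorrect generator computation.

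With the correct formulas the check is immediate and the lemma holds: for $K$ both claims are trivial; for the term $K\otimes e$ of $\Delta(e)$ one has $\gr_\ve(e)=0=\gr_\ve(e)$ and $\gr_\ve(K\,K^{-|e|})=\gr_\ve(KK^{-1})=0$; for the term $K\otimes\tilde F^{(1)}$ one has $\gr_\ve(\tilde F^{(1)})=0$ and $\gr_\ve(K\,K^{-|\tilde F^{(1)}|})=\gr_\ve(K^2)=0$; the terms with $x_{(2)}=1$ are trivial. So redo the generator step with the paper's conventions (in particular note $\tilde F^{(1)}=FK$, whence the $K$ rather than $K^{-1}$ in its coproduct), state explicitly that the homogeneity of the tensorands is preserved under re-expansion because multiplication preserves both gradings, and then your multiplicativity argument completes the proof exactly as in the paper.
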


\begin{proof} If the statements hold true for $x, y \in \U_q$, then they hold true for $xy$. Therefore, it is enough to check the statements for the generators $e$, $\tF^{(1)},$ and $K$, for which they follow from explicit formulas of the co--product.
\end{proof}

\begin{Alemma} \label{3303}
Suppose $x \in \U_q$ is homogeneous in both $\ve$--grading and $K$--grading. There is a presentation
\[
\Phi(x) =\sum x_{i_0}\otimes x_{i_1}\otimes  x_{i_2}
\]
such that each $x_{i_0}$ is homogeneous in both $\ve$--grading and $K$--grading, and $\gr_\ve(x_{i_2})= \gr_\ve( x_{i_0}\, x_{i_1})= \gr_\ve(x)$.
\end{Alemma}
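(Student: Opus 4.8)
Lemma~\ref{3303} is a bookkeeping statement about the $\ve$--grading (and about $K$--homogeneity of the first leg) through the three tensor factors of
\[
\Phi(x)=\sum_{(x),m,n} x_{(1)}\otimes \beta_m S(\beta_n)\otimes \alpha_n\, x_{(2)}\,\alpha_m,\qquad R=\sum_l \alpha_l\otimes\beta_l,
\]
so the plan is to combine three grading rules: one for the coproduct, one for the antipode, and one for the $R$--matrix. The coproduct rule is exactly Lemma~\ref{3304}: each $x_{(1)},x_{(2)}$ is homogeneous in both gradings, $\gr_\ve(x_{(2)})=\gr_\ve(x)$, and $\gr_\ve(x_{(1)})\equiv\gr_\ve(x)+|x_{(2)}|\pmod 2$ (using $\gr_\ve(K^j)\equiv j$).

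First I would establish the antipode rule: for $y\in\U_q$ homogeneous in both gradings, $S(y)$ is again homogeneous, $|S(y)|=-|y|$, and $\gr_\ve(S(y))\equiv\gr_\ve(y)+|y|\pmod 2$. As in the proof of Lemma~\ref{3304} it suffices to check this on the generators $K^{\pm1},e,\tilde F^{(1)}$ (from $S(K)=K^{-1}$, $S(e)=-K^{-1}e$, $S(\tilde F^{(1)})=-v^2F$), and it then extends to all of $\U_q$ since $S$ is an anti-automorphism while $\gr_\ve$ and $|\cdot|$ ignore the order of factors. Next I would extract from the formula for $R$ in Section~\ref{Uh(sl2)} the grading behaviour of the $R$--matrix: in the presentation $R=\sum_l\alpha_l\otimes\beta_l$ each $\alpha_l,\beta_l$ is a scalar times a doubly homogeneous element, $|\alpha_l|+|\beta_l|=0$, and the factor $D=\exp(\tfrac h4 H\otimes H)$ contributes only scalars (fractional powers of $q$, which are $\ve$--even and are recorded by $\gr_q$ alone) together with powers of $K$ distributed equally and oppositely between the two legs. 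This is the content of Habiro's identities (3.15)--(3.16) for $J_{\psi_{\modb,\modb}}$, which I would either cite or re-derive in this form.

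With these three rules in hand I would substitute into $\Phi(x)$: set $x_{i_0}:=x_{(1)}$ (doubly homogeneous by Lemma~\ref{3304}) and compute the $\ve$--grading of $x_{i_2}=\alpha_n x_{(2)}\alpha_m$ and of the product $x_{i_0}x_{i_1}=x_{(1)}\,\beta_m S(\beta_n)$; the antipode rule replaces $\gr_\ve(S(\beta_n))$ by $\gr_\ve(\beta_n)+|\beta_n|$, and the $K$--homogeneity constraints — which tie $|x_{(1)}|$ and $|x_{(2)}|$ to the $E/F$--degrees of the $\alpha$'s and $\beta$'s that actually survive, via $\Delta^{\mathrm{op}}(y)R=R\Delta(y)$ and $(S\otimes\id)R=R^{-1}$ — let one reorganise the double sum over $(m,n)$ so that in each resulting group both legs lie in $\U_q^{\gr_\ve(x)}$; as in the proof of Lemma~\ref{3304}, this last computation may be reduced to $x$ a generator. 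The main obstacle is precisely this regrouping: $\Phi$ is \emph{not} $\Z/2$--graded term by term — the $R$--matrix genuinely mixes $\ve$--degrees — so the homogeneous presentation of $\Phi(x)$ appears only after the terms of the double sum are grouped, and one must keep the fractional powers of $q$ coming from $D$ rigidly segregated into $\gr_q$ and check they never feed into $\gr_\ve$; this is where the argument is most easily derailed.
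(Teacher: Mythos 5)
You assemble the right ingredients (Lemma \ref{3304}, the behaviour of $S$ on the gradings, the shape of the $R$--matrix), but the decisive step is missing. The paper's proof is an explicit computation: writing $R=D\bigl(\sum_n q^{\frac12 n(n-1)}\tF^{(n)}K^{-n}\otimes e^n\bigr)$ with $D=v^{\frac12 H\otimes H}$, and using $(\id\otimes S)D=D^{-1}$ together with $D^{\pm1}(1\otimes y)=(K^{\pm|y|}\otimes y)D^{\pm1}$ for $K$--homogeneous $y$, the two Cartan factors coming from the two copies of $R$ in $\Phi(x)$ cancel against each other, giving
\[
\Phi(x)=\sum_{(x),m,n}(-1)^n q^{-\frac12 m(m+1)-n(|x_{(2)}|+1)}\, x_{(1)}\otimes e^{m}e^{n}K^{-|x_{(2)}|}\otimes \tF^{(n)}x_{(2)}\tF^{(m)},
\]
after which the grading claims follow at once from Lemma \ref{3304}. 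This cancellation is the real content of the lemma: it is what makes the second and third legs honest, doubly homogeneous elements of $\U_q$ at all, with only integer powers of $q$ as coefficients. Your proposal instead grades the $R$--matrix leg by leg (``each $\alpha_l,\beta_l$ is a scalar times a doubly homogeneous element, with $D$ contributing scalars and opposite powers of $K$''), but $D$ is not an element of $\U_q\otimes\U_q$ and has no such term-by-term presentation; that intuition only acquires meaning through exactly the commutation identities above, which you never invoke. You correctly flag the ``regrouping'' of the double sum as the danger point, but you do not resolve it --- you only assert that the $K$--homogeneity constraints ``let one reorganise'' the terms.

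The fallback you offer --- reducing ``this last computation'' to $x$ a generator, ``as in the proof of Lemma \ref{3304}'' --- does not work: $\Delta$ is an algebra homomorphism, which is why Lemma \ref{3304} can be checked on generators, but $\Phi$ is not multiplicative (the $R$--matrix legs sandwich $x_{(2)}$, so $\Phi(xy)$ is not determined by $\Phi(x)$ and $\Phi(y)$), so no such reduction is available. Your auxiliary antipode rule $\gr_\ve(S(y))\equiv\gr_\ve(y)+|y| \pmod 2$ is correct and harmless, but it is no substitute for the missing elimination of $D$. To close the gap, carry out the computation above (or an equivalent one): substitute the explicit form of $R$, bring the two Cartan factors together via $(\id\otimes S)D=D^{-1}$, commute them past the homogeneous factors at the cost of powers of $K^{|x_{(2)}|}$, and note that the resulting scalar $q^{-\frac12 m(m+1)-n(|x_{(2)}|+1)}$ involves only integer powers of $q$; only then does Lemma \ref{3304} finish the proof.
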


\begin{proof}
We put $D=\sum D' \otimes D'':=v^{\frac 12 H\otimes H}$. Using (see e.g. \cite{Ha}) 
\[
R=D\left(\sum_{n} q^{\frac 12 n(n-1)}\tF^{(n)}K^{-n}\otimes e^n\right),
\]
we get
\begin{eqnarray*}
\Phi(x)&=&\sum_{(x),n,m}
q^{\frac{1}{2}\left(m(m-1)+n(n-1)\right)}
x_{(1)}\otimes
D''_2 e^m S(D''_1e^{n})\otimes
D'_1\tF^{(n)}K^{-n}x_{(2)}D'_2\tF^{(m)}K^{-m}\\
&=&
\sum_{(x),n,m}(-1)^n q^{-\frac 12 m(m+1)-n(|x_{(2)}|+1)}
x_{(1)}\otimes e^{m}e^n K^{-|x_{(2)}|}\otimes \tF^{(n)}x_{(2)}\tF^{(m)}
\end{eqnarray*}
where we used $(\id \otimes S)D=D^{-1}$ and $D^{\pm1}(1\otimes x)=(K^{\pm|x|}\otimes x)D^{\pm 1}$ for homogeneous 
\linebreak
$x\in \U_q$ with respect to the $K$--grading. Now, the claim follows from Lemma \ref{3304}.
\end{proof}

By induction, using the fact that $C\in v \;\U^1_q$, Lemma \ref{3303} implies Proposition \ref{3308}. 

\newpage

\pagestyle{fancy}
\setlength{\headheight}{15.4pt}
\renewcommand{\chaptermark}[1]{\markboth{\MakeUppercase{\chaptername}\ \thechapter. \ #1}{}}
\renewcommand{\sectionmark}[1]{\markright{\thesection. \ #1}{}}
\fancyhead{}
\fancyhead[RE]{\slshape \leftmark}
\fancyhead[LE]{\thepage}
\fancyhead[LO]{\slshape \rightmark}
\fancyhead[RO]{\thepage}
\fancyfoot{}
\renewcommand{\headrulewidth}{0pt} 
\renewcommand{\footrulewidth}{0pt}


\begin{thebibliography}{[EMSS]}


\bibitem{And} G. Andrews, {\em  q--series: their
development and applications in analysis, number theory,
combinatorics, physics, and computer algebra}, regional conference
series in mathematics, Amer. Math. Soc. {\bf 66} (1985)

\bibitem{Ap} T. Apostol, {\em Modular Functions and Dirichlet Series in Number Theory}, Second Edition Springer-Verlag (1990)

\bibitem{BBL} A. Beliakova, C. Blanchet, T. T. Q. Le,
{\em  Unified quantum invariants and their refinements
for homology 3-spheres with 2-torsion}, Fund. Math. {\bf 201} (2008)
217--239


\bibitem{BeBuLe} A. Beliakova, I. B\"uhler, T. T. Q. Le, {\em A unified quantum $SO(3)$ invariant for rational homology $3$--spheres}, arXiv:0801.3893 (2008)

\bibitem{BL} A. Beliakova, T. T. Q.  Le, {\em Integrality of quantum
3--manifold invariants and rational surgery formula},
{Compositio Math.} {\bf 143} (2007) 1593--1612

\bibitem{Bullock} D. Bullock,
{\em A finite set of generators for the Kauffman bracket skein algebra},
Mathematische Zeitschrift {\bf 231} (1999)

\bibitem{Ch} K. Chandrasekharan, {\em Elliptic Fuctions}, Springer-Verlag (1985)

\bibitem{DT} F. Deloup, V. Turaev, {\em On reciprocity}, Journal of Pure and Applied Algebra
{\bf 208} Issue 1 (2007) 153--158

\bibitem{FR} R. Fenn, C. Rourke, {\em On Kirby's calculus of links}, Topology {\bf 18} (1979) 1--15

\bibitem{GR} G. Gasper, M. Rahman, {\em Basic Hypergeometric Series}, Encyclopedia  Math. {\bf 35} (1990)

\bibitem{GM} P. Gilmer, G. Masbaum, {\em Integral lattices in TQFT},
Ann. Scient. Ecole Normale Sup.  {\bf 40}   (2007) 815--844

\bibitem{Ha} K. Habiro,
{\em A unified Witten--Reshetikhin--Turaev invariant for integral homology
spheres}, Inventiones Mathematicae {\bf 171}  (2008) 1--81

\bibitem{Ha-b} K. Habiro, {\em Bottom tangles and universal invariants},
Algebraic $\&$ Geometric Topology  {\bf 6}  (2006)   1113--1214


\bibitem{Ha1} K. Habiro, {\em Cyclotomic completions of polynomial rings},
Publ. Res. Inst. Math. Sci.  {\bf 40}  (2004)   1127--1146


\bibitem{Je} L. C. Jeffrey, {\em Chern-Simons-Witten Invariants of Lens Spaces and Torus Bundles, and the Semiclassical Approximation},
Commun. Math. Phys. {\bf 147} (1992) 563--604

\bibitem{Jo} V. Jones, {\em A polynomial invariant for knots via no Neumann algebra}, Bull. Amer. Math. Soc. {\bf 12} (1995) 103--111


\bibitem{Kho} M. Khovanov, {\em Hopfological algebra and categorification
at a root of unity: the first steps}, math.QA/0509083 (2005), to appear in
Commun.  Contemp. Math.

\bibitem{Kirby} R. Kirby, {\em A Calculus for Framed Links in $S^3$} Inventiones Mathematicae {\bf 45} (1978) 35--56

\bibitem{KM-Dedekind} R. Kirby, P. Melvin, {\em Dedekind sums,
$\mu$--invariants
and the signature cocycle}, Math. Ann. {\bf 299} (1994)  231--267

\bibitem{KM} R. Kirby, P. Melvin, {\em The $3$--manifold invariants of Witten and Reshetikhin--Turaev for ${\rm sl}(2,C)$},
Inventiones Mathematicae {\bf 105} (1991)  473--545


\bibitem{Lang} S. Lang, {\em Algebra}, Revised 3rd edition, Springer (2003)


\bibitem{Le} T. T. Q. Le, {\em Strong integrality of quantum
    invariants of 3--manifolds}, Trans. Amer. Math. Soc. {\bf 360} (2008)
 2941--2963

\bibitem{Le3} T. T. Q. Le, {\em Quantum invariants of 3-manifolds: integrality,
splitting, and perturbative expansion},  Topology Appl.  {\bf 127}
(2003)  125--152


\bibitem{LeDuke} T. T. Q. Le, {\em Integrality and symmetry of quantum
link invariants},  Duke Math. Journal  {\bf 102}
(2000)  273--306

\bibitem{LiLi}
B.--H. Li, T.--J.  Li, {\em  Generalized Gaussian sums:
Chern--Simons--Witten--Jones invariants of lens spaces},  J. Knot
Theory and its Ramifications {\bf  5} (1996) 183--224


\bibitem{Li62} W. Likorish, {\em A representation of orientable combinatorial 3--manifolds}, Ann. of Math. {\bf 76} (1962) 531--540

\bibitem{Ma} Y. Manin, {\em Cyclotomy and analytic geometry over $\mathcal{F}_1$}, arXiv:0809.1564
 (2008)
\bibitem{MR} G. Masbaum, J. Roberts,
{\it A simple proof of integrality of quantum invariants at prime
roots of unity}, Math. Proc. Camb. Phil.  Soc. {\bf 121} (1997)
443--454

\bibitem{Ohtsukibook} T. Ohtsuki, {\em Quantum invariants.
A study of knots, 3-manifolds, and their sets}, Series on Knots and
Everything, World Scientific  {\bf 29} (2002)

\bibitem{Pr} J. H. Przytycki, {\em Skein modules of 3--manifolds}, Bull. Polish Acad.
Science, {\bf 39} (1--2) (1991) 91--100

\bibitem{SikPr} J. H. Przytycki, A. Sikora,
{\em On skein algebras and $Sl_2(\mathbb{C})$--character varieties},
Topology {\bf 39} (2000) 115--148

\bibitem{RT1} N. Reshetikhin, V. Turaev, {\em Ribbon Graphs and Their Invariants Derived from Quantum Groups}, Commun. Math. Phys. {\bf 127} (1990) 1--26

\bibitem{RT2}  N. Reshetikhin, V. Turaev, {\em Invariants of 3--manifolds via link polynomials and quantum groups}, Inventiones mathematicae {\bf 103} (1991) 547--597

\bibitem{Tu} V. Turaev, {\em Quantum invariants of knots and
    3--manifolds}, de Gruyter Studies in Math. 18 (1994)

\bibitem{Wallace} A. H. Wallace, {\em Modifications and cobounding manifolds}, Canad. J. Math. {\bf 12} (1960) 503--528

\bibitem{Wi} E. Witten, {\em Quantum field theory and the Jones polynomial}, Commun. Math. Phys. {\bf 121} (1989) 351--399
\end{thebibliography}
\end{document}